   \newtheorem{theorem}[subsubsection]{Theorem}
      \newtheorem*{theorem*}{Theorem}
   \newtheorem{proposition}[subsubsection]{Proposition}
   \newtheorem{hypothesis}[subsubsection]{Hypothetical Statement}
   \newtheorem{lemma}[subsubsection]{Lemma}
   \newtheorem*{conjecture*}{Conjecture}
   \newtheorem{assumption}[subsubsection]{Assumption}
\theoremstyle{definition}
          \newtheorem*{exercise*}{Exercise}
   \newtheorem*{example*}{Example}
   \newtheorem{definition}[subsubsection]{Definition}
   \newtheorem*{definition*}{Definition}
   \newtheorem{remark}[subsubsection]{Remark}
\newcommand{\RR}{{\mathbb{R}}}
\newcommand{\FF}{{\mathbb{F}}}
\newcommand{\CC}{{\mathbb{C}}}
\newcommand{\QQ}{{\mathbb{Q}}}
\newcommand{\NN}{{\mathbb{N}}}
\newcommand{\PP}{{\mathbb{P}}}
\newcommand{\ZZ}{{\mathbb{Z}}}
\newcommand{\GG}{{\mathbb{G}}}
\renewcommand{\AA}{{\mathbb{A}}}
\newcommand{\HH}{{\mathbb{H}}}
\newcommand{\by}{{\mathbf{y}}}
\def\hatcO{{\widehat\cO}}
\newcommand{\fX}{{\mathfrak{X}}}
\newcommand{\fY}{{\mathfrak{Y}}}
\newcommand{\fSp}{{\mathfrak{Sp}}}
\newcommand{\fSt}{{\mathfrak{St}}}
\newcommand{\cC}{{\mathcal C}}
\newcommand{\cE}{{\mathcal E}}
\newcommand{\cF}{{\mathcal F}}
\newcommand{\cG}{{\mathcal G}}
\renewcommand{\cH}{{\mathcal H}}
\newcommand{\cI}{{\mathcal I}}
\newcommand{\cM}{{\mathcal M}}
\newcommand{\cO}{{\mathcal O}}
\newcommand{\cP}{{\mathcal P}}
\newcommand{\cV}{{\mathcal V}}
\newcommand{\cX}{{\mathcal X}}
\newcommand{\cY}{{\mathcal Y}}
\newcommand{\cZ}{{\mathcal Z}}
\newcommand{\oJ}{{\overline{J}}}
\newcommand{\oE}{{\overline{E}}}
\newcommand{\oM}{{\overline{M}}}
\newcommand{\oh}{{\overline{h}}}
\newcommand{\an}{{\rm an}}
\def\<{\langle}
\def\>{\rangle}
\newcommand{\Spec}{\operatorname{Spec}}
\newcommand{\Spf}{\operatorname{Spf}}
\newcommand{\Proj}{\operatorname{Proj}}
\newcommand{\Hom}{{\operatorname{Hom}}}
\newcommand{\cHom}{{{\cH}om}}
\newcommand{\Coh}{{\operatorname{Coh}}}
\newcommand{\Ker}{{\operatorname{Ker}}}
\newcommand{\Aut}{{\operatorname{Aut}}}
\newcommand{\Sym}{{\operatorname{Sym}}}
\newcommand{\chara}{\operatorname{char}}
\newcommand{\das}{\dashrightarrow}
\newcommand{\bfA}{{\mathbf A}}
\newcommand{\oD}{{\overline{D}}}
\newcommand{\sst}{{\operatorname{sst}}}
\newcommand{\un}{{\operatorname{un}}}
\newcommand{\sing}{{\operatorname{sing}}}
\newcommand{\Bl}{{\operatorname{Bl_{rs}}}}
\newcommand{\TorBl}{{\operatorname{TorBl_{rs}}}}
\newcommand{\Fact}{{\operatorname{Fact_{rs}}}}
\newcommand{\TorFact}{{\operatorname{TorFact_{rs}}}}
\newcommand{\Cob}{{\operatorname{Cob_{rs}}}}
\def\:{{\colon}}
\def\.{{,\dots,}}
\newcommand{\double}{\genfrac..{0pt}1
{\raise -1pt\hbox{$\scriptstyle\longrightarrow$}}{\raise 3pt\hbox
{$\scriptstyle\longrightarrow$}}}
\renewcommand{\setminus}{\smallsetminus}
\def\tototi{\mathbin{\mathop{\otimes}\limits^{\raise-1pt\hbox
{$\scriptscriptstyle {\rm L}$}}}}
\def\indlim{\mathop{\vrule width0pt height7pt depth
4pt\smash{\lim\limits_{\raise 1pt\hbox to 14.5pt
{\rightarrowfill}}}}}
\def\projlim{\mathop{\vrule width0pt height7pt depth
4pt\smash{\lim\limits_{\raise 1pt\hbox to 14.5pt
{\leftarrowfill}}}}}
\newcommand\displaceamount{3pt}
\newcommand{\doubledown}{\ar@<\displaceamount>[d]\ar@<-\displaceamount>[d]}
\newcommand{\doubleup}{\ar@<\displaceamount>[u]\ar@<-\displaceamount>[u]}
\newcommand{\doubleright}{\ar@<\displaceamount>[r]\ar@<-\displaceamount>[r]}
\newcommand{\tor}{{\operatorname{tor}}}
\newcommand{\res}{{\operatorname{res}}}
\newcommand{\can}{{\operatorname{can}}}
\def\into{\hookrightarrow}
\def\onto{\twoheadrightarrow}
\def\gp{\text{gp}}
\def\GGm{{\mathbb G}_m}
\def\tilJ{{\widetilde J}}
\def\toisom{\xrightarrow{{_\sim}}}
\def\hatB{{\widehat B}}
\def\bfD{{\mathbf{D}}}
\def\hatphi{{\widehat\phi}}
\def\reg{{\operatorname{reg}}}
\begin{document}
\title[Factorization of birational maps for qe schemes]{Functorial factorization of birational maps for qe schemes in characteristic 0}

\author[D. Abramovich]{Dan Abramovich}
\address{Department of Mathematics, Box 1917, Brown University,
Providence, RI, 02912, U.S.A}
\email{abrmovic@math.brown.edu}
\author[M. Temkin]{Michael Temkin}
\address{Einstein Institute of Mathematics\\
               The Hebrew University of Jerusalem\\
                Giv'at Ram, Jerusalem, 91904, Israel}
\email{temkin@math.huji.ac.il}
\thanks{This research is supported by  BSF grant 2010255}

\date{\today}


\begin{abstract} We prove functorial weak factorization of projective birational morphisms of regular quasi-excellent schemes in characteristic 0 broadly based on the existing line of proof for varieties. From this general functorial statement we deduce factorization results for algebraic stacks, formal schemes, complex analytic germs, Berkovich analytic and rigid analytic spaces.
\end{abstract}
\maketitle
\setcounter{tocdepth}{1}

\tableofcontents

 \section{Introduction}



\subsection{}
The class of qe schemes (originally ``quasi excellent schemes") is the natural class of schemes on which problems around resolution of singularities are of interest. They can also be used as a bridge for studying the same type of problems in other geometric categories, see \cite[Section 5]{Temkin}. In this paper we address the problem of functorial factorization of birational morphisms between regular qe schemes of characteristic 0 into blowings up and down of regular schemes along regular centers. We rely on general foundations developed in \cite{ATLuna, AT1} and the approach for varieties of \cite{W-Cobordism, AKMW}. As a consequence of both this generality of qe schemes and of functoriality, we are able to deduce factorization of birational or bimeromorphic morphisms in other geometric categories of interest.

\subsection{Blowings up and weak factorizations}\label{Sec:def-factor} We start with a morphism of noetherian qe regular schemes $\phi\:X_1 \to X_2$ given as the blowing up of a coherent sheaf of ideals $I$ on the qe scheme $X_2$. In addition, we provide $\phi$ with a {\em boundary} $(D_1,D_2)$, where each $D_i$ is a normal crossings divisor in $X_i$ and $D_1:=\phi^{-1}D_2$. Let $U=X_2\setminus(D_2\cup V(I))$ be the maximal open subscheme of $X_2$ such that $I$ is the unit ideal on $U$ and the boundary is disjoint from $U$. The restriction of $\phi$ on $U$ is the trivial blowing up (i.e. the blowing up of the empty center), in particular, we canonically have an isomorphism $\phi^{-1}U \to U$. We often keep the ideal $I$ implicit in the notation, even though it determines $\phi$ (but see Section \ref{Sec:blowup-projective} for a construction in the reverse direction). The reader may wish to focus on the following two cases of interest: (i) $D_2 = \emptyset$; (ii) $V(I)\subseteq D_2$.

 A {\em weak factorization} of a blowing up $\phi\:X_1 \to X_2$ is a diagram of regular qe schemes
 $$\xymatrix{X_1 = V_0\ar@{-->}[r]^(.6){\varphi_1}& V_1\ar@{-->}[r]^{\varphi_2}&\ldots \ar@{-->}[r]^{\varphi_{l-1}} &V_{l-1} \ar@{-->}[r]^(.4){\varphi_l}&  V_l = X_2}$$
along with regular schemes $Z_i$ for $i=1,\ldots,l$  and ideal sheaves $J_i$ for $i=1,\ldots,(l-1)$  satisfying the following conditions:
 \begin{enumerate}
 \item $\phi = \varphi_l\circ \varphi_{l-1}\circ \cdots \circ \varphi_2\circ \varphi_1$.
 \item The maps $V_i \das X_2$ are morphisms; these maps as well as $\varphi_i$ induce isomorphisms on $U$.
 \item For every $i = 1,\ldots,l$ either $\varphi_i\: V_{i-1} \das V_i$ or $\varphi_i^{-1}\: V_{i} \das V_{i-1}$ is a morphism given as the blowing up of $Z_i$, which is respectively a subscheme  of $V_i$ or $V_{i-1}$ disjoint from $U$.
 \item The inverse image $D_{V_i} \subset V_i$ of $D_2 \subset X_2$ is a normal crossings divisor and $Z_i$ has normal crossings with $D_{V_i}$.
  \item For every $i = 1,\ldots,(l-1)$, the morphism $V_i \to X_2$ is given as the blowing up of the corresponding  coherent ideal sheaf $J_i$ on $X_2$, which is the unit ideal on $U$.
 \end{enumerate}

 To include $V_0 \to X_2$, we define $J_0 = I$. The ideals $J_i$ are a convenient way to encode functoriality, especially when we later pass to other geometric categories.

 These conditions are the same as (1)--(5)  in \cite[Theorem 0.3.1]{AKMW}, except that here the centers of blowing up and ideal sheaves are specified. Condition (2) is formulated for convenience; it is a consequence of (3) and (5).
Note that here, as in  \cite[Theorem 0.3.1]{AKMW}, the centers are not assumed irreducible, in contrast with \cite[Theorem 0.1.1]{AKMW}. With these condition, the most basic form of our main theorem is as follows:

\begin{theorem}[Weak factorization]\label{Th:factor-simple}
Every birational blowing up  $\phi\:X_1 \to X_2$ of  a noetherian qe regular $\QQ$-scheme has a weak factorization $X_1 = V_0\das V_1\das\dots \das V_{l-1} \das V_l = X_2$.
\end{theorem}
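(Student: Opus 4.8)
The plan is to reduce the statement for general noetherian qe regular $\QQ$-schemes to the known weak factorization theorem for varieties (that is, for regular schemes of finite type over a field of characteristic $0$, as in \cite{AKMW, W-Cobordism}), exploiting the fact that blowings up, normal crossings divisors, and weak factorizations are all local, approximable, and functorial notions. First I would observe that the problem is local on $X_2$ in the Zariski (or even étale) topology: a weak factorization is built from ideal sheaves $J_i$ on $X_2$ and centers $Z_i$, so it suffices to produce compatible factorizations on the members of an open cover and glue, using that the $J_i$ are the data carrying the gluing and that functoriality forces agreement on overlaps. Thus I may assume $X_2 = \Spec A$ is affine with $A$ a regular qe $\QQ$-algebra, $I \subset A$ an ideal, and $D_2 = V(f)$ for some $f$ cutting out a normal crossings divisor.

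Next I would bring in the Néron–Popescu / Artin-type approximation available for qe rings in characteristic $0$: $A$ is a filtered colimit of regular $\QQ$-algebras of finite type over $\QQ$ (this is exactly the kind of structure theory underpinning \cite{Temkin} and the foundations of \cite{ATLuna, AT1}). The ideal $I$, the equation $f$, and all the finite combinatorial data of the blowing up $\phi$ descend to some finite-type model $A_0 \subset A$: there is a regular $\QQ$-variety $X_2^0 = \Spec A_0$, a normal crossings divisor $D_2^0$, and an ideal $I_0$ whose blowing up $\phi^0\: X_1^0 \to X_2^0$ pulls back to $\phi$ under $X_2 \to X_2^0$ (after possibly enlarging $A_0$ so that regularity of source and target and the normal crossings hypothesis on $D_2^0$ hold — here one uses that these are open, hence finitely presented, conditions, shrinking the model if necessary). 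Apply the variety case to $\phi^0$ to obtain a weak factorization $V_0^0 \das \cdots \das V_l^0$ over $X_2^0$, encoded by ideal sheaves $J_i^0$ on $X_2^0$ and centers $Z_i^0$ meeting $D_2^0$ normally. Then pull everything back along $X_2 \to X_2^0$: set $J_i := J_i^0 \cdot A$ and $V_i := \mathrm{Bl}_{J_i}(X_2)$, $Z_i :=$ the base change of $Z_i^0$. Base change commutes with blowing up, with forming strict transforms, and preserves the unit-ideal condition on $U$; flatness of $A$ over $A_0$ (it is a colimit of smooth, hence flat, extensions — again the Popescu theorem) guarantees that regularity of the $V_i$ and the $Z_i$, and the normal crossings of $D_{V_i}$ and its transversality to $Z_i$, are inherited. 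Conditions (1)–(5) are then checked one by one: (1) and (5) because $\mathrm{Bl}$ commutes with flat base change and composition of blowings up is governed by products of ideals; (3) because whether $\varphi_i$ or its inverse is a morphism, and which center it blows up, is read off from the model; (4) by flatness as above; (2) follows formally.

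The main obstacle — and the reason this paper is not a one-paragraph corollary of \cite{AKMW} — is \emph{functoriality}: the theorem as I have stated it only asserts existence for each $\phi$, but the real content (and what the introduction promises, with the $J_i$ "a convenient way to encode functoriality") is that the factorization is compatible with, say, smooth morphisms $X_2' \to X_2$ and with automorphisms, so that it descends to stacks and passes to formal, analytic, and Berkovich categories. Making the approximation argument \emph{functorial} is delicate: different $\phi$'s descend to different finite-type models, and one must either show the variety-level factorization of \cite{W-Cobordism} is itself functorial enough to be compatible across a cofiltered system of models (which is essentially the toroidal/torification strategy of \cite{AKMW} combined with the canonical resolution functors), or — more likely the route taken here, given the cited foundations — first establish a \emph{functorial} weak factorization for varieties and then check that both the functor and its output commute with filtered colimits of rings and with smooth base change. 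I expect the bulk of the real work (and of the preceding sections of the paper) to be exactly in setting up this functorial version for varieties and the descent/limit formalism; the qe statement above then follows as indicated, with the approximation step being the conceptual crux but the compatibility bookkeeping being where the effort lies.
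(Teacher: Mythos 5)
There is a genuine gap, and it sits exactly at the step you call the ``conceptual crux.'' Popescu's theorem presents $A$ as a \emph{filtered colimit of smooth finite-type $\QQ$-algebras, with arbitrary transition maps}: the maps $A_0\to A$ from the finite-type models to the colimit are in general neither flat nor regular (e.g.\ for $A=\QQ[[u,v]]$ a model map $\QQ[x,y,z]\to A$ sending $z$ to a power series transcendental over $\QQ(u,v)$ is injective but not flat, by the dimension formula for flat local maps). So your assertion that ``flatness of $A$ over $A_0$'' follows from Popescu is false, and with it the whole pullback step collapses: blowing up only commutes with \emph{flat} base change, strict transforms and the composition rules of condition (5) are not preserved, and regularity of the $V_i$, $Z_i$ and the normal crossings/transversality of $D_{V_i}$ require the morphism to the model to be \emph{regular} (flat with geometrically regular fibers), not merely to exist. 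Nor can you simply ``enlarge the model'' to arrange $D_2^0$ to be normal crossings or $V_i^0$ regular: descending such completion-local conditions to a finite-type model along a regular morphism is precisely the delicate algebraization problem the paper sidesteps. Indeed the paper introduces the class of \emph{$\QQ$-absolute} triples (those that locally \emph{do} admit regular morphisms to varieties) exactly because a general triple on a regular qe $\QQ$-scheme is not known to be of this form; the Popescu/\cite{Illusie-Temkin} approximation trick is used there (Proposition \ref{Prop:absolute-principalization}) only to compare two pullbacks along regular morphisms of an already-functorial construction on varieties, never to descend arbitrary qe data to finite type. A secondary problem is your gluing step: patching factorizations produced on an affine cover requires compatibility with open immersions, i.e.\ functoriality for \emph{non-surjective} regular morphisms, which is exactly the stronger functoriality the paper explicitly does not establish (Section \ref{Sec:strong-functoriality}, where one must also pass to an equivalence relation to absorb blowings up of unit ideals and differing lengths $l$).

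The paper's actual route is different in kind: it does not reduce to \cite{AKMW} by approximation but reruns the whole factorization machine directly over qe schemes --- a $\GG_m$-birational cobordism built from deformation to the normal cone inside $\PP(E)$ over $X_2$, GIT quotients, the torific ideals of \cite{AT1} to toroidalize the quotients, canonical resolution of the $W_{i\pm}$, principalization of the (locally monoidal, hence $\QQ$-absolute) torific ideals, and a functorial combinatorial factorization via generalized cone complexes and Morelli's $\pi$-desingularization. The inputs from resolution theory are Temkin's functorial resolution of qe pairs in characteristic $0$ (Theorem \ref{resolth}) and principalization only for the special $\QQ$-absolute/locally monoidal ideals that actually arise, which is where the only approximation argument occurs. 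If you want to salvage your plan, you would first have to prove that every blow-up datum $(X_2,I,D_2)$ on a regular qe $\QQ$-scheme is, locally, the pullback of a variety datum along a regular morphism, and then prove a variety-level factorization functorial for all regular (not just smooth or surjective) morphisms --- both of which are open problems rather than known inputs.
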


The adjective ``weak" serves to indicate that blowings up and down may alternate arbitrarily among the maps $\varphi_i$, as opposed to a {\em strong factorization}, where one has a sequence of blowings up followed by a sequence of blowings down. We note that at present strong factorization is not known even for toric threefolds.

Theorem \ref{Th:factor-simple} generalizes  \cite[0.0.1]{W-Factor} and \cite[Theorem 0.1.1]{AKMW}, where the case of varieties is considered. But we wish to prove a more precise theorem.

\subsection{Functorial weak factorization}

The class of data $(X_2,I,D_2)$, namely morphisms $\phi\:X_1 \to X_2$  of noetherian qe regular schemes given as blowings up of ideals $I$, with divisor $D_2$ as in   Section \ref{Sec:def-factor},  can be made into {\em the regular surjective category of blowings up}, denoted $\Bl$, by defining arrows as follows:
\begin{definition}\label{Def:Bl} An arrow from the blowing up $\phi'\:X'_1=Bl_{I'}(X_2') \to X'_2$ to $\phi:X_1=Bl_{I}(X_2) \to X_2$ is a  regular and surjective morphism $g: X'_2\to X_2$  such that $g^* I = I'$ and $g^{-1}D_2=D'_2$. In particular, $g$ induces a canonical isomorphism $X_1'\to X_1 \times_{X_2} X_2'$ and $D'_1$ is the preimage of $D_1$ under $X'_1\to X_1$.
\end{definition}

Similarly, weak factorizations can be made into the regular surjective category  of weak factorizations, denoted $\Fact$, by defining arrows as follows:

 \begin{definition}\label{Def:factcat}  A morphism in $\Fact$ from a weak factorization  $$X'_1 = V'_0\das V'_1\das\dots \das V'_{l-1} \das V'_l = X'_2$$  of $\phi'\:X'_1 \to X'_2$, with centers $Z_i'$ and ideals $J_i'$ to a weak factorization $$X_1 = V_0\das V_1\das\dots \das V_{l-1} \das V_l = X_2$$ of $\phi\:X_1 \to X_2$, with centers $Z_i$ and ideals $J_i$ consists of a regular surjective morphism  $g\: X'_2\to X_2$ such that $g^*I = I'$, $g^*J_i= J_i'$ inducing $g_i\: V_i'\to V_i$, such that $Z_i' = g_i^{-1} Z_i$ or $g_{i-1}^{-1} Z_i$ as appropriate. In particular $\varphi_i \circ g_{i-1} = g_i \circ \varphi_i$ and $g_i^{-1}D_{V_i}=D_{V'_i}$.
  \end{definition}


Note that given a factorization of $\phi$, any morphism from a factorization of $\phi'$ is uniquely determined by $g\: X_2' \to X_2$.

If we wish to restrict to schemes in a given characteristic $p$ we denote the categories $\Bl({\chara = p})$  and $\Fact({\chara = p})$ respectively. If we wish to restrict the dimension we write $\Bl({\chara = p,\dim \leq d})$  and $\Fact({\chara = p,\dim\leq d})$.

There is an evident forgetful functor $\Fact \to \Bl$ taking a weak factorization $X_1 = V_0\das V_1\das\dots \das V_{l-1} \das V_l = X_2$ to its composition $\phi\:X_1 \to X_2$.   The weak factorization theorem provides a section, when strong resolution of singularities holds:

 \begin{theorem}\label{Th:main}
 \begin{enumerate}
 \item {\sc Functorial weak factorization:} There is a functor $$\Bl({\chara = 0}) \to \Fact({\chara = 0})$$ assigning to a  blowing up $\phi\:X_1 \to X_2$ in characteristic 0 a weak factorization $$X_1 = V_0\das V_1\das\dots \das V_{l-1} \das V_l = X_2,$$ so that the composite  $\Bl({\chara = 0}) \to \Fact({\chara = 0})\to \Bl({\chara = 0})$ is the identity.




\item {\sc Conditional factorization in positive and mixed characteristics:} If  functorial embedded resolution of singularities applies in characteristic $p$ (respectively,  over $\ZZ$) for schemes of dimension $\leq d+1$, then there is a functor  $$\Bl({\chara = p,\dim \leq d}) \to \Fact({\chara = p,\dim \leq d})$$ (respectively, a functor $$\Bl({\dim \leq d}) \to \Fact({\dim \leq d}))$$ which is a section of   $\Fact({\chara = p,\dim \leq d})\to \Bl({\chara = p,\dim \leq d})$ (respectively, $\Fact({\dim \leq d})\to \Bl({\dim \leq d})$).

 \end{enumerate}
 \end{theorem}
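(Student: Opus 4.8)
The plan is to prove Theorem \ref{Th:main}, from which Theorem \ref{Th:factor-simple} follows by composing with the forgetful functor $\Fact\to\Bl$ and forgetting functoriality. The construction follows the strategy of \cite{W-Cobordism, AKMW} for varieties, reorganised so that every choice made is canonical and so that the geometric inputs are available over quasi-excellent bases. Fix a blowing up $\phi\:X_1=\mathrm{Bl}_I(X_2)\to X_2$ with boundary $(D_1,D_2)$, and let $U=X_2\setminus(D_2\cup V(I))$. The four steps are: (i) attach to $\phi$ a relative birational cobordism, a $\gm$-equivariant scheme over $X_2$ built by a universal construction from the Rees algebra of $I$; (ii) run variation of GIT along the $\gm$-action to decompose the birational map into finitely many elementary modifications; (iii) factor each elementary modification into blowings up and down along regular centres by reducing, near the $\gm$-fixed loci, to a relative-toric situation and applying canonical combinatorial factorization; (iv) interleave functorial embedded resolution of singularities to force the normal-crossings conditions (4). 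Functoriality is then checked step by step, the point being that each construction commutes with regular surjective base change.

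For step (i), form the relative deformation to the normal cone of $V(I)$ in $X_2$, e.g. $\Spec_{X_2}\bigoplus_{n\in\ZZ}I^{-n}$ with $I^m:=\cO_{X_2}$ for $m\le 0$, which carries a $\gm$-action and a map to $\aa^1$ with general fibre $X_2\times\gm$ and special fibre the normal cone; a canonical projective completion $B$ over $X_2$ then has the property that the two chambers of $\gm$-linearisations produce GIT quotients recovering $X_1$ and $X_2$, with unstable loci supported over $V(I)$, hence disjoint from $U$. The boundary is carried along as the pullback of $D_2$. The $\gm$-fixed components lie over $V(I)$, and near them $B$ is toroidal relative to $X_2$; here the Luna-type slice theorems and the toroidal geometry over qe schemes developed in \cite{ATLuna, AT1} are what let one replace, étale-locally and canonically, the fixed loci and their neighbourhoods by relative toric data. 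When these GIT quotients are merely Deligne--Mumford stacks, one carries the argument out for stacks and applies a functorial destackification at the end, which is harmless for conditions (1)--(5).

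In step (ii), crossing a single $\gm$-wall changes the quotient by an elementary birational modification which, near the relevant fixed component $F$, is a weighted blowing up followed by a weighted blowing down of weighted cone bundles over $F$ — combinatorially, a subdivision of a fan. By the canonical factorisation of birational toric (toroidal) morphisms into star subdivisions — made functorial by breaking ties with intrinsic invariants of the cones rather than arbitrary orderings — each such modification factors into blowings up and down of regular centres; assembling these over all walls yields the sequence $X_1=V_0\das V_1\das\cdots\das V_l=X_2$ together with the centres $Z_i$, all supported away from $U$, so that (1)--(3) hold and (2) is automatic. Each $V_i\to X_2$ is projective birational with $V_i$ regular, hence is the blowing up of a canonical ideal $J_i$ on $X_2$, unit on $U$, giving (5). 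To obtain (4) one interleaves functorial embedded principalisation/resolution so that at every stage $Z_i$ meets $D_{V_i}$ transversally; in characteristic $0$ this is unconditionally available for qe schemes by \cite{Temkin}, which gives part (1), whereas in characteristic $p$ or over $\ZZ$ it is precisely the hypothesis of part (2). The dimension bookkeeping matches: the cobordism $B$ has dimension $\dim X_2+1$, so with $\dim X_2\le d$ the resolutions invoked live in dimension $\le d+1$.

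Finally, for a regular surjective $g\:X_2'\to X_2$ with $g^*I=I'$ and $g^{-1}D_2=D_2'$, the Rees algebra, the cobordism $B$, the $\gm$-action, the wall structure, the fixed loci with their relative-toric charts, and the chosen star subdivisions all pull back compatibly, since each was produced by a construction that commutes with flat base change and depends only on local-on-the-target data; and functorial resolution commutes with regular morphisms by design (equivalently, via approximation: a regular qe $\QQ$-scheme is, locally, a filtered limit of smooth $\QQ$-varieties, and functoriality transports the construction to the limit). Hence $g$ induces $g_i\:V_i'\to V_i$ with $Z_i'=g_i^{-1}Z_i$ or $g_{i-1}^{-1}Z_i$ and $g^*J_i=J_i'$, i.e. a morphism in $\Fact$, and $\Bl\to\Fact\to\Bl$ is the identity by construction. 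I expect the main obstacle to be canonicity: the classical torification and star-subdivision procedures of \cite{AKMW} involve non-canonical choices, and making the whole factorization functorial for \emph{all} regular morphisms — not just étale ones — is what forces the systematic use of the slice and destackification machinery of \cite{ATLuna, AT1} and the qe-scheme resolution of \cite{Temkin}; keeping these compatible with one another, and with the relative (over-$X_2$) nature of everything, is the technical heart of the proof.
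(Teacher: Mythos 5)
Your overall architecture (deformation-to-the-normal-cone cobordism, variation of GIT along the $\GG_m$-action, local toric/toroidal structure near the fixed loci via \cite{ATLuna,AT1}, combinatorial factorization, interleaved functorial resolution/principalization, then functoriality by pullback along regular surjective morphisms) is the same as the paper's. But there is a genuine gap at the single most delicate point: you assert that the star-subdivision factorization of each wall-crossing can be made canonical ``by breaking ties with intrinsic invariants of the cones rather than arbitrary orderings.'' This is not a construction, and it is precisely the hard part: Morelli's $\pi$-desingularization \cite{Morelli,W-Factor} and its use in \cite{AKMW} involve genuinely non-canonical choices, and making the algorithm itself intrinsic is, per the remark following Proposition~\ref{Prop:toroidal-factorization}, a program of W{\l}odarczyk requiring subtle modifications at the heart of the algorithm, not a tie-breaking device. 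The paper sidesteps this entirely by a formal mechanism you do not have: it enlarges the category of cone complexes to \emph{generalized} cone complexes, proves that each connected component (under surjective face maps compatible with the convex function) has a final object (Lemma~\ref{Lem:final}), reduces to fans via two barycentric subdivisions (Lemma~\ref{Lem:barycentric}), applies \emph{any} Morelli-type factorization to the final object, and pulls back (Lemmas~\ref{Lem:generalized-factorization} and \ref{Lem:generalized-factorization-functorial}). Without either this trick or a worked-out intrinsic algorithm, your functoriality claim for the combinatorial step is unsupported, and with it the whole functor $\Bl\to\Fact$.

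Two secondary divergences are worth flagging. First, the contingency of Deligne--Mumford quotient stacks plus ``functorial destackification'' does not arise in the paper and would import an additional unproved input in the qe setting: the GIT quotients $B^\sst_{a}\sslash\GG_m$ are schemes (Proj of invariants), and the passage to toroidal geometry is effected not by destackification but by blowing up the functorial normalized torific ideals of \cite{AT1}, producing the towers $W_{i\pm}\leftarrow W_{i\pm}^\res\leftarrow W_{i\pm}^{\can}\to W_{i\pm}^\tor$ and the toroidal diamond over $W_i^\tor$. Second, your sketch glosses over the verifications that make part (2) of the theorem meaningful: one must check that the ideals to be principalized on $W_{i\pm}^\res$ are locally monoidal (Lemma~\ref{locmonoidlem}), and that the resulting blowing-up ideals are toroidal (Lemma~\ref{Lem:ideal-is-toroidal}), since in positive and mixed characteristics only the restricted Hypothetical Statement~\ref{Hyp:principalization} for locally monoidal triples is assumed; your appeal to general ``functorial embedded principalisation'' in characteristic $p$ is stronger than the stated hypothesis and would weaken the conditional result.
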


This generalizes a theorem for {\em varieties}  in characteristic 0, \cite[Theorem 0.3.1 and Remark (3) thereafter]{AKMW}, \cite[Theorem 1.1]{W-ICM}, \cite[Theorem 0.0.1]{W-Seattle}, where the factorization is only shown to be functorial for isomorphisms. The precise statements we need for part (2) are spelled out below as Hypothetical Statements \ref{Hyp:resolution} and \ref{Hyp:principalization}.

\begin{remark}[Preservation of $G$-normality] In \cite[Definition 3.1]{Borisov-Libgober} Borisov and Libgober introduce $G$-normal divisors and in \cite[Theorem 3.8]{Borisov-Libgober} they show that this condition can be preserved in the algorithm of \cite{AKMW}. The same holds true here, using the same argument of   \cite[Theorem 3.8]{Borisov-Libgober}, by performing the sequence of blowings up associated to the barycentric subdivision on the schemes $W_{i\pm}^\res$ obtained in Section \ref{Sec:Tying}. Details are left to the interested reader.
\end{remark}

\subsection{Applications of functoriality}
We need to justify the somewhat heavy functorial treatment. Of course functoriality may be useful if one wants to make sure the factorization is equivariant under group actions and separable field extensions; this has  been of use already in the case of varieties. But it also serves as a tool to transport our factorization result to other geometric spaces.

Blowings up of regular objects is a concept which exists in categories other than schemes, for instance: Artin stacks, qe formal schemes, complex semianalytic germs (see Appendix \ref{App:germs}),  Berkovich $k$-analytic spaces, rigid $k$-analytic spaces.  For brevity we denote the full subcategory of {\em qe noetherian} objects in any of these categories by $\fSp$. Functoriality, as well as the generality of qe schemes, is crucial in proving the following:

\begin{theorem}[Factorization in other categories]\label{Th:factor-Sp} Any blowing up $X_1 \to X_2$ of either noetherian qe regular algebraic stacks, or regular objects of $\fSp$, in characteristic 0 has a weak factorization $X_1 = V_0\das V_1\das\dots \das V_{l-1} \das V_l = X_2.$ The same holds in positive and mixed characteristics (when relevant) if functorial embedded resolution of singularities for qe schemes applies in positive and mixed characteristics.
\end{theorem}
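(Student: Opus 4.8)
The plan is to deduce the statement in each category $\fSp$ (and for algebraic stacks) from the functorial weak factorization over qe schemes, Theorem \ref{Th:main}, by a descent/patching argument that exploits precisely the functoriality built into the functor $\Bl(\chara=0)\to\Fact(\chara=0)$. First I would recall from \cite{Temkin, ATLuna, AT1} that each of these categories admits, for a regular noetherian qe object $X_2$, a presentation by regular noetherian qe \emph{schemes}: for formal schemes one takes an affine open cover and the associated schemes $\Spec$ of the topological rings (or uses the algebraization results for the relevant ideals); for Berkovich and rigid $k$-analytic spaces one uses the scheme-theoretic local models $\Spec\cO_{X,x}$ together with the GAGA-type compatibilities; for complex semianalytic germs one uses Appendix \ref{App:germs}; and for algebraic stacks one uses a smooth surjective presentation $P\to X_2$ by a scheme. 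In every case the covering maps, and the transition maps of the covering (the two projections of $P\times_{X_2}P$, or the overlaps of a cover, or the maps $\Spec\cO_{X,x}\to X$), are \emph{regular morphisms} in the sense of Definition \ref{Def:Bl} — this is the key point that makes the category $\Bl$ with its regular-surjective arrows the right vehicle.

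The main steps, in order. Step 1: given a blowing up $\phi\colon X_1\to X_2$ of regular objects of $\fSp$ (or of stacks) with boundary $(D_1,D_2)$, pull the data $(X_2,I,D_2)$ back along a presentation to obtain a blowing up $\phi^P\colon X_1^P\to X_2^P$ of regular qe \emph{schemes} with boundary, lying in $\Bl(\chara=0)$, together with its two pullbacks to the ``double overlap'' $X_2^{PP}=P\times_{X_2}P$ (resp. to the scheme-theoretic fibered product in the analytic/formal cases), and the two maps $X_2^{PP}\rightrightarrows X_2^P$ are arrows in $\Bl(\chara=0)$. Step 2: apply the functor of Theorem \ref{Th:main}(1) to $\phi^P$ to get a weak factorization $X_1^P=V_0^P\das\cdots\das V_l^P=X_2^P$ with centers $Z_i^P$ and ideals $J_i^P$ on $X_2^P$. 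Step 3: by functoriality, the two arrows $X_2^{PP}\rightrightarrows X_2^P$ carry this factorization to \emph{the same} factorization over $X_2^{PP}$ — the ideals $J_i^P$ and centers $Z_i^P$ satisfy the cocycle condition on the double overlap, and one checks the triple-overlap cocycle condition automatically since everything is pulled back from the unique factorization of $\phi$'s pullback there. Step 4: descend. The ideals $J_i^P$ on $X_2^P$, being compatible on overlaps, glue to coherent ideals $J_i$ on $X_2$ in the category $\fSp$ (coherence descends; for stacks this is flat descent of quasicoherent sheaves, for formal/analytic spaces it is the standard gluing of coherent sheaves on an admissible cover). Blowing up $J_i$ on $X_2$ produces $V_i$ in $\fSp$ with $V_i\times_{X_2}X_2^P\cong V_i^P$; the centers $Z_i^P$ likewise glue to regular closed subobjects $Z_i\subset V_i$ or $V_{i-1}$, and the normal-crossings conditions (4) descend because they are local for the regular topology. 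Step 5: verify that the resulting diagram $X_1=V_0\das\cdots\das V_l=X_2$ satisfies (1)--(5) of Section \ref{Sec:def-factor} — conditions (1)--(3) and (5) descend directly, and (2)'s isomorphism over $U$ descends from the scheme case. For positive and mixed characteristics one repeats verbatim using Theorem \ref{Th:main}(2) in place of (1), under the stated resolution hypothesis.

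The hard part will be Step 4, the descent of the blowings up themselves — more precisely, checking that blowing up the glued ideal $J_i$ commutes with the (smooth, resp. regular) presentation map, so that the locally-defined $V_i^P$ genuinely descend to an object $V_i$ of $\fSp$. For algebraic stacks this is a formal consequence of the compatibility of blowing up with flat base change plus fppf descent of schemes-over-a-stack; the subtlety is only bookkeeping of the boundary and the centers. For formal schemes, Berkovich and rigid analytic spaces the issue is genuinely more delicate: one must know that the blowing up of a coherent ideal in these categories is well-behaved under the relevant base changes and that the centers $Z_i$, which a priori are only defined on the covering schemes, are cut out by coherent ideals that descend — here one leans on the foundational results of \cite{Temkin, AT1} identifying blowings up in $\fSp$ with blowings up of the associated schemes, and on the fact that a regular morphism of schemes is flat, so that all the fibered-product identities needed for descent hold on the nose. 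A secondary, purely technical point is the semianalytic-germ case, where one must check the presentation of Appendix \ref{App:germs} has regular transition maps; this is where I would expect to invoke the appendix directly rather than reprove anything. Once these descent facts are in hand the theorem follows formally from the functoriality of Theorem \ref{Th:main}.
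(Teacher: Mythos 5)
Your treatment of the stack case is essentially the paper's own proof of Theorem \ref{Th:stacks}: smooth presentation, two projections from $W\times_{\fX}W$, functoriality of Theorem \ref{Th:main} for surjective smooth morphisms, and flat descent. For the categories $\fSp$, however, your descent scheme has a genuine gap at the point you treat as automatic (your Step 3). The transition maps of an affinoid covering do not give arrows of $\Bl$: if $Y\subseteq X_i\cap X_j$ is an affinoid overlap, the induced scheme morphisms $\Spec\cO_X(Y)\to\Spec\cO_X(X_i)$ are regular (Lemma \ref{Lem:regmor}) but in general \emph{not surjective}, whereas Definition \ref{Def:Bl} requires surjectivity and Theorem \ref{Th:main} only provides compatibility with surjective regular morphisms. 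So your assertion that ``the two arrows $X_2^{PP}\rightrightarrows X_2^P$ are arrows in $\Bl(\chara=0)$'' fails outside the stack case, and the cocycle condition for the ideals $J_i^P$ and centers $Z_i^P$ does not follow from the stated functoriality. The paper closes exactly this hole with Lemma \ref{Lem:func}: if two regular (possibly non-surjective) morphisms $h_1,h_2\:\cY\to\cX$ pull back the given blowing up to the same blowing up of $\cY$, then the two pullbacks of the chosen factorization coincide; the proof adjoins an identity component, replacing $h_i$ by the surjective regular morphisms $\cY\coprod\cX\to\cX$, so that functoriality applies. Some such device (or a strengthening of functoriality to non-surjective regular morphisms, which the paper explicitly does not claim — see Section \ref{Sec:strong-functoriality}) is indispensable for your gluing step.

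Two secondary points. First, to place the chart-level blowing up $\cX'_i\to\cX_i$ in $\Bl$ you need the \emph{scheme} $\cX'_i$ to be regular, knowing only that the analytic/formal blow up $X'_i$ is; this is the GAGA-type regularity transfer of Proposition \ref{Prop:gagareg}(ii), which is not formal (in the formal-scheme case it uses properness over $\cX_i$), and for complex semianalytic germs the relative GAGA equivalence itself (Theorem \ref{Th:GAGA}) is proved in Appendix \ref{Sec:relGAGA} rather than quoted from the literature. Second, the paper does not descend glued ideals and then re-blow up as in your Step 4; it analytifies the chart-level factorizations and glues them directly via Lemma \ref{Lem:gagafact} and Lemma \ref{Lem:func} (Theorem \ref{Th:main-C}), which avoids having to verify separately that blowing up the glued ideal recovers the local models. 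Your route can be made to work, but only after the non-surjectivity issue above is repaired.
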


See Theorem \ref{Th:stacks} for the case of stacks and Theorem \ref{Th:main-C} for other categories, where functoriality is also shown, in other words Theorem \ref{Th:main} applies in each of the categories $\fSp$. In addition, the argument deducing Theorem \ref{Th:stacks} from Theorem \ref{Th:main} is a formal one based on functoriality, so the same argument can be used to extend Theorem~\ref{Th:main-C} to stacks in the categories of formal schemes, Berkovich spaces, etc., once an appropriated theory of stacks is constructed, see for instance \cite{Simpson,Noohi, Ulirsch_thesis, Yu, Porta-Yu}.

\subsection{The question of stronger functoriality}\label{Sec:strong-functoriality}
It is natural to replace the category $ \Bl$ by the category $\operatorname{Bl}_r$ with the same objects but where arrows $g:X_2'\to X_2$ as in Definition \ref{Def:Bl} are not required to be surjective, only regular. In a similar way one can replace the category  $\Fact$  by a category $\operatorname{Fact}_r$. As explained in \cite[\S2.3.3]{Temkin} for  resolution of singularities, removing the surjectivity assumption requires imposing an equivalence relation on factorizations, in which two factorizations which differ by a step which is the blowing up of the unit ideal are considered equivalent.  It is conceivable that the analogue of Theorem \ref{Th:main} may hold for $\operatorname{Fact}_r \to \operatorname{Bl}_r$.

\subsection{Factorization of birational and bimeromorphic maps} Our results for projective morphism imply results for birational and bimeromorphic maps. We start with the case of schemes. By a proper birational map $f\:X_1 \das X_2$ of reduced schemes we mean an isomorphism $f_0\:U_1\to U_2$ of dense open subschemes such that the closure $Y\subset X_1\times X_2$ of the graph of $f_0$ is proper over each $X_i$. Assume that $X_i$ are regular. The factorization problem for the birational map $f$ reduces to factorization of the proper morphisms $Y^\res\to X_i$, where $Y^\res$ is a resolution of $Y$. Assume, now, that $f\:X_1\to X_2$ is a proper birational morphism. By a blow up version of Chow's lemma (e.g., it follows from the flattening of Raynaud-Gruson) there exists a blowing up $Y=Bl_I(X_1)\to X_2$ that factors through $X_1$. Then $Y=Bl_{f^{-1}I}(X_1)$ and hence the resolution $Y^\res$, which is the blowing up of $Y$, is also a blowing up of both $X_i$. Thus, factorization of $f$ reduces to the factorization for blowings up, which was dealt with in Theorem~\ref{Th:main}.

Now, assume that $\fSp$ is any geometric category. The definition of a  proper bimeromorphic map $f\:X_1\to X_2$ is similar to the definition of a proper birational map with two addenda: in the case of stacks we require that the morphisms $Y\to X_i$ are representable, and in the case of analytic spaces or formal schemes we require that $U$ is open in $Y$ (in particular, $Y\to X_i$ are bimeromorphic). Then the general factorization problem immediately reduces to the case when $f$ is a proper morphism. Furthermore, if objects of $\fSp$ are compact and if Chow's lemma holds in $\fSp$ then the problem reduces further to the case when $f$ is a blowing up. For complex analytic spaces, Chow's lemma was proved by Hironaka in \cite[Corollary~2]{Hirflat}. It extends immediately to the complex analytic germs we consider in this paper, and these are indeed compact. Most probably, it also holds in all other categories $\fSp$ we mentioned, but this does not seem to be worked out so far.


\section{Qe schemes and functoriality}

\subsection{Projective morphisms and functorial constructions}\label{Sec:functorial-projective}
In our method, it will be important to describe certain morphisms we will obtain as blowing up of a concrete ideal or an explicitly described projective morphism, since further constructions will depend on this data. Moreover, this should be done functorially with respect to surjective regular morphisms. In the current section we develop a few basic functorial constructions of this type.

There are few ways to describe a projective morphism: using $\Proj$, using ample sheaves, or using projective fibrations, but each approach involves choices. Neither description is ``more natural" than the others, and we will have to switch between them. Similarly to \cite[II]{ega} we choose the language of projective fibrations to be the basic one and we will show how other descriptions are canonically reduced to projective fibrations.

\subsubsection{Projective fibrations}
Let $X$ be a scheme. For a coherent $\cO_X$-module $E$ consider the projective fibration $\PP(E)=\PP_X(E) := \Proj_X\Sym^\bullet(E)$ associated with $E$. It has a canonical twisting sheaf $\cO_{\PP(E)}(1)$, and $E\to \pi_* \cO(1)$ is an isomorphism. This construction is functorial for all morphisms: if $\phi\:X'\to X$ is any morphism and $E' = \phi^*E$ then $\PP_{X'}(E') = X'\times_X \PP_X(E)$, and $\cO_{\PP(E')}(1)$ is the pullback of $\cO_{\PP(E)}(1)$.

\subsubsection{Projective morphisms}\label{Sec:projective-morphisms}
By the usual definition \cite[II, 5.5.2]{ega}, a morphism $f\:Y\to X$ is projective if it factors through a closed immersion $i\:Y\into\PP_X(E)$ for a coherent $\cO_X$-module $E$. In this paper, we will use the convention that by saying ``$f$ is projective" we fix $E$ and $i$. In particular, $Y$ acquires a canonical relatively very ample sheaf $\cO_Y(1)=\cO_{\PP(E)}(1)|_Y$. The base change or {\em pullback} $f'\:Y'=Y\times_XX'\to X'$ of $f$ with respect to a morphism $\phi\:X'\to X$ is projective via the embedding $Y'\into\PP_{X'}(E')$, where $E'=\phi^*E$. We will use the notation $f'=\phi^*(f)$. Also, we say that $f$ is {\em projectively the} identity over an open $U$ of $X$ if $E|_U=\cO_U$ and $Y|_U=U$.

\subsubsection{Relation to $\Proj$}
For a projective morphism $f\:Y\to X$ we also obtain a canonical description of $Y$ as a $\Proj$. Namely, if $I_Y\subseteq\cO_{\PP(E)}$ denotes the ideal defining $Y$ then $Y=\Proj_XA$, where $A^\bullet=\Sym^\bullet(E)/I_{Y}$ is a quasi-coherent $\cO_X$-algebra with coherent graded components, generated over $A^0 = \cO_X$ by its degree-1 component $A^1$. Again this structure is functorial for all morphisms:  if $\phi\:X'\to X$ is any morphism and $A' = \phi^*A$ then $\Proj_{X'}A' = X'\times_X \Proj_XA$.

Conversely, if a graded $\cO_X$-algebra $A^\bullet$ has coherent components and is generated over $A^0=\cO_X$ by $A^1$ then $\Sym^\bullet(A^1)\onto A^\bullet$ and we obtain a closed immersion $i\:\Proj_XA\into\PP_X(A^1)$. Thus, $Y=\Proj_XA$ is projective over $X$, and the associated graded quasi-coherent algebra is $A$ itself. This construction is also functorial for all morphisms.

\begin{remark}
We note that the construction of a projective morphism from $\Proj$ is right inverse to the construction of $\Proj$ from a projective morphism, but they are not inverse: going from a projective morphisms to $\Proj$ and back to a projective morphism one usually changes the projective fibration.
\end{remark}

\begin{remark}
In this paper we use superscripts to denote degrees of homogeneous components of a graded object, as in $A^i \subset A^\bullet$. When considering weights of a given $\GG_m$-action we will use subscripts. We hope this will not cause confusion.
\end{remark}

\subsubsection{General $\Proj$}
Consider now a general quasi-coherent graded $\cO_X$-algebra with coherent graded components, which is only assumed to be generated over $A_0 = \cO_X$ in finitely many degrees. Writing $A^{M\bullet} = \oplus_j A^{Mj}$ for a positive integer $M$, we have a canonical isomorphism $Y= \Proj_XA^\bullet \simeq  \Proj_XA^{M\bullet}$. For a suitable $M$ the algebra $A^{M\bullet}$ is generated in degree 1 by $A^M$. If we take the minimal $M_0$ such that $A^{M\bullet}$  is generated in degree 1, then $L$ is not functorial for all morphisms. Rather it is functorial for all {\em flat surjective} morphisms $X'\to X$: if $A^{M\bullet}$ is generated in degree 1 then $(A')^{M\bullet}$ is generated in degree 1, and the opposite is true whenever $X'\to X$ is flat surjective; this follows since surjectivity of $((A')^1)^{\otimes n}\to (A')^n$ implies surjectivity of  $(A^1)^{\otimes n}\to A^n$ by flat decent. Combining this construction with the previous one we obtain an interpretation of $Y\to X$ as a projective morphism, and this construction is functorial for all flat surjective morphisms.

\begin{remark}
This construction applies to the following situation: assume $f\:Y\to X$ is a proper morphism of noetherian schemes and $L$ is an $f$-ample sheaf. Then $A^\bullet = \cO_X \oplus \bigoplus_{k=1}^\infty f_*(L^k)$ is generated in finitely many degrees and $Y=\Proj_XA$. Therefore, $L$ gives rise to an interpretation of $f$ as a projective morphism functorially for all surjective flat morphisms.
\end{remark}

\subsubsection{Blowings up}\label{Sec:blowup-projective}
An important variant is that of blowings up. Consider  a coherent ideal sheaf $I$ on $X$. The Rees algebra $R_X(I)=\oplus_{k=0}^\infty I^k$ is generated in degree 1, and we define $Bl_I(X) = \Proj_XR_X(I)$. In particular, $Bl_I(X)$ is projective over $X$ with the closed immersion $Bl_I(X)\into\PP_X(I)$, and if $I$ is the unit ideal on an open $U$ of $X$ then $Bl_I(X)\to X$ is projectively the identity on $U$. If $\phi\:X'\to X$ is a morphism, then $I^k\cO_{X'} = (I\cO_{X'})^k = (I')^k$ and $\phi^* (I^k) \to  I^k\cO_{X'}$ is surjective, giving a canonical morphism $\phi'\:Bl_{I'}(X')\to Bl_I(X)$ over $\phi$. Clearly $(\phi')^*L = L'$. So a blowing up is functorially projective. If moreover $X' \to X$ is flat, then $Bl_{I'}(X') = X'\times_X Bl_XI$.

We will need an opposite construction, using a variant of \cite[Theorem II.7.17]{Hartshorne} for regular schemes. Assume $X$ is regular and $f\:Y \to X$ is a proper birational morphism with an ample sheaf $L$ (e.g., if $Y\to X$ is projective we can take $L=\cO_Y(1)$). Then after replacing $L$ by a positive power which is functorial for flat surjective morphisms, we have that  $Y = \Proj_X A^\bullet$, where $A^\bullet$ is generated over $A_0 = \cO_X$ by its degree-1 component, and $A^k = f_*L^k$.

Locally on $X$, write  $L^k$ as a fractional ideal on $Y$, giving it as a fractional ideal $F_{L,k}$ on $X$ since $Y \to X $ is birational. Since $A^\bullet$ is generated in degree 1, we have that $F_{L,k} = F_{L,1}^k$ (see  \cite[Theorem II.7.17 Step 5]{Hartshorne}). Since $X$ is factorial, there is a unique expression $F_{L,1} = M I$, where $M$ is an invertible fractional ideal and $I$ is an {\em ideal sheaf} without invertible factors. Explicitly, $F_{L,1}^*$ is invertible, so we can write $I = F_{L,1}^* F_{L,1}$ and $M=F_{L,1}^{**}$. It follows that $F_{L,k} =M^k I^k$. Note that while the construction is local on $X$ and depends on an embedding of $L$ in the fraction field, the ideal sheaf $I$ glues canonically. Locally on $X$ we have a canonical isomorphism $Y \simeq Bl_I(X)$, which evidently glues canonically. We have obtained that a projective birational morphism $f\:Y \to X$ with $X$ regular is a blowing up, functorially for flat surjective morphisms $X' \to X$ of regular schemes. In addition, if $f$ is projectively the identity on $U\subseteq X$ then $I$ is the unit ideal on $U$.

For future reference we record the following well known result that follows from the universal property of blowings up.

\begin{lemma}\label{factorblowlem}
If $X$ is an integral scheme and a blowing up $Y=Bl_I(X)\to X$ factors through a proper birational morphism $Z\to X$ then $Y=Bl_{I\cO_Z}(Z)$.
\end{lemma}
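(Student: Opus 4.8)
The plan is to identify $Y$ with $Bl_{I\cO_Z}(Z)$ by the usual two-arrow argument: both are characterized by a universal property, so I produce a canonical map in each direction and check that the two composites are identities. First I would dispose of the trivial case $I=0$: then $Y=\Proj_X R_X(0)=\emptyset$ and, since $Z\to X$ is dominant, $I\cO_Z=0$ and $Bl_{I\cO_Z}(Z)=\emptyset$, so there is nothing to prove. So assume $I\neq 0$. Because $X$ is integral, the Rees algebra $R_X(I)$ is a domain and $Y=Bl_I(X)$ is integral; the same applies to $Z$ (birational over the integral scheme $X$) and, since $Z\to X$ is dominant, to $I\cO_Z$ and hence to $Bl_{I\cO_Z}(Z)$. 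Moreover, writing $V:=X\setminus V(I)$, each of $Y$, $Z$, $Bl_{I\cO_Z}(Z)$ restricts over $V$ to $V$ itself via its canonical structure morphism, since $I$ (hence $I\cO_Z$) is the unit ideal there; in particular $V$ is a dense open in each of these integral schemes, and over $V$ the various structure and factorization morphisms are the identity of $V$.

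Next I would invoke the universal property of blowing up (the standard statement, e.g.\ as in \cite[II.7]{Hartshorne}): for a scheme $W$ over a base $T$ and a coherent ideal $K$ on $T$, there is a $T$-morphism $W\to Bl_K(T)$ exactly when $K\cO_W$ is an invertible ideal sheaf, and then it is unique. Fix the factorization $Y\xrightarrow{g}Z\xrightarrow{h}X$. Since $I\cO_Y$ is invertible (this is the defining property of $Y=Bl_I(X)$) and $(I\cO_Z)\cO_Y=I\cO_Y$, the universal property of $Bl_{I\cO_Z}(Z)\to Z$ yields a unique $Z$-morphism $\alpha\colon Y\to Bl_{I\cO_Z}(Z)$. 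Symmetrically, $I\cO_{Bl_{I\cO_Z}(Z)}=(I\cO_Z)\cO_{Bl_{I\cO_Z}(Z)}$ is invertible, so the universal property of $Y=Bl_I(X)\to X$ yields a unique $X$-morphism $\beta\colon Bl_{I\cO_Z}(Z)\to Y$.

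Finally I would check these are mutually inverse. The composite $\beta\circ\alpha$ is an $X$-endomorphism of $Y$ (with $I\cO_Y$ invertible), so by the uniqueness clause of the universal property of $Bl_I(X)$ it equals the identity of $Y$. For the reverse composite one must first upgrade $\beta$ to a $Z$-morphism: $g\circ\beta$ and the structure morphism $Bl_{I\cO_Z}(Z)\to Z$ are two morphisms over $X$, they agree on the dense open $V$ (where everything in sight is canonically $V$), and their equalizer is a closed subscheme of $Bl_{I\cO_Z}(Z)$ because $h\colon Z\to X$ is separated; as $Bl_{I\cO_Z}(Z)$ is reduced and this closed subscheme has dense underlying space, it is all of $Bl_{I\cO_Z}(Z)$, so $\beta$ is a $Z$-morphism. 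Then $\alpha\circ\beta$ is a $Z$-endomorphism of $Bl_{I\cO_Z}(Z)$, hence the identity by uniqueness in the universal property of $Bl_{I\cO_Z}(Z)$. Thus $\alpha,\beta$ are inverse isomorphisms and $Y\simeq Bl_{I\cO_Z}(Z)$ as $Z$-schemes, a fortiori as $X$-schemes.

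The only point that is not purely formal manipulation of universal properties is the upgrade of $\beta$ to a morphism over $Z$, and this is precisely where the hypotheses are used: integrality of $X$ (and hence of $Z$ and of $Bl_{I\cO_Z}(Z)$) to make $V$ dense and the target reduced, and properness — hence separatedness — of $Z\to X$ to make the agreement locus closed. I do not expect any further obstacle; the rest is the standard argument identifying two solutions of the same universal problem.
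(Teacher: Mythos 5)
Your proof is correct and is exactly the argument the paper intends: the lemma is recorded there without proof as a well-known consequence of the universal property of blowings up, and your two-arrow argument --- including the separatedness-plus-reducedness step needed to promote $\beta$ to a $Z$-morphism --- supplies the details faithfully. One small imprecision: the identity $Z|_V=V$ does not follow merely from $I$ being the unit ideal on $V$; it requires the factorization hypothesis (the section $V=Y|_V\to Z|_V$ of the separated birational map $Z|_V\to V$ is a surjective closed immersion onto an integral scheme, hence an isomorphism), or else one can simply run the density argument over the isomorphism locus of $Z\to X$ instead of over $V$.
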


\subsubsection{Sequences of projective morphisms}\label{Sec:sequence-projective}
Now assume $Z\stackrel{g}\to Y \stackrel{f}\to X$ is a sequence of projective morphisms of noetherian schemes, say $Z\into\PP_Y(F)$ and $Y\into\PP_X(E)$ for a coherent $\cO_Y$-module $F$ and a coherent $\cO_X$-module $E$. For a large enough $n$ the map $f^*f_*(F(n))\stackrel{\alpha}\to F(n)$ is surjective, hence $\PP_Y(F)=\PP_Y(F(n))$ embeds into $\PP_X(E\otimes f_*F(n))$ and we obtain a closed immersion $Z\into\PP_X(E\otimes f_*F(n))$. Choosing the minimal $n$ such that $\alpha$ is surjective we obtain a construction that realizes composition of projective morphisms as a projective morphism functorially for flat surjective morphisms $X'\to X$.

If $X$ is regular we can combine this with the previous statements, so if $Y_m\to \cdots\to Y_1 \to X$ is a sequence of birational projective morphisms which are projectively the identity over an open $U\subseteq X$, then $Y_m\to X$ is a blowing up of an ideal sheaf which is the unit ideal on $U$, and this is functorial for flat and surjective morphisms of regular schemes.

\begin{remark}
We will not use this, but blowings up can also be composed in terms of ideals. One can show that if $X$ is normal then the composition of $Y=Bl_I(X)\stackrel{f}\to X$ and $Bl_J(Y)\to Y$ is of the form $Bl_{f_*(f^{-1}(I^n)J)}(X)\to X$ for a large enough $n$.
\end{remark}

\subsection{Qe schemes and resolution of pairs}\label{Sec:qe-resolution}

\subsubsection{Qe schemes}
The class of quasi-excellent schemes was introduced by Grothen\-dieck as the natural class where problems related to resolution of singularities behave well. The name ``quasi-excellent" is perhaps not very elegant (it was not introduced by Grothendieck), and we feel it harmless to refer to them as {\em qe schemes}.

First recall  that regular morphisms\index{regular morphism} are a generalization of smooth morphisms in situations of morphisms which are not necessarily of finite type. Following \cite[$\rm IV_2$, 6.8.1]{ega}  a morphism of schemes $f\:Y\to X$ is said to be  {\em regular} if
\begin{itemize}
\item  the morphism $f$ is flat and
\item all geometric fibers of $f\: Y \to X$ are regular.
\end{itemize}

A locally noetherian scheme  $X$ is a {\em qe scheme}\index{qe scheme} if the following two conditions hold:
\begin{itemize}
\item for any scheme $Y$ of finite type over $X$, the regular locus $Y_\reg$ is open; and
\item for any point $x\in X$, the completion morphism $\Spec \hat\cO_{X,x} \to \Spec \cO_{X,x}$ is regular.
\end{itemize}

It is a known, but nontrivial fact, that a scheme $Y$ of finite type over a qe scheme is also a qe scheme, see, for example, \cite[34.A]{Matsumura}. A ring $A$ is a qe ring if $\Spec A$ is a qe scheme.

\subsubsection{Resolution of pairs}
Consider a pair $(X,Z)$, where $X$ is a reduced qe scheme and $Z$ is a nowhere dense closed subset of $X$. By a {\em resolution} of $(X,Z)$ we mean a birational projective morphism $f\:X'\to X$ such that $X'$ is regular, $Z'=f^{-1}(Z)$ is a simple normal crossings divisor, and $f$ is projectively the identity outside of the union of $Z$ and the singular locus $X_\sing$ of $X$. Since \cite[IV$_2$, 7.9.6]{ega}, it is universally hoped that every qe scheme admits a good resolution of singularities; the same should also hold for pairs, see  Remark \ref{Rem:how-to-resolve-pairs} below. If $X$ is noetherian of characteristic zero then $(X,Z)$ can be resolved by \cite[Theorem~1.1]{Temkin-qe}.

\begin{remark}\label{Rem:how-to-resolve-pairs}
(i) Usually, resolution of pairs is constructed in two steps:
\begin{itemize}
\item[(1)] Resolve $X$ by a projective morphism $f\:X'\to X$. Usually, this is achieved by a sequence of blowings up $X_l\to\dots\to X_0=X$. One can also achieve that the centers are regular, though this requires an additional effort.
\item[(2)] Resolve $Z'=f^{-1}(Z)$ by a further projective morphism $f'\:X''\to X'$. Usually, this is achieved by a sequence of blowings up $X''=X'_n\to\dots\to X'_0=X'$ whose centers are regular and have simple normal crossings with the accumulated exceptional divisor, so that all schemes $X'_i$ remain regular and exceptional divisors $E'_i$ are simple normal crossings. In addition, one achieves a {\em principalization} of $Z'$ as a subscheme, i.e. $Z'\times_{X'}X'_n$ is a divisor supported on   $E'_n$.
\end{itemize}

(ii) The best known results for general noetherian qe schemes beyond characteristic 0 are resolution of qe threefolds, see \cite{CP}, and principalization of surfaces in regular qe schemes, see \cite{CJS}. In particular, a noetherian qe pair $(X,Z)$ can be resolved whenever $\dim(X)\le 3$.
\end{remark}

\subsubsection{Compatibility with morphisms}
By a morphism of pairs $\phi\:(Y,T)\to(X,Z)$ we will always mean a morphism $\phi\:Y\to X$ such that $T=\phi^{-1}(Z)$. We say that resolutions $f_X\:X'\to X$ and $f_Y\:Y'\to Y$ of $(X,Z)$ and $(Y,T)$ are {\em compatible} with $\phi$ if $f_Y=\phi^*(f_X)$.

\begin{remark}
As we mentioned, often resolution of pairs has a natural structure of a composition of blowings up. The definition of compatibility in this case is similar with the only difference that the blowing up sequence of $Y$ is obtained from the pullback of the blowing up sequence of $X$ by removing all blowings up with empty centers. The latter contraction procedure is only needed when $f$ is not surjective.
\end{remark}

\subsubsection{Functorial resolution}
Let $\cC$ be a class of pairs $(X,Z)$, where $X$ is a reduced noetherian qe scheme and $Z$ is a closed subscheme. Throughout this paper, by a {\em functorial resolution} on $\cC$ we mean a rule that assigns to any pair $(X,Z)\in\cC$ a resolution $(X',Z')\to(X,Z)$ in a way compatible with arbitrary {\em surjective regular} morphisms between pairs in $\cC$. In addition, we always make the following assumption on the resolution of {\em normal crossings pairs}, i.e. pairs $(X,Z)$ with regular $X$ and normal crossings $Z$ (not necessarily simple):

\begin{assumption}\label{Ass:snc}
For any normal crossings pair $(X,Z)$ in $\cC$ its resolution $X'\to X$ can be functorially represented as a composition of blowings up whose centers are regular and have normal crossings with the union of the preimage of $Z$ and the accumulated exceptional divisor.
\end{assumption}

\begin{remark}\label{functorrem}
(i) This definition provides the minimal list of properties we will use. As we remarked earlier usually one proves finer desingularization results obtaining, in particular, that $Z\times_XX'$ is a divisor and the resolution is functorial for non-surjective morphisms as well.

(ii) It seems that any reasonable resolution should satisfy the assumption. In fact, most (if not any) algorithms appearing in the literature apply to normal crossings pairs $(X,Z)$ via the following {\em standard algorithm}: first one blows up the maximal multiplicity locus of $Z$, then one blows up the maximal multiplicity locus of the strict transform of $Z$, etc. It is easy to see that the standard algorithm satisfies the assumption.
\end{remark}

\subsubsection{Resolution of singularities of qe schemes: characteristic 0}\label{Sec:resolution}
Functorial resolution of pairs is known in characteristic zero:

\begin{theorem}\label{resolth}
There exists a functorial resolution, satisfying Assumption \ref{Ass:snc}, on the class $\cC_{\chara=0}$ whose elements are pairs $(X,Z)$ with $X$ a reduced noetherian qe scheme over $\QQ$.
\end{theorem}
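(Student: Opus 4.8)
The plan is to reduce everything to the functorial desingularization of qe schemes over $\QQ$ established in \cite[Theorem~1.1]{Temkin-qe}, and to assemble its output into a resolution of pairs in the sense defined above. Given a pair $(X,Z)\in\cC_{\chara=0}$, I would proceed in the two steps of Remark \ref{Rem:how-to-resolve-pairs}: first apply functorial desingularization to $X$ to obtain a projective birational $f\colon X'\to X$ with $X'$ regular, realized as a sequence of blowings up, and projectively the identity over $X\setminus X_\sing$; then apply functorial embedded desingularization (principalization) to the closed subscheme $Z'=f^{-1}(Z)$ inside the regular scheme $X'$, obtaining a further sequence of blowings up with regular centers having normal crossings with the accumulated exceptional divisor, after which the total transform of $Z$ becomes a simple normal crossings divisor. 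Both steps are available and functorial for surjective regular morphisms by \cite{Temkin-qe}; composing them gives the desired $(X'',Z'')\to(X,Z)$.

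Next I would address the requirement that the resolution be \emph{projective} and the bookkeeping of ``projectively the identity''. Each step above is a blowing up, hence functorially projective in the sense of Section \ref{Sec:functorial-projective}; a composition of such is again functorially projective, and the functoriality of this encoding holds for flat surjective morphisms, in particular for the surjective regular morphisms we allow. Since each center is disjoint from the locus over which regularity and normal crossings have already been achieved --- that is, it lies over $Z\cup X_\sing$ --- the composite $f\colon X''\to X$ is projectively the identity outside $Z\cup X_\sing$, as required. Compatibility with a surjective regular morphism of pairs $\phi\colon (Y,T)\to(X,Z)$ follows because the algorithm commutes with $\phi$ step by step (this is the content of functoriality for regular morphisms in \cite{Temkin-qe}), and surjectivity guarantees that no blowing up in the pulled-back sequence has empty center, so no contraction procedure is needed and $f_Y=\phi^*(f_X)$ on the nose.

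Finally I would verify Assumption \ref{Ass:snc}. Let $(X,Z)$ be a normal crossings pair in $\cC_{\chara=0}$, so that $X$ is regular. Then step (1) is the identity and the resolution is entirely the principalization of $Z$ carried out in step (2). Here one uses the standard algorithm of Remark \ref{functorrem}(ii): blow up the locus of maximal multiplicity of $Z$, then of the strict transform, and so on. When $Z$ is a normal crossings divisor its maximal-multiplicity locus is a closed stratum of the associated arrangement, hence regular, and it has normal crossings with $Z$ and with any simple normal crossings divisor already present; the same persists inductively for the strict transforms together with the accumulated exceptional divisor. This exhibits the resolution of $(X,Z)$ as a composition of blowings up with regular centers having normal crossings with the union of the preimage of $Z$ and the exceptional locus, which is exactly Assumption \ref{Ass:snc}.

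I expect the principal difficulty to lie not in the formal assembly above but in the input itself --- the full strength of functorial desingularization and principalization over qe bases in \cite{Temkin-qe}, together with the verification that its internal invariant and centers are genuinely functorial for \emph{all} regular morphisms and not merely smooth ones of finite type, since the reduction to complete local rings is precisely where the qe hypothesis enters. Taking that result as a black box, the remaining work --- reorganizing it into resolution of pairs, tracking projectivity through the constructions of Section \ref{Sec:functorial-projective}, and checking the normal crossings assumption via the standard algorithm --- is routine.
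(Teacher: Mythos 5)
Your overall architecture is the same as the paper's: invoke Temkin's functorial desingularization as a black box, use the constructions of Section \ref{Sec:sequence-projective} to realize the composite as a functorially projective morphism which is projectively the identity off $Z\cup X_\sing$, and then check Assumption \ref{Ass:snc}. The gap is in that last check. What you verify is that the \emph{standard algorithm} of Remark \ref{functorrem}(ii) satisfies Assumption \ref{Ass:snc} --- but that is already recorded in the paper and is the easy half. The theorem requires that the resolution \emph{you actually defined} on all of $\cC_{\chara=0}$ has, on normal crossings pairs, a functorial presentation as a permissible blow up sequence. So either you must show that your black-box algorithm, when fed a normal crossings pair, proceeds by the standard algorithm (this is exactly the ``direct (but tedious) inspection'' of $\cF_{\rm princ}$ in the paper's proof, and it is the one nontrivial point, not a formality), or, if you mean to \emph{redefine} the rule on normal crossings pairs to be the standard algorithm, you must argue that the resulting hybrid is still a single functorial resolution --- e.g.\ by noting that surjective regular morphisms preserve and reflect the normal-crossings-pair property, so the two regimes never interact, and that the standard algorithm is itself functorial. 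Neither argument appears in your write-up; as stated, the sentence ``Here one uses the standard algorithm'' is an assertion about an algorithm you have treated as a black box.

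There is also a problem with the black box itself. The paper cites \cite[Theorem~1.1]{Temkin-qe} only for the \emph{existence} of a resolution of a pair; the functoriality in regular morphisms, for the embedded statement, is \cite[Theorem~1.1.7]{Temkin-embedded}, which resolves the pair $(X,Z)$ in one stroke, making the two-step assembly of Remark \ref{Rem:how-to-resolve-pairs} unnecessary. More seriously, your step (2) is phrased as functorial embedded resolution/principalization on regular qe schemes; in its general form (with boundary) this is precisely the result the paper states, in Section \ref{Sec:principalization}, is not yet available in the literature and is deliberately avoided. For the present theorem only the boundary-free embedded statement is needed, so the defect is reparable by citing \cite[Theorem~1.1.7]{Temkin-embedded}, but under the reference you name the input you rely on does not exist. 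The projectivity bookkeeping and the functoriality of the composed procedure are fine and agree with the paper.
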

\begin{proof}
By \cite[Theorem~1.1.7]{Temkin-embedded} there exists a blowing up sequence $$\cF_{\rm princ}(X,Z)\:X'\to\dots\to X$$ whose centers lie over $Z\cup X_\sing$ and such that $X'$ is regular and $Z'=f^{-1}(Z)$ is a simple normal crossings divisor. Moreover, this sequence is functorial in regular morphisms. By \S\ref{Sec:sequence-projective}, the morphism $X'\to X$ is a projective morphism functorially in surjective regular (even flat) morphisms. Finally, a direct (but tedious) inspection shows that the algorithm $\cF_{\rm princ}$ of loc.cit. resolves normal crossings pairs via the standard algorithm.
\end{proof}

\begin{remark}
Functoriality of this resolution implies that one also gets a functorial way to resolve an arbitrary qe pair over $\QQ$ (locally noetherian but not necessarily noetherian) by a morphism $f\:X'\to X$. In general, there is no natural way to provide $f$ with an appropriate structure, neither as a single blowing up nor a sequence of blowings up. However, $f$ can be realized as an infinite composition whose restrictions onto noetherian open subschemes of $X$ are finite, e.g., the case of $Z=\emptyset$ is worked out in \cite[Theorem 5.3.2]{Temkin}.
\end{remark}

\subsubsection{Positive and mixed characteristics hypothesis} In Theorem \ref{Th:main} (3), the precise hypothetical statement we need about resolutions of pairs is the following analogue of Theorem~\ref{resolth}:

\begin{hypothesis}\label{Hyp:resolution} \begin{enumerate}
\item {\sc Functorial resolution:} The class $\cC_{\chara=p,\dim\leq d+1}$ (resp. $\cC_{\dim\leq d+1}$) of pairs $(X,Z)$, where $X$ is reduced noetherian qe $\FF_p$-scheme (respectively, $\ZZ$-scheme) of dimension $\leq d+1$, admits a functorial resolution $f_{(X,Z)}\:X'\to X$ satisfying Assumption~\ref{Ass:snc}.
\item {\sc $\GGm$-equivariance:} Moreover, the resolution is compatible with any $\GGm$-action on $(X,Z)$ in the sense that $a^*(f_{(X,Z)}) = p_X^*(f_{(X,Z)})$, where $a:\GGm\times X\to X$ is the action morphism and $p_X:G\times X \to X$ is the projection.
\end{enumerate}
{\rm In mixed characteristics we will also need:}
\begin{enumerate}
\item[(3)] {\sc Functoriality of toroidal charts:} assume that $X$ is a toroidal scheme (\cite[\S2.3.4]{AT1}) of dimension at most $d+1$ and $j\:X\to Y=\Spec \ZZ[M]$ is a toroidal chart (\cite[\S2.3.17]{AT1}), $T$ is a toric subscheme of $Y$ and $Z=X\times_YT$, then $j^*(f_{(Y,T)}) = f_{(X,Z)}$.
\end{enumerate}
\end{hypothesis}

We note that the equivariance statement (2) in dimension $d+1$ follows from statement (1) in dimension $d+2$, but here we wish to only make assumptions up to dimension $d+1$. It is conceivable that a version of (2) sufficient for our needs follows from (1) by taking slices, but we will not pursue this question.

Let us say that a pair $(X,Z)$ is {\em locally monoidal} if locally $X$ admits a logarithmic structure making it to a logarithmically regular scheme so that the ideal of $Z$ is monoidal. It is expected that there should exist a canonical resolution of such pairs of combinatorial nature, which is, in particular, independent of the characteristics. Our Statement (3) asserts such independence in mixed characteristics; in pure characteristics it is a consequence of equivariance. It is analogous to Hypothetical Statement \ref{Hyp:principalization}(3) below. Similarly to Hypothetical Statement \ref{Hyp:principalization}, proving Statements (1)--(3) for locally monoidal pairs is expected to be easier than the general case. For example, it is proved in \cite[Theorem~3.4.9]{Illusie-Temkin} for logarithmically regular schemes (with a single logarithmic structure), but the known functoriality \cite[Theorem~3.4.15]{Illusie-Temkin} is not enough to extend it to locally monoidal schemes. In addition, very recently Buonerba resolved certain locally monoidal varieties in \cite{Buonerba}.

\subsection{Principalization of ideal sheaves}\label{Sec:principalization} In addition to resolution of pairs, we will need a version of functorial principalization of coherent ideal sheaves on a qe regular scheme $X$ with a simple normal crossings divisor $D$, that will often be called the {\em boundary}. In fact, we will only need a particular case of locally monoidal ideals as introduced below.

\subsubsection{Permissible sequences}
A blowing up sequence $X_n\to\dots\to X_0=X$ will be called {\em permissible} if its centers $V_i\subset X_i$ are regular and have simple normal crossings with $D_i\subset X_i$, which is defined to be the union of the preimage of $D$ and the accumulated exceptional divisor. Note that in such case each $X_i$ is regular and each $D_i$ is a boundary.

\subsubsection{Principalization}
We consider the category of triples $(X,D,I)$ where $(X,D)$ is a noetherian regular qe scheme with a boundary, $I$ is a coherent ideal sheaf, and arrows are regular morphisms $f\:X' \to X$ such that $f^{-1} I = I'$ and $f^{-1} D = D'$. A {\em principalization} of $I$ is a permissible sequence of blowings up $\phi_{(X,D,I)}\:X_n\to\dots\to X_0=X$ such that:
\begin{itemize}
\item[(1)] Each center $V_i$ lies in the union of $D_i$ with the locus where $I$ is not the unit ideal.
\item[(2)] $I_n=\phi_X^{-1}I$ is a divisorial ideal supported on $D_n$. In particular, $V(I_n)$ is a divisor with a simple normal crossings reduction.
\end{itemize}
Principalizations form a category again, and  functorial principalization provides a functor from triples $(X,I,D)$ to principalizations $\phi_X\:X'\to X$. As we do not require the morphism $f$ to be surjective, we have to use the  equivalence relation mentioned in Section \ref{Sec:strong-functoriality}. However, we will only apply the result in the context of  surjective morphisms, so this equivalence will not figure in any of our applications.

\subsubsection{Known results}
Functorial principalization of ideal sheaves for {\em varieties} over a field of characteristic 0 is known, e.g., see \cite[Sections 11,13]{Bierstone-Milman} or \cite[Theorem 3.26]{Kollar}. The second author is in the process of writing a general functorial principalization of ideal sheaves on noetherian regular qe schemes over $\QQ$ with the methods of \cite{Temkin-embedded}; we will manage not to use this result. For general qe schemes, the best known result is principalization on threefolds.

\begin{remark}\label{orderrem}
(i) Classically, one only blows up centers over the locus where $I$ is not trivial. On the other hand, usually one works with ordered boundaries $D=\cup_{i=0}^nD_i$, where $D_i$ are smooth components. Ordering the boundary restricts functoriality and, in fact, it is not critical. For example, the boundaries in \cite{CJS} are not ordered.

(ii) Since we allow blowings up that modify the whole $D$, we can freely use the classical results to resolve $(X,D,I)$: first apply the standard principalization $f\:X_n\to\dots\to X$ to $(X,D)$, then $D_n$ is a simple normal crossings divisor ordered by the history of blowings up, and we can apply a classical algorithm to $(X_n,D_n,f^{-1}I)$.
\end{remark}

\subsubsection{Locally monoidal ideals}
A triple $(X,D,I)$ with $X$ regular, $D$ a boundary and $I$ an ideal sheaf on $X$ is said to be {\em locally monoidal} if there is an open covering $\coprod U_\alpha \to X$, logarithmically regular structures $(U_\alpha, M_\alpha)$ in the sense of \cite{Kato-toric} and \cite[\S 2.3.1]{AT1} such that $D$ is part of the toroidal divisor, and monoid ideals $I_\alpha \subset M_\alpha$ such that $I_{U_\alpha}$ is generated by the image of $I_\alpha$ under $M_\alpha\to \cO_{U_\alpha}$.

\begin{hypothesis}\label{Hyp:principalization}
\begin{enumerate}
\item Each locally monoidal $\FF_p$-triple (respectively, $\ZZ$-triple) $(X,D,I)$ of dimension $\leq d$ admits a principalization $$\phi_{(X,D,I)}\:\tilde X\to\dots\to X$$ in a manner functorial for regular morphisms $X'\to X$.
\item Moreover, if $a:\GG_a^d\times X \to X$ is an action such that $I$ and $D$ are equivariant: $a^{-1}I = p_X^{-1} I$ and $a^{-1}D = p_X^{-1} D$, where $p_X:\GG_a^d\times X \to X$ is the projection, then $\tilde X \to X$ is $\GG_a^d$-equivariant as well.
\end{enumerate}
{\rm Again in mixed characteristics we also need:}
 \begin{enumerate}
\item[(3)] {\sc Functoriality of toroidal charts:} assume that $(X,D,I)$ is locally monoidal of dimension $\le d$ and $j\:(X,D)\to(\bfA_M,\bfA_{M^\gp})$ is a toroidal chart such that $I=j^{-1}I_0$ for a toric ideal $I_0$ on $\bfA_M$. Then the sequence $\phi_{(X,D,I)}$ is the pullback of $\phi_{(\bfA_M,\bfA_{M^\gp},I_0)}$.
\end{enumerate}
\end{hypothesis}

\begin{remark}
(i) In fact, the hypothesis asserts that toric ideals on schemes $\Spec\ZZ[M]$ can be principalized so canonically that given a locally monoidal triple $(X,D,I)$ any toroidal chart induces the same principalization of $I$.

(ii) We remark that the results of \cite[Section 3.1.14]{Illusie-Temkin} suggest that this statement may be within reach: in that paper the local non-functorial problem is solved, and the problem reduces to making the process functorial even if one changes the logarithmic structure $M_\alpha$ on $U_\alpha$.
\end{remark}

\subsubsection{The characteristic zero case}
To make our results unconditional in characteristic zero we should prove that parts (1) and (2) of \ref{Hyp:principalization} hold for schemes over $\QQ$. In fact, we will even deal with a larger class of triples using the case of varieties and methods of \cite[Theorem 2.4.1, p. 95]{Illusie-Temkin}.

A triple $(X,D,I)$ is said to be {\em $\QQ$-absolute} if there exists an open covering $\coprod U_\alpha \to X$, regular $\QQ$-varieties $Z_\alpha$, regular morphisms $f_\alpha\:U_\alpha \to Z_\alpha$, ideal sheaves $I_\alpha$ on $Z_\alpha$ and divisors $D_\alpha \subset Z_\alpha$ such that $f_\alpha^{-1} I_\alpha = I|_{U_\alpha}$ and $f_\alpha^{-1} D_\alpha = D|_{U_\alpha}$. The collection of $\QQ$-absolute triples forms a full subcategory of the category of triples. Functorial  principalization of $\QQ$-absolute triples $(X,D,I)$ is a functor from this subcategory to principalizations of the corresponding ideals.

The statement we need is the following:

\begin{proposition}\label{Prop:absolute-principalization}
There exists a functorial principalization $\phi_X\:\tilde X\to X$ of $\QQ$-absolute triples $(X,D,I)$.
\end{proposition}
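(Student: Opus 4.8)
The plan is to reduce the statement about $\QQ$-absolute triples to the known functorial principalization for $\QQ$-varieties (cited in \S\ref{Sec:principalization}) by a patching argument in the style of \cite[Theorem 2.4.1]{Illusie-Temkin}. First I would fix a $\QQ$-absolute triple $(X,D,I)$ together with a choice of covering data $\{U_\alpha \to X, f_\alpha\:U_\alpha\to Z_\alpha, I_\alpha, D_\alpha\}$ as in the definition, and apply the known functorial principalization $\cF_{\rm princ}$ for varieties to each triple $(Z_\alpha, D_\alpha, I_\alpha)$, obtaining a permissible blowing-up sequence $\tilde Z_\alpha \to Z_\alpha$. Pulling back along the regular morphism $f_\alpha$ gives a permissible sequence $\tilde U_\alpha\to U_\alpha$ principalizing $I|_{U_\alpha}$: here one uses that $f_\alpha$ is regular (hence flat), so that blowing up commutes with base change (\S\ref{Sec:blowup-projective}), that centers pull back to regular centers with simple normal crossings against the pulled-back boundary, and that the condition of $I_n$ being divisorial supported on $D_n$ is preserved under flat pullback.

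The core of the argument is then to check that these local principalizations $\tilde U_\alpha \to U_\alpha$ glue to a global permissible sequence $\tilde X \to X$, independent of all the auxiliary choices. For this I would argue on overlaps: given two charts $f_\alpha\:U_\alpha\to Z_\alpha$ and $f_\beta\:U_\beta\to Z_\beta$, on $U_{\alpha\beta}=U_\alpha\cap U_\beta$ one gets two regular morphisms to varieties, and one can dominate the situation — locally on $U_{\alpha\beta}$, after shrinking, both $f_\alpha$ and $f_\beta$ factor through a common regular morphism to a variety (e.g.\ to an appropriate component of $Z_\alpha\times_{\Spec\QQ} Z_\beta$, or using that a regular morphism of noetherian schemes is locally a limit of smooth morphisms, \cite[$\rm IV_4$, 17.5.1]{ega}, so both charts can be refined to smooth charts over a common variety). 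Functoriality of $\cF_{\rm princ}$ for the regular (in fact smooth) transition morphisms then forces the two pulled-back sequences to agree on $U_{\alpha\beta}$, and the usual descent for the blow-up sequences (each step being determined by a coherent ideal sheaf, which glues) produces the global $\tilde X\to X$. The same computation with a refinement shows the result is independent of the chosen covering data, so that $(X,D,I)\mapsto (\tilde X\to X)$ is well-defined.

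Finally I would verify functoriality in regular morphisms $g\:X'\to X$ of $\QQ$-absolute triples: pulling back the covering data of $X$ along $g$ yields covering data for $X'$ (composing $g$ with the $f_\alpha$ remains regular), and functoriality of $\cF_{\rm princ}$ for varieties, combined with the compatibility of blow-ups with the flat base change $g$, shows that $\phi_{X'}$ is the pullback of $\phi_X$ up to the equivalence relation of Section \ref{Sec:strong-functoriality} (insertion or deletion of blow-ups of the unit ideal, which can occur when $g$ is not surjective). Assumption \ref{Ass:snc}-type properties and conditions (1)--(2) of the principalization are local and stable under flat pullback, hence inherited. The main obstacle is the gluing/independence step: one must ensure that the variety charts $Z_\alpha$ can be compared on overlaps in a way that is seen by the functoriality of the known algorithm, i.e.\ that any two charts are \emph{dominated}, Zariski-locally, by a common smooth chart over a variety; handling this cleanly — and tracking that the resulting sequence, not just its final model, is canonical — is where the real work lies, and it is precisely the point where we lean on \cite[Theorem 2.4.1]{Illusie-Temkin} and on expressing a regular morphism as a cofiltered limit of smooth ones.
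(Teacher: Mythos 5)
Your overall strategy is the paper's: principalize the variety charts $(Z_\alpha,D_\alpha,I_\alpha)$ with the known functor, pull back along the regular maps $f_\alpha$, and reduce gluing and functoriality to showing that two pullbacks agree on (a disjoint union of) overlaps, using functoriality of the variety algorithm over a common chart. However, the step you yourself flag as ``where the real work lies'' is exactly where your proposal has a genuine gap, and your primary mechanism for it is wrong. On $U_{\alpha\beta}$ the induced map $(f_\alpha,f_\beta)\:U_{\alpha\beta}\to Z_\alpha\times_{\Spec\QQ}Z_\beta$ is essentially never regular: it is typically not even flat (take $Z_\alpha=Z_\beta$ and both charts equal to the same map; you get a diagonal-type immersion). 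So there is no ``common regular morphism to a component of the product'' through which both charts factor, and you cannot invoke functoriality of $\cF_{\rm princ}$ over the product, nor over any variety mapped to regularly by $U_{\alpha\beta}$ in such a direct way. Your fallback remark — that a regular morphism is a limit of smooth ones — is the right idea but is not an EGA statement (the citation to IV$_4$, 17.5.1 is incorrect); it is Popescu's theorem, and merely invoking it does not yet produce a common \emph{variety} chart with \emph{two} smooth maps to the $Z$'s through which both chart maps factor compatibly.

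What is actually needed (and what the paper does) is a limit argument with three ingredients. Set $W=\coprod U_{\alpha\beta}$ and $Z=\coprod Z_\alpha$, with the two regular maps $f,g\:W\to Z$. By Popescu, $f$ is a filtered limit of smooth morphisms $f_\gamma\:W_\gamma\to Z$, so each $W_\gamma$ is a $\QQ$-variety; by \cite[IV$_3$, Proposition~8.13.1]{ega}, $g$ factors through some $W_\gamma$ for $\gamma$ large; and by \cite[Proposition~2.4.3]{Illusie-Temkin}, after replacing $W_\gamma$ by a neighborhood of the image of $W$ the second map $g_\gamma\:W_\gamma\to Z$ can also be taken smooth. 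One must then spread out the equalities $f^{-1}I_Z=g^{-1}I_Z$ and $f^{-1}D_Z=g^{-1}D_Z$ from $W$ to some finite stage $W_\gamma$, after which functoriality of the variety algorithm for the two smooth maps $W_\gamma\rightrightarrows Z$ gives that the two pulled-back principalization sequences agree on $W_\gamma$, hence on $W$; this, not a chartwise ``shrink and glue,'' is what makes the global sequence and its independence of the covering well defined. The same device (not a regular map to $Z_1\times Z_2$, but a limit presentation of $W\to Z_1\times Z_2$ by varieties $W_\gamma$ whose two projections are smooth) is what carries the functoriality step for a regular morphism $(X_1,D_1,I_1)\to(X_2,D_2,I_2)$. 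Without these three ingredients spelled out, your argument does not close.
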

\begin{proof}
We may replace $\coprod U_\alpha$ by a finite covering, since $X$ is noetherian. We write $U_{\alpha\beta} = U_\alpha \times_{X} U_\beta$. Now, we will use the ideas from the proof of \cite[Theorem~2.4.3]{Illusie-Temkin}.

First we construct a principalization. For this it suffices to construct a principalization of $\coprod(U_\alpha, D|_{U_\alpha}, I|_{U_\alpha})$ whose two pullbacks to the fiber product $W:=\coprod U_{\alpha\beta}$ coincide. The triple $(Z,D_Z,I_Z):=\coprod(Z_\alpha,D_\alpha,I_\alpha)$ has a principalization compatible with $D_\alpha$ coming from the principalization functor for $\QQ$-varieties. This pulls back to a principalization of  $\coprod(U_\alpha, D|_{U_\alpha}, I|_{U_\alpha})$ and we need to show that the two pullbacks to $W$ coincide. We have two regular morphisms $f,g\:W\to Z$. By Popescu's theorem (see \cite{Popescu} or \cite{Spivakovsky}), $f$ is the limit of smooth morphisms $f_\gamma\:W_\gamma\to Z$. By \cite[IV$_3$, Proposition~8.13.1]{ega}, $g$ factors through a morphism $g_\gamma\:W_\gamma\to Z$ for a large enough $\gamma$ and then \cite[Proposition~2.4.3]{Illusie-Temkin} implies that replacing $W_\gamma$ by a neighborhood of the image of $W$ we can achieve that $g_\gamma$ is also smooth. Since the two pullbacks of $I_Z$ and $D_Z$ to $W$ coincide, there is some $\gamma$ such that  the two pullbacks of $I_Z$ and $D_Z$  to $W_\gamma$ coincide. It follows by functoriality of principalization for varieties that the two principalizations on $W_\gamma$ coincide, and therefore they coincide on $W$, as required.

We now demonstrate that this principalization is functorial. Consider a regular surjective morphism $f\:(X_1, D_1,I_1) \to (X_2, D_2,I_2)$ with coverings $\coprod U_{1\alpha}$ and $\coprod U_{2\beta}$ and $\QQ$ varieties $Z_{1\alpha}$ and $Z_{2\alpha}$. Then composing $U_{2\beta}\to Z_{2\beta}$ with $f$ we get another covering $\coprod f^{-1}U_{2\beta}$ with regular maps to $Z_{2\beta}$, so it is enough to show that the resulting principalizations on $X_1$ coincide. We now write $W = \coprod U_{1\alpha}\times_{X_1}f^{-1}U_{2\beta}$, which maps to $Z_1= \coprod Z_{1\alpha}$ and $Z_2= \coprod Z_{2\beta}$. By the same argument as earlier we have that $W\to Z_1\times Z_2$  is the limit of a family $W_\gamma \to Z_1\times Z_2$, where the two maps $W_\gamma\to Z_i$ are smooth. As above we conclude that the ideals  and divisors coincide on some $W_\gamma$ and  the two principalizations coincide on $W$ and therefore on $X_1$.
\end{proof}

\section{Functorial toroidal factorization}
\subsection{Statement} We follow the treatment of toroidal schemes in \cite[Section 2.3]{AT1}, in particular they carry logarithmic structures in the Zariski topology. A toroidal ideal $I$ on a toroidal scheme $X$ with logarithmic structure $M$ is the ideal generated by the image of a monomial ideal in $M$ through $M\to \cO_X$.  We define a category $\TorBl$ of toroidal blowings up, similar to $\Bl$:
\begin{enumerate}
\item  An object is a birational transformation $X_1 \to X_2$ where $X_1, X_2$ are toroidal and regular, and $X_1 \to X_2$ is given as the normalized blowing up of a toroidal ideal $I\subset \cO_{X_2}$.
\item An arrow from $X'_1 \to X'_2$ to $X_1 \to X_2$  consists of a regular surjective morphism $g:X_2' \to X_2$, such that $U_{X_2} = g^{-1}U_{X_2}$ and $I' = I\cO_{X_2'}$.
\end{enumerate}
We similarly define a {\em toroidal weak factorization} $X_1 = V_0\das V_1\das\dots \das V_{l-1} \das V_l = X_2$ of a toroidal blowing up $X_1 \to X_2$, where the schemes $V_i$, ideals $J_i$ and centers $Z_i$ are toroidal. These form {\em the regular surjective  category $\TorFact$ of toroidal weak factorizations} in a manner similar to the above.

\begin{proposition}\label{Prop:toroidal-factorization}
Let $X_1 \to X_2$ be a toroidal morphism of toroidal schemes obtained by normalized blowing up a toroidal ideal. Then there is a toroidal weak factorization  $X_1 = V_0\das V_1\das\dots \das V_{l-1} \das V_l = X_2$ in a functorial manner:  there is a section $\TorBl \to \TorFact$ of the forgetful functor $\TorFact \to\Bl$.
\end{proposition}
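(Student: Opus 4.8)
The plan is to reduce Proposition~\ref{Prop:toroidal-factorization} to a canonical combinatorial weak factorization of cone complexes and then transport it back through the standard dictionary for toroidal schemes. Recall from \cite[Section~2.3]{AT1} that a regular toroidal scheme $X$ carries an associated cone complex (Kato fan) $\Sigma_X$, functorial in morphisms compatible with the toroidal structures, whose cones are nonsingular because $X$ is regular; the toroidal divisor, the strata, the toroidal ideals and the open locus $U_X$ are all read off $\Sigma_X$, and the normalized blowing up of a toroidal ideal $I\subset\cO_{X_2}$ is precisely the toroidal morphism attached to a projective subdivision $\Sigma_1\to\Sigma_2$ of $\Sigma_{X_2}$, with $U_{X_2}$ corresponding to the subcomplex over which this subdivision is an isomorphism. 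Under this dictionary a toroidal weak factorization of $X_1\to X_2$ is exactly a sequence of nonsingular cone complexes $\Sigma_1=\Delta_0,\Delta_1,\dots,\Delta_l=\Sigma_2$, each carrying a projective map to $\Sigma_2$ that is an isomorphism over $U_{X_2}$, in which consecutive $\Delta_{i-1},\Delta_i$ differ by a single smooth star subdivision, or its inverse, along a cone not contained in the trivial subcomplex; the ideals $J_i$ are then the toroidal ideals attached to the subdivisions $\Delta_i\to\Sigma_2$. The remaining requirements in the definition of a weak factorization --- that each centre be regular and meet $D_{V_i}$ with normal crossings --- hold automatically in the toroidal world, every such centre being a closed union of strata.

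First I would invoke the combinatorial weak factorization of Morelli and W\l odarczyk in the form used in \cite{W-Cobordism, AKMW}: a projective subdivision between two nonsingular fans can be refined to a chain of smooth star subdivisions and assemblies passing only through nonsingular fans, all of them projective over the coarser fan. Two upgrades of this statement are needed. First, it must be run over an arbitrary nonsingular cone complex rather than a subfan of a fixed lattice; this is legitimate because the Morelli cobordism, the $\pi$-desingularization lemma and the collapsing procedure are compatible with lattice isomorphisms and with passage to faces, so the construction glues over $\Sigma_2$ --- this is the combinatorial package isolated in \cite{AT1, ATLuna}, which I would cite. Second, and more seriously, the construction must be made \emph{canonical}: it should depend on the pair $(\Sigma_1\to\Sigma_2)$ alone, with a deterministic choice fixed at every stage (the order in which the cobordism is collapsed, the $\pi$-desingularizing subdivision, and so on). I would take this functorial combinatorial statement from \cite{ATLuna, AT1}; absent that, one must revisit each step of the Morelli--W\l odarczyk argument and record an explicit tie-breaking rule.

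Granting a canonical combinatorial weak factorization, functoriality on $\TorBl$ becomes bookkeeping. An arrow $g\:X_2'\to X_2$ is a regular surjective morphism with $U_{X_2'}=g^{-1}U_{X_2}$ and $I\cO_{X_2'}=I'$; since $g$ is regular and respects the toroidal structures, the induced map $\Sigma_{X_2'}\to\Sigma_{X_2}$ is a local isomorphism of cone complexes (\'etale-locally on $X_2'$ the combinatorial data is pulled back from $X_2$), so a projective subdivision of $\Sigma_{X_2}$ and the smooth star subdivisions relating the members of its weak factorization pull back to the corresponding data over $\Sigma_{X_2'}$. Because the combinatorial construction is canonical and local, the pulled-back sequence is exactly the one it assigns to $\Sigma_{X_1'}\to\Sigma_{X_2'}$, which is the compatibility required of an arrow in $\TorFact$. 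To obtain an honest section $\TorBl\to\TorFact$ I would finally translate back to schemes, using the functorial presentation of a projective birational morphism of regular schemes as the blowing up of a canonical ideal sheaf from \S\ref{Sec:functorial-projective}--\S\ref{Sec:blowup-projective} to realise the toroidal ideals $J_i$ on $X_2$ and to check that they, together with the centres $Z_i$, transform as required under $g$.

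The step I expect to be the main obstacle is the passage from the classical, non-functorial toric weak factorization to a canonical one over a general nonsingular cone complex; once the combinatorial construction is known to be choice-free and local, the reduction above together with the scheme/cone-complex dictionary supplies everything else.
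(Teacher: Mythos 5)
Your reduction to a combinatorial statement about nonsingular cone complexes, and the dictionary you use to translate back and forth, match the paper's setup. But the decisive step is exactly the one you defer: a weak factorization of a projective subdivision of nonsingular cone complexes that is \emph{compatible with surjective face maps}. This is not contained in \cite{ATLuna} or \cite{AT1} (those papers concern torification and relatively affine $\GG_m$-actions, not toric factorization), and the existing literature only provides Morelli's $\pi$-desingularization for fans \cite{Morelli,W-Factor}, its extension to cone complexes \cite{Abramovich-Matsuki-Rashid}, and functoriality under \emph{automorphisms} in \cite{AKMW} --- which the paper explicitly notes is insufficient here. Your fallback, re-running the Morelli--W\l odarczyk algorithm with a tie-breaking rule, is precisely what the paper avoids; the remark after Proposition \ref{Prop:toroidal-factorization} records that such a modification would require subtle changes at the heart of the algorithm. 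Moreover, even granting a ``canonical'' (choice-free) procedure, canonicity alone does not yield compatibility with surjective face maps: a deterministic algorithm may use global features of the complex that are not preserved when one complex maps onto another by face maps, and your appeal to ``locality'' is an assertion, not an argument.

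The paper closes this gap by a different mechanism: it enlarges the category to \emph{generalized} cone complexes (allowing multiple and self face maps), proves that each connected component of the category of pairs $(\Sigma,f)$ under surjective face maps has a \emph{final object} (Lemma \ref{Lem:final}), reduces that final object to an honest fan via two barycentric subdivisions (Lemmas \ref{Lem:barycentric} and \ref{Lem:generalized-factorization}) so that the classical, non-functorial Morelli lemma applies, and then defines the factorization of an arbitrary object as the pullback of the chosen factorization of the final object (Lemma \ref{Lem:generalized-factorization-functorial}). In this way \emph{any} toric factorization procedure becomes functorial, with no need for a canonical or equivariant version of the algorithm. Your proposal is missing this idea (or an equivalent substitute), so as written it rests on an unproved --- and uncited-for --- combinatorial input.
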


\begin{remark}
Jaros\l aw W\l odarczyk informed us that  one can prove a stronger result:  a factorization procedure which is  functorial for all regular strict morphisms $g:X_2' \to X_2$, not required to be surjective. His proposed argument involves subtle modifications at the heart of the algorithm in  \cite[Sections 4 and 5]{W-Seattle}. The proof we provide at the end of this section shows that {\em any} procedure for toric factorization gives rise to a functorial procedure.
\end{remark}

\subsection{Cone complexes} Before proving Proposition \ref{Prop:toroidal-factorization} we need to discuss a generalization of the  polyhedral cone complexes with integral structure of \cite{KKMS} which was introduced in \cite[2.5]{ACP} to accommodate any toroidal embedding in the sense of \cite{KKMS}, allowing for self intersections and monodromy. In this paper we only assign  polyhedral cone complexes to Zariski toroidal schemes,  without self intersections or monodromy, but the generalized  polyhedral cone complexes are used as a combinatorial tool to achieve functoriality.

Fix a toroidal scheme $X$. Recall that the polyhedral complex of  \cite{KKMS} or the equivalent Kato fan of \cite{Kato-toric} assigns  a polyhedral cone $\sigma_{Z}$ with integral structure to each toroidal stratum $Z \subset X$; each inclusion $Z' \hookrightarrow \overline Z\subset X$ gives rise to a linear map $\nu:\sigma_Z \to \sigma_{Z'}$, which identifies $\sigma_Z$ as a face of $\sigma_{Z'}$ in such a way that the integral structure on $\sigma_{Z}$ is the restriction of the integral structure of $\sigma_{Z'}$: this is called a {\em face map}. We define $\Sigma(X) = \varinjlim(\{\sigma_Z\},\{\nu\})$ - it is similar to the fan of a toric variety, but is not embedded in a space $N_\RR$ and the intersection of two cones may be the union of faces rather than just one face.

A map of polyhedral cone complexes $\varinjlim(\{\sigma'_i\},\{\nu'_k\})\to \varinjlim(\{\sigma_j\},\{\nu_l\})$ is defined to be a collection of cone maps $\sigma'_i \to \sigma_{j(i)}$ compatible with the face maps $\nu'_k$ and $\nu_k$. A toroidal map $X'\to X$ gives rise to a map of cone complexes; here are a few well known relationships:

\begin{enumerate}
\item A proper birational toroidal morphism gives rise to a subdivision, and there is an equivalence of categories between proper toroidal birational morphisms and subdivisions. Blowings up of ideals correspond to subdivisions determined by piecewise linear continuous integral functions which are convex on each cone; following \cite{KKMS} we call these  {\em projective subdivisions} (in the combinatorial literature they are {\em coherent subdivisions}).
\item A regular morphism $g:X_2' \to X_2$ such that $U_{X_2} = g^{-1}U_{X_2}$ gives rise to a map of complexes $\Sigma(g):\Sigma(X') \to \Sigma(X)$  where all the maps $\sigma'_i \to \sigma_{j(i)}$ are face maps - this is called a face map of complexes.
\item If the map $g:X_2' \to X_2$ is also surjective then $\Sigma(g)$ is surjective.
\item  The scheme $X$ is regular if and only if all the cones $\sigma_i\subset \Sigma(X)$ are  nonsingular in the usual toric sense.
\item  If $X$ is regular then the closure of a stratum is always regular (this would fail if we allowed self intersections); we call such subschemes {\em toroidal centers}.
\item  The blowing up   $X'\to X$  of an irreducible toroidal center  $\overline Z$ on a {\em regular} $X$ corresponds to the star subdivision $\Sigma' \to \Sigma(X)$ at the barycenter of $\sigma_Z$. The blowing up   $X'\to X$  of any regular toroidal subscheme $W$ corresponds to the simultaneous star subdivision $\Sigma' \to \Sigma(X)$ at the barycenters of all the cones  corresponding to the connected components of $W$.
\end{enumerate}

Thus proposition  \ref{Prop:toroidal-factorization} would follow if the projective subdivision $\Sigma(X_1) \to \Sigma(X_2)$ can be factored as a composition of such simultaneous star subdivisions and their inverses,  in such a way that the intermediate steps are projective subdivisions of $\Sigma(X_2)$, in a functorial manner with respect to surjective face maps. This will be our Lemma \ref{Lem:generalized-factorization-functorial} below.

Morelli's  $\pi$-desingularization lemma of fan cobordisms \cite[Lemma 10.4.3]{W-Factor} gives a non-functorial result in the case of fans; this was generalized in \cite{Abramovich-Matsuki-Rashid} to polyhedral cone complexes. In \cite{AKMW} it is made functorial under {\em automorphisms,} which is not sufficient for our purposes here.

Consider the category whose objects are  projective subdivisions  $\Sigma_1 \to \Sigma_2$ of nonsingular cone complexes given by a fixed piecewise linear continuous integral function $f\:\Sigma_2\to\RR$ convex on each cone and arrows $(\Sigma'_2, f') \to (\Sigma_2,f)$ induced by surjective face maps  $h:\Sigma_2' \to \Sigma_2$  with $f' = f\circ h$.
Functoriality would be easily achieved if the connected component of any object $\Sigma_1 \to \Sigma_2$ in this  category had a final object, as we show below in Lemma \ref{Lem:generalized-factorization-functorial}. Indeed, this would mean that applying Morelli's lemma to the final object would induce a factorization for the whole component, giving the result. Unfortunately final objects usually do not exist in the category of cone complexes. Our next goal is to enlarge this category so that final objects do exist, see Lemma \ref{Lem:final} below.

\subsection{Generalized cone complexes and existence of final objects} A generalized cone complex is given by any finite diagram $(\{\sigma_j\},\{\nu_l\})$ of cones and face maps. We allow for more than one face map $\sigma_j \to \sigma_l$, including non-trivial self-face maps $\sigma_j\to \sigma_j$. We think of a generalized cone complex $\Sigma$ as a structure imposed on the topological space $\underline{\Sigma} =\varinjlim(\{\sigma_j\},\{\nu_l\})$. Thus an arrow of generalized cone complexes $(\{\sigma'_i\},\{\nu'_k\})\to (\{\sigma_j\},\{\nu_l\})$ is given by compatible cone maps as above; an arrow is a face map if it is given by compatible face maps; and an arrow is declared to be an isomorphism if it is a face map inducing a bijection of sets  $\varinjlim(\{\sigma'_i\},\{\nu'_k\})\to \varinjlim (\{\sigma_j\},\{\nu_l\})$. See \cite[\S 2.6]{ACP}.

Cone complexes are a full subcategory of generalized cone complexes. They are distinguished by the property that, for any cones $\tau,\sigma$ of  $\Sigma$ a face map $\nu:\tau\to \sigma$ in $\Sigma$ is unique if it exists.
Thus proposition  \ref{Prop:toroidal-factorization} would again follow if any projective subdivision $\Sigma_1 \to \Sigma_2$ of {\em generalized} nonsingular cone complexes can be factored as a composition of  simultaneous star subdivisions and their inverses, in a functorial manner with respect to surjective cone maps. The advantage of working with generalized cone complexes is the following:

\begin{lemma}\label{Lem:final}
The connected component of the projective subdivision $\Sigma_1 \to \Sigma_2$ of generalized cone complexes in the category induced by surjective face maps $\Sigma'_2 \to \Sigma_2$ has a final object.
\end{lemma}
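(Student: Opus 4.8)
The plan is to produce the final object explicitly as a ``universal cover'' type construction: given the connected component of $\Sigma_1 \to \Sigma_2$ in the category whose arrows are surjective face maps $\Sigma_2' \to \Sigma_2$ (covering a compatible datum over $\Sigma_1$), I will assemble a single generalized cone complex $\widehat\Sigma_2$, equipped with a compatible projective subdivision $\widehat\Sigma_1 \to \widehat\Sigma_2$, together with a surjective face map to every object of the component, and show this map is unique. The key point is that a generalized cone complex is \emph{allowed} to have a large (even infinite) indexing diagram with repeated cones and nontrivial self-face maps, so there is no obstruction to taking a ``limit'' of the objects in the component; by contrast an honest cone complex cannot absorb such a limit, which is exactly why Lemma~\ref{Lem:final} fails without the generalization.

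Concretely, first I would fix a ``base'' object of the component, say $\Sigma_2$ itself with its given $f\:\Sigma_2 \to \RR$ and subdivision $\Sigma_1$. Because every arrow in the component is a surjective face map over the fixed pair $(\Sigma_1 \to \Sigma_2, f)$, every object $(\Sigma_2', f')$ comes with a surjective face map $h\:\Sigma_2' \to \Sigma_2$ with $f' = f\circ h$; the maximal cones of $\Sigma_2'$ map onto maximal cones of $\Sigma_2$ via isomorphisms of cones-with-integral-structure (a surjective face map is a subdivision of the diagram, not of the individual cones). So the data of an object amounts to: a set, with a surjection to the set of cones of $\Sigma_2$, plus a compatible system of face maps refining those of $\Sigma_2$. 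Then I would form $\widehat\Sigma_2$ by taking, for each cone $\sigma$ of $\Sigma_2$, the diagram of all cones lying over $\sigma$ in all objects of the component, with all face maps between them (an ind/pro-construction over the filtered-up-to-connectedness poset of objects); since the component is connected this is a single well-defined diagram of cones and face maps, hence a generalized cone complex. The function $f$ pulls back to $\widehat f$ on $\widehat\Sigma_2$, and the subdivision $\Sigma_1$ pulls back to $\widehat\Sigma_1 \to \widehat\Sigma_2$, still projective (convexity of $\widehat f$ on each cone is inherited cone-by-cone, as each cone of $\widehat\Sigma_2$ is isomorphic to a cone of $\Sigma_2$). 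By construction $\widehat\Sigma_2$ admits a surjective face map to each object; conversely any object admits a surjective face map \emph{to} $\widehat\Sigma_2$ only if it equals $\widehat\Sigma_2$, since $\widehat\Sigma_2$ already contains every cone that appears anywhere in the component together with every face map between them, so it is terminal.

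The main obstacle I anticipate is not the construction of the underlying diagram but \emph{checking terminality precisely} in the category as defined: one must verify (i) that the face map $\widehat\Sigma_2 \to \Sigma_2'$ is genuinely an arrow of the stated category, i.e.\ that it is surjective, that it respects $\widehat f$ and $f'$, and that it covers a compatible morphism of the subdivisions $\widehat\Sigma_1,\Sigma_1'$ over $\Sigma_1$; and (ii) that such a map is \emph{unique}, which requires knowing that a face map out of $\widehat\Sigma_2$ is determined by where it sends cones --- true because, after the generalized-complex formalism of \cite[\S2.6]{ACP}, a morphism of generalized cone complexes is exactly a compatible family of cone maps on the indexing diagram, and the indexing diagram of $\widehat\Sigma_2$ surjects onto that of any object with the face maps already prescribed. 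A secondary subtlety is set-theoretic: the component could a priori be a proper class, but since $\Sigma_1 \to \Sigma_2$ is a subdivision of a \emph{finite} diagram of rational polyhedral cones, each object has boundedly many cones of each dimension up to isomorphism and only finitely many combinatorial types of face-map diagram refining $\Sigma_2$ can occur, so the component is essentially small and the limit is a legitimate generalized cone complex. With these points settled, Lemma~\ref{Lem:final} follows, and then — as indicated in the discussion preceding it — applying Morelli's $\pi$-desingularization lemma \cite[Lemma~10.4.3]{W-Factor} (in its cone-complex form from \cite{Abramovich-Matsuki-Rashid}) to the final object $\widehat\Sigma_1 \to \widehat\Sigma_2$ yields a factorization that pushes down to every object of the component, giving the functoriality needed for Proposition~\ref{Prop:toroidal-factorization}.
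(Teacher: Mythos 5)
Your construction aims at the wrong universal property, and this is not a cosmetic slip. In the category at hand an arrow $(\Sigma_2',f')\to(\Sigma_2,f)$ \emph{is} a surjective face map $h\:\Sigma_2'\to\Sigma_2$ with $f'=f\circ h$, so a final object is one that \emph{receives} a unique such map from every object of the component; these are precisely the maps $\psi_\Delta\:\Delta\to\tilde\Delta$ along which Lemma \ref{Lem:generalized-factorization-functorial} later pulls back the factorization. Your $\widehat\Sigma_2$ is instead built so as to admit ``a surjective face map to every object,'' and both of your verification points (i) and (ii) concern maps \emph{out of} $\widehat\Sigma_2$. That property is weak initiality, not finality, and moreover such maps do not exist in general: take $\Sigma_2$ a single nonsingular cone $\sigma$ with $f=0$, and let $\Sigma_2'=\sigma\sqcup\sigma$ with the fold map, which is a surjective face map, so $\Sigma_2'$ lies in the component. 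Since by your description $\widehat\Sigma_2$ contains the cone $\sigma$ of $\Sigma_2$ together with face maps relating it to both copies, any compatible system of cone maps $\widehat\Sigma_2\to\Sigma_2'$ must send this whole connected piece of the diagram into a single copy (there is no face map between the two copies of $\sigma$ in $\Sigma_2'$), hence is never surjective; so there is no arrow from your candidate to $\Sigma_2'$ at all, let alone a canonical one.

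There are two further gaps even before the universal property. First, a generalized cone complex is by definition a \emph{finite} diagram of cones and face maps, whereas your $\widehat\Sigma_2$ indexes one cone for every cone of every object of the component; the component contains, for instance, all fold covers $\sigma^{\sqcup n}\to\sigma$, so this diagram is genuinely infinite, and ``essentially small'' does not cure it -- you would have to pass to isomorphism classes of cones, and you would also have to discard the face maps not compatible with $f$ (otherwise $\widehat f$ is not well defined on the colimit). Once you make both corrections you have reconstructed exactly the paper's object $\Delta$: the cones of $\Sigma_2$ itself, with \emph{all} $f$-compatible face maps adjoined. Second, your reduction ``every object of the component comes with a surjective face map to $\Sigma_2$'' is unjustified: the component is generated by zigzags of arrows, and an object connected to $\Sigma_2$ only through a roof $\Sigma_2\leftarrow A\to B$ need not map to $\Sigma_2$. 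The paper's proof avoids both issues by saturating the face maps of a single object to form $\Delta$ and then proving, via the skeleton $\bar\Delta$ obtained by choosing one representative per isomorphism class, that this construction is unchanged along any single arrow $h$; invariance along single arrows is exactly what finality over a zigzag-connected component requires. As written, your proposal neither contains these steps nor can its stated plan (maps from $\widehat\Sigma_2$ onto every object) be carried out.
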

\begin{proof}
The projective subdivision  $\Sigma_1 \to \Sigma_2$ is induced by an implicit piecewise linear convex integral function $f:\Sigma_2 \to \RR$. Write $\Sigma_2 =  (\{\sigma_j\},\{\nu_l\})$. Then $\nu_l: \sigma_i \to \sigma_j$  has the property that $f_{\sigma_i} = f_{\sigma_j} \circ \nu_l$. Let $\{\mu_k\}$ be the collection of all face maps $\mu_k: \sigma_m \to \sigma_n$ with the property that $f_{\sigma_m} = f_{\sigma_n} \circ \mu_k$. Then  $\Delta :=  (\{\sigma_j\},\{\mu_k\})$ is a generalized cone complex, the maps $f_{\sigma_j}$ glue to give a  piecewise linear integral function $\tilde f: \Delta\to \RR$,  and since  $\{\nu_l\}\subset \{\mu_k\}$ we have a  map of diagrams $g:\Sigma_2 \to \Delta$ such that $f= \tilde f\circ g$.

It is convenient to have another presentation of $\Delta$. Choose one representative $\bar\sigma$ from each isomorphism class of cones in $\Delta$. Given two such representatives $\bar\tau$ and $\bar\sigma$, consider all maps $\bar\nu_l: \bar\tau \to \bar\sigma$ in $\Delta$. Clearly $\bar\Delta = (\{\bar\sigma\},\{\bar\nu_l\})$ maps as a subdiagram to $\Delta$, and the map is  an isomorphism since it is clearly a bijection on set theoretic limits.

We claim that $(\Delta,\tilde f)$ is a final object in the component of $(\Sigma_2,f)$ in the category of generalized cone complexes with piecewise linear integral function. For this it suffices to show that if $(\Sigma_2', f')$ is an object and $h:\Sigma_2 \to \Sigma_2'$ is a surjective face map such that $f'\circ h = f$ then $g= g''\circ h$ where $g'':   \Sigma_2'\to \Delta$ is a morphism so that $f' = \tilde f \circ g''$.

First, if we apply the construction of $\Delta$ to $\Sigma_2'$ we get a map $g':\Sigma_2'\to \Delta'$ which sits in a commutative diagram
$$\xymatrix{
\Sigma_2 \ar^g[rr]\ar_h[dd]\ar_f[rrrd] && \Delta\ar_{\tilde h}[dd]\ar^{\tilde f}[rd]\\ &&&\RR\\  \Sigma_2' \ar_{g'}[rr]\ar^{f'}[rrru] && \Delta'\ar_{\tilde f'}[ru]
}$$

On the other hand $\bar\Delta\simeq \Delta$ and $\bar\Delta' \simeq \Delta'$, and the  map $\bar\Delta \to \bar\Delta'$ induced by $\tilde h$ is an isomorphism of diagrams: since $h$ is a surjective face map, any cone in $\Sigma_2'$ is isomorphic to a cone of $\Sigma_1$ via an isomorphism compatible with $f$ and vice versa. So $\tilde h$ gives a bijection between the isomorphism classes of cones, and the maps $\bar\nu$ between cones are determined by the compatibility of the function $\tilde f =\tilde f'$ on them. So $\Delta\to \Delta'$ is an isomorphism, giving the requisite map of generalized complexes $g'' = \tilde h^{-1} \circ g'$.
\end{proof}

\subsection{Barycentric subdivisions and factorization for generalized cone complexes}

We proceed to extend the factorization of subdivisions of cone complexes to generalized cone complexes. We do it by a reduction step using barycentric subdivisions:
\begin{lemma}\label{Lem:barycentric}
\begin{enumerate}
\item (\cite[2.5]{ACP})
The barycentric subdivision $B(\Delta)$ of a generalized cone complex $\Delta$ is a projective subdivision obtained by a sequence of simultaneous star subdivisions. If $\Delta$ is nonsingular then the star subdivisions are smooth. The generalized cone complex $B(\Delta)$ is  in fact a cone complex.
\item (\cite[Lemma 8.7]{Abramovich-Matsuki-Rashid}) The barycentric subdivision $B(\Delta)$ of a nonsingular  cone complex $\Delta$ is a projective subdivision obtained by a sequence of simultaneous smooth star subdivisions. The nonsingular  cone complex $B(\Delta)$ is  in fact isomorphic to a fan.
\end{enumerate}
\end{lemma}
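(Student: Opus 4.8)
The plan is to follow \cite[\S2.5--2.6]{ACP} for part (1) and \cite[Lemma~8.7]{Abramovich-Matsuki-Rashid} for part (2), organized in three steps plus a bookkeeping step. First I would fix the description of $B(\Delta)$: to each cone $\sigma$ of $\Delta$, with primitive ray generators $v_1,\dots,v_r$, attach the barycentric ray generated by the primitive integral vector $b_\sigma$ along $\RR_{\ge0}(v_1+\dots+v_r)$; the cones of $B(\Delta)$ are the simplicial cones $\langle b_{\sigma_0},\dots,b_{\sigma_p}\rangle$ indexed by flags $\sigma_0\subsetneq\sigma_1\subsetneq\dots\subsetneq\sigma_p$ of cones of $\Delta$ (where ``$\subsetneq$'' records a chosen face map exhibiting $\sigma_{j-1}$ as a proper face of $\sigma_j$), with face maps the sub-flag inclusions and with the integral structure induced from $\Delta$. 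That $B(\Delta)\to\Delta$ is a subdivision is immediate.

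Next I would realize $B(\Delta)$ as a sequence of simultaneous star subdivisions, processing cones of $\Delta$ by non-increasing dimension: for $k=\dim\Delta,\ \dim\Delta-1,\dots,2$ one performs the simultaneous star subdivision at the barycenters $b_\sigma$ of all $k$-dimensional cones $\sigma$ of $\Delta$. Each such $\sigma$ is a face of every cone of the current complex containing it, hence survives the earlier star subdivisions, so the operation is well defined; using the standard local description of a star subdivision at a ray in the relative interior of a face (the affected cones are replaced by the joins of $b_\sigma$ with the facets not containing $\sigma$), an induction on $k$ identifies the outcome with the flag complex $B(\Delta)$. If moreover $\Delta$ is nonsingular, then inside a nonsingular ambient cone $\langle e_1,\dots,e_m\rangle\supseteq\sigma=\langle e_1,\dots,e_k\rangle$ one has $b_\sigma=e_1+\dots+e_k$ and the new cones $\langle b_\sigma,\ e_i:i\ne j\rangle$ $(j\le k)$ are again spanned by part of a lattice basis, so every star subdivision in the sequence is smooth and nonsingularity persists throughout.

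For projectivity I would use that each star subdivision at a (barycentric) ray is a projective subdivision — it is a blowing up of a monomial ideal, equivalently it is cut out by a convex integral piecewise linear function in the sense of the excerpt — together with the standard fact that a composition of projective subdivisions is projective, realized by a positive combination $f_1+\varepsilon_2 f_2+\dots+\varepsilon_N f_N$ of the functions cutting out the successive steps, with $1\gg\varepsilon_2\gg\dots\gg\varepsilon_N>0$, which is still convex on each cone of $\Delta$ with domains of linearity the cones of $B(\Delta)$. For the remaining combinatorial assertions: two flag-cones of $B(\Delta)$ admit at most one sub-flag inclusion compatible with the recorded face maps of $\Delta$, so $B(\Delta)$ is a \emph{genuine} cone complex even when $\Delta$ is only a \emph{generalized} cone complex; and when $\Delta$ is a nonsingular cone complex, sending $b_\sigma$ to the standard basis vector $e_\sigma$ of $\RR^{C}$ (with $C$ the finite set of positive-dimensional cones of $\Delta$) and $\langle b_{\sigma_0},\dots,b_{\sigma_p}\rangle$ to $\langle e_{\sigma_0},\dots,e_{\sigma_p}\rangle$ identifies $B(\Delta)$ with a simplicial fan in $\RR^{C}$, an identification which, by the nonsingularity established above, matches the integral structures.

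The step I expect to be the main obstacle is the bookkeeping underlying the passage from $\Delta$ to $B(\Delta)$ in the generalized setting: checking that ``simultaneous star subdivision at all barycenters of a given dimension'' is a well-posed operation on a generalized cone complex (where cones may carry non-trivial self-face-maps and where one barycentric ray can sit inside several maximal cones), that iterating it by non-increasing dimension really produces the flag complex $B(\Delta)$, and that the resulting $B(\Delta)$ is a genuine cone complex — indeed a fan when $\Delta$ is nonsingular. These verifications are precisely the content of \cite[\S2.5--2.6]{ACP} and \cite[Lemma~8.7]{Abramovich-Matsuki-Rashid}, which I would follow.
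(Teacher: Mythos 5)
Your proposal is correct and takes essentially the same route as the paper: the paper likewise delegates the star-subdivision and projectivity statements to \cite[2.5]{ACP} and \cite[Lemma 8.7]{Abramovich-Matsuki-Rashid}, and itself only writes out the two verifications you assert or defer at the end, namely that a face map between flag cones of $B(\Delta)$ is unique if it exists (since it must send each barycenter to the barycenter of a face of the same dimension and the barycenters span), and that for nonsingular $\Delta$ the lattice of a flag cone is generated by its barycenters, so the map $b_\sigma\mapsto e_\sigma$ into $\bigoplus_{\sigma\in\Delta}\RR_\sigma$ respects integral structures and exhibits $B(\Delta)$ as a fan. Your dimension-of-barycenters remark is exactly the paper's uniqueness argument, so no essential step is missing.
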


\begin{proof}
\begin{enumerate}
\item Write $\Delta =  (\{\sigma_j\},\{\mu_k\})$. We need to show that if $\tau_B,\sigma_B$ are cones in $B(\Delta)$, then a face map $\tau_B\to \sigma_B$ in $B(\Delta)$ is unique if it exists. Suppose the minimal cone containing the image of $\tau_B$ is $\tau$ and the corresponding cone for $\sigma_B$ is $\sigma$. Then it suffices to show that the restriction to $\tau_B$ of  a  face map $\psi:\tau \to \sigma$ in $\Delta$ carrying $\tau_B$ into $\sigma_B$ is unique if it exists. We can write  $\sigma_B = \< b(\sigma_{i_1}),\ldots b(\sigma_{i_k})\>$ uniquely as the cone generated by the barycenters $b(\sigma_{i_r})$ of faces $\sigma_{i_r}$ of $\sigma$ of dimensions $i_1<\dots<i_k$, and similarly $\tau_B = \< b(\tau_{j_1}),\ldots b(\tau_{j_l})\>$. So $\psi$ must carry $b(\tau_{j_s})$ to the barycenter of a cone of $\sigma$ of dimension $j_s$, in other words $\psi(b(\tau_{j_s})) = b(\sigma_{j_s})$. Since $\{b(\tau_{j_1}),\ldots,b(\tau_{j_l})\}$ span $\tau_B$ this means that the restriction of $\psi$ is unique if it exists.
\item Consider the vector space $V = \bigoplus_{\sigma\in \Delta} \RR_\sigma$ with one basis element for each cone of $\sigma$. Assume $\Delta$ is a cone complex. In \cite[Lemma 8.7]{Abramovich-Matsuki-Rashid} it is shown that $B(\Delta)$ has a real embedding in $V$, and the image is the real support of a fan. The embedding is obtained by sending $b(\sigma)$ to the unit vector $e_\sigma\in \RR_\sigma \subset V$. Here we assume that $\Delta$ is nonsingular, and we need to check that the embedding gives an isomorphism of cone complexes, namely that the integral structures coincide. Note that the lattice in any cone $\< b(\sigma_{i_1}),\ldots, b(\sigma_{i_k})\>$  in $B(\Delta)$ is generated by  the elements $ b(\sigma_{i_1}),\ldots ,b(\sigma_{i_k})$. The image of this lattice in $V$ is precisely generated by $e(\sigma_{i_1}),\ldots, e(\sigma_{i_k})$, and coincides with the intersection of the cone $\<e(\sigma_{i_1}),\ldots, e(\sigma_{i_k})\>$ with $\bigoplus_{\sigma\in \Delta} \ZZ_\sigma$. So the image of $B(\Delta)$ is indeed a fan, as required.
\end{enumerate}
\end{proof}

\begin{lemma}\label{Lem:generalized-factorization} Let $\Delta$ be a nonsingular generalized cone complex and  $ f: \Delta \to \RR$ a  piecewise linear  function, convex and integral on each cone, such that the corresponding subdivision  $\Delta_1 \to \Delta$ is nonsingular. Then $\Delta_1 \to \Delta$  admits  a factorization into nonsingular star subdivisions and their inverses, with all intermediate steps projective over $\Delta$.
\end{lemma}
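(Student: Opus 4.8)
The plan is to reduce, by means of barycentric subdivisions, from the category of nonsingular \emph{generalized} cone complexes to that of nonsingular \emph{cone complexes}, where the factorization of a projective birational toric transformation into smooth star subdivisions and their inverses is already available: Morelli's $\pi$-desingularization for fans, \cite[Lemma~10.4.3]{W-Factor}, extended to polyhedral cone complexes in \cite{Abramovich-Matsuki-Rashid}, in its relative (projective-over-the-base) form. If one prefers to cite only the fan statement, one barycentric subdivision more (using Lemma~\ref{Lem:barycentric}(2)) lands in the category of fans, so either route works.

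First I would pass to barycentric subdivisions. By Lemma~\ref{Lem:barycentric}(1), $B(\Delta)$ and $B(\Delta_1)$ are nonsingular cone complexes, and $B(\Delta)\to\Delta$ and $B(\Delta_1)\to\Delta_1$ are each obtained as a sequence of nonsingular (simultaneous) star subdivisions; in particular each is a projective subdivision. Since a composition of projective subdivisions is again projective — add a sufficiently large integer multiple of the convex integral function defining the outer subdivision to the one defining the inner subdivision, both being convex on the cones of the finer complex — the composite $B(\Delta_1)\to\Delta_1\to\Delta$ is a projective subdivision of $\Delta$, and so is $B(\Delta)\to\Delta$. Consequently the birational transformation $B(\Delta_1)\dashrightarrow B(\Delta)$ is projective over $\Delta$: pulling back along $B(\Delta)\to\Delta$ the convex function on $\Delta$ cutting out $B(\Delta_1)$ exhibits the common refinement of $B(\Delta_1)$ and $B(\Delta)$ as a projective subdivision of each, so this common refinement is a common projective refinement.

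Next I would apply the relative form of Morelli's factorization theorem for nonsingular cone complexes to $B(\Delta_1)\dashrightarrow B(\Delta)$, obtaining a chain
$$B(\Delta_1)=\Sigma_0\dashrightarrow\Sigma_1\dashrightarrow\cdots\dashrightarrow\Sigma_m=B(\Delta)$$
in which each step, or its inverse, is a nonsingular star subdivision and each $\Sigma_j$ is projective over $\Delta$. Concatenating with the barycentric subdivisions of the first step,
$$\Delta_1\longleftarrow B(\Delta_1)=\Sigma_0\dashrightarrow\cdots\dashrightarrow\Sigma_m=B(\Delta)\longrightarrow\Delta,$$
every arrow or its inverse is a nonsingular star subdivision (the two outer segments by Lemma~\ref{Lem:barycentric}, the middle one by Morelli), and every complex occurring is projective over $\Delta$: the complexes in the two outer segments are projective subdivisions of $\Delta_1$, hence, composing with $\Delta_1\to\Delta$, projective subdivisions of $\Delta$, and those in the middle segment were arranged to be projective over $\Delta$. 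This is the desired factorization of $\Delta_1\to\Delta$ with all intermediate steps projective over $\Delta$.

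I expect the one genuinely substantial ingredient to be the \emph{relative} version of Morelli's theorem in the cone-complex setting: everything else is bookkeeping. One must check that \cite{W-Factor} and \cite{Abramovich-Matsuki-Rashid} deliver a factorization in which the intermediate cone complexes are projective over the prescribed target — which they do, the cobordism machinery being inherently relative — and that this property survives the two compositions above, which reduces to the standard fact, used repeatedly here, that a composition of coherent (projective) subdivisions is coherent. A routine point to check along the way is that the convex integral function $f$ restricts to a convex integral function on each cone of a barycentric subdivision, which is immediate since those cones, with their induced lattices, lie inside the cones of $\Delta$.
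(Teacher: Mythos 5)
Your overall strategy — reduce by barycentric subdivisions and then invoke Morelli — is the same as the paper's, but there is a genuine gap at the point where you "apply the relative form of Morelli's factorization theorem to $B(\Delta_1)\dashrightarrow B(\Delta)$". The difficulty is that $B(\Delta_1)$ is in general \emph{not} a subdivision of $B(\Delta)$: barycenters of cones of $\Delta_1$ need not be compatible with the walls of $B(\Delta)$. (Already for $\Delta$ a nonsingular $3$-dimensional cone $\sigma=\langle e_1,e_2,e_3\rangle$ and $\Delta_1$ the smooth star subdivision at $e_1+e_2$, the cone of $B(\Delta_1)$ spanned by $e_3$, the barycenter of $\langle e_3, e_1+e_2\rangle$ and the barycenter of $\langle e_1,e_3,e_1+e_2\rangle$ meets the interiors of two distinct maximal cones of $B(\Delta)$, so it lies in no single cone of $B(\Delta)$.) The statement you quote from \cite[Lemma 10.4.3]{W-Factor}, as used here and as extended in \cite{Abramovich-Matsuki-Rashid}, factors a \emph{projective subdivision} with intermediate steps projective over its target; the existence of a common projective refinement of $B(\Delta_1)$ and $B(\Delta)$ does not by itself put you in that situation, so invoking "relative Morelli" for a general birational transformation of cone complexes, with projectivity over a third complex $\Delta$, is exactly the unproved (or at least uncited-in-this-form) step.

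The paper closes this gap by one further refinement: after passing to the second barycentric subdivisions (so that $\Delta':=B(B(\Delta))$ and $B(B(\Delta_1))$ are honest fans, by Lemma \ref{Lem:barycentric}(2)), it takes the common subdivision of $B(B(\Delta_1))$ and $\Delta'$, which is projective over $B(B(\Delta_1))$, and dominates it by a sequence of star subdivisions $\Delta_1'\to B(B(\Delta_1))$; then $\Delta_1'\to\Delta'$ \emph{is} a projective subdivision of a fan, Morelli's $\pi$-desingularization lemma applies to it verbatim, and the auxiliary star subdivisions simply become extra links in the final chain, all projective over $\Delta$ by the composition-of-coherent-subdivisions argument you already use. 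Your proof becomes correct if you insert this step (or, alternatively, if you prove or locate a precise reference for the relative factorization of arbitrary projective birational transformations of nonsingular cone complexes over a base); the remaining differences — stopping at cone complexes after one barycentric subdivision versus going to fans after two — are cosmetic, and the paper's own preference for fans is just to rely only on the fan-level statement of Morelli's lemma.
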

\begin{proof}
By Lemma \ref{Lem:barycentric} we may replace $\Delta_1$ by its second barycentric subdivision, so we may assume $\Delta_1$ is isomorphic to a fan. The common subdivision of $B(B(\Delta_1))$ and  $B(B(\Delta))$ is a projective subdivision of $B(B(\Delta_1))$, so there is a sequence of star subdivisions $\Delta_1'\to B(B(\Delta_1))$ such that $\Delta_1' \to \Delta$ factors through a projective subdivision $\Delta_1' \to \Delta' :=B(B(\Delta))$. Since $\Delta'$ is isomorphic to a fan and $\Delta_1'$ is a projective subdivision, Morelli's $\pi$ desingularization lemma applies, see \cite{Morelli} or \cite[Lemma 10.4.3]{W-Factor}, giving a factorization by star subdivisions and their inverses, all projective over $\Delta'$. Combining these transformation, we obtain the desired factorization, with all steps projective over $\Delta$:

$$\xymatrix{
&&\Delta_1'\ar[dl]|-{\text{star subdivision sequence}}\ar@{.>}[rr]^{\text{factorized}}&&\Delta'\ar@{=}[dr] \\
&B(B(\Delta_1))\ar[dl]|-{\text{star subdivision sequence}}&&&&B(B(\Delta))\ar[dr]|-{\text{star subdivision sequence}}\\
\Delta_1 \ar[rrrrrr]_{\text{projective subdivision}} &&&&&&\Delta
}$$

\end{proof}

\subsection{Functoriality for generalized cone complexes}

\begin{lemma}\label{Lem:generalized-factorization-functorial} The factorization in Lemma \ref{Lem:generalized-factorization} can be made functorial for surjective face maps: we can associate to $(\Delta,f)$ a factorization so that, given a surjective face map $\phi:\Sigma\to \Delta$, the factorization of $(\Sigma,f\circ \phi)$ is the pullback of the factorization of $(\Delta,f)$ along $\phi$.
\end{lemma}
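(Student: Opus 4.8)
The plan is to reduce functoriality to the existence of final objects (Lemma~\ref{Lem:final}), then perform the factorization of Lemma~\ref{Lem:generalized-factorization} on the final object and pull it back. Concretely, given $(\Delta,f)$, first form the final object $(\Delta^{\mathrm{fin}},f^{\mathrm{fin}})$ of the connected component of the associated projective subdivision $\Delta_1\to\Delta$ in the category of objects-over-$\Delta$ with surjective face maps, as produced in Lemma~\ref{Lem:final}. Apply Lemma~\ref{Lem:generalized-factorization} once, to $(\Delta^{\mathrm{fin}},f^{\mathrm{fin}})$, obtaining a factorization into simultaneous star subdivisions and their inverses, with all intermediate steps projective over $\Delta^{\mathrm{fin}}$. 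Then \emph{define} the factorization of an arbitrary object $(\Delta,f)$ in that component to be the pullback of this fixed factorization of the final object along the canonical map $(\Delta,f)\to(\Delta^{\mathrm{fin}},f^{\mathrm{fin}})$. Doing this for every connected component yields the assignment.

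The verification then has two parts. First, I must check that pulling back a factorization along a surjective face map again produces a legitimate factorization: star subdivisions pull back to (simultaneous) star subdivisions along surjective face maps — this is exactly the compatibility recorded in the discussion of cone complexes in the excerpt (a surjective face map $\Sigma'_2\to\Sigma_2$ induces surjective face maps on the subdivided complexes, and the barycenter of a cone pulls back to the barycenters of the cones mapping onto it, so a star subdivision at a barycenter pulls back to the simultaneous star subdivision at the relevant barycenters) — and projectivity of the intermediate steps over $\Delta^{\mathrm{fin}}$ pulls back to projectivity over $\Delta$ since piecewise linear convex integral functions pull back to piecewise linear convex integral functions along face maps. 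Second, I must check the functoriality equation itself: given a surjective face map $\phi:\Sigma\to\Delta$, the factorization assigned to $(\Sigma,f\circ\phi)$ equals the pullback along $\phi$ of the factorization assigned to $(\Delta,f)$. Since $\Sigma$ and $\Delta$ lie in the same connected component, they share the same final object, and the canonical map $(\Sigma,f\circ\phi)\to(\Delta^{\mathrm{fin}},f^{\mathrm{fin}})$ factors through $\phi$ followed by $(\Delta,f)\to(\Delta^{\mathrm{fin}},f^{\mathrm{fin}})$; hence pulling back the fixed final-object factorization along the former equals pulling back along $\phi$ the pullback along the latter, which is precisely the required identity. Composition of pullbacks is what makes this work, so this step is essentially formal once the category-theoretic setup is in place.

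I expect the main obstacle to be the first verification — that the pullback of the Lemma~\ref{Lem:generalized-factorization} factorization along a surjective face map is genuinely a factorization in the sense required by the lemma, in particular that the intermediate complexes remain \emph{nonsingular} generalized cone complexes and that each elementary step really is a \emph{simultaneous} star subdivision (or its inverse) rather than something more degenerate. One has to be careful that when several cones of $\Sigma$ map onto a single cone of $\Delta^{\mathrm{fin}}$ being star-subdivided, the pulled-back operation is the simultaneous star subdivision at all their barycenters, and that distinct connected components of the center upstairs correspond correctly; this is where the bookkeeping with face maps (including self-face maps, which can occur in generalized cone complexes) is most delicate. Once that is handled, the rest is the formal pullback-of-pullbacks argument and the observation that two objects in the same component have the same final object, so their factorizations are mutually compatible.
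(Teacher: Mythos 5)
Your proposal is correct and follows essentially the same route as the paper: choose a final object $(\tilde\Delta,\tilde f)$ in each connected component via Lemma~\ref{Lem:final}, factor it once by Lemma~\ref{Lem:generalized-factorization}, define the factorization of any $(\Delta,f)$ as the pullback along the canonical map $\psi_\Delta$ to the final object, and deduce functoriality from $\psi_\Sigma=\psi_\Delta\circ\phi$ and composition of pullbacks. Your extra verification that pulled-back star subdivisions remain simultaneous star subdivisions and that projectivity over the base is preserved is a point the paper leaves implicit, relying on the earlier dictionary between face maps and subdivisions.
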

\begin{proof}
For each connected component of the category of pairs $(\Delta,f)$ with face maps between them choose  a final object $(\tilde\Delta,\tilde f)$. By Lemma  \ref{Lem:generalized-factorization} there is a factorization $\tilde\Delta_1 \das \dots\das \tilde\Delta$  of $(\tilde\Delta,\tilde f)$. Given an arbitrary  $(\Delta,f)$ it has a morphism $\psi_{\Delta}: \Delta\to \tilde\Delta$ to the final object   $(\tilde\Delta,\tilde f)$, so that $f = f\circ\psi_\Delta$. The pullback $\Delta_1 \das \dots\das \Delta$  of $\tilde\Delta_1 \das \dots\das \tilde\Delta$ along $\psi_\Delta$ is a factorization of $(\Delta,f)$, and its pullback along $\phi$  is simply the pullback $\Sigma_1 \das \dots\das \Sigma$   along $\psi_\Delta\circ \phi = \psi_\Sigma$ of $\tilde\Delta_1 \das \dots\das \tilde\Delta$, so the process is functorial.
\end{proof}

\subsection{Functoriality for toroidal factorization}
\begin{proof}[Proof of Proposition \ref{Prop:toroidal-factorization}]  The toroidal morphism $X_1 \to X_2$ corresponds to a subdivision $\Sigma(X_1) \to \Sigma(X_2)$ induced by a piecewise linear  function   $f:\Sigma(X_2)\to \RR$ convex and integral on each cone.  This is functorial: a surjective regular morphism $X_2' \to X_2$ gives rise to a surjective face map $\phi:\Sigma(X_2)' \to \Sigma(X_2)$ such that $X_1'\to X_2' $ corresponds to $f\circ \phi$.

By Lemma \ref{Lem:generalized-factorization-functorial} we have a factorization $\Sigma(X_1) \das \dots \das \Sigma(X_2)$, functorial for surjective face maps, into nonsingular star subdivisions and their inverses, with all intermediate steps functorially projective over $\Sigma(X_2)$. This gives rise to a toroidal factorization $X_1 \das \dots \das X_2$ into blowings up and down, which is functorial for surjective regular morphisms, where the terms are functorially projective over $X_2$.

\end{proof}

 \section{Birational cobordisms}

 A key tool in the factorization algorithm is the notion of {\em birational cobordism}, introduced in  \cite{W-Cobordism}, where it is motivated by analogy with Morse theory. In this paper we adopt the approach of  \cite{AKMW} which relies on Geometric Invariant Theory and variation of linearizations, see \cite{Brion-Procesi,Thaddeus,Dolgachev-Hu}.

\subsection{Geometric Invariant Theory of $\PP(E)$}\label{Sec:GIT-E} Given a nonzero coherent sheaf $E$ on $X_2$, the data of  a $\GG_m$-action $\rho: \GG_m \to \Aut E$ on $E$  is equivalent to the data of  a $\ZZ$-grading $E = \oplus_{a\in \ZZ} E_a$, which is necessarily a finite sum: $E=\bigoplus_{a=a_{\min}}^{a_{\max}} E_a$.  The homogeneous factor $E_a$ is characterized by $$\rho(t) v\ \  = \ \ t^av \quad \forall v\in E_a.$$ Here and later we use the informal notation $v\in E_a$ to indicate that $v$ is a local section of $E_a$.  Given such data, there is a resulting  action of  $\GG_m$ on $\Sym^\bullet(E)$ and a linearized action on $\PP(E)=\PP_{X_2}(E)$.

We require the following:
\begin{assumption}
 The sheaves  $E_{a_{\min}}$ and $E_{a_{\max}}$ are everywhere nonzero, so $\PP(E_{a_{\min}})\to X_2$ and $\PP(E_{a_{\max}})\to X_2$ are surjective.
 \end{assumption}

 Given an integer $a$ viewed as a character of $\GG_m$, we define a new action of $\GG_m$ on $E$ by $$\rho_a(t) v =  t^{-a}\rho(t)(v).$$ This induces an action on  $\Sym^\bullet(E)$ and on  $(\PP(E),\cO_{\PP(E)}(1))$ which we also denote by $\rho_a$. Writing $ (\Sym^\bullet(E))^{\rho_a}$ for the ring of invariants under this action, we denote $$\PP(E)\sslash_a \GG_m\ \  := \ \ \Proj_{X_2} (\Sym^\bullet(E))^{\rho_a}.$$

As customary, we unwind this as follows: we define the {\em unstable locus of $\rho_a$} to be the closed subscheme
\begin{equation} \label{Eq:unstable}\PP(E)^\un_a \ \ := \ \ \PP\left(\bigoplus_{b<a} E_b\right)\ \ \bigsqcup\ \   \PP\left(\bigoplus_{b>a} E_b\right),
\end{equation} and the {\em semistable locus}  to be the complementary open $$\PP(E)^\sst_a\ \  := \ \ \PP(E)\smallsetminus \PP(E)^\un_a.$$

We have the following well-known facts:
\begin{lemma}\label{Lem:GIT-E}
\begin{enumerate}
\item The semistable locus $\PP(E)^\sst_a$ is nonempty precisely when $a_{\min}\leq a\leq a_{\max}$.
\item Consider the rational map $q_a:\PP(E)\to \PP(E)\sslash_a \GG_m$ induced by the inclusion $(\Sym^\bullet(E))^{\rho_a}\subset (\Sym^\bullet(E))$. Then $q_a$ restricts to an affine $\GG_m$-invariant morphism $\PP(E)^\sst_a  \to  \PP(E)\sslash_a \GG_m$ which is a submersive universal categorical quotient, thus $\PP(E)\sslash_a \GG_m =  \PP(E)^\sst_a\sslash \GG_m.$
\item For $a_{\min}\leq a_1 < a_2\leq a_{\max}$  we have $\PP(E)^\sst_{a_1}\subset\PP(E)^\sst_{a_2}$ precisely when $\bigoplus_{a=a_1}^{a_2-1} E_a = 0$, and similarly $\PP(E)^\sst_{a_1}\supset\PP(E)^\sst_{a_2}$ precisely when $\bigoplus_{a=a_1+1}^{a_2} E_a = 0$. In particular $\PP(E)^\sst_{a_1}=\PP(E)^\sst_{a_2}$ precisely when $\bigoplus_{a=a_1}^{a_2} E_a = 0$.
\item If $a_{\min}\leq a_1 < a_2\leq a_{\max}$  and $\bigoplus_{a=a_1}^{a_2-1} E_a = 0$, then the inclusion $\PP(E)^\sst_{a_1}\subset\PP(E)^\sst_{a_2}$ induces a projective morphism $$\PP(E)^\sst_{a_1}\sslash {\GG_m} \to \PP(E)^\sst_{a_2}\sslash {\GG_m}.$$ Similarly if $\bigoplus_{a=a_1+1}^{a_2} E_a = 0$ we have a projective morphism  $$\PP(E)^\sst_{a_1}\sslash {\GG_m} \leftarrow \PP(E)^\sst_{a_2}\sslash {\GG_m}.$$
\end{enumerate}
\end{lemma}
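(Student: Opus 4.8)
The plan is to reduce all four items to standard geometric invariant theory and then read them off from the weight decomposition $E=\bigoplus_a E_a$. Every object in play --- $\PP(E)$ with $\cO(1)$, the sub-fibrations $\PP(\bigoplus_{b<a}E_b)$ and $\PP(\bigoplus_{b>a}E_b)$, the invariant algebra $(\Sym^\bullet E)^{\rho_a}$, and $\Proj$ of a graded algebra --- commutes with arbitrary base change $X_2'\to X_2$, so the formation of $\PP(E)^\sst_a$ and of $\PP(E)\sslash_a\GG_m$ is local on $X_2$; one may either invoke Mumford's relative GIT for the $\GG_m$-linearized projective scheme $(\PP(E),\cO(1),\rho_a)$ over the noetherian base $X_2$, or pass to an affine $X_2=\Spec R$ with $E=\widetilde M$ for a coherent $R$-module $M$. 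Then (1) and (3) are bookkeeping with weights, (2) is Mumford's GIT over the base, and (4) is the standard variation-of-GIT statement.

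For (1) and (3), the key point --- essentially the content of \eqref{Eq:unstable} --- is that a local section $v$ of $E$ gives a point $[v]\in\PP(E)^\sst_a$ exactly when $v$ has a nonzero homogeneous component of $\rho$-weight $\le a$ \emph{and} a nonzero homogeneous component of $\rho$-weight $\ge a$. Hence $\PP(E)^\sst_a=\emptyset$ iff $\bigoplus_{b\le a}E_b=0$ (forcing $a<a_{\min}$) or $\bigoplus_{b\ge a}E_b=0$ (forcing $a>a_{\max}$); for $a_{\min}\le a\le a_{\max}$ the assumption that $E_{a_{\min}}$ and $E_{a_{\max}}$ are everywhere nonzero gives, over each point of $X_2$, a section $v_{\min}+v_{\max}$ with $0\ne v_{\min}\in E_{a_{\min}}$ and $0\ne v_{\max}\in E_{a_{\max}}$, so $\PP(E)^\sst_a\ne\emptyset$; this is (1). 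For (3), with $a_{\min}\le a_1<a_2\le a_{\max}$, the same description shows $[v]\in\PP(E)^\sst_{a_1}\smallsetminus\PP(E)^\sst_{a_2}$ iff $v$ has a component of weight $\le a_1$, a component of weight $\ge a_1$, and \emph{no} component of weight $\ge a_2$ --- i.e. all components of $v$ of weight $\ge a_1$ lie in $\bigoplus_{a_1\le b\le a_2-1}E_b$. If $\bigoplus_{a=a_1}^{a_2-1}E_a=0$ this is impossible, so $\PP(E)^\sst_{a_1}\subseteq\PP(E)^\sst_{a_2}$; conversely, if $E_c\ne0$ for some $a_1\le c\le a_2-1$, then over a point where $(E_c)_x\ne0$ the section $v_{\min}+v_c$ with $0\ne v_c\in E_c$ gives a point of the difference. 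The reverse inclusion is the mirror statement, and the equality assertion follows since the two vanishing conditions together say precisely $\bigoplus_{a=a_1}^{a_2}E_a=0$.

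For (2), over an affine base $\Spec R$ the semistable locus $\PP(E)^\sst_a$ is the union of the basic opens $D_+(f)$ over $\rho_a$-invariant homogeneous $f\in\Sym^\bullet M$ of positive degree, and on each such open $q_a$ restricts to the affine morphism $\Spec(\Sym^\bullet M)_{(f)}\to\Spec\big((\Sym^\bullet M)^{\rho_a}\big)_{(f)}$; linear reductivity of $\GG_m$ makes each of these a good quotient, so after gluing $q_a\colon\PP(E)^\sst_a\to\PP(E)\sslash_a\GG_m$ is a good quotient --- in particular a submersive universal categorical quotient --- whence $\PP(E)\sslash_a\GG_m=\PP(E)^\sst_a\sslash\GG_m$; this globalizes by base change. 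For (4), given (3), the inclusion of $\GG_m$-invariant opens $\PP(E)^\sst_{a_1}\subseteq\PP(E)^\sst_{a_2}$ induces, by the universal property of the good quotient $q_{a_1}$ applied to the invariant composite $\PP(E)^\sst_{a_1}\hookrightarrow\PP(E)^\sst_{a_2}\xrightarrow{q_{a_2}}Q_2$, a canonical morphism $Q_1:=\PP(E)^\sst_{a_1}\sslash\GG_m\to\PP(E)^\sst_{a_2}\sslash\GG_m=:Q_2$. Its projectivity is the standard variation-of-GIT fact that passing to a linearization with larger semistable locus yields a projective morphism of quotients; concretely, over $\Spec R$ one has $Q_i=\Proj_R\big(\bigoplus_n(\Sym^nM)^{\rho_{a_i}}\big)$, and one realizes $Q_1$ as $\Proj$ over $Q_2$ of a finitely generated graded $R$-algebra built from the weight spaces $(\Sym^nM)_w$ with $-n(a_2-a_1)\le w\le0$, finite generation coming from linear reductivity of a rank-two torus.

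The only step where I expect genuine work is (4): setting up this interpolating (bi)graded algebra correctly, checking its finite generation, and identifying its $\Proj$ with $Q_1$ over $Q_2$, while also tracking the construction through the reduction to $\Spec R$ and back to general $X_2$. If preferred, (4) can instead be quoted from the variation-of-GIT literature \cite{Thaddeus, Dolgachev-Hu} together with the treatment in \cite{AKMW}; everything else is a routine unwinding of definitions plus Mumford's GIT over the base.
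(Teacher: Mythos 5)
Your treatment of (1) and (3) is essentially the paper's own argument: both work directly from the explicit description \eqref{Eq:unstable}, characterizing semistable points as those with nonzero weight components on both sides of $a$, and both use the standing assumption that $E_{a_{\min}}$ and $E_{a_{\max}}$ are everywhere nonzero to produce witnesses like $v_{\min}+v_{\max}$ (the paper uses $v$ or $v+w$ with $w$ of weight $a_{\min}$). For (2) your route is also the paper's, with one step asserted that the paper actually proves: since $\PP(E)^\sst_a$ is \emph{defined} by \eqref{Eq:unstable} and not as a GIT semistable locus, the claim that it equals the union of the $D_+(f)$ over $\rho_a$-invariant homogeneous $f$ of positive degree needs the short verification the paper gives --- every invariant $f=\prod f_j$ with $f_j\in E_{a+\delta_j}$, $\sum\delta_j=0$, has factors of weight $\ge a$ and $\le a$ and hence vanishes on both unstable strata, and conversely at a semistable point one manufactures a nonvanishing invariant $f_1^rf_2^s$ from coordinates of weights $a+\delta_1$, $\delta_1\le 0$, and $a+\delta_2$, $\delta_2\ge 0$. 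Note also that the paper deliberately does not quote Mumford's GIT over a field here (the base is a qe scheme, possibly of mixed characteristic); it cites the relative statements of \cite{ATLuna}, which is consistent with your remark that linear reductivity of $\GG_m$ over $\ZZ$ is what is really used.

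The genuine divergence is (4), and there you are doing far more work than needed. The paper's proof is two lines: the inclusion $\PP(E)^\sst_{a_1}\subset\PP(E)^\sst_{a_2}$ plus the universal property of the categorical quotient gives the morphism $\varphi_{a_1/a_2}$ (as in your proposal), and then projectivity is immediate because both quotients $\PP(E)\sslash_{a_i}\GG_m=\Proj_{X_2}(\Sym^\bullet E)^{\rho_{a_i}}$ are projective over $X_2$, and a morphism over $X_2$ from a scheme projective over $X_2$ to another such scheme is projective. Your proposed interpolating bigraded algebra with finite generation via a rank-two torus, or a citation of \cite{Thaddeus,Dolgachev-Hu}, is both unnecessary and riskier in this setting: those references work over a field, while the base here is an arbitrary noetherian qe scheme, and your reduction to $X_2=\Spec R$ is problematic for exactly this item because projectivity of $Q_1\to Q_2$ is not local on the base, so an affine-local proof does not glue back to general $X_2$ without supplying a relatively ample sheaf globally --- which is precisely what the global $\Proj_{X_2}$ description hands you for free. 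So the deferred ``genuine work'' you flag in (4) can simply be deleted in favor of the projectivity-over-$X_2$ observation.
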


\begin{proof}
\begin{enumerate}

\item We have  $a\leq a_{\max}$ if and only if $\PP(\bigoplus_{b<a} E_b) \neq \PP(E)$, and  $a_{\min}\leq a$ if and only if $\PP(\oplus_{b>a} E_b)\neq \PP(E)$.

\item \begin{enumerate}

\item {\sc Affine cover of the quotient.}  The scheme $\PP(E)\sslash_a \GG_m =  \Proj_{X_2} (\Sym^\bullet(E))^{\rho_a}$ is covered by principal open sets \begin{equation} \label{Eq:D0} D^0_f :=  (\PP(E)\sslash_a \GG_m)\setminus Z_{\PP(E)\sslash_a \GG_m}(f)\end{equation} associated to non-zero homogeneous invariant elements of the form $f=\prod_{j=1}^s f_j$ where $f_j \in E_{a+\delta_j}$ with $\sum \delta_j = 0$.

 \item {\sc Common zero locus of $\{f\}$.} We note that the common zero locus of elements of $E_c$ is $\PP(E/E_c) = \PP(\bigoplus_{b\neq c} E_b)$. Now observe that any element $f=\prod_{j=1}^s f_j$ as above has a factor $f_j$ with $\delta_j\geq 0$ and a factor $f_j$ with $\delta_j\leq 0$. This means that $f$ vanishes on $\PP(\bigoplus_{b<a} E_b)$ and on $\PP(\bigoplus_{b>a} E_b)$, so $f$ vanishes on $\PP(E)^\un_a$.

Conversely if $x\notin \PP(E)^\un_a$ then  we have some coordinates $f_1\in E_{a+\delta_1}, \delta_1\leq 0$ and $f_2\in E_{a+\delta_2}, \delta_2\geq 0$ which do not vanish: $f_1(x) \neq 0 \neq f_2(x)$.  Taking any positive $r,s$ so that $r\delta_1+ s\delta_2 = 0$ we can form $f = f_1^{r} f_2^s$, and  $f(x) \neq 0$.  This implies that  the common zero locus of the elements $f=\prod_{j=1}^s f_j$ above in $\PP(E)$ is precisely $\PP(E)^\un_a$.

\item {\sc Compatible affine cover of $\PP(E)^\sst_a$.}   It follows that $\PP(E)^\sst_a$ is covered by principal open sets \begin{equation} \label{Eq:D} D_f = \PP(E) \setminus Z_{\PP(E)}(f),\end{equation} the inverse image of the affine open $D^0_f$ of equation (\ref{Eq:D0}) is the affine open $D_f$ of equation (\ref{Eq:D}), and  $\PP(E)^\sst_a \to \PP(E)\sslash_a \GG_m$ is an affine morphism.

\item {\sc Coordinates and invariants} The coordinate ring of $D^0_f$ is  the degree-zero component of $(\Sym^\bullet(E))^{\rho_a}[1/f]$, which is the $\rho_a$-invariant summand of the degree-0 component of $(\Sym^\bullet(E))[1/f]$. The latter is the coordinate ring of $D_f$. In particular, $D^0_f = D_f\sslash \GG_m$ is a submersive universal categorical quotient, see  \cite[Lemma 4.2.6 and Corollary 4.2.11]{ATLuna}. It follows from the definition (see \cite[Remark 5, p. 8]{GIT}) that  $\PP(E)^\sst_a \to \PP(E)\sslash_a \GG_m$ is a submersive universal categorical quotient.
\end{enumerate}
\item The situation is symmetric, so we only address the first statement.  If $\bigoplus_{a=a_1}^{a_2-1} E_a=0$ then $ \PP(\bigoplus_{b<a_2} E_b) =  \PP(\bigoplus_{b<a_1} E_b) \subset \PP(E)^\un_{a_1}$ and certainly $\PP(\bigoplus_{b>a_2} E_b) \subset  \PP(\bigoplus_{b>a_1} E_b)\subset \PP(E)^\un_{a_1}$, so $\PP(E)^\un_{a_1}\subset \PP(E)^\un_{a_2}$ as needed.

Conversely,  if $v\in \PP(\bigoplus_{a=a_1}^{a_2-1} E_a)$ over $x\in X_2$ and we take $w \in \PP(E_{a_{\min}})$ also over $x$, then either $v \in \PP(E_{a_1}) \subset   \PP(E)^\sst_{a_1}$ or  else   $(v+w) \in   \PP(E)^\sst_{a_1}$. In either case, if $\bigoplus_{a=a_1}^{a_2-1} E_a\neq 0$ we have $\PP(E)^\sst_{a_1}\not\subset\PP(E)^\sst_{a_2}$, as needed.
\item The situation is symmetric, so we only address the first case, where $a_{\min}\leq a_1 < a_2\leq a_{\max}$ and $\oplus_{a=a_1}^{a_2-1} E_a = 0$, so that $\PP(E)^\sst_{a_1}\subset\PP(E)^\sst_{a_2}$  by (3). Since   $\PP(E)^\sst_{a_i}  \to  \PP(E)\sslash_{a_i} \GG_m$ are categorical quotients, we have a canonical morphism $\varphi_{a_1/a_2}$ making the following diagram commutative:
$$ \xymatrix{ \PP(E)^\sst_{a_1}\ \  \ar@{^(->}[rr]\ar[d] &&\ \  \PP(E)^\sst_{a_2}\ar[d] \\
\PP(E)\sslash_{a_1} \GG_m\ar[rr]^{\varphi_{a_1/a_2}}&& \PP(E)\sslash_{a_2} \GG_m.
}$$
But $ \PP(E)\sslash_{a_i} \GG_m$ are projective over $X_2$, hence $\varphi_{a_1/a_2}$ is projective.

\end{enumerate}
\end{proof}

This lemma gives the familiar ``wall and chamber decomposition" of the interval $[a_{\min},a_{\max}]$ in the character lattice $\ZZ$ into segments where the quotients $\PP(E)^\sst_{a_1}\sslash_{\GG_m}$ are constant.

All the constructions above are compatible with arbitrary morphisms $X_2' \to X_2$, except that the values of $a_{\min}$ and $a_{\max}$ and the ample sheaf for $\phi_{a_1/a_2}$ are only compatible with surjective morphisms $X_2' \to X_2$.

\begin{remark}\label{Rem:geometric-quotient}
One can show that the quotient morphism $\PP(E)^\sst_a\to   \PP(E)^\sst_a \sslash \GG_m$ is in fact universally submersive. If in addition $E_a=0$ it can be shown that  the quotient morphism is a universal geometric quotient $\PP(E)^\sst_a\to   \PP(E)^\sst_a / \GG_m$. These facts follow from \cite[Theorem 1.1 and Amplification 1.3]{GIT}, which are stated for schemes over a field in characteristic 0 but apply here since $\GG_m$ is a linearly reductive group-scheme over $\ZZ$. Since we do not need these facts, we will not provide a detailed proof, though we will use the notation $\PP(E)^\sst_a / \GG_m$ when $E_a = 0$.
\end{remark}

\subsection{Geometric Invariant Theory of $B\subset \PP(E)$}\label{GIT}
Continuing the discussion, let $B\subset \PP(E)$ be a closed reduced $\GG_m$-stable subscheme. It is the zero locus of  a homogeneous and $\GG_m$-homogeneous ideal $I_B \subset \Sym^\bullet E$.
We define $B^\un_a := B\cap \PP(E)^\un_a $ and $B^\sst_a := B\cap \PP(E)^\sst_a$. The image of  $q_a:B^\sst_a \to \PP(E)\sslash_a \GG_m$ is denoted $B\sslash_a \GG_m$. We have canonically $B\sslash_a \GG_m=\Proj_{X_2} \left(\left(\Sym^\bullet E/I_B\right)^{\rho_a}\right)$. We write $a_{\min}(B) = \min\{a\mid B\cap \PP(E_a) \neq \emptyset\}$ and similarly $a_{\max}(B) = \max\{a\mid B\cap \PP(E_a) \neq \emptyset\}$. We deduce the analogous, still well-known, facts, which follow immediately from Lemma \ref{Lem:GIT-E}:
\begin{lemma}\label{Lem:GIT-B}
\begin{enumerate}
\item The semistable locus $B^\sst_a$ is nonempty precisely when $a_{\min}(B)\leq a\leq a_{\max}(B)$.
\item The map $q_a:B^\sst_a \to \PP(E)\sslash_a \GG_m$ is an affine $\GG_m$-invariant morphism, inducing  a categorical quotient  $B^\sst_a\to   B^\sst_a\sslash \GG_m=  B\sslash_a \GG_m.$
\item For $a_1 < a_2$  we have $B^\sst_{a_1}\subset B^\sst_{a_2}$ precisely when $B\cap \PP(\oplus_{a=a_1}^{a_2-1} E_a) = \emptyset$, and similarly $B^\sst_{a_1}\supset B^\sst_{a_2}$ precisely when $B\cap \PP(\oplus_{a=a_1+1}^{a_2} E_a) = \emptyset$. In particular $B^\sst_{a_1}=B^\sst_{a_2}$ precisely when $B\cap \PP(\oplus_{a=a_1}^{a_2} E_a) = \emptyset$.
\item If $a_1 < a_2$ and $B\cap \PP(\oplus_{a=a_1}^{a_2-1} E_a) = \emptyset$, then the inclusion $B^\sst_{a_1}\subset B^\sst_{a_2}$ induces a projective morphism $B^\sst_{a_1}\sslash {\GG_m} \to B^\sst_{a_2}\sslash {\GG_m}$. Similarly if $B\cap \PP(\oplus_{a=a_1+1}^{a_2} E_a) = \emptyset$  we have a projective morphism  $B^\sst_{a_1}\sslash {\GG_m} \leftarrow B^\sst_{a_2}\sslash {\GG_m}$.
\end{enumerate}
\end{lemma}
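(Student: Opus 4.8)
The plan is to deduce each of the four statements from the corresponding part of Lemma~\ref{Lem:GIT-E} by intersecting every locus that occurs there with the closed, reduced, $\GG_m$-stable subscheme $B\subset\PP(E)$, supplying the little extra input either from linear reductivity of $\GG_m$ or from $\GG_m$-orbit limits inside $B$.

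I would treat the quotient statement (2) first, since (4) rests on it. Because $B^\sst_a=B\cap\PP(E)^\sst_a$ is closed and $\GG_m$-stable in $\PP(E)^\sst_a$, the restriction to $B^\sst_a$ of the affine $\GG_m$-invariant morphism $q_a\colon\PP(E)^\sst_a\to\PP(E)\sslash_a\GG_m$ of Lemma~\ref{Lem:GIT-E}(2) is again affine and $\GG_m$-invariant, with image $B\sslash_a\GG_m=\Proj_{X_2}\bigl((\Sym^\bullet E/I_B)^{\rho_a}\bigr)$. On each principal affine chart $D_f$ appearing in the proof of Lemma~\ref{Lem:GIT-E}(2) one has $D_f\cap B=\Spec(\cO(D_f)/J)$ for a $\GG_m$-stable ideal $J$; as $\GG_m$ is linearly reductive the functor of $\GG_m$-invariants is exact, so $(\cO(D_f)/J)^{\GG_m}=\cO(D_f)^{\GG_m}/J^{\GG_m}$, and $D_f\cap B^\sst_a\to\Spec\bigl((\cO(D_f)/J)^{\GG_m}\bigr)$ is a categorical quotient by \cite[Lemma~4.2.6 and Corollary~4.2.11]{ATLuna}; gluing these over the affine cover $\{D^0_f\}$ of the quotient yields the categorical quotient $B^\sst_a\to B\sslash_a\GG_m$. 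The same exactness presents $(\Sym^\bullet E/I_B)^{\rho_a}$ as a graded quotient of $(\Sym^\bullet E)^{\rho_a}$, so $B\sslash_a\GG_m$ is a closed subscheme of $\PP(E)\sslash_a\GG_m$, hence projective over $X_2$. For (4), once (3) gives $B^\sst_{a_1}\subseteq B^\sst_{a_2}$ under the stated vanishing hypothesis, the universal property of the categorical quotients produces a morphism $B\sslash_{a_1}\GG_m\to B\sslash_{a_2}\GG_m$ between schemes projective over $X_2$, which is therefore projective; the reversed case is symmetric.

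There remain the set-theoretic statements (1) and (3), which are the heart of the matter. One half of each is immediate from Lemma~\ref{Lem:GIT-E} by intersection: $B^\sst_a=B\setminus(B\cap\PP(E)^\un_a)$, the loci $\PP(E)^\un_a$ behave as in that lemma, and $B^\sst_a$ is visibly empty when $a<a_{\min}(B)$ or $a>a_{\max}(B)$. For the reverse implications — nonemptiness of $B^\sst_a$ on all of $[a_{\min}(B),a_{\max}(B)]$, and the conclusion that $B\cap\PP(\oplus_{a=a_1}^{a_2-1}E_a)\neq\emptyset$ forces $B^\sst_{a_1}\not\subseteq B^\sst_{a_2}$ — one cannot copy the proof of Lemma~\ref{Lem:GIT-E}, which used the running Assumption that $E_{a_{\min}},E_{a_{\max}}$ are everywhere nonzero; that Assumption has no counterpart for $B$. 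Instead I would use that, $\PP(E)\to X_2$ being proper and $B$ closed and $\GG_m$-stable, for every $z\in B$ the limits $\lim_{t\to0}\rho(t)\cdot z$ and $\lim_{t\to\infty}\rho(t)\cdot z$ exist and lie in $B$, on the single weight spaces $\PP(E_c)\cap B$ with $c$ the least, respectively greatest, weight occurring in $z$; flowing suitable points of $B\cap\PP(\oplus_{a=a_1}^{a_2-1}E_a)$ onto such weight spaces produces points of $B^\sst_{a_1}\smallsetminus B^\sst_{a_2}$. I expect this to be the main obstacle, and I would also record that for a completely general reduced $\GG_m$-stable $B$ the reverse implications can fail (take $B$ a disjoint pair of $\GG_m$-fixed points on non-extreme weight spaces); they hold once $B$ is irreducible over $X_2$, as it is in the cobordism where the lemma is applied, in which case the generic point of $B$ has least weight $a_{\min}(B)$ and greatest weight $a_{\max}(B)$ and so is semistable throughout the range.
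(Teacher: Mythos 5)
The paper gives this lemma no separate proof: it is introduced only with the remark that the facts ``follow immediately from Lemma \ref{Lem:GIT-E}'', so your write-up is necessarily more detailed than the original. Your treatment of (2) and (4) is correct and is exactly the intended deduction: restrict the affine $\GG_m$-invariant morphism of Lemma \ref{Lem:GIT-E}(2) to the closed $\GG_m$-stable subscheme $B^\sst_a$, use exactness of $\GG_m$-invariants (linear reductivity of the diagonalizable group $\GG_m$ over $\ZZ$) to see that $B\sslash_a\GG_m=\Proj_{X_2}\bigl((\Sym^\bullet E/I_B)^{\rho_a}\bigr)$ is a closed subscheme of $\PP(E)\sslash_a\GG_m$ and to get the categorical quotient chart by chart via \cite{ATLuna}, and then deduce (4) from the universal property together with projectivity over $X_2$, just as in the proof of Lemma \ref{Lem:GIT-E}(4). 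You are also right -- and this is a genuinely useful observation that the paper glosses over -- that the set-theoretic parts are not purely formal: the hypothesis $B\cap\PP(\oplus_{a=a_1}^{a_2-1}E_a)=\emptyset$ is weaker than the vanishing $\oplus_{a=a_1}^{a_2-1}E_a=0$ used for $\PP(E)$, so even the ``if'' directions of (3), and the emptiness of $B^\sst_a$ outside $[a_{\min}(B),a_{\max}(B)]$, require your limit argument (closedness of $B$ under the flows $t\to 0,\infty$, using properness of $\PP(E)\to X_2$); and your two-fixed-point example does show that (1) as literally stated fails for an arbitrary closed reduced $\GG_m$-stable $B$. What rescues the statement in the paper is Assumption \ref{Ass:spread}, introduced immediately after the lemma and built into the definition of a birational cobordism: under it the generic point of each irreducible component has lowest weight $a_{\min}(B)$ and highest weight $a_{\max}(B)$, hence is semistable for every $a$ in the range -- which is your irreducibility argument applied componentwise.

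The one step of yours that does not work as written is the claim that flowing points of $B\cap\PP(\oplus_{a=a_1}^{a_2-1}E_a)$ onto the fixed weight spaces produces points of $B^\sst_{a_1}\setminus B^\sst_{a_2}$: the $t\to\infty$ limit of such a point lies in $B\cap\PP(E_M)$ with $a_1\le M\le a_2-1$, and a point of pure weight $M$ is semistable at $a_1$ only when $M=a_1$, so this settles the ``only if'' half of (3) only in that special case. Moreover that half also needs the range restriction $a_{\min}(B)\le a_1<a_2\le a_{\max}(B)$ present in Lemma \ref{Lem:GIT-E}(3) but dropped in the statement here: for instance if every weight occurring on $B$ is at least $2$ (say $B$ is the closure of an orbit with weights $2,3$, which is irreducible and satisfies Assumption \ref{Ass:spread}) and $a_1=1$, $a_2=3$, then the hypothesis holds while $B^\sst_{a_1}=\emptyset\subseteq B^\sst_{a_2}$. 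None of this affects the paper: in the sequel only (2), (4) and the ``if'' directions of (3) are used (the wall-and-chamber structure, Assumption \ref{Ass:separate}, and the projective morphisms $\varphi_{a_i\pm}$), and those parts your argument proves correctly.
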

This time we obtain a  ``wall and chamber decomposition" of the interval $[a_{\min}(B),a_{\max}(B)]$. We denote the ``walls", namely the values of $a$ for which $B\cap \PP(E_a) \neq \emptyset$, by $a_{\min}(B) = a_0 < a_1 \dots < a_m = a_{\max}(B)$.

By replacing the embedding $B \subset \PP(E)$ by the Veronese re-embedding $B \subset \PP(\Sym^2E)$ we may, and will, assume
\begin{assumption}\label{Ass:separate}$a_i+1<a_{i+1}$.
\end{assumption}
We denote $B^\sst_{a_i+} = B^\sst_{a_i+1}$ and $B^\sst_{a_i-} = B^\sst_{a_i-1}$, and note that $B^\sst_{a_i+} = B^\sst_{a_{i+1}-}$.  Assumption \ref{Ass:separate} implies that now we always have projective morphisms $\varphi_{a_i\pm}$:
\begin{equation}\xymatrix@=.7pc{B^\sst_{a_i-} /\GG_m \ar[dr]_{\varphi_{a_i-}}\ar@{-->}[rr]^{\varphi_i} &&   B^\sst_{a_i+} /\GG_m\ar[dl]^{\varphi_{a_i+}}  \ar@{=}[r]&  B^\sst_{a_{i+1}-} /\GG_m \ar[dr]^{\varphi_{a_{i+1}-}} & \dots \\
&B^\sst_{a_i} \sslash\GG_m &&& B^\sst_{a_{i+1}} \sslash\GG_m }.
\end{equation}

Finally, we will assume the following:
\begin{assumption}\label{Ass:spread} Each irreducible component of $B$ meets both $\PP(E_{a_{\min}(B)})$ and $\PP(E_{a_{\max}(B)})$.
\end{assumption}
Under this assumption the quotients $B^\sst_{a} \sslash \GG_m$ are all birational to each other, as long as $a_{\min}(B)< a< a_{\max}(B)$. For the extreme values we have isomorphisms $B\cap \PP(E_{a_{\min}(B)}) \to B^\sst_{a_{\min}(B)} \sslash \GG_m$ and $B\cap \PP(E_{a_{\max}(B)}) \to B^\sst_{a_{\max}(B)} \sslash \GG_m$.

\begin{remark}
As in Remark \ref{Rem:geometric-quotient}, it can be shown that $B^\sst_a\to   B^\sst_a \sslash \GG_m$ is universally submersive, and if $B\cap \PP(E_a)=\emptyset$ we have a universal geometric quotient $B^\sst_a\to   B^\sst_a / \GG_m$.
\end{remark}

\subsection{Definition of a birational cobordism}\label{Sec:def-cobordism} The notion of a birational cobordism for a blowing up we use in this paper extends the notion of {\em compactified relatively projective embedded birational cobordism of  \cite[2.4]{AKMW}} by allowing a non-empty boundary. Ignoring the issue of the boundary, it is far more restrictive than the notion introduced in \cite{W-Cobordism}.

Let $\phi\: X_1 \to X_2$ be an object of $\Bl$. A birational cobordism for $\phi$ is a  scheme $B$ which is the blowing up of a $\GG_m$-invariant ideal on $\PP^1_{X_2}$, and embedded, in a manner satisfying Assumptions \ref{Ass:separate} and \ref{Ass:spread}, as a $\GG_m$-stable subscheme in $\PP(E)$ for a $\GG_m$-sheaf $E$ on $X_2$, such that
\begin{enumerate}
\item $X'_1=B^\sst_{a_0+} / \GG_m=B^\sst_{a_0} \sslash \GG_m$ is obtained from $X_1$ by principalizing $D_1$,
\item $X'_2=B^\sst_{a_m-} / \GG_m=B^\sst_{a_m} \sslash \GG_m$ is obtained from $X_2$ by principalizing $D_2$, and
\item the following diagram of rational maps commutes:

$$\xymatrix{
B^\sst_{a_0} \ar[r]^{q_{a_0}}\ar@{-->}[d]_\alpha & X'_1\ar[r] & X_1\ar[d]^\phi\\
B^\sst_{a_m}\ar[r]^{q_{a_m}} & X'_2\ar[r]& X_2}$$
where $\alpha$ is the birational map induced by the open dense inclusions $$B^\sst_{a_0}\subset B\supset B^\sst_{a_m}.$$
\end{enumerate}
The birational cobordism is said to respect the open set $U\subset X_2$ if $U$ is contained in the image of $(B^\sst_{a_0+}\cap B^\sst_{a_m-})/\GG_m$. This happens whenever the ideal on $\PP^1_{X_2}$ whose blowing up is $B$ restricts to the unit ideal on $\PP^1_U$. We say that a birational cobordism $B$ of $\phi$ is {\em regular} if $B$ is regular and the preimage $D_B$ of $D_2$ is a simple normal crossings divisor.

\subsection{Construction of regular birational cobordism}  We claim that one can associate a regular birational cobordism to any blowing up in $\Bl$ functorially, and we formalize this claim as follows. There is an evident category $\Cob$ of regular birational cobordisms of blowings up $\phi\:X_1 \to X_2$ in $\Bl$, with an evident forgetful functor $\Cob \to \Bl$. A morphism of regular birational cobordisms $B' \to B$ is uniquely determined by a regular surjective morphism $g\: X_2'\to X_2$.

 \begin{proposition}\label{Prop:cobordism} The functor $\Cob \to \Bl$ has a section $\Bl \to \Cob$.
 \end{proposition}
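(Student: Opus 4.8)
The plan is to construct the cobordism $B$ as a blowing up of $\PP^1_{X_2}$, following the deformation-to-the-normal-cone picture of \cite{AKMW}, but threading through two layers of functorial constructions: first principalizing the boundary, then incorporating the ideal $I$ whose blowing up is $\phi$. First I would apply functorial principalization (Proposition \ref{Prop:absolute-principalization}, in the guise of functorial resolution of the boundary via Theorem \ref{resolth}) to replace $X_i$ by $X_i'$, obtained from $X_i$ by principalizing $D_i$; this is where conditions (1) and (2) in the definition of a birational cobordism (\S\ref{Sec:def-cobordism}) come from, and functoriality for surjective regular morphisms is automatic from the functoriality of principalization. Since $X_1 \to X_2$ is the blowing up of $I$, after principalizing one gets a corresponding ideal on $X_2'$ whose blowing up is $X_1'$; by Lemma \ref{factorblowlem} one may work entirely on $X_2'$. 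I would then rename $X_2'$ as $X_2$ for the remainder of the construction (keeping track of the auxiliary morphisms $X_i' \to X_i$).

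Next I would build the cobordism proper. Let $\cI$ be the ideal on $X_2$ with $Bl_\cI(X_2) = X_1$. Form the $\GG_m$-equivariant ideal $\cJ$ on $\PP^1_{X_2} = \Proj_{X_2}\cO_{X_2}[x,y]$ interpolating between the trivial blowing up at $y=0$ (the $t\to 0$ fiber, say) and $Bl_\cI$ at $x=0$ — concretely, the ideal generated by $\cI \cdot (y)$ and $(x)$, or the analogous deformation-to-the-normal-cone ideal as in \cite[\S2]{AKMW}. This construction is functorial for \emph{all} morphisms $X_2' \to X_2$ since it is just pullback of ideals on a relative $\PP^1$, and it restricts to the unit ideal over $\PP^1_U$. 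Setting $B_0 = Bl_\cJ(\PP^1_{X_2})$, I would check that the two extreme GIT quotients of $B_0$, with respect to the natural $\GG_m$-action lifted from $\PP^1_{X_2}$, recover $X_2$ and $X_1$, so that condition (3) holds; this is the computation of \cite[Lemma 2.7 or thereabouts]{AKMW}, adapted to the relative setting, and it is purely local on $X_2$ hence survives base change. Embedding $B_0$ $\GG_m$-equivariantly in some $\PP(E)$: use \S\ref{Sec:blowup-projective}, which gives $B_0 \into \PP_{X_2}(\cJ)$, with the $\GG_m$-action on $\PP^1_{X_2}$ inducing a grading on $\cJ$, hence on $E := \cJ$ (or a functorial power thereof, via the constructions of \S\ref{Sec:sequence-projective} to make everything functorial for surjective flat morphisms). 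Then pass to the Veronese re-embedding to force Assumption \ref{Ass:separate}; Assumption \ref{Ass:spread} must be arranged by discarding or not — actually it holds because each component of $B_0$ dominates $\PP^1_{X_2}$ and hence meets both extreme weight loci, being a blow-up of the irreducible $\PP^1_{X_2}$ along an ideal trivial at the generic point.

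Finally I would make $B$ regular. The scheme $B_0$ need not be regular, nor need $D_{B_0}$ be simple normal crossings, so I would apply functorial resolution of the pair $(B_0, D_{B_0})$ — here Theorem \ref{resolth}, which is $\GG_m$-equivariant by Hypothetical Statement \ref{Hyp:resolution}(2) (automatic in characteristic $0$, since the resolution is canonical and $\GG_m$ is connected, so the action lifts). The resolution $B \to B_0$ is a blowing up of a $\GG_m$-equivariant ideal, so $B$ is again a blowing up of a $\GG_m$-equivariant ideal on $\PP^1_{X_2}$ (compose the two blowings up, using the remark at the end of \S\ref{Sec:sequence-projective}), it is regular, and $D_B$ is simple normal crossings. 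Re-embed $B$ $\GG_m$-equivariantly in a $\PP(E)$ and Veronese-re-embed to restore Assumptions \ref{Ass:separate} and \ref{Ass:spread}; the extreme quotients are unchanged up to the principalization already performed, so conditions (1)–(3) persist. Each step — principalization, the interpolating-ideal construction, equivariant resolution, the embedding and Veronese constructions of \S\ref{Sec:functorial-projective} — is functorial for surjective regular morphisms $g\colon X_2' \to X_2$, and a morphism of cobordisms is by definition determined by such a $g$, so the assignment $\phi \mapsto B$ is a section of $\Cob \to \Bl$.

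\medskip

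\emph{Main obstacle.} The delicate point is keeping the equivariant resolution of $(B_0,D_{B_0})$ genuinely functorial as a blowing up (not merely as an abstract morphism): one must invoke Assumption \ref{Ass:snc} to present it as a composition of blowings up with regular centers having normal crossings with $D_{B_0}$, then use the composition-of-blowings-up construction of \S\ref{Sec:sequence-projective} to repackage $B \to \PP^1_{X_2}$ as a single blowing up of an honest $\GG_m$-equivariant ideal — and to verify that under a surjective regular base change the pullback of this ideal is the one produced by the same procedure downstairs, which rests on the compatibility of functorial resolution with surjective regular morphisms together with flat-descent arguments like those in the proof of Proposition \ref{Prop:absolute-principalization}. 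A secondary nuisance is verifying that resolution does not destroy the wall-structure at the two ends, i.e. that the centers blown up in resolving $(B_0, D_{B_0})$ can be taken disjoint from the open loci computing the extreme GIT quotients $B^\sst_{a_0}\sslash\GG_m$ and $B^\sst_{a_m}\sslash\GG_m$ — which holds because those loci are already regular with snc boundary (having been arranged in the principalization step), so the canonical resolution is an isomorphism there.
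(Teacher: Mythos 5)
Your skeleton (the deformation-to-the-normal-cone ideal $I\otimes\cO_{\PP^1_{X_2}}+I_{\{0\}}$, equivariant functorial resolution of the resulting pair, then a re-embedding to restore Assumptions \ref{Ass:separate} and \ref{Ass:spread}) is the paper's, but the way you arrange conditions (1)--(2) of \S\ref{Sec:def-cobordism} has two genuine gaps. First, your preliminary step needs $X_1'$ (the principalization of $(X_1,D_1)$) to be a blowing up of $X_2'$ (the principalization of $(X_2,D_2)$). Lemma \ref{factorblowlem} only converts an already given factorization $X_1'\to X_2'\to X_2$ into a blowing up; it does not produce that factorization, and since $\phi$ is not a regular morphism, functoriality of resolution gives no map $X_1'\to X_2'$ at all (one would need the ideal defining $X_2'\to X_2$ to become invertible on $X_1'$, which is unjustified). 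Second, and more seriously, you rely on the canonical resolution of $(B_0,D_{B_0})$ being an isomorphism over the extreme semistable loci because they are ``already regular with snc boundary''. The resolution functor of Theorem \ref{resolth} has no such property: on a normal crossings (even simple normal crossings) pair it runs the standard algorithm of Remark \ref{functorrem}(ii), which blows up the deepest strata of the boundary and is in general nontrivial --- this is precisely why the paper needs Remark \ref{princrem}, where applying the resolution to the regular $X_1'$ with its snc divisor produces a further nontrivial modification $X_1''$. Consequently the extreme GIT quotients of your $B$ are not $X_1'$ and $X_2'$ but further blowings up, and conditions (1)--(2) of the definition fail as stated.

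The paper obtains (1)--(2) by a mechanism your proposal omits: no prior principalization is performed. Instead one computes explicitly (Steps 2c--2e of Appendix \ref{App:cobordism}) that the extreme semistable loci $(B_I)^\sst_{0}$ and $(B_I)^\sst_{2}$ map smoothly and surjectively onto $X_1$ and $X_2$ (being the total space of the invertible sheaf $I\cO_{X_1}$ and $\AA^1_{X_2}$, respectively), and then functoriality of resolution of pairs with respect to these smooth surjective charts identifies the restriction of $B\to B_I$ over these loci with the pullbacks of the resolutions $X_1'\to X_1$ and $X_2'\to X_2$; hence the extreme quotients are exactly $X_1'$ and $X_2'$. (Your treatment of Assumption \ref{Ass:spread} after resolution is also glossed over: the paper re-embeds via $\pi^{B_I/X_2}_*\tilde J(d)$ in Step 3b and checks $a_{\min}(B)=0$, $a_{\max}(B)=2d$, with $B\cap\PP(\widetilde E_0)=X_1'$ and $B\cap\PP(\widetilde E_{2d})=X_2'$.) To salvage your order of operations you would have to prove the two claims above or weaken the definition of cobordism, which then propagates into Section 5; the functoriality-along-smooth-charts argument is the route the paper takes and avoids both issues.
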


We provide a sketch of proof here, and more detail in Appendix \ref{App:cobordism}.
\begin{proof}[Sketch of proof] Following the construction of \cite[Theorem 2.3.1]{AKMW}, consider the blowing up of the ideal $I\otimes \cO_{\PP^1_{X_2}} + I_{\{0\}}$. This is a birational cobordism $B_I$ for $\phi$, but it may be singular. Let $D_{B_I}\subset B_I$ be the preimage of $D_2$. Applying resolution of pairs to $(B_I,D_{B_I})$ we obtain a regular birational cobordism $(B,D_B)$ for $\phi$. Here we use Theorem~\ref{resolth} if the characteristic is zero, and parts (1) and (2) of the Hypothetical Statement \ref{Hyp:resolution} otherwise.
\end{proof}


\section{Factoring the map}
Throughout this section ``functorial" means ``functorial in $X_1\to X_2$ with respect to surjective regular morphisms". By {\em total transform} of a divisor $D\subset X$ under a (normalized) blowing up $Bl_J(X)\to X$ we mean the union of the preimage of $D$ and the total transform of $J$.

\subsection{Initial factorization}
Proposition \ref{Prop:cobordism} provides a functorial birational cobordism $(B,D_B)$ of $\phi$. Departing slightly from the notation of  \cite[Theorem 2.6.2]{AKMW}, we write $W_{i\pm} = B^\sst_{a_i\pm}/\GG_m $, and  $W_{i} = B^\sst_{a_i}\sslash\GG_m $.  Since $W_{i+}\simeq W_{{(i+1)}-}$ we have a functorial factorization
\begin{equation}\label{Eq:wtd-fact}\xymatrix@=1pc{&& W_{1-}\ar@{=}[ld]_{\varphi_{0+}}\ar[rd]^{\varphi_{1-}}&& W_{2-}\ar[ld]_{\varphi_{1+}}\ar[rd]^{\varphi_{2-}}&&& &W_{m-}\ar@{=}[rd]^{\varphi_{m-}} \ar[ld]_{\varphi_{(m-1)+}}&&
\\
X'_1 \ar@{=}[r]&W_0 && W_1 &&&\ldots&&& W_m \ar@{=}[r] &  X'_2}
 \end{equation}
with all terms functorially projective over $X_2$. Since the cobordism is compatible with $U$, the morphisms $W_{i\pm}\to X_2$ and $W_i\to X_2$ and hence also the morphisms $\varphi_{i\pm}$ are isomorphisms on $U$. Note that since $W_{m-1}\das W_m$ is a morphism it follows that $\varphi_{(m-1)+}$ is an isomorphism, but this fact does not feature in our arguments. In general the terms $W_i$ and $W_i\pm$ in this factorization are singular, but we will use them to construct a non-singular factorization.

\subsection{Blowing up torific ideals}

\subsubsection{Torific ideals}
Let $D_i\subset W_i$, $D_{i\pm}\subset W_{i\pm}$, $D_{a_i}\subset B^\sst_{a_i}$ and $D_{a_i\pm}\subset B^\sst_{a_i\pm}$ denote the preimages of $D_2$. We will show how main results of \cite{AT1} imply that since $(W_i,D_i)$ is given as a quotient of $(B^\sst_{a_i},D_{a_i})$, it can be made toroidal by a canonical torific blowing up. Since $B$ is regular and $D_B$ is a simple normal crossings divisor, $(B^\sst_{a_i},D_{a_i})$ is a toroidal scheme with a relatively affine $\GG_m$-action. In \cite[\S5.4.1]{AT1} one functorially associates to $(B^\sst_{a_i},D_{a_i})$ a $\GG_m$-equivariant {\em normalized torific ideals} $J_i^B$ and $J_i$ on $B^\sst_{a_i}$ and $W_i$, respectively. By abuse of language, the ideal sheaves $J_{i\pm}=J_i\cO_{W_{i\pm}}$ will also be called normalized torific ideals.

\begin{theorem}
For every $1\leq i \leq (m-1)$ the ideal sheaves $J_i$ and $J_{i\pm}$ are functorial and restrict to the unit ideal on $U$. Furthermore, let $W_i^\tor = Bl_{J_i} W_i$ and $W_{i\pm}^\tor = Bl_{J_{i\pm}} W_{i\pm}$, and denote by $D_i^\tor\subset W_i^\tor$ and $D_{i\pm}^\tor\subset W_{i\pm}^\tor$ the total transforms of $D_i$ and $D_{i\pm}$, respectively. Then

\begin{enumerate}
\item $(W_i^\tor,D_i^\tor)$ and  $(W_{i\pm}^\tor,D_{i\pm}^\tor)$ are toroidal, and
\item the morphisms $\varphi_{i\pm}$ induce toroidal morphisms $$\varphi_{i\pm}^\tor: (W_{i\pm}^\tor,D_{i\pm}^\tor) \to (W_i^\tor,D_i^\tor)$$ that restrict to isomorphisms on $U$.
\end{enumerate}
\end{theorem}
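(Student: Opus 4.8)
The plan is to deduce everything from the theory of (normalized) torific ideals of \cite[\S5.4]{AT1}, applied to the semistable loci of the cobordism. First I would collect the hypotheses that make that machinery applicable. Since $B$ is a regular birational cobordism, $B$ is regular and $D_B$ is a simple normal crossings divisor; the loci $B^\sst_{a_i}$ are open in $B$, so $(B^\sst_{a_i},D_{a_i})$ is a regular, hence toroidal, scheme, and likewise $(B^\sst_{a_i\pm},D_{a_i\pm})$. Here, by Assumption~\ref{Ass:separate} together with Lemma~\ref{Lem:GIT-B}(3), $B^\sst_{a_i\pm}=B^\sst_{a_i\pm1}$ is a $\GG_m$-stable \emph{open} subscheme of $B^\sst_{a_i}$, because $a_i\pm1$ is not a wall and hence $B\cap\PP(E_{a_i\pm1})=\emptyset$. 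Moreover, by Lemma~\ref{Lem:GIT-B}(2) the quotient maps $q_{a_i}\:B^\sst_{a_i}\to W_i$ and $q_{a_i\pm}\:B^\sst_{a_i\pm}\to W_{i\pm}$ are affine, i.e. the $\GG_m$-actions are relatively affine — precisely the setting in which \cite[\S5.4.1]{AT1} produces the $\GG_m$-equivariant normalized torific ideal $J_i^B$ on $B^\sst_{a_i}$ together with its descent $J_i$ on $W_i$.

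For the toroidality of $(W_i^\tor,D_i^\tor)$ I would invoke the main result of \cite{AT1}: for a regular toroidal scheme carrying a relatively affine $\GG_m$-action, the torific blowing up of the quotient — that is, $Bl_{J_i}W_i$ with the total transform of $D_i$ as boundary — is toroidal, the toroidal structure arising by descent from the $\GG_m$-equivariant toroidal structure on $Bl_{J_i^B}(B^\sst_{a_i})$. For the $\pm$ versions I would restrict to the $\GG_m$-stable open $B^\sst_{a_i\pm}\subseteq B^\sst_{a_i}$: the torific ideal construction of \cite{AT1} is local on the base and compatible with restriction to $\GG_m$-stable opens (open immersions are regular morphisms), so $J_i^B\cO_{B^\sst_{a_i\pm}}$ is the normalized torific ideal of $(B^\sst_{a_i\pm},D_{a_i\pm})$. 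Chasing the commutative square relating the inclusion $B^\sst_{a_i\pm}\hookrightarrow B^\sst_{a_i}$, the maps $q_{a_i\pm}$, $q_{a_i}$, and $\varphi_{a_i\pm}\:W_{i\pm}\to W_i$ then identifies its descent to $W_{i\pm}$ with $J_{i\pm}=J_i\cO_{W_{i\pm}}$, so $(W_{i\pm}^\tor,D_{i\pm}^\tor)$ is toroidal by the same result of \cite{AT1}.

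For part (2): since $J_{i\pm}=\varphi_{a_i\pm}^*J_i$, the universal property of blowing up yields a canonical morphism $\varphi_{i\pm}^\tor\:W_{i\pm}^\tor=Bl_{J_{i\pm}}W_{i\pm}\to Bl_{J_i}W_i=W_i^\tor$ lying over $\varphi_{a_i\pm}$. To check it is toroidal I would argue locally: over a $\GG_m$-stable affine chart of $Bl_{J_i^B}(B^\sst_{a_i})$ the toric charts provided by \cite{AT1} make the $\GG_m$-action toric, present $W_i^\tor$ locally as the quotient of a toric variety by a subtorus — hence toroidal — and present $W_{i\pm}^\tor$ the same way using the preimage $\GG_m$-stable open. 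In these charts $\varphi_{i\pm}^\tor$ is the morphism induced on such quotients by the inclusion of a semistable (and thus toric, $\GG_m$-stable) open, which is toric; hence $\varphi_{i\pm}^\tor$ is toroidal, and it is an isomorphism over $U$ because $\varphi_{a_i\pm}$ is and $J_i,J_{i\pm}$ are trivial there. Finally, functoriality of $J_i$ (and hence of its pullback $J_{i\pm}$) follows from functoriality of the whole cobordism (Proposition~\ref{Prop:cobordism}), compatibility of the wall structure with surjective regular base change (as noted after Lemma~\ref{Lem:GIT-E}), and functoriality of the torific-ideal construction of \cite{AT1}; triviality on $U$ holds because the cobordism respects $U$, so over $U$ the scheme $B$ is $\PP^1_U$ with the standard $\GG_m$-action and $D_B$ is empty, whence the action on each $B^\sst_{a_i}|_U$ is free with empty boundary and its torific ideal is the unit ideal, giving $J_i|_U=\cO_U$ and $J_{i\pm}|_U=\cO_U$.

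The main obstacle I anticipate is not conceptual but bookkeeping with \cite{AT1}: isolating the exact statement to cite and verifying the two compatibilities it hinges on — first, that the normalized torific ideal commutes with restriction to $\GG_m$-stable opens (so that $J_{i\pm}$ really is the genuine torific ideal of $B^\sst_{a_i\pm}$ descended, not merely a pullback of $J_i$), and second, that $\varphi_{i\pm}^\tor$ is toroidal and not just a morphism between toroidal schemes, which requires carrying the toric charts on $W_i^\tor$ and $W_{i\pm}^\tor$ through the GIT quotient and checking the induced map is locally toric.
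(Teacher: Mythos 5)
Your overall skeleton follows the paper: apply the torific machinery of \cite{AT1} to the relatively affine $\GG_m$-action on $(B^\sst_{a_i},D_{a_i})$, descend toroidality to the quotients, and get $\varphi_{i\pm}^\tor$ from the universal property of blowing up. Your treatment of functoriality and of triviality over $U$ is consistent with the paper's (which cites \cite[Theorem~1.1.2(iii),(iv)]{AT1}), and your observation that $B^\sst_{a_i\pm1}\subset B^\sst_{a_i}$ is a $\GG_m$-stable open is correct. However, the step you yourself flag as a risk is a genuine gap, and it is exactly where the paper takes a different (and necessary) route. You assert that $J_i^B\cO_{B^\sst_{a_i\pm}}$ is the normalized torific ideal of $(B^\sst_{a_i\pm},D_{a_i\pm})$ because the construction is ``local on the base and compatible with restriction to $\GG_m$-stable opens.'' But the normalized torific ideal of \cite{AT1} is constructed relative to the quotient of a relatively affine action, and its locality and functoriality are for strongly equivariant morphisms, i.e.\ those obtained by base change along the quotient. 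The inclusion $B^\sst_{a_i\pm}\into B^\sst_{a_i}$ is not of this kind: it is not the preimage of an open subscheme of $W_i$ (the induced map $W_{i\pm}\to W_i$ is a proper modification, not an open immersion), so the quotient --- and with it the data entering the construction, such as the relevant set of characters --- changes. This is precisely why the paper introduces $J_{i\pm}=J_i\cO_{W_{i\pm}}$ only ``by abuse of language'': it never claims these are the intrinsic torific ideals of the $\pm$ loci, and its proof does not use such a claim.

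Consequently both your proof of toroidality of $(W_{i\pm}^\tor,D_{i\pm}^\tor)$ and your chart argument for toroidality of $\varphi_{i\pm}^\tor$ rest on this unproved identification. The paper argues differently: it first notes that $\GG_m$ acts relatively affinely on $B^\tor_{a_i}:=Bl_{J_i^B}(B^\sst_{a_i})$ (\cite[Lemma~4.2.12]{AT1}) and that $(B^\tor_{a_i},D^\tor_{a_i})$ is toroidal with toroidal $\GG_m$-action and $W_i^\tor=B^\tor_{a_i}\sslash\GG_m$ (\cite[Theorem~1.1.2]{AT1}); it then identifies $W_{i\pm}^\tor$ with $(B^\tor_{a_i})_\pm\sslash\GG_m$, the quotient of the appropriate semistable locus of the \emph{torified} cobordism, by \cite[Lemma~5.5.5]{AT1}, so toroidality of both quotients follows from \cite[Theorem~1.1.3(i)]{AT1}, and toroidality of $\varphi^\tor_{i\pm}$ from \cite[Proposition~5.5.2]{AT1}. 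To repair your argument you would either have to actually prove the restriction compatibility of the normalized torific ideal along $B^\sst_{a_i\pm}\into B^\sst_{a_i}$ (nontrivial, and not needed), or reroute the $\pm$ case through $B^\tor_{a_i}$ and its semistable loci as the paper does.
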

\begin{proof}
The ideals $J_i$ are functorial by \cite[Theorem~1.1.2(iii)]{AT1}, hence $J_{i\pm}$ are functorial too. Since the action of $\GG_m$ on $B^\sst_{a_i}$ is already toroidal on $\PP^1_U$, we know by \cite[Theorem~1.1.2(iv)]{AT1} that $J_i$ restrict to the unit ideal of $U$.

By \cite[Lemma~4.2.12]{AT1} $\GG_m$ acts in a relatively affine way on $B^\tor_{a_i}:=Bl_{J^B_i}(B^\sst_{a_i})$. Let $D^\tor_{a_i}\subset B^\tor_{a_i}$ be the total transform of $D_{a_i}$, then by \cite[Theorem 1.1.2]{AT1}, $(B^\tor_{a_i},D^\tor_{a_i})$ is a toroidal scheme with toroidal action of $\GG_m$, and $W_i^\tor=B^\tor_{a_i} \sslash \GG_m$. Note that $D_i^\tor$ is the image of $D^\tor_{a_i}$, hence $(W_i^\tor,D_i^\tor)$ is toroidal by \cite[Theorem 1.1.3(i)]{AT1}.

By \cite[Lemma~5.5.5]{AT1}, $W_{i\pm}^\tor=(B^\tor_{a_i})_\pm\sslash\GG_m$. Set ($D_{a_i}^\tor)_\pm=D^\tor_{a_i}|_{(B^\tor_{a_i})_\pm}$, then $\GG_m$ acts toroidally on $((B^\tor_{a_i})_\pm,(D_{a_i}^\tor)_\pm)$ and hence the quotient $(W_{i\pm}^\tor,D_{i\pm}^\tor)$ is toroidal by \cite[Theorem 1.1.3(i)]{AT1}. Note also that $\varphi_{i\pm}$ induce toroidal morphisms $\varphi^\tor_{i\pm}$ by \cite[Proposition~5.5.2]{AT1}.
\end{proof}

We note that in general  $W_{i+}^\tor \neq W_{(i+1)-}^\tor.$ The steps  $W_{i-} \to W_i\leftarrow W_{i+}$ in the factorization (\ref{Eq:wtd-fact}) now look as follows:
\begin{equation}\label{Eq:tor-fact}\xymatrix@=.5pc{
& & W_{i-}^\tor\ar[rrd]^{\varphi_{i-}^\tor}\ar[ldd]&&&& W_{i+}^\tor\ar[lld]_{\varphi_{i+}^\tor}\ar[rdd]&\\
& &&& W_i^\tor\ar[dd] &&&\\
W_{(i-1)+}\ar@{=}[r]& W_{i-}\ar[rrrd]^{\varphi_{i-}}&&&&&& W_{i+}\ar[llld]_{\varphi_{i+}}&W_{(i+1)-}\ar@{=}[l]\\
& &&&W_i}
 \end{equation}

\begin{remark}In  \cite[Lemma 3.2.8]{AKMW} it is stated with a sketch of proof that the ideals $J_i$ can be chosen so that $\varphi_{i\pm}^\tor$ are isomorphisms. We will not use this statement. We note however that this follows from   \cite[Theorem 3.5]{Thaddeus}: if the $l$-torific ideal $I_l$ generates all $I_{Ml}, M\geq 1$ and  also $I_{-l}$ generates all $I_{-Ml}, M\geq 1$, then once  $l,-l\in S_i$, the ample set of characters on $B^\sst_{a_i}$ used to determine $J^B_i$ in \cite{AT1}, then  $\varphi_{i\pm}^\tor$ are isomorphisms.  One can choose such $l$ in a manner functorial for regular surjective morphisms.
\end{remark}

\subsection{Resolution and local charts}

\subsubsection{Canonical resolution}\label{Sec:canres}
Extending the notation of \cite[Section 4.2]{AKMW} to qe schemes with a boundary, we write $W_{i\pm}^\res\to W_{i\pm}$ for the resolution of the pair $(W_{i\pm},D_{i\pm})$ and denote the preimage of $D_2$ in $W_{i\pm}^\res$ by $D_{i\pm}^\res$. This morphism is functorially projective and is projectively the identity on $U$. In characteristic zero we use Theorem~\ref{resolth}, and otherwise we invoke Hypothetical Statement \ref{Hyp:resolution}(1). Thus, $W_{i\pm}^\res$ is regular and $D_{i\pm}^\res$ is a simple normal crossings divisor.

Note that the resolution process is independent of the toroidal structures and hence coincides for $(W_{(i-1)+},D_{(i-1)+}) =(W_{i-},D_{i-})$. Thus,  $(W_{(i-1)+}^\res,D_{(i-1)+}^\res) =(W_{i-}^\res,D_{i-}^\res)$ and this provides a bridge between $W_{(i-1)+}^\tor$ and $W_{i-}^\tor$:
$$\xymatrix{W_{(i-1)+}^\tor \ar@{-->}[rr] && W_{(i-1)+}^\res = W_{i-}^\res && W_{i-}^\tor \ar@{-->}[ll]}$$

\begin{remark}\label{princrem}
Since $W_{1-}=X'_1$ is regular, $X''_1:=W^\res_{1-}$ is obtained from $X'_1$ by principalization of $D'_1$ and similarly $X''_2:=W_{m-}^\res$ is obtained from $X'_2$ by principalization of $D'_2$. Both $D'_1$ and $D'_2$ are simple normal crossings divisors, so we could alternatively take $W^\res_{1-}=X'_1$ and $W^\res_{m-}=X'_m$. Our choice above helps to make notation uniform, though it results in a slightly longer factorization.
\end{remark}

\begin{remark} The singularities requiring resolution in this step are far from general: it is shown in the proof of Lemma~\ref{locmonoidlem} below that Zariski locally one can obtain a toroidal scheme from $(W_{i\pm},D_{i\pm})$ simply by enlarging the divisor $D_{i\pm}$. At least over an algebraically closed field they admit resolution of singularities, see \cite[Theorem 8.3.2]{W-Factor}, and it seems reasonable to expect the same in general, and in a functorial manner.
\end{remark}

\subsubsection{Localization}
In Section \ref{Sec:Tying} we will connect $W_{i\pm}^\res$ and $W_{i\pm}^\tor$ by principalizing the ideal $J_{i\pm}^\res:=J_{i\pm}\cO_{W_{i\pm}^\res}$, but to use our principalization conjectures in positive and mixed characteristics we should first check that $J_{i\pm}^\res$ is locally monoidal, so we start with defining local toroidal charts of all our constructions. We will work locally at a point $x\in W_{i\pm}$, so consider the localization $W_x := \Spec\cO_{W_{i\pm},x}$. We denote  $W_x^{\res} = W_{i\pm}^{\res}\times_{W_{i\pm}} W_x$ and similarly for  $W_x^{\tor}$ and other $W_{i\pm}$-schemes we will introduce later. For brevity, we also set $B_x=B_{a_i\pm}^\sst\times_{W_{i\pm}} W_x$, $D_{B_x}=D_{a_i\pm}\times_{B_{a_i\pm}^\sst} W_x$, and $D_x=D_{i\pm}\times_{W_{i\pm}} W_x$. We use the terminology of \cite{ATLuna} regarding strictly local actions and strongly equivariant morphisms, and of \cite{AT1} regarding simple actions and toroidal actions.

\subsubsection{Local toroidal charts}\label{torchart}
The action of $\GGm$ on $B_x$ is simple since $\GGm$ is connected and local since $B_x\sslash\GGm=W_x$. Let $O$ be the closed orbit of $B_x$ and $G_O=\Spec(\ZZ[L_O])$ its stabilizer. Note that $O$ is a torsor under the $k(x)$-group-scheme $\bfD_{K_O}:=\Spec k(x)[K_O]$ with $K_O=\Ker(\ZZ\onto L_O)$. We have two possibilities: (1) $O$ is a point (i.e. the action is strictly local), $G_O=\GGm$, and $L_O=\ZZ$, or (2) the orbit is a torus, $G_O=\mu_n$, and $L_O=\ZZ/n\ZZ$. For a toric monoid $P$ we will use the notation $\bfA_P=\Spec\ZZ[P]$ and $\oE_P=\bfA_P\setminus\bfA_{P^\gp}$. By \cite[Theorem~3.6.11]{AT1} there exists a strongly equivariant strict morphism $h\:(B_x,D_{B_x})\to(\bfA_P,E_P)$, with a suitable $\ZZ$-graded toric monoid of the form $P=\oM_O\oplus K_O\oplus\NN^{\sigma_O}$ and $E_P=\bfA_P\setminus\bfA_{\oM_O^\gp\oplus K_O\oplus\NN^{\sigma_O}}$. Note that the action on $(\bfA_P,E_P)$ is not toroidal, but it becomes toroidal if we enlarge the toroidal structure to $\oE_P$.

\subsubsection{The quotient charts}\label{quotchart}
Let $M=P_0$ be the trivially graded part of $P$. Then $Y:=\bfA_M=\bfA_P\sslash\GGm$ and we consider the divisor $E=E_P\sslash\GGm$ on $Y$, which is a subdivisor of the toroidal divisor $\oE_M=\oE_P\sslash\GGm$. The $\GGm$-action on $(\bfA_P,E_P)$ gives rise to the normalized torific ideal $J_Y$ on $Y$, and let $Y^\tor\to Y$ be the blowing up along $J_Y$. By \cite[Theorem~1.1.2(iii)]{AT1}, the torifications of $Y$ and $W_x$ are compatible with respect to the quotient morphism $h\sslash G\:W_x\to Y$, namely, $J_{W_x}=J_{i\pm}|_{W_x}$ coincides with $J_Y\cO_{W_x}$ and $W_x^{\tor} = W_x\times_Y Y^{\tor}$.

In addition, consider the resolution $Y^\res\to Y$ of the pair $(Y,E)$ as defined in Theorem~\ref{resolth} and Hypothetical Statement \ref{Hyp:resolution}(1). Since the resolution is $\bfA_{M^\gp}$-equivariant, $Y^\res$ is a toric scheme too. Recall that the resolution is compatible with toroidal charts: this follows from the functoriality if $X$ is defined over a field, and we use Hypothetical Statement \ref{Hyp:resolution}(3) in mixed characteristics. Therefore, $W_x^{\res} = W_x\times_Y Y^{\res}$ and the ideal $J_{i\pm}^\res=J_{i\pm}\cO_{W_x^\res}=J_Y\cO_{W_x^\res}$ comes from the ideal $J_Y^\res=J_Y\cO_{Y^\res}$ on $Y^\res$.

\begin{lemma}\label{locmonoidlem}
The ideal $J_{i\pm}^\res$ is locally monoidal.
\end{lemma}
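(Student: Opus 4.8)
The plan is to feed the local toroidal charts of \S\ref{torchart}--\S\ref{quotchart} into the definition of a locally monoidal triple. Fix a point $x\in W_{i\pm}$ and keep the notation of \S\ref{quotchart}: the toric scheme $Y=\bfA_M$, the regular quotient morphism $q\:W_x\to Y$ (a strict toroidal chart), the resolution $Y^\res\to Y$ of the pair $(Y,E)$ --- a toric morphism onto a regular toric scheme, since it is $\bfA_{M^\gp}$-equivariant --- the cartesian square $W_x^\res=W_x\times_Y Y^\res$, and the identity $J_{i\pm}^\res\cO_{W_x^\res}=J_Y^\res\cO_{W_x^\res}$ with $J_Y^\res=J_Y\cO_{Y^\res}$. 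Write $E^\res\subset Y^\res$ for the preimage of $E$. The first observation is that the normalized torific ideal $J_Y$ is a toric (monomial) ideal on $\bfA_M$ by its construction in \cite{AT1}, so $J_Y^\res$, being the pullback of a monomial ideal along the toric morphism $Y^\res\to Y$, is a monomial ideal on $Y^\res$: it is generated by the image of a monoid ideal of the canonical (toric) logarithmic structure $\cM_{Y^\res}$ of the regular toric scheme $Y^\res$.

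The key step is to equip $W_x^\res$ with the logarithmic structure $\cM$ obtained by pulling back $\cM_{Y^\res}$ along $q'\:W_x^\res\to Y^\res$, and to verify the three conditions in the definition of ``locally monoidal''. Logarithmic regularity of $(W_x^\res,\cM)$: since $q'$ is a base change of the regular morphism $q$ it is itself regular, and logarithmic regularity ascends along regular morphisms; equivalently, $W_x^\res$ is regular and the pullback along the regular morphism $q'$ of the simple normal crossings toric boundary of $Y^\res$ is a normal crossings divisor whose divisorial logarithmic structure is precisely $\cM$. Inclusion of the boundary: the toroidal divisor of $(W_x^\res,\cM)$ is the $q'$-preimage of the toric boundary of $Y^\res$, which contains $(q')^{-1}(E^\res)$; and $(q')^{-1}(E^\res)=D_{i\pm}^\res|_{W_x^\res}$ because $D_x$ is the preimage of $E$ under the strict chart $q$, so $D_{i\pm}^\res$ is part of the toroidal divisor. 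Monoidality: $J_{i\pm}^\res\cO_{W_x^\res}=J_Y^\res\cO_{W_x^\res}$ is generated by the image under $\cM\to\cO_{W_x^\res}$ of the monoid ideal of $\cM_{Y^\res}$ cutting out $J_Y^\res$.

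Finally one spreads out from the localization to an honest Zariski neighbourhood: the morphism $q$ to the finite type $\ZZ$-scheme $Y$, and with it the square $W_x^\res=W_x\times_Y Y^\res$, the logarithmic structure $\cM$, and the monomial generation of $J_{i\pm}^\res$, are all defined over some open $U_x\ni x$ of $W_{i\pm}$. The preimages in $W_{i\pm}^\res$ of such $U_x$ cover $W_{i\pm}^\res$, and on each of them we have produced a logarithmically regular structure containing $D_{i\pm}^\res$ in its toroidal divisor and with respect to which $J_{i\pm}^\res$ is monoidal. This is exactly the assertion that $J_{i\pm}^\res$ is locally monoidal.

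The genuinely delicate compatibilities --- $W_x^\res=W_x\times_Y Y^\res$ and $J_{i\pm}^\res\cO_{W_x^\res}=J_Y^\res\cO_{W_x^\res}$, which rest on functoriality of the torification and of the resolution with respect to the chart $q$ (and, in mixed characteristics, on Hypothetical Statement \ref{Hyp:resolution}(3)) --- have already been arranged in \S\ref{quotchart}, so they are not part of the present argument. The only point requiring real care in the proof of the lemma itself is the logarithmic regularity of $(W_x^\res,\cM)$: one needs to know that pulling back the toric logarithmic structure of $Y^\res$ along the regular morphism $q'$ yields a logarithmically regular scheme, equivalently that the preimage of a simple normal crossings divisor under a flat morphism with regular fibres is normal crossings; everything else is routine bookkeeping with the charts.
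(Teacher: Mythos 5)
Your overall strategy is the paper's: reduce everything through the charts of \S\ref{torchart}--\S\ref{quotchart} to the toric model $Y=\bfA_M$, note that the torific ideal becomes a monomial ideal there, and transport the resulting log structure and monomiality back to $W_x^\res$. But the step you yourself flag as the only one ``requiring real care'' --- logarithmic regularity of $(W_x^\res,\cM)$ --- is justified by the assertion that the chart $q\:W_x\to Y=\Spec\ZZ[M]$ is a \emph{regular} morphism, and that is a genuine gap. Nothing in \S\ref{quotchart} says this, and it is false precisely in the situations the lemma is designed to cover: if $\cO(W_x)$ is an $\FF_p$-algebra, then $p\mapsto 0$ under $\ZZ[M]\to\cO(W_x)$ while $p$ is a nonzerodivisor in $\ZZ[M]$, so the local homomorphisms cannot be flat; and in mixed characteristic even the simplest chart $\ZZ[\NN]\to\ZZ_p$, $t\mapsto p$, of the logarithmically regular scheme $\Spec\ZZ_p$ is non-flat (a flat local map would force $\dim\ZZ_p\ge\dim\ZZ[t]_{(p,t)}=2$). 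Even in characteristic zero, where regularity of charts of logarithmically regular schemes is true, it needs an argument (Kato's structure theorem for the completion plus excellence and descent), which you do not supply. Since the lemma is exactly what feeds Hypothetical Statement \ref{Hyp:principalization} for $\FF_p$- and $\ZZ$-triples, a proof that only works in characteristic zero (and there only modulo an unproved regularity claim) does not suffice.

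The conclusion you want is true, and the paper obtains it without any flatness of the chart. The key is to work with the \emph{enlarged} toroidal structures: the preimage $\oD_x$ of the full toric boundary $\oE_M$ rather than of $E$ (note the torific ideal $J_Y$ is monomial only with respect to $\oE_M$; the paper cites \cite{AT1} Lemma~4.4.5(i) for this, using toroidality of the $\GGm$-action on $(Y,\oE_M)$ --- your ``by its construction'' glosses over this point). Then $(W_x,\oD_x)\to(Y,\oE_M)$ is a toroidal chart (via strong equivariance of $h$ and \cite{AT1} Lemma~3.1.9(iv), Theorem~1.1.3(iii)), $(Y^\res,\oE^\res)\to(Y,\oE_M)$ is a toroidal (monomial) morphism since the resolution is $\bfA_{M^\gp}$-equivariant, and the strict pullback of a toroidal chart along such a morphism is again a toroidal chart; this is the statement that logarithmic regularity is preserved under pulling back monomial blowings up, which holds in all characteristics and replaces your appeal to ``log regularity ascends along regular morphisms.'' With that substitution your bookkeeping of the boundary, the monomiality of $J_{i\pm}^\res$, and the spreading-out to a Zariski neighborhood are fine, and the argument becomes the paper's proof.
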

\begin{proof}
We will work locally at $x\in W_{i\pm}$. Let $\oD_{B_x}\subset B_x$ and $\oD_x\subset W_x$ be the preimages of $\oE_P$ and $\oE_M$, respectively. Since $h$ is strongly equivariant, the induced morphism $\oh\:(B_x,\oD_{B_x})\to(\bfA_P,\oE_P)$ is a strongly equivariant toroidal chart. The action on the target of $\oh$ is toroidal, hence the action on the source is toroidal by \cite[Lemma~3.1.9(iv)]{AT1} and $\oh\sslash G\:(W_x,\oD_x)\to(Y,\oE_M)$ is a toroidal chart by \cite[Theorem~1.1.3(iii)]{AT1}.

The resolution $Y^\res\to Y$ is $\bfA_{M^\gp}$-equivariant, hence it is obtained by blowing up a toroidal ideal, and if $\oE^\res$ denotes the total transform of $\oE_M$ then the morphism $(Y^\res,\oE^\res)\to(Y,\oE_M)$ is toroidal. In addition, the pullback of $\oh\sslash G$ gives rise to a toroidal chart $g\:(W_x^\res,\oD^\res_x)\to(Y^\res,\oE^\res)$ with $D^\res_x\subseteq\oD^\res_x$. Since the action on $(Y,\oE_M)$ is toroidal, the ideal $J_Y$ is toroidal with respect to $\oE_M$ by \cite[Lemma~4.4.5(i)]{AT1}. Thus, $J_Y^\res$ is toroidal with respect to $\oE^\res$ and hence its pullback $J_{i\pm}^\res$ is toroidal with respect to $\oD^\res_x$. The lemma follows.
\end{proof}

\subsection{Tying the maps together}\label{Sec:Tying}

\subsubsection{Principalization of torific ideals}
Thanks to Lemma~\ref{locmonoidlem} we can define $W_{i\pm}^{\can}$ to be the canonical principalization of $J_{i\pm}^\res$ in the sense of Section \ref{Sec:principalization}. It is obtained by a {\em functorial sequence of blowings up of nonsingular centers disjoint from $U$ starting from $W_{i\pm}^\res$}, see Proposition \ref{Prop:absolute-principalization}; in positive and mixed characteristics we require Hypothetical Statement \ref{Hyp:principalization}.

By the universal property of blowing up, the maps  $W_{i\pm}^{\can} \das W_{i\pm}^\tor$ are morphisms. The map $W_{i\pm}^{\can} \to W_i$ is a composition of maps given functorially by blowing up ideals restricting to the unit ideal on $U$. By Section \ref{Sec:sequence-projective} the morphism $W_{i\pm}^{\can} \to W_i$ itself is given by blowing up a functorial ideal $\tilJ^\can_{i\pm}$ restricting to the unit ideal on $U$. So, by Lemma~\ref{factorblowlem} the morphism $W_{i\pm}^{\can} \to W_{i\pm}^\tor$ is given by blowing up the functorial ideal $J^\can_{i\pm}=\tilJ^\can_{i\pm}\cO_{W_{i\pm}^\tor}$. By $D_{i\pm}^\can$ we denote the total transform of $D_{i\pm}^\tor$. Diagram (\ref{Eq:tor-fact}) now looks as follows:

\begin{equation}\label{Eq:can-fact}\xymatrix@=.5pc{
&& W_{i-}^\can\ar[ldd]\ar[dd]&&&& W_{i+}^\can\ar[rdd]\ar[dd]\\ \\
 W_{(i-1)+}^\res\ar@{=}[r]\ar[dd]&W_{i-}^\res\ar[dd] & W_{i-}^\tor\ar[rrd]^{\varphi_{i-}^\tor}\ar[ldd]&&&& W_{i+}^\tor\ar[lld]_{\varphi_{i+}^\tor}\ar[rdd]&W_{i+}^\res\ar[dd]&W_{(i+1)-}^\res\ar@{=}[l]\ar[dd]\\
 &&&& W_i^\tor\ar[dd] &&&\\
 W_{(i-1)+}\ar@{=}[r]& W_{i-}\ar[rrrd]^{\varphi_{i-}}&&&&&& W_{i+}\ar[llld]_{\varphi_{i+}}&W_{(i+1)-}\ar@{=}[l]\\
 &&&&W_i}
 \end{equation}

\begin{lemma}\label{Lem:ideal-is-toroidal}
The ideal $J^\can_{i\pm}$ is toroidal. Thus, $(W_{i\pm}^{\can},D_{i\pm}^\can) \to (W_{i\pm}^\tor,D_{i\pm}^\tor)$ is a functorial toroidal blowing up.
\end{lemma}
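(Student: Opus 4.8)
The plan is to prove toroidality of $J^\can_{i\pm}$ locally, over the localizations $W^\tor_x$ at points $x\in W_{i\pm}$, by reducing to a toric statement on the quotient chart and transporting it back; the second assertion is then formal.

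Concretely, I would first complete the dictionary between the $W$-side and the toric chart $Y=\bfA_M$ of \S\ref{torchart}--\S\ref{quotchart}. There we already have $W^\tor_x=W_x\times_Y Y^\tor$ and $W^\res_x=W_x\times_Y Y^\res$, with $Y^\tor=Bl_{J_Y}Y$ the torific blowing up, $Y^\res\to Y$ the $\bfA_{M^\gp}$-equivariant resolution of $(Y,\oE_M)$, and $J^\res_{i\pm}=J_Y^\res\cO_{W^\res_x}$ where $J_Y^\res=J_Y\cO_{Y^\res}$. I would add the principalization layer: the proof of Lemma~\ref{locmonoidlem} exhibits the toroidal chart $(W^\res_x,\oD^\res_x)\to(Y^\res,\oE^\res)$ with $J^\res_{i\pm}$ the pullback of the toric ideal $J_Y^\res$, so by functoriality of the canonical principalization for toroidal charts --- Proposition~\ref{Prop:absolute-principalization} in characteristic $0$, and Hypothetical Statement~\ref{Hyp:principalization}(3) in mixed characteristic --- the principalization $W^\can_x\to W^\res_x$ of $J^\res_{i\pm}$ is the base change of the toric principalization $Y^\can\to Y^\res$ of $J_Y^\res$, giving $W^\can_x=W_x\times_Y Y^\can$. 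Since $J_Y\cO_{Y^\can}=J_Y^\res\cO_{Y^\can}$ is invertible (the principalization makes $J_Y^\res$ divisorial and $Y^\can$ is regular), the universal property of blowing up produces a morphism $Y^\can\to Y^\tor$; and as $Y^\can\to Y$ is a composition of $\bfA_{M^\gp}$-equivariant blowings up of the integral scheme $Y$, it is $Bl_{\mathfrak a}Y$ for an equivariant --- hence toric --- ideal $\mathfrak a$, so by Lemma~\ref{factorblowlem} $Y^\can=Bl_{\mathfrak a\cO_{Y^\tor}}Y^\tor$ with $\mathfrak a\cO_{Y^\tor}$ toroidal on the toroidal scheme $Y^\tor$.

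To conclude I would transport this back through the chart $W^\tor_x\to Y^\tor$, which is flat (being the pullback along $Y^\tor\to Y$ of $W_x\to Y$). Thus $W^\can_x=W^\tor_x\times_{Y^\tor}Y^\can=Bl_{\mathfrak a\cO_{W^\tor_x}}W^\tor_x$ with $\mathfrak a\cO_{W^\tor_x}$ toroidal, while at the same time $W^\can_x=Bl_{J^\can_{i\pm}\cO_{W^\tor_x}}W^\tor_x$ since $W^\can_{i\pm}=Bl_{J^\can_{i\pm}}W^\tor_{i\pm}$. Matching these two centers --- which is built into the functoriality of the construction of \S\ref{Sec:sequence-projective} underlying the definition of $J^\can_{i\pm}$, so that $J^\can_{i\pm}$ pulls back over $W^\tor_x$ from the corresponding toric ideal on $Y^\tor$ --- shows $J^\can_{i\pm}\cO_{W^\tor_x}$ is toroidal. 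As $x$ was arbitrary, $J^\can_{i\pm}$ is toroidal. The last assertion is then immediate: $W^\can_{i\pm}=Bl_{J^\can_{i\pm}}W^\tor_{i\pm}$ is the (normalized, since $W^\can_{i\pm}$ is regular) blowing up of a toroidal ideal on the toroidal scheme $(W^\tor_{i\pm},D^\tor_{i\pm})$, hence toroidal, and carrying the total transform $D^\can_{i\pm}$ of $D^\tor_{i\pm}$ it is a toroidal blowing up; functoriality was recorded before the lemma.

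The step I expect to be the main obstacle is the passage to the toric chart for the principalization together with the accompanying bookkeeping of log structures: one must verify that the enlarged boundary $\oD^\res_x$ of Lemma~\ref{locmonoidlem} is precisely the pulled-back toric boundary of $Y^\res$, that the principalization functor genuinely commutes with this chart, and --- the delicate point --- that the resulting ideal is monomial for the toroidal structure $D^\tor_{i\pm}$ actually carried by $W^\tor_{i\pm}$ and not merely for the enlarged one; this is what the torification construction of \cite{AT1} is designed to secure, and I would extract it from \cite[Theorems~1.1.2 and~1.1.3]{AT1} and the explicit shape $P=\oM_O\oplus K_O\oplus\NN^{\sigma_O}$ of the local monoid.
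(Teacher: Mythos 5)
Your first part (the chart dictionary and the reduction to a statement about the ideal $J^\can_Y$ on the toric model) reproduces Step~1 of the paper's proof, and the final formal deduction is fine. The gap is in the toric step, which is the actual content of the lemma. From $\bfA_{M^\gp}$-equivariance of $Y^\res\to Y$ and of the principalization you conclude that $Y^\can=Bl_{\mathfrak a}(Y)$ for a torus-equivariant, hence monomial, ideal $\mathfrak a$, and you then assert that $\mathfrak a\cO_{Y^\tor}$ is ``toroidal on the toroidal scheme $Y^\tor$''. That inference does not follow: the toroidal structure on $Y^\tor$ which corresponds to $(W^\tor_{i\pm},D^\tor_{i\pm})$ has boundary the total transform of $E=E_P\sslash\GGm$, which is only a \emph{subdivisor} of the full toric boundary $\oE_M$. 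A monomial ideal may involve monomials vanishing on toric divisors outside $E$ (already $(y)\subset\ZZ[x,y]$ with $E=V(x)$ is toric but not toroidal for that boundary), and there is no a priori reason this is excluded here: the resolution of the pair $(Y,E)$ has centers over $E\cup Y_\sing$, and $Y_\sing$ need not be contained in $E$. You do flag exactly this as ``the delicate point'', but you offer no argument beyond an appeal to \cite[Theorems~1.1.2 and~1.1.3]{AT1}; those results concern the torific ideal and toroidality of quotients, and they say nothing about the ideals produced by resolution of $(Y,E)$ and by principalization being monomial with respect to the \emph{smaller} boundary. So the crucial implication is asserted, not proved.

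The paper closes precisely this gap with a mechanism absent from your proposal: following \cite[Proposition 4.2.2]{AKMW} one constructs a $\GG_a^k$-action on $(Y,E)$ for which $Y^\tor$ is a product of $\GG_a^k$ with a toric scheme furnishing its toroidal structure, and one checks that the ideal of $Y^\res\to Y$, the torific ideal on $Y^\res$, and hence the ideal of the functorial principalization $Y^\can\to Y^\res$ are monomial \emph{and} $\GG_a^k$-equivariant; equivariance forces the monomial generators to be prime to the coordinates of the $\GG_a^k$-factor, which is exactly toroidality for the structure carried by $Y^\tor$. This is also why Hypothetical Statement~\ref{Hyp:principalization}(2) demands $\GG_a^d$-equivariance in positive and mixed characteristics (in characteristic $0$ it follows from functoriality applied to the smooth surjective action and projection maps $\GG_a^k\times Y\rightrightarrows Y$). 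A minor further point: the paper obtains the identifications $W_x^\tor=W_x\times_YY^\tor$, $W_x^\res=W_x\times_YY^\res$, $W_x^\can=W_x\times_YY^\can$ and the chart-compatibility of $J^\can_{i\pm}$ from the compatibility statements of \cite{AT1} and Hypothetical Statement~\ref{Hyp:principalization}(3), not from flatness of the chart $W_x\to Y$, which you assert without justification and which is not needed.
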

\begin{proof}
{\sc Step 1: reduction to toric case.}
We will work locally at $x\in W_{i\pm}$. We already used in \S\ref{quotchart} that torification and resolution are compatible with toroidal charts to show, in the notation introduced there, that $W_x^{\tor} = W_x\times_Y Y^{\tor}$, $W_x^{\res} = W_x\times_Y Y^{\res}$ and $J^\res_x=J_Y^\res\cO_{W_x^\res}$. Let $Y^{\can} \to Y^{\res}$ be the principalization of $J_Y^\res$. Then by the same functoriality argument $W_x^{\can} = W_x\times_Y Y^{\can}$.

By the universal property of blowings up, $Y^\can\to Y$ factors through $Y^\tor$. We have that $Y^\can=Bl_{\tilJ^\can_Y}(Y)$ for a functorial ideal $\tilJ^\can$ on $Y$, hence by Lemma~\ref{factorblowlem}, $Y^\can=Bl_{J^\can_Y}(Y^\tor)$, where $J^\can_Y=\tilJ^\can_Y\cO_{Y^\tor}$. Again, the construction of the ideals $J_\pm^\can$ is compatible with charts. So $J_{i\pm}^\can\cO_{W_x^\can}$ is the pullback of $J_Y^\can$. Thus, it suffices to prove that the ideal $J^\can_Y$ is toroidal.

{\sc Step 2: proof in the toric case.}
In \cite[Proposition 4.2.1]{AKMW} one shows that $(Y^{\can},E^\can)\to(Y^{\tor},E^\tor)$  is toroidal: here we produce this morphism by blowing up the normalized toroidal ideals of \cite{ATLuna} instead of the torific ideal of \cite{AKMW}, but these morphisms have the same equivariance properties. In \cite{AKMW} the ideal blown up is not shown to be toroidal. This can be shown as follows. As in \cite[Proposition 4.2.2]{AKMW} one constructs an action of $\GG_a^k$ on $(Y,E)$. One shows that the morphism $Y^{\tor} \to Y$ of charts is equivariant under this action,  as well as the normalized torific ideal $J_Y$; the scheme  $Y^\tor$  is written as a product of $\GG_a^k$ with a toric scheme providing its toroidal structure. It suffices to show that the ideal defining the blowing up $Y^{\can} \to Y^\tor$  is  a $\GG_a^k$-equivariant monomial ideal, since then its generating monomials are not divisible by the coordinates of the $\GG_a^k$ factor.

Since the blowing up $Y^{\res} \to Y$ is the canonical resolution of singularities of $(Y,E)$, the ideal defining this blowing up on a toric chart is monomial and $\GG_a^k$-equivariant. Also the torific ideal on $Y^{\res}$  is monomial and $\GG_a^k$-equivariant, therefore the same is true for the ideal defining its functorial principalization  $Y^{\can} \to Y^\res$, as required.
\end{proof}

The above lemma implies that the composition $W_{i\pm}^{\can} \to W_{i}^\tor$ is a toroidal morphism given by blowing up a functorial toroidal ideal we denote by $\oJ^{\can}_{i\pm}$. Let $W_i' \to W_i^\tor$ be the normalized blowing up of the product ideal $\oJ^{\can}_{i-}\oJ^{\can}_{i+}$, giving rise to toroidal morphisms $W_i' \to W_{i\pm}^{\can}$.  By \cite[Theorem 3.4.9]{Illusie-Temkin} there is a functorial toroidal resolution  of singularities $W_i^{\tor\res} \to W_i'$. This gives the following:


\begin{lemma}\label{Lem:torres-construction} There is a toroidal nonsingular modification $W_i^{\tor\res} \to W_{i}^\tor$ obtained by blowing up a functorial ideal, such that the maps $W_i^{\tor\res} \das W_{i\pm}^{\can}$ are both toroidal morphisms.
\end{lemma}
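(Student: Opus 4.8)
The plan is to carry out the construction indicated just before the statement. First I would let $W_i'\to W_i^\tor$ be the normalized blowing up of the product $\oJ^{\can}_{i-}\oJ^{\can}_{i+}$. Since each $\oJ^{\can}_{i\pm}$ is a toroidal ideal on the toroidal scheme $W_i^\tor$, so is the product, and, combinatorially, if $f_{i\pm}$ denotes the piecewise linear convex integral function on $\Sigma(W_i^\tor)$ attached to $\oJ^{\can}_{i\pm}$, then $W_i'$ is the subdivision cut out by $f_{i-}+f_{i+}$, which is the common refinement of the subdivisions cut out by $f_{i-}$ and by $f_{i+}$. In particular $\oJ^{\can}_{i\pm}\cO_{W_i'}$ is invertible, so the universal property of blowing up supplies morphisms $W_i'\to W_{i\pm}^{\can}$ over $W_i^\tor$; here one uses that $W_{i\pm}^{\can}=Bl_{\oJ^{\can}_{i\pm}}(W_i^\tor)$ is regular, hence normal, hence agrees with its own normalized blowing up, so that these morphisms are maps of subdivisions and therefore toroidal. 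All the morphisms in sight are isomorphisms over $U$ because the ideals $\oJ^{\can}_{i\pm}$ are trivial there.

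Next I would apply \cite[Theorem~3.4.9]{Illusie-Temkin} to obtain a functorial toroidal resolution of singularities $W_i^{\tor\res}\to W_i'$ of the toroidal scheme $W_i'$ (with boundary the total transform of $D_i^\tor$). It is a composition of toroidal blowings up, has regular source, and is an isomorphism wherever $W_i'$ is already regular, in particular over $U$. Composing with the previous step, the maps $W_i^{\tor\res}\to W_{i\pm}^{\can}$ are morphisms and are toroidal, being composites of toroidal morphisms; this is the second assertion of the lemma. For the first assertion, $W_i^{\tor\res}\to W_i^\tor$ is a tower of projective birational toroidal morphisms that are isomorphisms over $U$, hence it corresponds to a projective subdivision of $\Sigma(W_i^\tor)$ and is thus the normalized blowing up of a single toroidal ideal on $W_i^\tor$ restricting to the unit ideal on $U$; alternatively, since $W_i^\tor$ is normal one may invoke the composition-of-blowings-up description recalled in \S\ref{Sec:sequence-projective}.

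The one point that demands genuine care is functoriality for the surjective regular morphisms $g\:X_2'\to X_2$ defining the category $\Cob$. The ideals $\oJ^{\can}_{i\pm}$ are functorial by construction (\cite{AT1} together with Section~\ref{Sec:principalization}); forming the product and taking the normalized blowing up is functorial; and the remaining ingredient, the toroidal resolution $W_i^{\tor\res}\to W_i'$, is functorial for the induced strict regular morphisms between the schemes $W_i'$ by the functoriality statements in \cite[Theorems~3.4.9 and 3.4.15]{Illusie-Temkin}. Chaining these compatibilities yields functoriality of the resulting ideal on $W_i^\tor$, hence of $W_i^{\tor\res}\to W_i^\tor$ and of the maps $W_i^{\tor\res}\das W_{i\pm}^{\can}$. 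I expect that this bookkeeping of compatibilities, rather than any geometric difficulty, will be the main obstacle.
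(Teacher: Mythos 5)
Your proposal is correct and follows essentially the same route as the paper: the paper likewise forms the normalized blowing up $W_i'\to W_i^\tor$ of the product ideal $\oJ^{\can}_{i-}\oJ^{\can}_{i+}$, obtains the toroidal maps to $W_{i\pm}^{\can}$, and then applies the functorial toroidal resolution of \cite[Theorem~3.4.9]{Illusie-Temkin}, recording afterwards that the composite is the blowing up of a functorial ideal restricting to the unit ideal on $U$. Your added details (the PL-function description of the common refinement, the universal-property argument, and the functoriality bookkeeping via \cite{Illusie-Temkin}) only make explicit what the paper leaves implicit.
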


Note that these latter maps are again blowings up of the pullbacks of the ideal defining $W_i^{\tor\res} \to W_i^\tor$, which is functorial as well. Since the morphism is toroidal, it induces the identity on $U$, and the toroidal ideal blown up is the unit ideal on $U$.

We now have pieces of the diagram above looking as follows:
$$
\xymatrix@=1.5pc{
&&& W_i^{\tor\res}\ar[dl]|-{(\TorBl)}\ar[dr]|-{(\TorBl)}\\
&& W_{i-}^{\can}\ar[dl]_{\text{(blow up sequence)}}\ar[dr] && W_{i+}^{\can}\ar[dl]\ar[dr]^{\text{(blow up sequence)}}\\
W_{(i-1)+}^\res\ar@{=}[r]\ar[d]&W_{i-}^\res\ar[d] && W_i^\tor\ar[dd] && W_{i+}^\res\ar[d]&W_{(i+1)-}^\res\ar@{=}[l]\ar[d]\\
 W_{(i-1)+}\ar@{=}[r]&W_{i-}\ar[rrd] &&&& W_{i+}\ar[lld]&W_{(i+1)-}\ar@{=}[l]\\
&&&W_i
}$$
All maps are functorially the blowings up of ideals. The top diamond is at the same time toroidal, with maps given by blowings up of functorial toroidal ideals, so the toroidal structure is functorial in $X_1\to X_2$.  By Proposition \ref{Prop:toroidal-factorization}, the two top maps $W_i^{\tor\res} \to W_{i\pm}^{\can}$ have a functorial toroidal weak factorization; since it is toroidal it induces isomorphisms on $U$. This gives a factorization of the top diamond of the diagram above  as follows:

$$
\xymatrix@=1.5pc{
& W_i^{\tor\res}\ar[dl]|-{(\TorFact)}\ar[dr]|-{(\TorFact)}\\
 W_{i-}^{\can}\ar[dr]&& W_{i+}^{\can}\ar[dl]\\
& W_i^\tor
}$$

Note that $W_{1-}^\res = X''_1$ and $W_{m-}^\res = X''_2$ by Remark~\ref{princrem}. By construction, $X''_i\to X'_i$ and $X'_i\to X_i$ are resolutions of normal crossings pairs $(X'_i,D'_i)$ and $(X_i,D_i)$, respectively, hence $X''_i\to X_i$ factor as sequences of blowings up of regular centers compatible with $U_i$ and $D_i$  thanks to Assumption~\ref{Ass:snc}. Putting these together we functorially obtain a diagram

$$\xymatrix@=0.5pc{
X''_1\ar@{=}[rrddd]\ar[ddd]|-{(\Fact)}&&& W_1^{\tor\res}\ar[lddd]|-{(\Fact)}\ar[dddr]|-{(\Fact)} &&&&&& W_{m-1}^{\tor\res}\ar[dddl]|-{(\Fact)}\ar[dddr]|-{(\Fact)}&&&X''_2\ar@{=}[llddd]\ar[ddd]|-{(\Fact)}
\\ \\ \\
X_1\ar@{-->}[rr]&& W^\res_{1-}\ar@{-->}[rr]_{\varphi_1}&& W^\res_{2-}\ar@{-->}[rr]_{\varphi_2}&&{\ldots\vphantom{M}} \ar@{-->}[rr]_{\varphi_{m-1}} &&W^\res_{(m-1)-} \ar@{-->}[rr]_{\varphi_m}&&  W^\res_{m-}\ar@{-->}[rr]&& X_2.
}$$

Note  that $W_{i}$ are  given by blowing up of functorial ideals on $X_2$, and that $W_{i\pm}^\res$ are obtained by blowing up functorial ideals on $W_i$, all restricting to the identity on $U$. Similarly, the terms appearing in the diagonal arrows are  given by blowing up of functorial ideals on $W_{i\pm}^\res$. By the result of Section \ref{Sec:sequence-projective} all terms appearing are obtained  by blowing up of functorial ideals on $X_2$ restricting to the unit ideal on $U$. In case $X_i\setminus U$ are normal crossings divisors, we have guarantees that the same holds for $W_{i\pm}^\res$. It follows that the same holds for all terms in the sequence forming $W_{i\pm}^\can \to W_{i\pm}^\res$ by the properties of canonical principalization, and for the terms in a factorization of  $W_i^{\tor\res} \to W_{i\pm}^{\can}$ since these are all nonsingular toroidal schemes. Renaming all these terms $V_i, i=1,\ldots,l$, Theorem \ref{Th:main} follows.\qed

\subsection{Summary of resolution steps}

Results around resolution of singulatities were used in several steps in the proof of Theorem \ref{Th:main}. We recall here these steps and what they require. While our main theorem requires the procedures to be functorial, we emphasize the equivariance and functoriality properties necessary for the factorization theorem to hold even without requiring the factorization to be functorial.

The first resolution process appears in the construction of the birational cobordism in Proposition \ref{Prop:cobordism}. This appears explicitly in {\sc Step 3a} in Appendix \ref{App:cobordism}, where we resolve the pair $(B_I, D_{B_I})$, which has dimension $\dim X_2 + 1$. It is crucial that the process be $\GG_m$-equivariant.

In Section \ref{Sec:canres} we apply resolution of singularities to $W_{i\pm}$, which has dimension $\dim X_2$. The singularities of $W_{i\pm}$ are all locally monomial. Similarly, in Section \ref{Sec:Tying} we apply principalization of the ideals $J_{i\pm}^\res$, which are locally monoidal ideals. On the other hand these two steps require the resolution and principalization to be equivariant in a strong sense: Lemma \ref{Lem:ideal-is-toroidal} requires the process to be compatible with toric charts, and the process on the toric schemes must be both torus equivariant and $\GG_a^k$-equivariant. Finally,   Lemma \ref{Lem:torres-construction} requires toroidal resolution of singularities, which is  as functorial as one could wish.

\section{Extending the factorization to other categories}
In this section we use the factorization for schemes to construct an analogous factorization for blowings up of formal schemes, complex and non-archimedean analytic spaces, and stacks. We follow the general outline of the argument in \cite[Sections 5.1--5.2]{Temkin}, though we decided to elaborate more details related to the relative GAGA issues. In fact, for this construction to work one only needs to have a reasonable comparison theory between algebraic blow ups and their analytifications, but some of these results do not seem to be covered by the literature, especially in the complex analytic case.

\subsection{Stacks}\label{Sec:stacks}
Once functorial factorization for schemes is established it extends to stacks straightforwardly.

\subsubsection{Basic notions}
Our terminology concerning stacks follows that of \cite[\S5.1]{Temkin}. In particular, by a {\em stack} we mean an Artin stack $\fX$ and $\fX$ is {\em qe} (respectively, {\em regular}) if it admits a smooth covering $W\to\fX$ with $W$ a qe (respectively, a regular) scheme. The definition of blowing up along a closed subscheme is compatible with flat morphisms and hence extends to stacks. We define the regular surjective category of blowings up of stacks ${\rm Bl}_{\rm rs}^\fSt$ and the regular surjective category of weak factorizations of blowings up of stacks ${\rm Fact}_{\rm rs}^\fSt$ as in  definitions \ref{Def:Bl} and \ref{Def:factcat}.

\subsubsection{Factorization for stacks}
We are now in position to extend the factorization to stacks.

\begin{theorem}\label{Th:stacks} There is a functor ${\rm Bl}_{\rm rs}^\fSt({\chara = 0}) \to {\rm Fact}_{\rm rs}^\fSt({\chara = 0})$ from the regular surjective category of blowings up $f\:\fX' \to \fX$ in characteristic zero to the regular surjective category of factorizations $$\fX' = \fX_0\das \fX_1\das\dots \das \fX_{l-1} \das \fX_l = \fX,$$ in characteristic zero such that the composite  $${\rm Bl}_{\rm rs}^\fSt({\chara = 0}) \to {\rm Fact}_{\rm rs}^\fSt({\chara = 0})\to {\rm Bl}_{\rm rs}^\fSt({\chara = 0})$$ is the identity. The same holds in positive and mixed characteristics if Hypothetical Statements \ref{Hyp:resolution} and \ref{Hyp:principalization} hold true.
\end{theorem}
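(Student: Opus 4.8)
The plan is to deduce this from Theorem \ref{Th:main} by smooth descent, with no new geometric input. Since $\fX$ is a regular qe Artin stack, I would first choose a smooth hypercover $W_\bullet\to\fX$ by regular qe schemes: take a scheme atlas $W_0\to\fX$ with $W_0$ regular qe, let $W_1$ be a scheme atlas of the regular qe algebraic space $W_0\times_\fX W_0$, and continue; all face maps $W_n\to W_{n-1}$ are then smooth and surjective, hence regular and surjective. Pulling $f\colon\fX'\to\fX$ back along $W_\bullet$ yields blowings up $f_n\colon W_n'\to W_n$ that are objects of ${\rm Bl}_{\rm rs}$, with $f_n$ the pullback of $f_0$ along each composite $W_n\to W_0$. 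Applying the functor of Theorem \ref{Th:main}(1) to $f_0$ (in positive or mixed characteristic, Theorem \ref{Th:main}(2) under the relevant Hypothetical Statements) produces a weak factorization $W_0'=V_0\das\cdots\das V_l=W_0$ with centers $Z_i$ and ideal sheaves $J_i$ on $W_0$.

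The step that makes the descent go through is the strengthened functoriality encoded in Definitions \ref{Def:Bl} and \ref{Def:factcat}. For the two smooth surjective projections $d_0,d_1\colon W_1\to W_0$, the functor sends $f_1$ to a factorization whose ideals are simultaneously $d_0^*J_i$ and $d_1^*J_i$, which forces a canonical identification $d_0^*(V_0\das\cdots\das V_l)\toisom d_1^*(V_0\das\cdots\das V_l)$ of factorizations of $f_1$; this is a descent datum on each $V_i$ relative to $W_1\rightrightarrows W_0$, and applying the functor once more over $W_2$ and invoking the same uniqueness gives the cocycle identity. By fppf descent the $V_i$ then descend to Artin stacks $\fX_i$ with $\fX_0=\fX'$ and $\fX_l=\fX$, and the maps $\varphi_i$ (each a blowing up or the inverse of a blowing up), the centers $Z_i$, and the ideals $J_i$ descend for the same reason. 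Conditions (1)--(5) in the definition of a weak factorization, together with regularity of the $\fX_i$ and the normal-crossings requirements on the boundaries, are all smooth-local on $\fX$ and therefore hold for the descended data; this produces the factorization $\fX'=\fX_0\das\cdots\das\fX_l=\fX$.

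For functoriality of the resulting assignment, given an arrow of ${\rm Bl}_{\rm rs}^\fSt$, i.e.\ a regular surjective $g\colon\fX_2'\to\fX_2$ of base stacks as in the stacky analogue of Definition \ref{Def:Bl}, I would pick compatible smooth hypercovers $W_\bullet'\to\fX_2'$ and $W_\bullet\to\fX_2$ by schemes together with a morphism $W_\bullet'\to W_\bullet$ over $g$ whose components are smooth surjective (refining if necessary). Applying scheme-level functoriality of Theorem \ref{Th:main} to $W_\bullet'\to W_\bullet$ gives a morphism of the factorizations over the hypercovers compatible with the descent data, hence a morphism of the descended factorizations of stacks. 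Independence of all choices of atlases and hypercovers is the standard argument --- any two choices are dominated by a common refinement, over which the two factorizations agree by functoriality --- and that the composite ${\rm Bl}_{\rm rs}^\fSt\to{\rm Fact}_{\rm rs}^\fSt\to{\rm Bl}_{\rm rs}^\fSt$ is the identity is inherited from the scheme-level statement. The positive and mixed characteristic cases are proved word for word the same way, the Hypothetical Statements \ref{Hyp:resolution} and \ref{Hyp:principalization} entering only through Theorem \ref{Th:main}(2).

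I do not expect a genuine obstacle; the one point requiring care is that functoriality merely \emph{for isomorphisms}, as in \cite{AKMW}, would not be enough here --- one would then have to construct the comparison isomorphism over $W_1$ by hand and verify the cocycle condition --- whereas Definition \ref{Def:factcat}, which identifies the factorization of a pullback with the pullback of the factorization (this is precisely why the ideals $J_i$ are tracked), makes both of these automatic.
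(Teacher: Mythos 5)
Your proposal is correct and follows essentially the same route as the paper: pull back along a smooth atlas, apply the scheme-level functor of Theorem \ref{Th:main}, use the coincidence of the two pullbacks of the factorization over the fiber product to obtain a descent datum (with cocycle condition automatic from the strengthened functoriality via the ideals $J_i$), descend, and check independence of the atlas and functoriality by comparing along refinements. The only cosmetic difference is that you work with a full hypercover $W_\bullet$, whereas the paper argues directly with $W$ and $R=W\times_\fX W$.
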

\begin{proof}
Choose a smooth covering of $\fX$ by a qe scheme $W$. Then $W$ and $R=W\times_\fX W$ are regular qe schemes and the projections $p_{1,2}\:R\rightrightarrows W$ are surjective and smooth. The pullbacks $W'\to W$ and $R'\to R$ of $\fX'\to \fX$ are objects of $\Bl$, hence Theorem~\ref{Th:main} provides their regular factorizations $(W_{\scriptscriptstyle\bullet})$ and $(R_{\scriptscriptstyle \bullet})$. By the functoriality, these factorizations are compatible with both $p_1$ and $p_2$. Since both pullbacks of the factorization $(W_{\scriptscriptstyle \bullet})$ to $R$ coincide, flat descent implies that $(W_{\scriptscriptstyle \bullet})$ comes from a factorization $(\fX_{\scriptscriptstyle \bullet})$ of $\fX'\to \fX$.

To see that the factorization $(\fX_{\scriptscriptstyle \bullet})$ is independent of a smooth covering $W\to \fX$ we note that any smooth covering $W'\to \fX$ that factors through $W$ induces the same factorization of $\fX'\to \fX$, as follows from the functoriality of factorization with respect to the morphism $W'\to W$.

Finally, assume that $(\fY'\to \fY)\to(\fX'\to \fX)$ is a morphism in ${\rm Bl}_{\rm rs}^\fSt$. Then there exist smooth coverings by qe schemes $W\to \fX$ and $T\to \fY$ such that the morphism $\fY\to \fX$ lifts to a regular surjective morphism $T\to W$. It then follows easily from the functoriality of factorization with respect to $T\to W$ that the factorization for stacks we constructed is compatible with $\fY\to \fX$. Thus, the factorization for stacks is functorial.
\end{proof}

\subsection{Geometric spaces}

\subsubsection{Categories}
We will work with the geometric spaces of the following four classes, that will simply be called {\em spaces}:
\begin{itemize}
\item[(1)] qe formal schemes as defined in \cite[Section 2.4.3]{Temkin},
\item[(2)] semianalytic germs of complex analytic spaces, see Appendix \ref{App:germs},
\item[(3)] $k$-analytic spaces of Berkovich for a complete non-Archimedean field $k$, see \cite[Section 1]{berihes},
\item[(3')] rigid $k$-analytic spaces, where $k$ is as above and non-trivially valued.
\end{itemize}

To make notation uniform, the category of all such spaces will be denoted $\fSp$ in each of the four cases.

\begin{remark}
(i) The case (3') is added for the sake of completeness. It is essentially included in (3) because the category of qcqs (i.e. quasi-compact and quasi-separated) rigid spaces is equivalent to the category of compact strictly analytic Berkovich spaces, and all our arguments will be "local enough".

(ii) Probably, there exist other contexts where our methods apply, e.g. semialgebraic geometry. We do not explore this direction here, but we will deal with the above four cases in a uniform way that should make it simpler for the interested reader to extend our results to other possible settings.
\end{remark}

\subsubsection{Affinoid spaces}
We say that a space $X$ is {\em affinoid} if it is of the following type:
\begin{itemize}
\item[(1)] $X=\Spf(A)$ is affine,
\item[(2)] $(\cX,X)$ is an affinoid germ of a complex analytic space, see Section \ref{Sec:affgerm}
\item[(3)] $X=\cM(A)$ is an affinoid $k$-analytic space,
\item[(3')] $X={\rm Sp}(A)$ is an affinoid rigid space over $k$.
\end{itemize}

\subsubsection{Admissible affinoid coverings}
To simplify the discussion we consider only affinoid coverings $X=\cup_{i\in I}X_i$ of a qcqs space by its affinoid domains. Such a covering is called {\em admissible} if it possesses a finite refinement. Here is the main property of admissible coverings, which may fail for non-admissible ones (e.g. the covering of a germ $(\cX,X)$ by one-pointed subgerms $(\cX,x)$ with $x\in X$).

\begin{lemma}\label{Lem:acyclicity}
Assume that $X=\cup_{i\in I}X_i$ is an admissible covering of an affinoid space. Then for any coherent $\cO_X$-module $\cF$ the \c{C}ech complex
$$0\to\cF(X)\to\prod_i\cF(X_i)\to\prod_{i,j}\cF(X_i\cap X_j)\to\dots$$ is acyclic.
\end{lemma}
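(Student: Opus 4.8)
The plan is to reduce the statement to the case of a \emph{finite} covering, where in each of the four categories it is a form of Tate's acyclicity theorem, and then to bootstrap from a finite refinement by a standard \v Cech bicomplex argument.

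First I would isolate the finite case: in each of the four categories, a finite covering $X=\bigcup_{k=1}^{n}Y_k$ of an affinoid space by affinoid domains has acyclic augmented \v Cech complex $0\to\cF(X)\to\prod_k\cF(Y_k)\to\prod_{k,l}\cF(Y_k\cap Y_l)\to\cdots$ for every coherent $\cF$; here one also uses that a finite intersection of affinoid domains is again an affinoid domain, and that "affinoid domain of $X$ contained in $X_i$" implies "affinoid domain of $X_i$". For rigid spaces this is Tate's acyclicity theorem together with Gerritzen--Grauert; for Berkovich $k$-analytic spaces it is Berkovich's version of the same, see \cite{berihes}; for qe formal schemes it reduces to the vanishing of the higher cohomology of a coherent sheaf on an affine formal scheme (the relevant intersections again being affinoid); and for semianalytic germs of complex analytic spaces it follows from the structure theory of affinoid germs developed in Appendix \ref{App:germs}. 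I expect this bundle of inputs --- in particular spelling it out for germs and for formal schemes --- to be the main obstacle; the rest is formal.

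Next, let $\mathfrak U=(X_i)_{i\in I}$ be an admissible covering and choose a finite refinement $\mathfrak U'=(X'_j)_{j\in J}$ with $J$ finite, each $X'_j$ an affinoid domain contained in some $X_{\iota(j)}$. Form the \v Cech bicomplex $D^{p,q}=\prod_{i_0<\cdots<i_p}\prod_{j_0<\cdots<j_q}\cF(X_{i_0}\cap\cdots\cap X_{i_p}\cap X'_{j_0}\cap\cdots\cap X'_{j_q})$. Since $J$ is finite the bicomplex is bounded in the $q$-direction, so $\operatorname{Tot}(D)$ is well defined and both spectral sequences converge. For fixed $p$, the row $D^{p,\bullet}$ is a product (over $I$-multi-indices) of the augmented \v Cech complexes of the affinoid $X_{i_0}\cap\cdots\cap X_{i_p}$ for its \emph{finite} covering by the affinoid domains $X_{i_0}\cap\cdots\cap X_{i_p}\cap X'_j$; by the finite case and exactness of products, $D^{p,\bullet}$ is a resolution of $C^p(\mathfrak U,\cF)$, so $\operatorname{Tot}(D)\simeq C^\bullet(\mathfrak U,\cF)$. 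For fixed $q$, the column $D^{\bullet,q}$ is a product (over $J$-multi-indices) of the \v Cech complexes of $X'_{j_0}\cap\cdots\cap X'_{j_q}$ for the covering $\{X_i\cap X'_{j_0}\cap\cdots\cap X'_{j_q}\}_{i\in I}$; since $X_{\iota(j_0)}$ contains $X'_{j_0}\cap\cdots\cap X'_{j_q}$, this covering has the whole space as a member, hence its augmented \v Cech complex is contractible via the usual homotopy, so $D^{\bullet,q}$ resolves $C^q(\mathfrak U',\cF)$ and $\operatorname{Tot}(D)\simeq C^\bullet(\mathfrak U',\cF)$.

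Combining, $\check H^n(\mathfrak U,\cF)\simeq\check H^n(\mathfrak U',\cF)$ for all $n$, and the right-hand side is $\cF(X)$ in degree $0$ and vanishes in positive degrees by the finite case applied to $\mathfrak U'$; this is exactly the asserted acyclicity. The only points needing care beyond the citations of the finite case are the finiteness of $J$ (used for convergence of the two spectral sequences) and the transitivity and finite-intersection stability of "affinoid domain" (used so that the finite case applies to all the restricted coverings appearing as rows and columns).
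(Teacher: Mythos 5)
Your bicomplex reduction from an admissible covering to a finite refinement is correct, and it usefully makes explicit a step the paper only asserts (``it suffices to deal with the case of finite coverings''). The genuine gap is precisely at the point you flag as the main obstacle and then dispose of by citation: the finite case for semianalytic germs. You claim it ``follows from the structure theory of affinoid germs developed in Appendix \ref{App:germs}'', but Lemma \ref{Lem:affgerm} only gives vanishing of higher coherent cohomology on a single affinoid germ; it does not give Tate-type acyclicity of the \c{C}ech complex of a finite covering of an affinoid germ by affinoid subgerms. The members of such a covering are compact (closed) subsets, not open ones, so Leray/Cartan for acyclic coverings cannot be invoked off the shelf; to argue this way you would have to justify a \c{C}ech-to-sheaf-cohomology comparison for finite closed coverings (via soft or flabby resolutions, in the spirit of the proof of Lemma \ref{Lem:GAGA-basic-sheaves}) and, in addition, prove that finite intersections of affinoid subgerms are again affinoid germs --- a fact your rows require for every multi-intersection $X_{i_0}\cap\dots\cap X_{i_p}$ and which is nowhere established (being affinoid is a nontrivial condition: a closed immersion of a representative into some $(\CC^n,D)$ with the support equal to the preimage of the polydisc).

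This missing step is exactly where the paper's proof has its content. After reducing to a finite covering it chooses a Hausdorff representative, writes $X=\cap_n V_n$ and $X_i=\cap_n V_{ni}$ as intersections of decreasing families of Stein open sets, observes that for each $n$ some $V_{m(n)}$ is covered by the Stein open sets $V_{m(n)}\cap V_{ni}$ (intersections of Stein opens in a Hausdorff space are Stein, so the intersection-stability issue never arises), applies acyclicity for Stein open coverings of the Stein space $V_{m(n)}$, and then passes to the filtered colimit over $n$, using exactness of filtered colimits and the fact that sections of a coherent sheaf over the compact supports are the colimits of sections over shrinking neighborhoods. Either you reproduce an argument of this kind, or you supply the closed-covering Leray argument together with the intersection-stability lemma; as written, the complex-germ case of Lemma \ref{Lem:acyclicity} remains unproved, while the other three categories are indeed covered by the classical references you cite.
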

\begin{proof}
For formal schemes this is classical, and for non-archimedean geometry this is Tate's Acyclicity Theorem and its extension to Berkovich spaces. It remains to deal with complex germs. It suffices to deal with the case of finite coverings, and then we can replace the direct products with  direct sums. Choosing a small enough representative $\cX$ of $X$ we can assume that $\cX$ is Hausdorff. Choose families of Stein domains $V_0\supset V_1\dots$ and $V_{0i}\supset V_{1i}\dots$ for each $i\in I$ such that $X=\cap_{n=0}^\infty V_n$ and $X_i=\cap_{n=0}^\infty V_{ni}$. For each $n\in\NN$ the union $\cup_{i\in I}V_{ni}$ is a neighborhood of $X$ and hence it contains some $V_m$. Let $m=m(n)$ be the minimal number for which the latter happens. The intersections $U_{ni}=V_m\cap V_{ni}$ are Stein domains since $\cX$ is Hausdorff, hence $V_m$ is covered by Stein domains $U_{ni}$ and we obtain the acyclic \c{C}ech complex
$$0\to\cF(V_m)\to\oplus_i\cF(U_{ni})\to\oplus_{i,j}\cF(U_{ni}\cap U_{nj})\to\dots.$$
Since $\lim_{n\to\infty}m(n)=\infty$ and $X_i=\cap_n U_{ni}$, passing to the limit on $n$ we obtain the sequence from the formulation of the Lemma. It remains to use that the filtered colimit is an exact functor.
\end{proof}

\subsubsection{Regular spaces}
Each category of spaces possesses a natural notion of regular spaces, see \cite[Section 5.2.2]{Temkin}. In fact, a space $X$ is regular if it possesses an admissible affinoid covering $X=\cup_iX_i$ such that the rings $A_i=\cO_X(X_i)$ are regular. In particular, it follows from Lemma~\ref{Lem:affgerm} that a germ of analytic space $(\cX,X)$ is regular if and only if $\cX$ is smooth in a neighborhood of $X$.

By $\fSp_\reg$ we denote the full subcategory of $\fSp$ consisting of quasi-compact regular objects, and we do not impose any separatedness assumption.

\subsubsection{Smooth and regular morphisms}
Also, the category $\fSp$ has a natural notion of smooth morphisms. In cases (1), (2) and (3') this is the classical notion (with the obvious adjustment in (2)) and in (3) this is the notion of quasi-smooth morphisms as defined in \cite[Section 4]{Ducros}. 

In cases (2), (3) and (3') any morphism is of finite type, so we identify the notions of smooth and regular morphisms. Regular morphisms of qe formal schemes were defined in \cite[2.4.12]{Temkin}: a morphism $f\:Y\to X$ is called {\em regular} if it admits an open covering of the form $f_i\:\Spf(B_i)\to\Spf(A_i)$ such that the homomorphisms $A_i\to B_i$ are regular.

\begin{lemma}\label{Lem:regmor}
If $Y\to X$ is a regular morphism of affinoid spaces in $\fSp$ then the homomorphism $\cO_X(X)\to\cO_Y(Y)$ is regular.
\end{lemma}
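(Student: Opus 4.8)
Write $A=\cO_X(X)$ and $B=\cO_Y(Y)$; these are noetherian rings, and it suffices to show that $A\to B$ is flat with geometrically regular fibres. The plan is to reduce, in each of the four settings, to the following local assertion, and then to deduce the statement by faithfully flat descent: \emph{there is a finite admissible affinoid covering $Y=\bigcup_i Y_i$ such that each composite homomorphism $A\to B_i:=\cO_Y(Y_i)$ is regular.}

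For the reduction to the local assertion, in case (1) one is handed, essentially by the definition of a regular morphism of qe formal schemes \cite[2.4.12]{Temkin}, an open covering $Y=\bigcup\Spf(B_i)$ together with opens $\Spf(A_i)\subseteq X$ through which the pieces factor, with $A_i\to B_i$ regular. After refining the $\Spf(A_i)$ (and pulling back the covering of $Y$) one may take the $\Spf(A_i)$ to be principal opens of $X=\Spf(A)$, so that the $A_i$ are $I$-adic completions of localizations of $A$; since $X$ is quasi-excellent these are regular $A$-algebras, so $A\to B_i$ becomes a composite of regular homomorphisms, hence regular, and quasi-compactness makes the covering finite. In cases (2), (3), (3') the morphism $Y\to X$ is smooth (quasi-smooth in the sense of \cite[\S4]{Ducros} for (3)). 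The first step is to invoke the local structure of smooth morphisms: after shrinking $Y$ to a finite affinoid covering, each $Y_i\to X$ factors as $Y_i\xrightarrow{e_i}\mathbb{A}^{d_i}_X\to X$, where $\mathbb{A}^{d_i}_X$ is the relative $d_i$-ball over $X$, the projection corresponds to the standard regular homomorphism $A\to\cO(\mathbb{A}^{d_i}_X)=A\langle T_1,\dots,T_{d_i}\rangle$, and $e_i$ is étale; shrinking once more, $e_i$ corresponds to an étale homomorphism of rings, which is likewise regular. Thus $A\to B_i$ is again a composite of regular homomorphisms.

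Granting the local assertion, the rest is formal. The inclusions $Y_i\hookrightarrow Y$ induce flat homomorphisms $B\to B_i$, so $B\to B':=\prod_iB_i$ is flat, and it is faithfully flat because Lemma~\ref{Lem:acyclicity} (in degree $0$) gives $N\hookrightarrow\prod_iN\otimes_BB_i$ for every finitely generated $B$-module $N$. To see $A\to B$ is flat, note that for any $A$-module $M$ and $j>0$ one has $\Tor^A_j(M,B)\otimes_BB'\cong\Tor^A_j\bigl(M,\textstyle\prod_iB_i\bigr)\cong\prod_i\Tor^A_j(M,B_i)=0$, the last equality because each $A\to B_i$ is flat; faithful flatness of $B\to B'$ then forces $\Tor^A_j(M,B)=0$. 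To see the fibres of $A\to B$ are geometrically regular, fix $\mathfrak{p}\in\Spec A$ and a finite extension $L$ of $\kappa(\mathfrak{p})$; the noetherian ring $B\otimes_AL$ receives the faithfully flat homomorphism $B\otimes_AL\to B'\otimes_AL=\prod_i(B_i\otimes_AL)$, and each factor $B_i\otimes_AL$ is regular, being the base change $L\to B_i\otimes_AL$ of the regular homomorphism $A\to B_i$ applied to the regular ring $L$. Since a finite product of regular noetherian rings is regular and regularity descends along faithfully flat homomorphisms of noetherian rings, $B\otimes_AL$ is regular; hence $A\to B$ is regular.

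The step I expect to require the most care is the local assertion in the analytic and formal settings, that is, matching the geometric notion of smoothness (or of a regular morphism of formal schemes) with the algebraic notion of a regular homomorphism on the coordinate rings of sufficiently small affinoid pieces. For formal schemes this is exactly what quasi-excellence buys us (completed localizations of $A$ are regular $A$-algebras); in the analytic categories it rests on the structure theory of smooth morphisms — that they are, locally on the source, étale over a relative ball over the base, together with the fact that $A\to A\langle T_1,\dots,T_d\rangle$ is regular — and on étale morphisms of affinoid spaces being locally standard-étale on rings. Everything downstream of that input, namely the faithfully flat descent of flatness and of fibrewise regularity, is routine.
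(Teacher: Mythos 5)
Your overall route differs from the paper's: the paper checks regularity pointwise, using that the natural map from $Y$ to $\Spec(\cO_Y(Y))$ hits enough points (all closed points in case (2), everything in case (3)) together with local-ring comparison results (Lemma~\ref{Lem:affgerm}, the cited results of Ducros, and \cite[Lemma~2.4.6]{Temkin} for formal schemes), whereas you localize on $Y$ and descend along the faithfully flat map $B\to\prod_i B_i$ supplied by Lemma~\ref{Lem:acyclicity}. The descent half of your argument is correct and complete: the $\Tor$ computation, the passage to $B\otimes_A L$, and descent of regularity along faithfully flat maps are all fine (faithfulness of $B\to\prod_i B_i$ only needs to be tested on cyclic modules, which your \v{C}ech injection provides).

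The gaps are in the local assertion. In case (1), after refining the base to principal opens $\Spf(A_{\{f\}})$ the induced ring maps are no longer the given regular homomorphisms $A_i\to B_i$: they are the $I$-adic completions of localizations of those maps, and regularity of a completed regular homomorphism is not automatic --- it is a quasi-excellence statement of essentially the same depth as the lemma itself (it is what \cite[Lemma~2.4.6]{Temkin} packages), so ``a composite of regular homomorphisms'' is not yet justified. Your own descent mechanism repairs this if reorganized: prove $A\to A_i$ is regular by covering $\Spf(A_i)$ by opens principal in both $X$ and $\Spf(A_i)$, for which $A\to A_{\{f_j\}}$ is regular (localization followed by completion of a qe ring) and $A_i\to\prod_j A_{\{f_j\}}$ is faithfully flat because $I$ lies in the Jacobson radical of the complete ring $A_i$; then compose with the given regular map $A_i\to B_i$, with no refinement of the $B_i$ needed. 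In cases (2), (3), (3') the local structure you invoke is not available in quite the stated form: in the Berkovich setting a quasi-smooth morphism is only G-locally (quasi-)\'etale over a relative polydisc, so the factorization passes through affinoid subdomains, and you then also need that restrictions to affinoid subdomains of the base (and of the relative ball) are regular ring homomorphisms, as well as regularity of $A\to A\langle T_1,\dots,T_d\rangle$; these are exactly the excellence-type inputs the paper takes from \cite{Ducros} and \cite{Ducros-excellent} and must be cited rather than treated as routine, and analogous care is needed in the germ category (2), where smooth maps are locally projections but the factorization has to be kept inside the semianalytic-germ formalism of Appendix~\ref{App:germs}. With those inputs supplied, your argument goes through.
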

\begin{proof}
Case (1) is covered by \cite[Lemma~2.4.6]{Temkin}. Case (3), and hence also case (3'), follows from \cite[Proposition 4.5.1]{Ducros}, \cite[Theorem 3.3]{Ducros-excellent} and the fact that for any affinoid space $Z=\cM(C)$ the map $Z\to\Spec(C)$ is surjective by \cite[Proposition 2.1.1]{berihes}. Case (2) is dealt with similarly using that if $Z$ is an affinoid germ, $z\in Z$ and $f\:Z\to T=\Spec(\cO_Z(Z))$ is the natural map then $f(Z)$ is the set of all closed points and the homomorphism $\cO_{T,f(z)}\to\cO_{Z,z}$ is regular by Lemma~\ref{Lem:affgerm}.
\end{proof}

\subsection{Relative GAGA}
Assume that $X$ is an affinoid space, $A=\cO_X(X)$ and $\cX=\Spec A$. Relative GAGA relates the theory of $\cX$-schemes and $X$-spaces.

\subsubsection{Analytification functor}
There exists an analytification/formal completion functor from $\cX$-schemes of finite type to $X$-spaces. For uniformity, we will usually call this functor {\em analytification} and denote $\cY\mapsto Y=\cY^\an$. It is constructed as follows:
\begin{itemize}
\item[(i)] The analytification of $\AA^n_\cX$ is $\AA^n_X$.
\item[(ii)] If $\cY$ is $\cX$-affine, say $\cY=\Spec B$ with $B=A[t_1\.t_n]/(f_1\.f_m)$, then $\cY^\an$ is the vanishing locus of $f_1\.f_m$ in $\AA^n_X$. It is easily seen to be independent of the $A$-presentation of $B$
\item[(iii)] The construction in (ii) is compatible with localizations, so in general one covers $\cY$ by $\cX$-affine schemes $\cY_i$ and glues $\cY^\an$ from $\cY_i^\an$.
\end{itemize}

\subsubsection{The analytification map}
There exist natural {\em analytification maps} $\pi_\cY\:\cY^\an\to\cY$ which can be constructed through the steps (i)--(iii), or directly (ii) and (iii). Let us describe them in the affine case $\cY=\Spec B$:
\begin{itemize}
\item[(1)] The map is $\Spf B\into\Spec B$. It is injective and the image is the set of open prime ideals of $B$.
\item[(2),(3')] The map $\cY^\an\to\cY$ is injective and its image is the set of maximal ideals of $B$.
\item[(3)] The map $\cY^\an\to\cY$ is surjective, see \cite[Proposition 2.6.2]{berihes}.
\end{itemize}

\subsubsection{Sheaves}
The analytification functor also extends to coherent sheaves: for any $\cX$-scheme $\cY$ of finite type there exists an analytification functor $\Coh(\cY)\to\Coh(\cY^\an)$ given by $\cF^\an=\pi_\cY^*\cF$.

\subsubsection{Properties}\label{Sec:Gagaproperties}
For each $\cX$-proper scheme $\cY$ the analytification functor $\Coh(\cY)\toisom\Coh(Y)$ is an equivalence of categories. In particular, the analytification functor induces an equivalence between the categories of projective $\cX$-schemes and $X$-spaces. The references are:
\begin{itemize}
\item[(1)] Grothendieck's Existence Theorem, \cite[III$_1$, 5.1.4]{ega}.
\item[(2)] Theorem  \ref{Th:GAGA} below.
\item[(3)] The analytification was introduced in \cite[Section 2.6]{berihes}, and comparison of coherent sheaves can be found in \cite[Theorem A.1]{Poineau}.
\item[(3')] K\"opf's theorem, see \cite[Sections 5 and 6]{Kopf} and \cite[Example 3.2.6]{relativeample}.
\end{itemize}

\subsubsection{Analytification and regularity}
Various properties are respected by analytification, but for our needs we only need to study the situation with regularity.

\begin{proposition}\label{Prop:gagareg}
Assume that $X$ is an affinoid space with $A=\cO_X(X)$, $\cX=\Spec(A)$, and $\cY$ is an $\cX$-scheme of finite type with $Y=\cY^\an$, then
\begin{itemize}
\item[(i)] If $\cY$ is regular then $Y$ is regular.

\item[(ii)] Conversely, assume that $Y$ is regular, then

\subitem(a) in cases (2), (3) and (3'), $\cY$ is regular,

\subitem(b) in case (1) assume also that $\cY$ is $\cX$-proper, then $\cY$ is regular.
\end{itemize}
\end{proposition}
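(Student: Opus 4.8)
The proof will rest on one GAGA input: for $y\in Y$ with image $x=\pi_\cY(y)\in\cY$, the induced map on local rings $\cO_{\cY,x}\to\cO_{Y,y}$ is a \emph{regular} homomorphism of noetherian local rings, i.e. flat (hence faithfully flat, being local) with geometrically regular — in particular regular — closed fibre. The plan is first to assemble this uniformly: in case (1) the map is the completion $\cO_{\cY,x}\to\widehat{\cO_{\cY,x}}$ along the ideal of definition, and it is regular because $A$, hence the finite-type $A$-scheme $\cY$, hence $\cO_{\cY,x}$, is quasi-excellent and therefore a $G$-ring, and the completion of a noetherian $G$-ring along any ideal is a regular homomorphism; in cases (3) and (3') it is a consequence of Ducros's excellence results for Berkovich spaces (\cite{Ducros,Ducros-excellent}) together with the coherent comparison of \S\ref{Sec:Gagaproperties}; in case (2) it is the classical excellence of complex analytic local rings, in the local form recorded in Lemma~\ref{Lem:affgerm}. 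I would also record the image of $\pi_\cY$: all of $\cY$ in case (3), the set of closed points of $\cY$ in cases (2) and (3'), and the closed fibre $\cY_0=\cY\times_{\cX}\Spec(A/I)$ over an ideal of definition $I$ of $A$ in case (1).

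Granting this, part (i) is immediate: for $y\in Y$ put $x=\pi_\cY(y)$; then $\cO_{\cY,x}$ is regular since $\cY$ is, and a flat local homomorphism $R\to S$ with $R$ regular and regular closed fibre has $S$ regular by \cite[IV$_2$, 6.5.2]{ega}, so $\cO_{Y,y}$ is regular; as this holds at every point, $Y$ is regular. For part (ii) I would use faithfully flat descent — \cite[IV$_2$, 6.5.2]{ega} also gives that a faithfully flat local homomorphism with regular target has regular source — so $\cO_{\cY,x}$ is regular for every $x$ in the image of $\pi_\cY$. In case (3) the image is all of $\cY$, proving (ii)(a) there with nothing further. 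In cases (2) and (3') we obtain regularity of $\cO_{\cY,x}$ for every closed point $x$; now $\cY$, being of finite type over the affine $\cX$, is quasi-compact, and its non-regular locus $S$ is closed because $\cY$ is quasi-excellent, so if $S$ were nonempty it would contain a point closed in $S$, hence closed in $\cY$, contradicting closed-point regularity; thus $S=\emptyset$ and $\cY$ is regular, proving (ii)(a). In case (1), under the properness hypothesis, the same argument reduces us to a closed point $x$ of $\cY$; since $\cY\to\Spec A$ is proper, $x$ maps to a closed point of $\Spec A$, and $1+I\subseteq A^\times$ forces $I\subseteq\operatorname{rad}(A)$, so every closed point of $\Spec A$ lies in $V(I)$; hence $x\in\cY_0=\operatorname{im}\pi_\cY$ and $\cO_{\cY,x}$ is regular by the descent step — contradiction — so $\cY$ is regular, proving (ii)(b). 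This also explains why properness is required in (ii)(b): for non-proper $\cY$, already $\AA^1$ over $\Spec A$ in case (1), singularities supported off $\cY_0$ are invisible to $\cY^\an$.

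The main obstacle is the first paragraph — pinning down, in a form valid simultaneously for all four analytification theories, that $\cO_{\cY,x}\to\cO_{Y,y}$ is a regular homomorphism, in particular faithfully flat with regular closed fibre. For formal schemes this is standard once $A$ is known to be quasi-excellent; for Berkovich and rigid spaces it repackages Ducros's excellence theorems and the flatness of analytification; for complex germs it is classical but must be routed through the local description of affinoid germs in Lemma~\ref{Lem:affgerm}. Everything after that is formal commutative algebra and point-set topology of schemes.
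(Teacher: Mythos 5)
Your overall strategy is the one the paper uses: make the comparison map a regular homomorphism, ascend regularity along it for (i) and descend it by faithful flatness for (ii), keep track of the image of $\pi_\cY$, and in the formal case use properness together with $I\subseteq\operatorname{rad}(A)$ to push closed points into the closed fibre. In cases (2), (3) and (3') your argument is essentially the paper's: case (2) rests on Lemma \ref{Lem:affgerm}(iii)--(iv), case (3) on Ducros (the paper quotes the pointwise equivalence, which is exactly your ``regular homomorphism plus faithful flatness'' packaged), and (3') reduces to (3) or, as you do, is handled like (2); your closed-point bookkeeping (non-regular locus closed by quasi-excellence, quasi-compactness gives a closed point) is fine.

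The one genuine problem is case (1), in two related places. First, the identification you lean on is not correct as stated: for $\cY=\Spec B$ the stalk of $Y=\Spf\hatB$ at a point $y$ corresponding to an open prime $\mathfrak{p}$ is not the $I$-adic completion of $\cO_{\cY,x}=B_{\mathfrak{p}}$; it is the colimit $\varinjlim_{f\notin\mathfrak{p}}(\hatB_f)^{\wedge}$ of $I$-adically completed localizations, and completion does not commute with this colimit. One can still show that $B_{\mathfrak{p}}\to\cO_{Y,y}$ is a regular homomorphism (e.g.\ via the fact that both rings have the same maximal-adic completion and $B$ is a G-ring), but that requires an argument you have not supplied. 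Second, the paper's notion of regularity for objects of $\fSp$ -- in particular for formal schemes -- is not stalkwise: a space is regular if it has an admissible affinoid covering with regular rings of global sections, so even granting stalk regularity of $Y$ you still need a bridge between ``all stalks of $\Spf\hatB$ are regular'' and ``$\hatB$ is a regular ring'' (using that $I\hatB$ lies in the Jacobson radical, so every prime sits under an open maximal ideal, plus faithfully flat descent from the stalks). The paper sidesteps both issues by never touching stalks of the formal scheme: for (i) it uses that $B\to\hatB$ is regular because $B$ is qe, so $B$ regular gives $\hatB$ regular; for (ii)(b) it works with $\hatB\to\hatB_m$ (regular since $\hatB$ is qe) and then descends along the faithfully flat $B_m\to\hatB_m$. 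Rewriting your case (1) in those ring-theoretic terms repairs the gap; the rest of your proof stands.
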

\begin{proof}
Note that case (3') follows from (3) since a qcqs rigid space can be enhanced to an analytic space, and the regularity is preserved. We will study cases (1), (2) and (3) separately, but let us first make a general remark. The claims (i) and (ii)(a) are local on $\cY$, so we can assume that $\cY=\Spec B$ for a finitely generated $A$-algebra $B$ in these cases.

Case (1). In this case, $A$ is an $I$-adic ring and $X=\Spf A$. Since $A$ is qe, $B$ is qe and so the $I$-adic completion homomorphism $B\to\hatB$ is regular. This implies (i) since if $B$ is regular then $\hatB$ is regular, and so $\Spf\hatB$ is regular.

Let us prove (ii). Since $A$ is $I$-adic, $I$ is contained in the Jacobson radical of $A$ (see \cite[Proposition~10.15(iv)]{AM}), and so any point of $\cX$ has a specialization in $\cX_s:=V(I)$. By the properness of $f\:\cY\to\cX$, any point of $\cY$ has a specialization in $\cY_s:=f^{-1}(\cX_s)$, hence it suffices to prove the following claim: if $\cY$ is of finite type over $\cX$ and $Y$ is regular, then $\cY$ is regular at any point $y\in\cY_s$.

The latter claim is local around $y$, hence we can assume, again, that $\cY=\Spec B$. Let $m\subset B$ be the ideal corresponding to $y$, then the $m$-adic completion $B\to\hatB_m$ factors through the $I$-adic completion $B\to\hatB$, and so $\hatB_m$ is the completion of $\hatB$ along $m\hatB$. Since $X$ is qe, $\hatB$ is qe and so $\hatB\to\hatB_m$ is regular. By our assumption $\hatB$ is regular, hence $\hatB_m$ is regular too. The homomorphism $B_m\to\hatB_m$ is faithfully flat, hence $B_m$ is regular and we win.

Case (3). In this case, $A$ is $k$-affinoid and $X=\cM(A)$. Consider a point $y\in Y$ and set $\by=\pi_\cY(y)\in\cY$. By \cite[(1.3.7.2)]{Ducros}, $\cY$ is regular at $\by$ if and only if $Y$ is regular at $y$. Since $\pi_\cY$ is surjective this implies that $\cY$ is regular if and only if $Y$ is so.

Case (2). If $y\in Y$ and $\by=\pi_\cY(y)$ then it follows easily from Lemma~\ref{Lem:affgerm} that the homomorphism $f_y\:\cO_{\cY,\by}\to\cO_{Y,y}$ induces an isomorphism of the completions. A local ring is regular if and only if its completion is regular, hence $\cO_{\cY,\by}$ is regular if and only if $\cO_{Y,y}$ is so. Since the image of $\pi_\cY$ contains all closed points, we obtain that $Y$ is regular if and only if $\cY$ is regular.
\end{proof}

\subsection{The factorization theorem}
\subsubsection{Blowings up}
Each of the categories $\fSp$ has a natural notion of blowings up $f\:X'\to X$ along ideals (e.g., see \cite[Section 2.4.4]{Temkin} and \cite[Section 5.1.2]{Temkin}). In fact, $Bl_I(X)$ can be described as follows: if $Y\subset X$ is an affinoid domain, $\cY=\Spec(\cO_X(Y))$ and $\cI\subset\cO_\cY$ is induced by $I$, then the restriction of $f$ onto $Y$ is the analytification of the blowing up $Bl_\cI(\cY)\to\cY$. We will only consider blowings up with nowhere-dense centers.

\subsubsection{Weak factorization}
By a weak factorization of $X_1 \to X_2$ we mean a diagram
$$\xymatrix{X_1 = V_0\ar@{-->}[r]^(.6){\phi_1}& V_1\ar@{-->}[r]^{\phi_2}&\ldots \ar@{-->}[r]^{\phi_{l-1}} &V_{l-1} \ar@{-->}[r]^(.4){\phi_l}&  V_l = X_2}$$
along with subspaces $Z_i$ and ideal sheaves $J_i$  satisfying conditions (1-5) of Section \ref{Sec:def-factor}, where in (2) and (4) the word ``scheme" is replaced with ``space". For brevity of notation, such a datum will be denoted $(V_{\scriptscriptstyle \bullet},\phi_{\scriptscriptstyle \bullet},Z_{\scriptscriptstyle \bullet})$.

We define the regular surjective category of blowings up ${\rm Bl}_{\rm rs}^\fSp$ in $\fSp$ and the regular surjective category of weak factorizations ${\rm Fact}_{\rm rs}^\fSp$ on $\fSp$ as in Definitions \ref{Def:Bl} and \ref{Def:factcat}. By definition, these categories are fibred over the category of regular spaces with regular morphisms, and the fibers over a regular space $X$ will be denoted $\Bl(X)$ and $\Fact(X)$. Thus, $\Bl(X)$ is the set of blowings up $X'\to X$ with regular $X$ and $\Fact(X)$ is the set of all regular factorizations of blowings up of $X$.

\begin{lemma} \label{Lem:gagafact}
Let $X$ be an affinoid space, $A=\cO_X(X)$ and $\cX=\Spec A$. Then the analytification functor $\cY\mapsto\cY^\an$ induces bijections $\Bl(X)\toisom\Bl(\cX)$ and $\Fact(X)\toisom\Fact(\cX)$.
\end{lemma}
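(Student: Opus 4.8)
The plan is to show that analytification sets up an equivalence between the relevant categories of (factorizations of) blowings up on $\cX = \Spec A$ and on $X$, using relative GAGA as developed in \S\ref{Sec:Gagaproperties} together with the regularity comparison Proposition \ref{Prop:gagareg}. First I would reduce everything to a statement about coherent ideal sheaves. A blowing up $\cY = Bl_\cI(\cX) \to \cX$ is determined by the nowhere-dense closed subscheme $V(\cI)$, equivalently by the radical of $\cI$ up to the equivalence of Section \ref{Sec:strong-functoriality}; but more usefully, by \S\ref{Sec:blowup-projective} it is a concretely given projective morphism via $Bl_\cI(\cX) \hookrightarrow \PP_\cX(\cI)$, and its analytification is exactly $Bl_{\cI^\an}(X) \hookrightarrow \PP_X(\cI^\an)$ since the Rees algebra and the $\Proj$ construction are compatible with the analytification functor (analytification is a base change on affines, and commutes with $\Sym^\bullet$, quotients, and hence with $\Proj$ of a finitely generated graded algebra). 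Because analytification $\Coh(\cX) \toisom \Coh(X)$ is an equivalence for the affine $\cX$ — here one uses that $A = \cO_X(X)$ and that coherent sheaves on $\cX = \Spec A$ are just finite $A$-modules, which correspond to coherent $\cO_X$-modules on the affinoid $X$ by the definitions and Lemma \ref{Lem:acyclicity} — the assignment $\cI \mapsto \cI^\an$ is a bijection on coherent ideal sheaves, compatible with the unit ideal on a fixed open $U$. This already gives a bijection between blowings up of $\cX$ and of $X$ at the level of the underlying data.

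Next I would promote this to the level of the categories $\Bl$ and $\Fact$, i.e.\ check that the objects produced are legitimate objects of these categories and that the bijection is compatible with all the structure. The key points: (a) regularity of the source. An object of $\Bl(\cX)$ requires both $\cX$ and $Bl_\cI(\cX)$ regular; an object of $\Bl(X)$ requires $X$ and $Bl_{\cI^\an}(X)$ regular. Since $X$ is affinoid with $\cO_X(X) = A$, $X$ is regular iff $\cX$ is (using $A$ qe together with the completion arguments underlying Proposition \ref{Prop:gagareg}, or directly that $X$ regular is equivalent to $A$ regular and $\cX = \Spec A$ regular is the same condition). For the blowing up itself, $Bl_\cI(\cX)$ is $\cX$-proper, so $Bl_{\cI^\an}(X) = (Bl_\cI(\cX))^\an$ and Proposition \ref{Prop:gagareg}(i) and (ii) — including the properness hypothesis in case (1), which holds because blowings up are projective — give that $Bl_\cI(\cX)$ is regular iff $Bl_{\cI^\an}(X)$ is regular. (b) The normal crossings boundary $D_2 \subset \cX$ corresponds to a normal crossings $D_X \subset X$ and back, since a divisor has normal crossings iff this holds after completion at every point, which analytification preserves in all four cases (again by the local completion comparisons behind Proposition \ref{Prop:gagareg} and Lemma \ref{Lem:affgerm}). (c) For the factorization category, a regular weak factorization consists of the intermediate spaces $V_i$ (each $V_i \to X_2$ a blowing up of $J_i$, projective hence amenable to the above), the centers $Z_i$ (nowhere-dense closed subschemes, handled like the ideals $\cI$), and the conditions (1)--(5) of \S\ref{Sec:def-factor}; each of conditions (1)--(5) is a statement about morphisms being isomorphisms on $U$, ideals being unit on $U$, and centers having normal crossings with $D_{V_i}$ — all of which transfer across analytification by the coherent-sheaf equivalence on each affine piece together with the GAGA equivalence for the proper $\cX$-schemes $V_i$. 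Injectivity of the analytification assignment follows from faithfulness of the functor on coherent sheaves; surjectivity (every object of $\Bl(X)$ or $\Fact(X)$ is an analytification) follows from fullness plus the essential surjectivity of $\Coh(\cY) \to \Coh(Y)$ for proper $\cY$, which lets us descend the ideals $J_i$ and centers $Z_i$ from $X$ to $\cX$.

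The main obstacle I expect is case (1), formal schemes, and more generally keeping the regularity equivalence clean. For complex germs (case 2) and Berkovich/rigid spaces (cases 3, 3') the analytification map $\pi_\cY$ is surjective (or hits all closed points) and induces isomorphisms on completed local rings at all relevant points, so regularity transfers in both directions with no properness hypothesis; the subtle point there is only to invoke Lemma \ref{Lem:affgerm} and \cite[(1.3.7.2)]{Ducros} correctly. For formal schemes, $\pi_\cY\colon \Spf B \hookrightarrow \Spec B$ is not surjective, so the converse direction of regularity genuinely needs $\cX$-properness of $Bl_\cI(\cX)$ — which we have — via the argument in Proposition \ref{Prop:gagareg}(ii)(b): every point specializes into the closed fiber $\cY_s$, and there the $I$-adic completion together with qe-ness forces regularity of $B_m$. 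I would make sure that this properness is available at every stage (it is, since all morphisms in sight are projective, being blowings up and compositions thereof by \S\ref{Sec:sequence-projective}). A secondary, more bookkeeping obstacle is verifying that the analytification of a \emph{sequence} of blowings up with the data $(J_i, Z_i)$ is again such a sequence with the analytified data and no more and no fewer steps — but this is immediate once one observes that analytification commutes with composition of projective morphisms (the construction of \S\ref{Sec:sequence-projective} is built from $\Proj$, $\Sym^\bullet$, and pushforwards of twists, all of which analytification respects for proper morphisms by GAGA) and takes a blowing up of a center to the blowing up of the analytified center.
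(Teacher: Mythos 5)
Your proposal is correct and follows essentially the same route as the paper: relative GAGA (\S\ref{Sec:Gagaproperties}) gives the bijection between blowings up (and factorization data) of $\cX$ and of $X$, and Proposition~\ref{Prop:gagareg} transfers regularity of the sources and centers in both directions, with the properness hypothesis in the formal case supplied by projectivity of blowings up. The paper's proof is just a terser version of this; your additional bookkeeping (Rees algebra/$\Proj$ compatibility, boundaries, composition of projective morphisms) fills in details the paper leaves implicit.
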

\begin{proof}
By the relative GAGA, see Section \ref{Sec:Gagaproperties}, analytification induces a bijection between the blowings up $X'\to X$ and $\cX'\to\cX$. By Proposition~\ref{Prop:gagareg}, $X'$ is regular if and only if $\cX'$ is regular, hence $\Bl(X)\toisom\Bl(\cX)$. The second bijection is proved similarly, but this time one also relates regularity of the centers in the factorizations.
\end{proof}

\subsubsection{The main theorem}
We are now in position to prove the following analogue of Theorem~\ref{Th:main}.

\begin{theorem}\label{Th:main-C} There is a functor ${\rm Bl}_{\rm rs}^\fSp({\chara = 0}) \to {\rm Fact}_{\rm rs}^\fSp({\chara = 0})$ from the regular surjective category of blowings up $f\:X' \to X$ in characteristic zero to the regular surjective category of factorizations $$X' = V_0\das V_1\das\dots \das V_{l-1} \das V_l = X,$$ in characteristic zero such that the composite  $${\rm Bl}_{\rm rs}^\fSp({\chara = 0}) \to {\rm Fact}_{\rm rs}^\fSp({\chara = 0})\to {\rm Bl}_{\rm rs}^\fSp({\chara = 0})$$ is the identity. The same holds in positive and mixed characteristics if Hypothetical Statements \ref{Hyp:resolution} and \ref{Hyp:principalization} hold true.
\end{theorem}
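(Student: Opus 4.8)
The plan is to reduce Theorem~\ref{Th:main-C} to the scheme case, Theorem~\ref{Th:main}, using the relative GAGA dictionary recorded in Lemma~\ref{Lem:gagafact} together with a descent argument in the style of the proof of Theorem~\ref{Th:stacks}. First I would observe that any object $X\in\fSp_\reg$ admits a \emph{finite} admissible affinoid covering $X=\cup_{i\in I}X_i$ with each $A_i=\cO_X(X_i)$ regular (possible by the definition of regular spaces and quasi-compactness), and that all the overlaps $X_{i_1\cdots i_r}=X_{i_1}\cap\cdots\cap X_{i_r}$ are again affinoid. For each affinoid $X_i$ set $\cX_i=\Spec\cO_X(X_i)$; these are regular qe $\QQ$-schemes since $A_i$ is a regular qe ring over $\QQ$ (qe-ness of the relevant rings is built into the definition of qe spaces, cf.\ \cite[Section 2.4.3]{Temkin} and the excellence results cited for cases (2),(3),(3')). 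Given a blowing up $f\:X'\to X$ in ${\rm Bl}_{\rm rs}^\fSp({\chara=0})$, its restriction over $X_i$ analytifies from a unique blowing up $\cX_i'\to\cX_i$ in $\Bl(\cX_i)$ by Lemma~\ref{Lem:gagafact}, and similarly over all overlaps.

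Next I would apply the functor of Theorem~\ref{Th:main} to each $\cX_i'\to\cX_i$, obtaining a weak factorization $(\cV_{i\,{\scriptscriptstyle\bullet}})\in\Fact(\cX_i)$. The key point is that the transition maps $\cX_{ij}\to\cX_i$ induced by the inclusions $X_{ij}\hookrightarrow X_i$ are regular morphisms of affinoid spaces, hence regular morphisms of schemes by Lemma~\ref{Lem:regmor}; they are also surjective, since $\Spec\cO_X(X_{ij})\to\Spec\cO_X(X_i)$ is surjective (this uses that $X_{ij}\to X_i$ is a ``domain'' inclusion, so dominant with dense image hitting all closed points in the relevant cases, exactly as exploited in Lemma~\ref{Lem:regmor}). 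By functoriality of the factorization in Theorem~\ref{Th:main} with respect to regular surjective morphisms, the two pullbacks of $(\cV_{i\,{\scriptscriptstyle\bullet}})$ and $(\cV_{j\,{\scriptscriptstyle\bullet}})$ to $\cX_{ij}$ agree, as do the ideal sheaves $J$ and centers $Z$ (which are uniquely determined by the morphism $g$, as noted after Definition~\ref{Def:factcat}). Analytifying via Lemma~\ref{Lem:gagafact} and using acyclicity of the \v{C}ech complex for coherent sheaves on affinoid covers, Lemma~\ref{Lem:acyclicity}, I can glue the local factorizations $(\cV_{i\,{\scriptscriptstyle\bullet}})^\an$ into a single weak factorization $(V_{\scriptscriptstyle\bullet},\phi_{\scriptscriptstyle\bullet},Z_{\scriptscriptstyle\bullet})$ of $X'\to X$, whose defining ideal sheaves $J_i\subset\cO_X$ and centers $Z_i$ are glued from the local ones; that the glued data again satisfies conditions (1)--(5) is checked locally, where it holds by construction.

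It then remains to promote this assignment to a functor. For a morphism $(Y'\to Y)\to(X'\to X)$ in ${\rm Bl}_{\rm rs}^\fSp$, i.e.\ a regular surjective $g\:Y\to X$ with $g^*I=I'$, I would choose compatible finite affinoid coverings — refine a covering of $Y$ so that each piece maps into an affinoid piece of $X$ — obtaining regular surjective morphisms of affinoid spaces, hence of schemes by Lemmas~\ref{Lem:regmor} and the surjectivity discussion. Functoriality of the scheme-level factorization then gives compatibility of the local algebraic factorizations with these morphisms, and analytifying and gluing shows the space-level factorization is compatible with $g$. Independence of all choices (the covering, the affinoid pieces) follows, as in Theorem~\ref{Th:stacks}, from functoriality applied to common refinements. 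Finally, the positive and mixed characteristic statements follow verbatim by invoking Theorem~\ref{Th:main}(2) under Hypothetical Statements~\ref{Hyp:resolution} and \ref{Hyp:principalization} in place of Theorem~\ref{Th:main}(1).

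The main obstacle I anticipate is not any single deep input but the careful bookkeeping of \textbf{regularity and surjectivity of the transition morphisms of affinoid pieces after passing to $\Spec$}, together with checking that the glued ideal sheaves $J_i$ on $X$ are genuinely coherent and that the glued data satisfies all of (1)--(5) — in particular that the centers $Z_i$ glued from the $\cX_i$ are nowhere-dense closed subspaces with the required normal crossings properties. All of these are local and reduce to the scheme case, but writing the descent cleanly (especially handling case (1), formal schemes, where $\pi_\cY$ is only a bijection onto open primes, and invoking Proposition~\ref{Prop:gagareg}(ii)(b) which requires properness) requires some care. The conceptual content, however, is entirely carried by Theorem~\ref{Th:main} and the GAGA equivalence Lemma~\ref{Lem:gagafact}; the present proof is a formal descent on top of them, exactly parallel to the proof of Theorem~\ref{Th:stacks}.
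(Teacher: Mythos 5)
Your overall strategy (finite admissible affinoid covering, relative GAGA via Lemma~\ref{Lem:gagafact} and Proposition~\ref{Prop:gagareg}, apply Theorem~\ref{Th:main} on the algebraic models, glue, then check functoriality by refining coverings) is the same as the paper's. But there is a genuine gap at the gluing step: you assert that the transition maps $\Spec\cO_X(X_{ij})\to\Spec\cO_X(X_i)$ are \emph{surjective}, and this is false in general. If $X_{ij}\subsetneq X_i$ is an affinoid subdomain (or subgerm, or open affine formal subscheme), then any point $x\in X_i\setminus X_{ij}$ gives a maximal ideal $m_x\subset A_i=\cO_X(X_i)$ whose fiber in $\Spec\cO_X(X_{ij})$ is empty: the restrictions to $X_{ij}$ of functions vanishing at $x$ generate the unit ideal (in the germ case this is visible from Lemma~\ref{Lem:affgerm}(iii), in the Berkovich/rigid case from the universal property of affinoid subdomains, and the formal case behaves the same way, e.g.\ for a completed localization). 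Lemma~\ref{Lem:regmor} gives regularity of $A_i\to\cO_X(X_{ij})$ but says nothing about surjectivity on $\Spec$. Since the arrows of $\Bl$ and $\Fact$, and hence the functoriality furnished by Theorem~\ref{Th:main}, are restricted to \emph{surjective} regular morphisms, your argument that the two pullbacks of $(\cV_{i\,\bullet})$ and $(\cV_{j\,\bullet})$ to the overlap coincide does not follow as written; the same problem recurs in your functoriality step, where a refined affinoid piece of $Y$ maps to only part of an affinoid piece of $X$, so the induced map of spectra is again non-surjective even when $g\:Y\to X$ is surjective.

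The repair is short and is exactly what the paper's Lemma~\ref{Lem:func} provides: given two regular (not necessarily surjective) morphisms $h_1,h_2\:\cY\to\cX$ with $h_1^*(F)=h_2^*(F)=G$, extend each $h_i$ to $\phi_i\:\cY\coprod\cX\to\cX$ by the identity on the second summand; the $\phi_i$ are regular and surjective, so functoriality applies to them, and restricting back to $\cY$ shows $h_1^*(\cF(F))=h_2^*(\cF(F))$. Concretely, the paper applies the factorization functor once to the single blowing up of the disjoint union $\cX=\coprod_i\Spec A_i$, and for an affinoid $Y\subset X_i\cap X_j$ compares the two induced factorizations on $\cY=\Spec\cO_X(Y)$ via the two regular morphisms $\cY\to\cX$ using Lemma~\ref{Lem:func}; the same device handles independence of the covering and compatibility with regular surjective morphisms $Y\to X$. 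With this lemma inserted in place of your surjectivity claim, your proof goes through and coincides with the paper's.
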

\begin{proof}
First, let us construct a factorization of $f\:X'\to X$. Fix an admissible affinoid covering $X=\cup_{i=1}^n X_i$ and set $X'_i=X_i\times_XX'$. The rings $A_i=\cO_X(X_i)$ are qe, see \cite[Section 5.2.3]{Temkin}, so the scheme $\cX=\coprod_{i=1}^n\cX_i$ with $\cX_i=\Spec(A_i)$ is noetherian and qe. Let $I$ be the ideal defining $f$ and let $I_i\subset A_i$ be its restrictions. Consider the blowings up $F_i\:\cX'_i\to\cX_i$ defined by $I_i$. The analytification of $F_i$ is the restriction $f_i$ of $f$ over $X_i$ by the relative GAGA, hence $\cX'_i$ is regular by Proposition~\ref{Prop:gagareg}(ii).

Set $\cX'=\coprod_{i=1}^n\cX'_i$ and consider the factorization $(\cV_{\scriptscriptstyle \bullet},\Phi_{\scriptscriptstyle \bullet},\cZ_{\scriptscriptstyle \bullet})$ of the blow up $F\:\cX'\to\cX$. For each $i$, it induces a factorization
$(\cV_{i,{\scriptscriptstyle \bullet}},\Phi_{i,{\scriptscriptstyle \bullet}},\cZ_{i,{\scriptscriptstyle \bullet}})$ of $F_i\:\cX'_i\to\cX_i$ and the analytification of the latter is a factorization of $f_i\:X'_i\to X_i$ that will be denoted $(V_{i,{\scriptscriptstyle \bullet}},\phi_{i,{\scriptscriptstyle \bullet}},Z_{i,{\scriptscriptstyle \bullet}})$.

We claim that the latter factorizations glue to a factorization of $f$. It suffices to prove that for any $i,j$ and an affinoid domain $Y\subset X_i\cap X_j$ the restrictions of $(V_{i,{\scriptscriptstyle \bullet}},\phi_{i,{\scriptscriptstyle \bullet}},Z_{i,{\scriptscriptstyle \bullet}})$ and $(V_{j,{\scriptscriptstyle \bullet}},\phi_{j,{\scriptscriptstyle \bullet}},Z_{j,{\scriptscriptstyle \bullet}})$ onto $Y$ coincide. Set $B=\cO_X(Y)$ and $\cY=\Spec(B)$, and let $G\:\cY'\to\cY$ be the blowing up along the ideal induced by $I$. In particular, the analytification $g\:Y'\to Y$ of $G$ is the restriction of $f$. The regular homomorphisms $A_i\to B$ and $A_j\to B$ induce regular morphisms $h_i,h_j\:\cY\to\cX$ such that $G$ is the pullback of $F$ with respect to either of this morphisms. The factorizations of $G$ induced from $(\cV_{\scriptscriptstyle \bullet},\Phi_{\scriptscriptstyle \bullet},\cZ_{\scriptscriptstyle \bullet})$ via $h_i$ and $h_j$ coincide by Lemma~\ref{Lem:func} below. It remains to note that the factorizations of $g$ induced from the factorizations of $f_i$ and $f_j$ are the analytifications of these factorizations of $G$.

We have constructed a factorization of $f$. The same argument as was used to glue local factorizations to a global one shows that the construction is independent of the affinoid covering. Finally, compatibility of factorization with a regular morphism $h\:Y\to X$ is deduced in the same way from Lemma~\ref{Lem:regmor} and compatibility with regular morphisms of factorization for schemes.
\end{proof}

The following result is an analogue of \cite[Lemma~2.3.1]{Temkin}.

\begin{lemma}\label{Lem:func}
Assume that $\cF\:\Bl \to \Fact$ is a factorization functor, $f\:X'\to X$ and $g\:Y'\to Y$ are two blowings up with regular source and target and $h_i\:Y\to X$ with $i=1,2$ are two regular morphisms such that $h_i^*(f)=g$. Then the pullbacks of $\cF(f)$ to a factorization of $g$ via $h_1$ and $h_2$ coincide.
\end{lemma}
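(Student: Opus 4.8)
The plan is to deduce the statement from the functoriality of $\cF$ alone, by a disjoint‑union trick. The one genuine issue is that $h_1,h_2$ need not be surjective, so they do not by themselves give arrows of $\Bl$; this is circumvented by always carrying along a copy of $\id_X$, and by using that $\cF$, being a section of the forgetful functor over the category of (noetherian qe regular) schemes and regular surjective morphisms, necessarily commutes with disjoint‑union decompositions.

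First I would form the morphism $H:=h_1\sqcup h_2\sqcup\id_X\colon Y\sqcup Y\sqcup X\to X$. Its source is a noetherian qe regular scheme ($Y$ is such because $h_i$ is regular and $X$ is qe) with the normal crossings boundary $h_1^{-1}D\sqcup h_2^{-1}D\sqcup D$; $H$ is regular (flatness and geometric regularity of fibres are componentwise) and surjective (the $\id_X$ summand already surjects), and $H^{*}I=I_g\sqcup I_g\sqcup I$ since $h_i^{*}f=g$. Thus $H$ is an arrow of $\Bl$ from the object $g\sqcup g\sqcup f$ to $f$, and functoriality of $\cF$ yields, as factorizations of $g\sqcup g\sqcup f$,
\[
\cF(g\sqcup g\sqcup f)\;=\;H^{*}\cF(f)\;=\;h_1^{*}\cF(f)\ \sqcup\ h_2^{*}\cF(f)\ \sqcup\ \cF(f).
\]
Next I would decompose $\cF(g\sqcup f)=\mathcal G\sqcup\mathcal H$ (any object of $\Fact$ over $Y\sqcup X$ splits canonically into its $Y$‑part and $X$‑part), so $\mathcal G$ is a factorization of $g$ and $\mathcal H$ one of $f$. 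The fold (codiagonal) morphism $Y\sqcup Y\to Y$ is étale and surjective, so $\nabla:=(\text{fold})\sqcup\id_X\colon Y\sqcup Y\sqcup X\to Y\sqcup X$ is an arrow of $\Bl$ from $g\sqcup g\sqcup f$ to $g\sqcup f$, and functoriality gives
\[
\cF(g\sqcup g\sqcup f)\;=\;\nabla^{*}\cF(g\sqcup f)\;=\;\mathcal G\ \sqcup\ \mathcal G\ \sqcup\ \mathcal H .
\]
Comparing the two displays summand by summand — the splitting of a factorization over $Y\sqcup Y\sqcup X$ into its three pieces being unique — gives $h_1^{*}\cF(f)=\mathcal G=h_2^{*}\cF(f)$, which is the assertion.

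I do not expect a serious obstacle; the work is purely bookkeeping. The points to be checked are: that $g\sqcup g\sqcup f$ and $g\sqcup f$ are genuine objects of $\Bl$ (requiring that $Y$ is noetherian qe regular and that $h_i^{-1}D$ is a normal crossings divisor, both consequences of $h_i$ being regular and $X$ an object of $\Bl$); that $H$ and $\nabla$ are \emph{surjective} regular morphisms — this is exactly the role of the $\id_X$ summand, since $h_1\sqcup h_2$ alone need not be surjective and hence would not define an arrow of $\Bl$; and that $\cF$ takes disjoint unions to disjoint unions and commutes with the displayed pullbacks, which is immediate from the definitions of $\Bl$ and $\Fact$ and the fact that $\cF$ is a section of $\Fact\to\Bl$. (With the $\id_X$ summand omitted, the same argument proves the sharper $h_i^{*}\cF(f)=\cF(g)$ whenever $h_1\sqcup h_2$ happens to be surjective.)
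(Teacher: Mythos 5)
Your proof is correct and rests on the same key idea as the paper's: adjoin a disjoint copy of $\id_X$ to obtain a surjective regular morphism, and let functoriality force the pullback of $\cF(f)$ to coincide with the canonical factorization of the pulled-back blowing up. The paper's version is slightly more economical --- it applies functoriality to each $\phi_i = h_i\coprod\id_X\colon Y\coprod X\to X$ separately, so both pullbacks equal $\cF(g\coprod f)$ and one simply restricts to $Y$, which makes your extra fold-map step on $Y\coprod Y\coprod X$ unnecessary (though harmless).
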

\begin{proof}
Extend $h_i$ to morphisms $\phi_i\:Y\coprod X\to X$ so that the map on $X$ is the identity. Each $\phi_i$ is a surjective regular morphism, hence the pullback of $\cF(f)$ to $Y\coprod X$ via $\phi_i$ coincides with the factorization of the blowing up $Y'\coprod X'\to Y\coprod X$. Restricting the latter onto $Y$ coincides with $h_i^*(\cF(f))$.
\end{proof}

\begin{remark}
(i) An analogue of Lemma~\ref{Lem:func} holds true in any category $\fSp$ and the above proof applies verbatim.

(ii) Although $h_i^*(\cF(f))$ coincide, they can differ from $\cF(g)$ when $h_i$ are not surjective. See also \cite[Remark~2.3.2(ii)]{Temkin}.
\end{remark}

\appendix

\section{Construction of a birational cobordism via deformation to the normal cone}\label{App:cobordism}

\begin{proof}[Proof of Proposition \ref{Prop:cobordism}]
 We follow the construction of \cite[Theorem 2.3.1]{AKMW} word for word, except we make it even more explicit and check functoriality.

 {\sc Step 1: \em cobordism $B_\cO$ for trivial blowing up.} We start with $$B_\cO=\PP^1_{X_2} = \PP(\cO_{X_2}\cdot T_0 \oplus \cO_{X_2}\cdot T_1) =: \PP_{X_2}(E_\cO),$$ with  its projection $\pi_0: B_\cO \to X_2$. Providing the  generators $T_0$ and $T_1$ with $\GG_m$-weights 0 and 1, the scheme $B_\cO$ is a birational cobordism for the identity morphism with the trivial  ideal $(1)$, with the standard action of $\GG_m$ linearized, except that it does not satisfy Assumption \ref{Ass:separate}. But that may be achieved after the fact by taking the symmetric square.
The construction is clearly functorial.

 {\sc Step 2a: \em construction of a singular cobordism $B_I$.}  Assume $X_1$ is given as the blowing up of the ideal $I$ on $X_2$. We blow up the $\GG_m$-equivariant ideal $I^B:=I\otimes \cO_{B_\cO} + I_{\{0\}}$ on $B_\cO$, where $I_{\{0\}}$ is the defining ideal of  $\{0\}\times X_2$. The ideal is clearly the unit ideal on $\PP^1_U$.  This blowing up gives rise to a $\GG_m$-scheme $B_I$ and projective morphism $\pi_I: B_I\to B_\cO$;  this is evidently functorial in $\phi$. The arguments of Section \ref{Sec:sequence-projective} show that $\pi^{B_I/X_2}:=\pi_0\circ\pi_I:B_I\to X_2$ is  projective, again in a functorial manner. In particular $B_I \subset \PP(E_I)$ for some functorial $\GG_m$-sheaf $E_I$.

 {\sc Step 2b: \em coordinates of $B_I$.}  Let us make the construction of the previous step explicit: write $F_I = \pi_{0\,*} I^B(1) =  I\cdot U_0\oplus \cO_{X_2}\cdot U_1 $ with $U_0,U_1$ having corresponding $\GG_m$-weights $0$ and $1$.   Let $$E_I \ \  = \ \  F_I \otimes E_\cO \ \ = \ \ I\cdot U_0T_0 \ \oplus \ (\cO_{X_2}\cdot U_1T_0 \oplus I\cdot U_0T_1)\  \oplus \ \cO_{X_2}\cdot U_1T_1$$ with corresponding $\GG_m$-weights $0,1$ and $2$.  Again it does not satisfy Assumption \ref{Ass:separate}, but again that may be achieved after the fact by taking the symmetric square.

We have a surjection $\pi_0^*F_I \to I^B(1)$ where the first coordinate sends $f\cdot U_0\mapsto f T_0$ and the second sends  $U_1\mapsto T_1$. We thus have  $\GG_m$-equivariant closed embeddings
\begin{align*}
B_I =Bl_{I^B}(B_\cO)=Bl_{I^B(1)}(B_\cO)&  \\
 \subset \ \   \PP_{B_\cO}(\pi_0^*F_I) &= \PP_{X_2}(F_I) \times_{X_2} B_\cO = \PP_{X_2}(F_I) \times_{X_2} \PP_{X_2}(E_\cO)\\ &\subset\ \ \  \PP_{X_2}(F_I \otimes E_\cO) = \PP_{X_2}(E_I) ,
\end{align*}
where $Bl_{I^B(1)}(B_\cO)$ denotes the blowing up of the fractional ideal $I^B(1)$ and the last inclusion is the Segre embedding.

We describe $B_I = \Proj_{X_2} A$ as follows. The algebra
\begin{align*}A \ :=&\\   \bigoplus_d &\left(I^d\cdot T_0^{2d} \  \ \oplus\ \  I^{d-1} \cdot T_0^{2d-1}T_1 \ \  \oplus \dots  \oplus \ \ \cO_{X_2}\cdot T_0T_1^{2d-1} \ \ \oplus \ \ \cO_{X_2}\cdot T_1^{2d} \right),
\end{align*}
with terms $I^{d-k}\cdot T_0^j T_1^k$ when $j>k$ and $\cO_{X_2} \cdot T_0^j T_1^k$ when $j\leq k$,
is a graded $\GG_m$-weighted quotient
 $\Sym^\bullet E_I  \onto  A , $ where we set $U_j = T_{j}$ and map $I^{\otimes d} \onto I^d$.

 We note that $B_I$ admits an equivariant  projection morphisms $B_I \to B_\cO = \PP_{X_2}(E_\cO)$ which is an isomorphism away from the divisor  $(T_1^2)$, and an equivariant  projection morphism $B_I \to \PP_{X_2}(F_I)$, whose image is the closed subscheme we denote
$$\PP_{X_2}(F_I)' :=\Proj_{X_2} \bigoplus_{n\geq 0} \left(\bigoplus_{j=0}^n I^j\right).$$
The morphism $B_I \to \PP_{X_2}(F_I) ' $ is an isomorphism away from the zero section $\Proj_{X_2} \bigoplus_{n\geq 0} \cO_{X_2} \subset \PP_{X_2}(F_I)'$, whose complement is the total space $\Spec \Sym((IO_{X_1})^{-1})$  of the invertible sheaf $I \cO_{X_1}$ on $X_1$.



{\sc Step 2c: \em stable and unstable loci for weight $1$.} The homogeneous Cartier divisor $(T_0T_1)$ is the union of two  regular subschemes: $X_1  = \Proj_{X_2} \bigoplus_{n\geq 0} (I^n\cdot T_0^{2n})$  which is the zero locus of $(T_0T_1,T_1^2)$, and  $X_2 = \Proj_{X_2} \bigoplus_{n\geq 0} (\cO_{X_2}\cdot T_1^{2n})$ which is the zero locus of $(T_0T_1, I\cdot T_0^2)$. Since the zero locus of  the ``irrelevant ideal" $(I\cdot T_0^2,T_0T_1,T_1^2)$ is empty, these two subschemes are disjoint. In particular each is a regular Cartier divisor.   It follows that  both $X_1$ and $X_2$ lie in the regular locus $B_I^{\reg}$, which is open since $B_I$ is of finite type over the qe scheme $X_2$.

We have $X_1 =  B_I \cap \PP_{X_2}((E_I)_{0})$ and $X_2 =  B_I \cap \PP_{X_2}((E_I)_{2})$, where the indices $0$ and $2$ denote the components with given $\GG_m$-weight (the variable $a$ in Section \ref{GIT}). Their union $(T_0T_1)$ is the unstable locus $(B_I)^\un_{1}$. The complement is affine, explicitly
\begin{align*} (B_I&)^\sst_{1} = \Spec_{X_2} A[(T_0T_1)^{-1}]_{\text{degree}=0}\\
  =& \Spec_{X_2}\left( \dots \oplus I^2 \left(\frac{T_0}{T_1}\right)^2 \oplus I \left(\frac{T_0}{T_1}\right) \oplus \cO_{X_2} \oplus \cO_{X_2} \left(\frac{T_1}{T_0}\right) \oplus \cO_{X_2} \left(\frac{T_1}{T_0}\right)^2 \oplus \dots  \right).
\end{align*} This scheme is in general singular, but the quotient is simpler: $$(B_I)^\sst_{1} \sslash \GG_m = \Spec_{X_2} \cO_{X_2} = X_2.$$

{\sc Step 2d: \em stable and unstable loci for weight $2$.}
The projective Cartier divisor $(T_1^2)$ can be identified as
\begin{align*}(B_I)^\un_{2} &= \PP_{X_2}(I\cdot T_0^2)\ \cup\ \PP_{Z(I)}(I/I^2\cdot T_0^2 \oplus \cO\cdot T_0T_1)\\ &= X_1\qquad \quad \ \ \ \cup\qquad  C(Z(I)),\end{align*}
where   $C(Z(I))$ is the normal cone. The complement is again affine, of the form
\begin{align*} (B_I)^\sst_{2} =& \Spec_{X_2} A[T_1^{-1}]_{\text{degree}=0}\\
  =& \Spec_{X_2}\left( \dots \oplus \cO_{X_2} \left(\frac{T_0}{T_1}\right)^2 \oplus \cO_{X_2}  \left(\frac{T_0}{T_1}\right) \oplus \cO_{X_2}   \right) =\AA^1_{X_2}.
\end{align*}

Thus, $$(B_I)^\sst_{2} \sslash \GG_m = \Spec_{X_2} \cO_{X_2} = X_2$$ and the morphism $(B_I)^\sst_{2}\to X_2$ is smooth. Another way to see this is to notice that the map $B_I \to B_\cO$ restricts to an open embedding on $(B_I)^\sst_{2}$, and the image is the complement of $\{0\} \times X_2$.

{\sc Step 2e: \em stable and unstable loci for weight $0$.}
The projective zero locus of $(I\cdot T_0)^2$ can be identified as $$(B_I)^\un_{0} = \PP_{X_2}(\cO_{X_2}\cdot T_1^2) \cup \PP_{Z(I)}(\cO_{X_2}\cdot T_0T_1 \oplus \cO_{X_2}\cdot T_1^2)= X_2 \cup \PP^1_{Z(I)}.$$
The complement is not necessarily affine, as $I$ is not necessarily principal. However, recalling the sheaf $F_I$ from {\sc Step 2b},  the morphism $(B_I)^\sst_{0} \to \PP_{X_2}(F_I)$ is an open embedding, whose image is the complement of the zero section. So  $(B_I)^\sst_{0}$ is the total space of the invertible sheaf $I\cO_{X_1}$ on $X_1$. Thus, $(B_I)^\sst_{0} \sslash \GG_m =X_1$ and the morphism $(B_I)^\sst_{0}\to X_1$ is smooth.

{\sc Step 3a: \em resolving $(B_I,D_{B_I})$.}
Let $D_{B_I}\subset B_I$ be the preimage of $D_2$. Applying resolution of pairs to $(B_I,D_{B_I})$ we obtain a functorial projective $\GG_m$-equivariant morphism $B\to B_I$ such that $B$ is regular and the preimage $D_{B}\subset B$ of $D_2$ is a simple normal crossings divisor. Here we use Theorem~\ref{resolth} if the characteristic is zero. In positive and mixed characteristic we may use parts (1) and (2) of  Hypothetical Statement \ref{Hyp:resolution} since $\dim B = \dim X_2+1$. In addition, $B\to B_I$ is projectively the identity outside of the union of $D_{B_I}$ and the singular locus of $B_I$, which is included in the preimage of $\PP_{X_2}((E_I)_{1}) = \PP_{X_2}(\cO_{X_2}\cdot U_1T_0 \oplus I\cdot U_0T_1)$. It follows that $(B,D_B)$ is a regular birational cobordism for $\phi$.

{\sc Step 3b: \em embedding.}
By the arguments of Section \ref{Sec:sequence-projective}, the composition $B\to B_I\to B_\cO$ is functorially a single blowing up of an ideal $J$. Write $\tilde J = J\cO_{B_I}$ so that $B=Bl_{\tilde J} B_I$. There is a functorially defined integer $d$ such that $\tilde J(d)$ is globally generated on $B_I$ relative to $X_2$. Using \cite[II.7.10(b)]{Hartshorne} we have an equivariant embedding of $B$ inside
$$\PP_{X_2}(\widetilde E):=\PP_{X_2}\left(\pi^{B_I/X_2}_*\tilde J(d)\right).$$

We claim that $a_{\min}(B) = 0$ and $a_{\max}(B) = 2d$. First, since $E_I$ has weights $a_{\min}(E_I) = 0$ and $a_{\max}(E_I) = 2$ we have $a_{\min}(\Sym^d(E_I)) = 0$ and $a_{\max}(\Sym^d(E_I)) = 2d$. Second, the weights $0$ and $2d$ survive in the homogeneous coordinate ring of $B_I$ with respect to $O(d)$ as described in the steps above. Third, the weights in $\pi^{B_I/X_2}_*\tilde J(d)$  necessarily lie among those of $\Sym^d(E_I)$, so $a_{\min}(B) \geq 0$ and $a_{\max}(B) \leq 2d$. To show that the weights $0$ and $2d$ survive in $B$ it suffices to show this over a dense open set in $X_2$. Since $B\to B_I$ is projectively the identity over $U$, the weight $0$ and $2d$ components of $\pi^{B_I/X_2}_*\tilde J(d)$ are everywhere nonzero, as needed.

Inspecting the description of unstable loci in Section \ref{Sec:GIT-E}, Equation (\ref{Eq:unstable}) we note that $B^\sst_0 = B\times_{B_I}(B_I)^\sst_0$ and  $B^\sst_{2d} = B\times_{B_I}(B_I)^\sst_{2}$.


{\sc Step 3c: \em $B$ is a cobordism for $\phi$ that respects $U$.} We have shown in steps 2d and 2e that the morphisms $q_2\:(B_I)^\sst_{2}\to X_2$ and $q_1\:(B_I)^\sst_{0}\to X_1$ are smooth. Functoriality of resolution of pairs with respect to $q_i$ implies that, once restricted to $(B_I)^\sst_{2}$, respectively  $(B_I)^\sst_{0}$, the morphism $B\to B_I$ is the pullback of the resolution $X'_2\to X_2$ of $(X_2,D_2)$, respectively $X'_1\to X_1$ of $(X_1,D_1)$. It follows that $B\times_{B_I}(B_I)^\sst_{2}\sslash\GG_m=X'_2$ and $B\times_{B_I}(B_I)^\sst_{0}\sslash\GG_m=X'_1$ and hence $B$ is a cobordism for $\phi$. Also, we note that $B\cap \PP(\widetilde E_{0}) = X'_1$ and  $B\cap \PP(\widetilde E_{2d}) = X'_2$, so Assumption \ref{Ass:spread} applies.

To show that $B$ is compatible with $U$ it suffices to show that both $B\to B_I$ and $B_I\to B_\cO$ are projectively the identity over $U$. This is so for the blowing up $B_I\to\PP^1_{X_2}$ because $I+I_{\{0\}}$ is the unit ideal on $\PP^1_U$, and this is so for the resolution $B\to B_I$ because $\PP^1_U$ is regular and disjoint from the preimage of $D_2$.

\end{proof}

\section{Germs of complex analytic spaces}\label{App:germs}
In this section we use germs to extend the category of complex analytic spaces to include certain Stein compacts. This will be used later to establish a tight connection between the scheme theory and complex analytic geometry. In particular, this is needed to develop a relative GAGA theory.

\subsection{Semianalytic sets}
We follow the setup of Frisch \cite{Frisch}. A subset $X$ of an analytic space $\cX$ is  called {\em semianalytic} if its local germs belong to the minimal class of germs, stable under finite unions and complements, generated by inequalities of the form $f(x)<0$ for real analytic $f$, see \cite[p. 120]{Frisch}. It is called  {\em a Stein} if $X$ has a fundamental system of neighborhood of Stein subspaces of $\cX$, see \cite[p. 123]{Frisch}.

\subsection{The category of germs}
A {\em germ of a complex analytic space} (or, simply, a germ) is a pair $(\cX,X)$ consisting of an analytic space $\cX$ and a semianalytic subset $X\subset\cX$. We call $X$ the {\em support} of $(\cX,X)$ and we call $\cX$ a {\em representative} of $(\cX,X)$. Sometimes, we will use the shorter notation $X=(\cX,X)$.

A morphism $\phi\:(\cX,X)\to(\cY,Y)$ consists of a neighborhood $\cX'$ of $X$ and an analytic map $f\:\cX'\to\cY$ taking $X$ to $Y$. We say that $f$ is a {\em representative} of $\phi$. Note that a morphism $(\cX,X)\to(\cY,Y)$ is an isomorphism if it induces a bijection of $X$ and $Y$ and an isomorphism of their neighborhoods.

We identify an analytic space $X$ with the germ $(X,X)$. In particular, the category of analytic spaces becomes a full subcategory of the category of germs.

\subsection{The structure sheaf}
Given a germ $(\cX,X)$ we provide its support with the {\em structure sheaf} $\cO_X:=\cO_\cX|_X=i^*\cO_\cX$, where $i\:X\into\cX$ is the embedding. In particular, we obtain a functor $\cF\:(\cX,X)\mapsto(X,\cO_X)$ from the category of germs to the category of locally ringed spaces.

\begin{remark}
We do not aim to develop a complete theory of semianalytic germs, so we do not study the natural question whether $\cF$ is fully faithful.
\end{remark}

\subsection{Closed polydiscs and convergent power series}
Consider an analytic affine space $\cX=\AA_\CC^n$ with coordinates $t_1\.t_n$. For any tuple $r$ of numbers $r_1\.r_n\in[0,\infty)$, by the closed polydisc $D=D_r$ of radius $r$ we mean the subset of $\cX$ given by the inequalities $|t_i|\le r_i$. Note that $r_i$ can be zero. By $\CC\{t_1\.t_n\}^\dag_r$ we denote the ring of overconvergent series in $t_1\.t_n$ of radius $r$. It is a noetherian regular excellent ring of dimension $n$, see \cite[Theorem 102]{Matsumura}.

\begin{lemma}\label{Lem:polydisc}
Let $D=D_r\subset\cX=\AA_\CC^n$ be a polydisc and $A=\cO_\cX(D)=\Gamma(\cO_D)$. Then,

(i) $\CC\{t_1\.t_n\}^\dag_r\toisom A$.

(ii) $\Gamma(D,\cdot)$ induces an equivalence between the categories of coherent $\cO_D$-modules and finitely generated $A$-modules, and higher cohomology of coherent $\cO_D$-modules vanish.

(iii) For any $a\in D$ the ideal $m_a=(t_1-a_1\.t_n-a_n)\subset A$ is maximal, and any maximal ideal of $A$ is of this form.

(iv) The completion of $A$ along $m_a$ is $\CC[[t_1-a_1\.t_n-a_n]]$.
\end{lemma}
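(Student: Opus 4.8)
The four statements are standard facts about the ring $A$ of overconvergent power series on a closed polydisc, and the plan is to reduce everything to known properties of $\CC\{t_1\.t_n\}^\dag_r$ together with standard Stein theory. For part (i), the idea is to produce a natural ring homomorphism $\CC\{t_1\.t_n\}^\dag_r\to A=\Gamma(D,\cO_D)$ and show it is an isomorphism. An element of $A=i^*\cO_{\AA^n_\CC}(D)$ is, by definition of the germ structure sheaf, represented by a holomorphic function on some open neighborhood of $D$; shrinking, one may take a polydisc neighborhood $D_{r'}$ with $r'_i>r_i$, and the Taylor expansion of such a function is an overconvergent series of radius $r$. Conversely every overconvergent series of radius $r$ converges on some strictly larger polydisc, hence defines a section of $\cO_D$. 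Injectivity follows from the identity theorem. The only mild subtlety is handling the coordinates $r_i=0$, where $D$ collapses in the $i$-th direction; here the ``series'' in $t_i$ are just polynomials—or rather the local ring in that direction—and one checks the matching definitions of $\CC\{\cdot\}^\dag_r$ accommodate this.

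For part (ii), I would invoke that $D_r$ is a \emph{Stein compact}: it has a fundamental system of open Stein neighborhoods (the slightly larger polydiscs $D_{r'}$). By Frisch's theorem \cite{Frisch} — this is precisely the setting of that paper — the global section functor $\Gamma(D,\cdot)$ on coherent sheaves on such a Stein compact is exact, takes values in finitely generated $A$-modules (using that $A$ is noetherian, which holds since $\CC\{t_1\.t_n\}^\dag_r$ is noetherian by \cite[Theorem 102]{Matsumura} via part (i)), and is an equivalence onto the category of finitely generated $A$-modules, with vanishing higher cohomology. I would cite Frisch's results directly rather than reproving them. Part (iii): given $a\in D$, evaluation at $a$ gives a surjection $A\to\CC$ whose kernel is $m_a$, so $m_a$ is maximal and generated by $t_1-a_1\.t_n-a_n$ (after translating, reduce to $a=0$ and note a series with zero constant term lies in the ideal $(t_1\.t_n)$). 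For the converse — every maximal ideal is of this form — the cleanest route is: a maximal ideal $\m\subset A$ corresponds under the equivalence of (ii) to a nonzero coherent sheaf supported on a nonempty closed analytic subset of $D$, so $V(\m)\subseteq D$ is nonempty; picking $a\in V(\m)$ gives $\m\subseteq m_a$, hence $\m=m_a$ by maximality. Alternatively, one argues directly that $A$ is a Jacobson-type ring in which closed points of $\Spec A$ biject with points of $D$.

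For part (iv), translate to $a=0$ and compute the $m_0$-adic completion $\widehat{A}=\varprojlim A/m_0^k$. Since $m_0=(t_1\.t_n)$, one has $A/m_0^k=\CC[t_1\.t_n]/(t_1\.t_n)^k$ (every overconvergent series modulo $(t_1\.t_n)^k$ is its truncated Taylor polynomial), and passing to the inverse limit gives $\CC[[t_1\.t_n]]$. I expect the main obstacle to be the precise bookkeeping in part (i) — in particular making the definition of $\CC\{t_1\.t_n\}^\dag_r$ from \cite{Matsumura} match the sections of $\cO_D$ on the germ, uniformly in whether the $r_i$ are positive or zero — and, to a lesser extent, citing the correct form of Frisch's coherence/equivalence theorem for (ii)–(iii); the rest is routine.
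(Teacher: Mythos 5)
Your proposal is correct, and for parts (i), (ii) and the forward half of (iii) it matches the paper: (i) is treated there as a classical fact (your Taylor-expansion identification, with the compactness argument that any neighborhood of $D$ contains a strictly larger polydisc, is exactly the standard proof), (ii) is likewise dispatched by Stein theory (the paper says $D$ is an intersection of open Stein polydiscs; your citation of Frisch is the same mechanism), and for the maximality of $m_a$ the paper carries out explicitly the division you leave implicit, writing $f=f(a)+\sum_i(t_i-a_i)g_i$ with $g_i\in A$ — do note that your step ``a series vanishing at $a$ lies in $(t_1-a_1\.t_n-a_n)$'' needs exactly this check that the quotients remain overconvergent. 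Where you genuinely diverge is the converse of (iii): the paper argues functional-analytically, putting the sup norm on $A$, observing that the completion of $A/m$ is a Banach $\CC$-field, and invoking Gel'fand--Mazur to get $A/m=\CC$ and hence $t_i-a_i\in m$ with $|a_i|\le r_i$; you instead use the equivalence of (ii) to attach to $A/\m$ a nonzero coherent sheaf $\cO_D/\m\cO_D$, pick a point $a$ of its nonempty support, and conclude $\m\subseteq m_a$, hence $\m=m_a$. Both are sound and non-circular ((ii) does not depend on (iii)); the paper's route is independent of the coherence theorem and needs only Gel'fand--Mazur, while yours leans harder on Frisch's equivalence, which you are citing anyway, and avoids the small subtlety of checking that the quotient seminorm on $A/m$ is a genuine norm. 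For (iv) the paper merely says it follows easily from (iii); your direct computation of $A/m_a^k$ as truncated Taylor polynomials is precisely the intended argument, again modulo the routine overconvergence of the monomial decompositions.
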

\begin{proof}
The first claim is a classical result of analysis of several complex variables. Assertion (ii) follows from the fact that $D$ is the intersection of open polydiscs containing it, and the latter are Stein spaces. Assertion (iv) follows easily from (iii), so we will only prove (iii).

For any $f\in A$ the quotient $$g_1=(f(t_1\.t_n)-f(a_1,t_2\.t_n))/(t_1-a_1)$$ lies in $A$, so $f=(t_1-a_1)g_1+f_1(t_2\.t_n)$ with $f_1=f(a_1,t_2\.t_n)$. Applying the same argument to $t_2$ and $f_1$, etc., we will obtain in the end a representation $f=f(a_1\.a_n)+\sum_{i=1}^n(t_i-a_i)g_i$. In particular, $A/m_a=\CC$ and hence $m_a$ is maximal.

Conversely, assume that $m\subset A$ is maximal. The norm $\|f\|=\max_{x\in D}|f(x)|$ on $A$ induces a norm on the field $\kappa=A/m$, hence the completion $K=\hat\kappa$ is a Banach $\CC$-field. Thus, $K=\CC$ by Gel'fand-Mazur theorem, and we obtain that $t_i-a_i\in m$ for some $a_i\in\CC$. Finally, $|a_i|\le r_i$ as otherwise $t_i-a_i\in A^\times$.
\end{proof}

\subsection{Classes of morphisms}
Let $\phi\:(\cY,Y)\to(\cX,X)$ be a morphism of germs. We say that $\phi$ is {\em without boundary} if there exists a representative $f\:\cY'\to\cX$ such that $Y=f^{-1}(X)$. Let $P$ be one of the following properties: smooth, open immersion, closed immersion. We say that $\phi$ is $P$ if it is without boundary and has a representative which is $P$. We say that $\phi$ is an {\em embedding of a subdomain} (resp. {\em quasi-smooth}) if it possesses a representative which is an open immersion (resp. smooth).

\begin{remark}
The above terminology is chosen to match its non-archimedean analogue as much as possible.
\end{remark}

\subsection{Affinoid germs}\label{Sec:affgerm}
A germ $X$ is called {\em affinoid} if it admits a closed immersion into a germ of the form $(\CC^n,D)$ where $D$ is a closed polydisc. Such a germ is controlled by the ring $\cO_X(X)$ very tightly.

\begin{lemma}\label{Lem:affgerm}
Assume that $X$ is an affinoid germ and let $A=\cO_X(X)$ and $f\:(X,\cO_X)\to Y=\Spec(A)$ the corresponding map of locally ringed spaces. Then,

(i) $A$ is a quotient of a ring $\CC\{t_1\.t_n\}^\dag_r$; in particular it is an excellent noetherian ring.

(ii) $\Gamma(X,\cdot)$ induces an equivalence between the categories of coherent $\cO_X$-modules and finitely generated $A$-modules, and higher cohomology of coherent $\cO_X$-modules vanish.

(iii) $f$ establishes a bijection between $X$ and the closed points of $Y$.

(iv) For any point $x\in X$ with $y=f(x)$ the homomorphism $\cO_{Y,y}\to\cO_{X,x}$ is regular and its completion $\hatcO_{Y,y}\to\hatcO_{X,x}$ is an isomorphism.
\end{lemma}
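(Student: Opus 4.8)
The statement is a package of four assertions about an affinoid germ $X$ with $A=\cO_X(X)$; my plan is to reduce each to the corresponding statement for closed polydiscs, which is Lemma~\ref{Lem:polydisc}, together with standard commutative algebra. By definition there is a closed immersion $X\hookrightarrow(\CC^n,D)$ with $D=D_r$ a closed polydisc, so we may fix a representative and an ideal $\fa$ of the polydisc ring such that $A=\CC\{t_1\.t_n\}^\dag_r/\fa$. This immediately gives (i): $\CC\{t_1\.t_n\}^\dag_r$ is noetherian, regular and excellent by \cite[Theorem 102]{Matsumura} (cited in the discussion preceding Lemma~\ref{Lem:polydisc}), and a quotient of an excellent noetherian ring is excellent and noetherian. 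For (ii), I would use that the closed immersion identifies $\Coh(\cO_X)$ with the full subcategory of $\Coh(\cO_D)$ of modules annihilated by the ideal sheaf generated by $\fa$; under the equivalence $\Gamma(D,-)$ of Lemma~\ref{Lem:polydisc}(ii) this matches finitely generated $A$-modules inside finitely generated modules over the polydisc ring, and vanishing of higher cohomology is inherited since a coherent $\cO_X$-module pushed forward to $D$ is still coherent with vanishing higher cohomology and pushforward along a closed immersion is exact.

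For (iii), the bijection between $X$ and the closed points of $Y=\Spec A$: by Lemma~\ref{Lem:polydisc}(iii) the closed points of $\Spec\CC\{t_1\.t_n\}^\dag_r$ are exactly the maximal ideals $m_a$, $a\in D$, each with residue field $\CC$. Quotienting by $\fa$, the closed points of $Y$ correspond to those $m_a$ containing $\fa$, i.e. to the points $a\in D$ lying on the analytic subspace cut out by $\fa$, which is precisely the support $X$; and $f$ sends $x\in X$ to the maximal ideal of functions vanishing at $x$. One should check that $f$ is injective and that every closed point arises this way — both follow from the polydisc case by passing to the quotient, using that every maximal ideal of $A$ pulls back to a maximal ideal of the polydisc ring (since $\CC\{t\}^\dag_r\to A$ is surjective, hence integral on the relevant quotient, or more simply since $A/m$ is a finite, indeed trivial, extension of $\CC$ by the Gel'fand--Mazur argument already run in Lemma~\ref{Lem:polydisc}).

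For (iv), the assertion is that $\cO_{Y,y}\to\cO_{X,x}$ is regular with isomorphic completions. The completion statement is the heart of it: $\cO_{Y,y}=A_{m_a}$ has $m_a$-adic completion obtained from Lemma~\ref{Lem:polydisc}(iv) by quotienting, namely $\CC[[t_1-a_1\.t_n-a_n]]/\hat\fa$; and the analytic local ring $\cO_{X,x}=\cO_{\cX,a}/\fa$ has the same completion $\CC[[t-a]]/\hat\fa$ because the convergent-power-series local ring $\CC\{t-a\}$ of germs of holomorphic functions has completion $\CC[[t-a]]$, a classical fact, and completion is exact on noetherian rings so commutes with the quotient by $\fa$. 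Given that the completions agree, regularity of $\cO_{Y,y}\to\cO_{X,x}$ follows: it is flat since a noetherian local ring is faithfully flat over a subring with the same completion (both map faithfully flatly to the common completion, and one factors through the other), and its fibers are geometrically regular because the only fiber is over the closed point and $\cO_{X,x}\otimes_{\cO_{Y,y}}\kappa(y)=\cO_{X,x}/m_x\cO_{X,x}$ is a field (being a quotient of $\cO_{X,x}$ whose completion is that of a field). The main obstacle I anticipate is being careful in (iv) that "same completion" is set up correctly — one must verify that the natural map $\cO_{Y,y}\to\cO_{X,x}$ really does induce an isomorphism on completions rather than merely that both completions are abstractly isomorphic, and that the resulting flatness argument (a local noetherian ring is flat over a subring sharing its completion) is applied in the right generality; this is where I would spend the most care, though it is ultimately standard excellent-ring bookkeeping.
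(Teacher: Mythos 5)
Your reduction of (i)--(iii) and of the completion statement in (iv) to the closed polydisc case is the same route the paper takes, and it is essentially correct. One small point you pass over in (i): to know that $A=\cO_X(X)$ really is a \emph{quotient} of $\CC\{t_1\.t_n\}^\dag_r$ you need surjectivity of $\Gamma(D,\cO_D)\to\Gamma(X,\cO_X)$, which does not come for free from the closed immersion of sheaves; it follows from $H^1(D,\Ker(\cO_D\to\cO_X))=0$, i.e. from Lemma \ref{Lem:polydisc}(ii), and the paper states this explicitly. (Also, in your closed-fiber computation the fiber is $\cO_{X,x}/m_y\cO_{X,x}$, not $\cO_{X,x}/m_x\cO_{X,x}$ -- a typo, and the conclusion that it is $\CC$ is fine.)

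The genuine gap is in the regularity half of (iv). Your argument "its fibers are geometrically regular because the only fiber is over the closed point" is false: since $\cO_{Y,y}\to\cO_{X,x}$ is a flat local homomorphism, $\Spec\cO_{X,x}\to\Spec\cO_{Y,y}$ is surjective, so there is a nonempty fiber over \emph{every} prime of $\cO_{Y,y}$, and the non-closed fibers are exactly the delicate ones. Indeed, an isomorphism on completions can never by itself imply regularity: for any noetherian local ring $R$, the map $R\to\hatR$ induces an isomorphism on completions and has closed fiber a field, yet it is regular precisely when $R$ is a G-ring; your argument, applied verbatim, would "prove" that every noetherian local ring is a G-ring. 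The missing ingredient is the excellence you already established in (i) but never used here: $\cO_{Y,y}$ is a localization of the excellent ring $A$, hence a G-ring, so $\cO_{Y,y}\to\hatcO_{Y,y}$ is regular; composing with the isomorphism $\hatcO_{Y,y}\toisom\hatcO_{X,x}$ shows $\cO_{Y,y}\to\hatcO_{X,x}$ is regular, and regularity then descends through the faithfully flat map $\cO_{X,x}\to\hatcO_{X,x}$ (flatness descends, and geometric regularity of each fiber descends along a faithfully flat base change of that fiber). This is precisely how the paper concludes; your flatness argument can stay, but the fiber argument must be replaced by this excellence-plus-descent step.
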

\begin{proof}
In the case of a closed polydisc the assertion was proved in Lemma~\ref{Lem:polydisc}. In general, we fix a closed embedding $i\:X\into D$ into a closed polydisc. So, $\cO_X$ becomes a coherent $\cO_D$-algebra such that the homomorphism $\phi\:\cO_D\to\cO_X$ is surjective, and then all assertions except the first half of (iv) follow easily from the case of a polydisc. For example, $\Gamma(X,\cO_X)$ is a quotient of $\Gamma(D,\cO_D)$ since $H^1(D,\Ker\phi)=0$, thereby proving (i).

The only new assertion is that $\phi\:\cO_{Y,y}\to\cO_{X,x}$ is regular. This follows from the facts that $\hatphi$ is an isomorphisms and the local ring $\cO_{Y,y}$ is excellent (since it is a localization of the excellent ring $A$).
\end{proof}

\section{The complex relative GAGA Theorem}\label{Sec:relGAGA}

\subsection{Statement of the theorem}
Let $(\cX,X)$ be an affinoid germ  as in Appendix \ref{App:germs} with ring of global analytic functions $A$, and $r\geq 0$ an integer. Set $\PP^r_X = \CC\PP^r \times X$ and endow it with a locally ringed space structure using the sheaf $\cO_{\PP^r_X} = \cO_{\PP^r_\cX}|_{\PP^r_X}$. We have a germ $(\PP^r_{\cX},\PP^r_X)$ and a morphism of locally ringed spaces $h:\PP^r_X \to \PP^r_A$. The aim of this appendix is to prove the following extension of Lemma \ref{Lem:affgerm}:

\begin{theorem}[Serre's Th\'eor\`eme 3]\label{Th:GAGA}
Let $(\cX,X)$ be an affinoid germ with ring of global analytic functions $A$, and $r\geq 0$ an integer. Then the pullback functor $h^*: \Coh(\PP^r_A) \to \Coh(\PP^r_X)$  is an equivalence which induces isomorphisms on cohomology groups.
\end{theorem}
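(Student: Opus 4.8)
The plan is to imitate Serre's original GAGA argument, but in the relative setting over the affinoid germ $(\cX,X)$, using Lemma~\ref{Lem:affgerm} as the absolute base case. First I would establish the comparison of cohomology: for a coherent sheaf $\cF$ on $\PP^r_A$ with analytification $\cF^\an = h^*\cF$ on $\PP^r_X$, one wants $H^q(\PP^r_A,\cF) \toisom H^q(\PP^r_X,\cF^\an)$ for all $q$. Cover $\PP^r_A$ by the $r+1$ standard affine charts $\mathbb A^r_A$; their preimages under $h$ are germs of the form $(\mathbb A^r_\cX, \mathbb A^r_X)$ which, after intersecting with the polydisc controlling $X$, are affinoid germs in the sense of Appendix~\ref{App:germs}. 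By Lemma~\ref{Lem:affgerm}(ii) each such chart has vanishing higher coherent cohomology and global sections computed by the corresponding ring, and the same holds for finite intersections of charts. Hence both sides are computed by the same finite \v{C}ech complex with respect to these coverings (the analytic \v{C}ech complex is obtained by applying $-\otimes_{A}\cO_X(\text{chart})$, which is exact since $A\to \cO_X(\text{chart})$ is flat by regularity/Lemma~\ref{Lem:affgerm}(iv)), giving the cohomology isomorphism.

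Next I would prove full faithfulness of $h^*$. For coherent $\cF,\cG$ on $\PP^r_A$, $\Hom_{\PP^r_A}(\cF,\cG) = H^0(\PP^r_A,\cHom(\cF,\cG))$ and $\Hom_{\PP^r_X}(\cF^\an,\cG^\an) = H^0(\PP^r_X,\cHom(\cF,\cG)^\an)$ since analytification commutes with $\cHom$ of coherent sheaves (this is local, hence reduces to the affinoid-germ statement where it follows from Lemma~\ref{Lem:affgerm}(ii) and the fact that $\cO_{X,x}$ is faithfully flat over $\cO_{\PP^r_A,y}$ with the same completion). The cohomology comparison of the previous paragraph applied to $\cHom(\cF,\cG)$ then gives the bijection on Hom-sets.

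For essential surjectivity — the main obstacle — I would run the standard Serre dévissage. Given a coherent $\cO_{\PP^r_X}$-module $\cB$, one shows it is a quotient of a sheaf of the form $\bigoplus \cO_{\PP^r_X}(-n_i)$ for suitable large $n_i$: the twisting sheaves $\cO(n)$ on $\PP^r_X$ are pullbacks of the algebraic $\cO(n)$, and one needs that $\cB(n)$ is globally generated for $n\gg 0$, which follows from Cartan's Theorem~A/B on the Stein charts together with the noetherian finiteness from Lemma~\ref{Lem:affgerm} and the compactness of $\CC\PP^r$ (so only finitely many twists are needed). Choosing such a surjection $\bigoplus\cO(-n_i)\onto \cB$ with kernel $\cB'$ coherent, one has an exact sequence; by descending induction on a ``coherence degree'' (or by using that on each chart $\cB$ is the analytification of a coherent algebraic sheaf, then gluing via the already-proven full faithfulness), both $\cB'$ and the map $\bigoplus\cO(-n_i)\onto\cB$ are algebraic, whence $\cB$ is the analytification of $\coker$. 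The delicate points will be (a) checking that the finiteness/global-generation statements hold uniformly over the germ base $X$ rather than over a point — this is exactly what Lemma~\ref{Lem:affgerm}(i)--(ii) and the Stein property of the charts are there to supply — and (b) the bookkeeping of the dévissage, which is formally identical to Serre's once the cohomology comparison and full faithfulness are in hand. I expect step (a), the uniform global generation of twists of a coherent analytic sheaf on $\PP^r_X$, to be the genuine crux; everything else is a faithful transcription of the classical argument with $\Spec\CC$ replaced by $\Spec A$.
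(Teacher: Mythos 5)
Your outline reproduces Serre's three-theorem structure, which is also how the paper proceeds, but the two steps you treat as formal are exactly where the substance lies, and as written both have genuine gaps. First, the cohomology comparison: the preimages of the standard affine charts are germs with support $X\times\CC^r$, which are \emph{not} affinoid germs --- they are unbounded in the $\CC^r$-direction, and no intersection ``with the polydisc controlling $X$'' removes this --- so Lemma~\ref{Lem:affgerm}(ii) does not apply to them. Whether $D\times\CC^r$ is even Stein in the germ sense is precisely what the paper says it does not know, and this is the one point where its argument deviates from Serre's. Moreover, even granting acyclicity of these charts, the analytic \v{C}ech complex is not the algebraic one tensored over $A$: each term is a base change along the chart ring $\cO(U_I)\to\cO^{\an}(U_I^{\an})$, a different ring for each $I$, so flatness gives no formal identification of the cohomologies of the two complexes --- that identification \emph{is} Th\'eor\`eme~1, not an input to it. The paper instead proves Th\'eor\`eme~1 by d\'evissage: an explicit computation for $\cO$ (Lemma~\ref{Lem:GAGA-basic-sheaves}, using a \v{C}ech cover by the compact polydisc germs $D\times D_i$ to kill cohomology above degree $r$, and the topological proper push-forward theorem along $\PP^r_{\CC\PP^n}\to\CC\PP^n$ plus classical GAGA for the base case), then $\cO(n)$ by induction on $r$ via the hyperplane sequence (Lemma~\ref{allrlem}), then general $\cF$ by descending induction on $i$ using a finite presentation, flatness of $h$, and the five lemma.

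Second, the step you correctly flag as the crux --- global generation of $\cB(n)$ for $n\gg 0$ --- is not supplied by Cartan~A/B on the Stein charts plus compactness: Cartan~A produces generators only over each chart, and nothing makes them extend to global sections of any twist; ``finitely many twists by compactness'' does not produce that extension. Your fallback, that $\cB$ is algebraic on each chart and can be glued by full faithfulness, fails because the charts are non-compact: already over a point, a coherent analytic sheaf on $\CC$ (say the ideal of an infinite discrete set) is not the analytification of a coherent algebraic sheaf. The paper proves global generation (Lemma~\ref{Lem:global-generation}) as Serre does, by induction on $r$: restrict to a hyperplane $H\cong\PP^{r-1}_D$ through the given point, invoke the inductive hypothesis that coherent sheaves on $H$ are algebraic so that Th\'eor\`eme~1 yields vanishing of the relevant $H^1$ and $H^2$ terms for large $n$, deduce stabilization of $\dim H^1(\PP^r_D,\cF(n))$ and hence surjectivity of $H^0(\PP^r_D,\cF(n))\to H^0(H,\cF_H(n))$, and conclude by Nakayama and compactness. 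Your full-faithfulness argument and the concluding d\'evissage do match the paper, but they rest on these two inputs, neither of which is established by your sketch.
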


Since $(\cX,X)$ is closed in $(\CC^n, D)$ it suffices to consider the case  $(\cX,X)=(\CC^n, D)$. So from now on we make this assumption, and write $A$ for the ring of holomorphic functions on $X=D$.

We follow the steps of Serre's original proof \cite[\S 3]{Serre-GAGA} in some detail, to alleviate our skepticism that this generalization might actually work.  See also \cite{Kedlaya-GAGAnotes}, which sketches Serre's proof. One difficulty is that we do not know if $D \times \CC^r$ is Stein in the sense of \cite{Frisch} or \cite{Grauert-Remmert-Steinspaces}. The  problem is that if $\{D_i\}$ are the open polydiscs containing $D$ then $\{D_i\times\CC^r\}$ do not form a {\em fundamental} family of neighborhoods of $D \times \CC^r$, while functions on $D \times \CC^r$ are only guaranteed to extend to some member of a fundamental family of neighborhoods.
This is circumvented in Lemma \ref{Lem:GAGA-basic-sheaves}, which is the only point where we differ from the original arguments.



 \subsection{Cohomology}

 \begin{proposition}[Serre's Th\'eor\`eme 1]
Let $\cF$ be a coherent sheaf on $\PP^r_A$. The homomorphism $h^*: H^i(\PP^r_A,\cF)\to H^i(\PP^r_D,h^*\cF)$ is an isomorphism.
 \end{proposition}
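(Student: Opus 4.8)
The plan is to follow Serre's original argument for GAGA, adapted to the relative setting over the affinoid germ $D = (\CC^n, D)$ with ring $A = \cO_X(X)$, using the vanishing and equivalence already established in Lemma~\ref{Lem:affgerm} as the affine base case. First I would reduce to the case $\cF = \cO_{\PP^r_A}(m)$ for $m \in \ZZ$: every coherent sheaf on $\PP^r_A$ admits, locally on $\Spec A$ and hence — since $\Spec A$ is covered by the single affinoid $D$ with $H^{>0}$ vanishing — globally, a (possibly infinite to the left) resolution by finite direct sums of twisting sheaves $\bigoplus \cO_{\PP^r_A}(-m_i)$; truncating this resolution and using a standard dimension-shifting/spectral-sequence argument lets one propagate an isomorphism statement from the twisting sheaves to arbitrary $\cF$, provided one knows it in a range of twists sufficient to kill the boundary terms in cohomological degrees $0,\dots,r$.

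The core computational step is therefore to compute both $H^i(\PP^r_A, \cO(m))$ and $H^i(\PP^r_D, h^*\cO(m))$ and check $h^*$ is an isomorphism. The algebraic side is the classical computation: $H^0 = (\Sym^m A^{r+1})$ for $m \geq 0$, $H^r \cong$ the appropriate negative-degree piece, all else zero, with everything a finitely generated free $A$-module. For the analytic side one covers $\PP^r_D$ by the $r+1$ standard charts $U_j \cong \AA^r_D$, each of which is an affinoid germ in the sense of Appendix~\ref{App:germs} (a closed polydisc times affine space, or rather a product one must present carefully as a germ), so by Lemma~\ref{Lem:affgerm}(ii) higher cohomology of coherent sheaves on each chart and on each finite intersection of charts vanishes and global sections are computed by the corresponding overconvergent function rings. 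Then the \v{C}ech complex for $h^*\cO(m)$ with respect to this cover computes $H^\bullet(\PP^r_D, h^*\cO(m))$, and an explicit monomial bookkeeping — exactly as in Serre — identifies it with the $A$-module tensored version of the algebraic \v{C}ech complex. The map $h^*$ is visibly the natural inclusion/identification at the level of these \v{C}ech complexes, hence an isomorphism on cohomology.

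The step I expect to be the main obstacle is the one the authors themselves flag: establishing that the relevant function rings on $\PP^r_D$ and its affine charts behave well, i.e. that the standard charts $U_j$ of $\PP^r_D$ are genuinely affinoid germs with the cohomology-vanishing and ring-theoretic properties of Lemma~\ref{Lem:affgerm}. The subtlety is that $D \times \CC^r$ need not be Stein in Frisch's sense — the products $D_i \times \CC^r$ of open polydiscs containing $D$ with affine space do not form a fundamental system of neighborhoods of $D \times \CC^r$ — so one cannot blindly invoke Stein theory for the charts. This is precisely what Lemma~\ref{Lem:GAGA-basic-sheaves} (referred to in the excerpt) is designed to circumvent, presumably by working with a cofinal family of neighborhoods adapted to the overconvergent structure and passing to a filtered colimit, exploiting exactness of filtered colimits as in the proof of Lemma~\ref{Lem:acyclicity}. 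Once that technical input is in place, the rest is Serre's argument run verbatim with $\CC$ replaced by $A$ and "polynomial" replaced by "overconvergent series", together with the observation that all the modules in sight are finitely generated over the noetherian ring $A$, so no issues of completeness or topology on the coefficients intervene.
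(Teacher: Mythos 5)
Your overall architecture matches the paper's: establish the statement for the twisting sheaves $\cO(m)$, then pass to arbitrary coherent $\cF$ by a resolution by sums of twists together with dimension shifting (the paper does this as a descending induction on $i$ with the five lemma, seeded by vanishing in degrees $>r$). The gap is in your ``core computational step.'' You propose to compute $H^i(\PP^r_D, h^*\cO(m))$ by the \v{C}ech complex of the standard affine charts $U_j\cong \AA^r_D$ and match it monomial-by-monomial with the algebraic complex. But those charts are $D\times\CC^r$, which are not affinoid germs (they are not compact, so Lemma~\ref{Lem:affgerm} does not apply to them), and the paper explicitly states that it does not know whether $D\times\CC^r$ is Stein --- precisely because the sets $D_i\times\CC^r$ for open polydiscs $D_i\supset D$ need not be a fundamental system of neighborhoods. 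You flag this obstacle yourself, but your proposed resolution (``a cofinal family of neighborhoods adapted to the overconvergent structure and a filtered colimit'') is exactly the thing that is unavailable: there is no known cofinal family of Stein neighborhoods of $D\times\CC^r$ whose cohomology one can control, so acyclicity of your charts, and hence the identification of your \v{C}ech complex with $H^\bullet(\PP^r_D,h^*\cO(m))$, is unproved. The argument as written therefore does not close.

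The paper's route around this is genuinely different at this point and is worth internalizing. For the vanishing in degrees $>r$ it covers $\PP^r_D$ by the \emph{compact} sets $X_i=D\times D_i$ with $D_i$ a \emph{closed} polydisc of radius $>1$ in each standard chart; all finite intersections $X_I$ are then honest affinoid germs, so Lemma~\ref{Lem:affgerm}(ii) gives acyclicity and a double-complex argument identifies $H^\bullet(\PP^r_D,h^*\cF)$ with the cohomology of an $(r+1)$-term \v{C}ech complex. This cover is useless for an explicit monomial computation, so the paper never computes $H^i(\PP^r_D,\cO(m))$ directly: instead it handles $\cF=\cO$ by writing $\cO_{\PP^r_D}=j_r^{-1}\cO_{\PP^r_{\CC\PP^n}}$ and invoking the topological proper push-forward theorem together with Serre's classical GAGA for $\CC\PP^r_{\CC\PP^n}\to\CC\PP^n$, and then obtains all twists $\cO(n)$ by induction on $r$ using the hyperplane sequence $0\to\cO(n-1)\to\cO(n)\to\cO_{\PP^{r-1}}(n)\to 0$ and the five lemma. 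If you want to keep your plan, you must replace the monomial bookkeeping by one of these two devices (or prove Cartan~B for $D\times\CC^r$, which is an open issue here).
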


 \begin{lemma}\label{Lem:GAGA-basic-sheaves}
\begin{enumerate}\item We have $H^i(\PP^r_A,\cF)= H^i(\PP^r_D,h^*\cF)=0$ for $i>r$ and all $\cF$.
\item The proposition holds for $\cF = \cO_{\PP^r_A}$ for all $r\geq 0$.
\end{enumerate}
 \end{lemma}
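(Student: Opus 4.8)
The statement is the base case of Serre's GAGA induction, and I would follow Serre's original argument \cite[\S3]{Serre-GAGA}, adapting the analytic inputs to the germ $X = D$. For part (1), the vanishing $H^i(\PP^r_A,\cF)=0$ for $i>r$ is standard since $\PP^r_A$ is covered by $r+1$ affine opens and $\cF$ is quasi-coherent (\v Cech cohomology with respect to the standard affine cover); on the analytic side, $\PP^r_D$ is covered by the $r+1$ subdomains $U_j\times D$, where $U_j\cong\AA^r$ is the $j$-th standard chart, and each $U_j\times D$ is an affinoid germ in the sense of Appendix \ref{App:germs}, hence has vanishing higher coherent cohomology by Lemma~\ref{Lem:affgerm}(ii); intersections of these charts are again affinoid germs (products of polydiscs with affine spaces, presented as closed subgerms of $(\CC^N,D')$), so the \v Cech complex computing $H^i(\PP^r_D,h^*\cF)$ has length $r+1$ and gives the vanishing for $i>r$.

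For part (2), I would compute both sides directly using the \v Cech complexes attached to the cover by the standard charts. The algebraic side gives the classical answer: $H^0(\PP^r_A,\cO)=A$, $H^r(\PP^r_A,\cO)=0$ for $r\geq 1$ (and $=A$ only in the degenerate sense handled by the twisting-sheaf computation), and $H^i=0$ otherwise --- more precisely one reduces via the Euler sequence or directly via the monomial basis of $\cO_{\PP^r}(n)$ to an explicit $A$-module computation, exactly as in Hartshorne III.5 but with the ground ring $A$ in place of a field. The analytic side requires the key lemma: for the chart $U_j\times D$ one must identify $\cO_{\PP^r_D}(U_j\times D)$ with $A[x_1,\dots,x_r]^{\wedge}$ in the appropriate overconvergent sense, and more importantly one must show that the \v Cech differentials match those in the algebraic complex after the identification $\cO_{\PP^r_X}(1) = h^*\cO_{\PP^r_A}(1)$. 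The subtlety flagged in the excerpt --- that $D\times\CC^r$ may fail to be Stein because $\{D_i\times\CC^r\}$ is not a \emph{fundamental} system of neighborhoods --- is precisely what forces one to work chart-by-chart with affinoid germs of the form $U_j\times D$ (which \emph{are} controlled by Lemma~\ref{Lem:affgerm}) rather than with $\PP^r_D$ globally; the intersections $U_{j_0}\cap\dots\cap U_{j_k}$ are affine spaces, and their products with $D$ remain affinoid germs because a polydisc times an affine space embeds as a closed subgerm of $(\CC^{N},D')$ for a suitable larger polydisc $D'$, using that $\CC\{t\}^\dag_r[x_1,\dots,x_m]$ is a localization of an overconvergent ring.

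Assembling: since both \v Cech complexes are built from the same combinatorial data (the nerve of the $(r+1)$-element cover) and since term-by-term the map $h^*$ on sections $\cO_{\PP^r_A}(U_{j_0\dots j_k}) \to \cO_{\PP^r_D}(U_{j_0\dots j_k}\times D)$ is a ring homomorphism compatible with the twist, I get a map of complexes; I then check it is a quasi-isomorphism. The cleanest route is to compute $\bigoplus_{n} H^i(\PP^r_A,\cO(n))$ and $\bigoplus_n H^i(\PP^r_D,\cO(n))$ simultaneously as graded modules over $A[T_0,\dots,T_r]$: on both sides $H^0$ is the degree-$n$ part of $A[T_0,\dots,T_r]$ and $H^r$ is (up to a shift) its graded dual, the identification being the standard monomial/Laurent-monomial bookkeeping, and $h^*$ visibly respects these.

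\textbf{Main obstacle.} The crux is Lemma~\ref{Lem:GAGA-basic-sheaves}(2)'s analytic half: one must honestly verify that $\cO_{\PP^r_D}(U_j\times D)$ and the intersections are the expected overconvergent polynomial rings, that higher cohomology on these pieces vanishes (this is where one invokes that each piece is an affinoid germ, sidestepping the non-Stein-ness of the total space), and that the \v Cech coboundary maps are the algebraic ones --- i.e.\ that the analytic structure sheaf's restriction maps between charts agree with the algebraic ones under $h$. Everything else (the length bound, the $i>r$ vanishing, the bookkeeping of monomials) is routine once this comparison on each chart and each chart-intersection is in place.
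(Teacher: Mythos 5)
Your reduction of part (1) to Lemma~\ref{Lem:affgerm}(ii) breaks down at the key step: the charts $U_j\times D\cong\CC^r\times D$ are \emph{not} affinoid germs. By definition (Section~\ref{Sec:affgerm}) an affinoid germ admits a closed immersion into $(\CC^N,D')$ with $D'$ a closed polydisc, so its support is compact; $\CC^r\times D$ is non-compact for $r\geq 1$, so the embedding you assert cannot exist. The remark that $\CC\{t\}^\dag_r[x_1,\dots,x_m]$ is a localization of an overconvergent ring is beside the point: a localization corresponds (at best) to an open-type inclusion, not a closed immersion into a polydisc germ. Consequently you have no acyclicity statement on your charts or on their intersections, hence no reason that the \v{C}ech complex of the cover $\{U_j\times D\}$ computes $H^i(\PP^r_D,h^*\cF)$. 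This is precisely the difficulty the paper flags (it is not known that $D\times\CC^r$ is Stein, since the sets $D_i\times\CC^r$ are not a fundamental system of neighborhoods), and your proposal in effect asserts that difficulty away rather than circumventing it. The paper's fix is different: cover $\PP^r_D$ by $X_i=D\times D_i$, where $D_i\subset U_i\cong\CC^r$ is the \emph{closed} polydisc of radius $>1$ in the $i$-th standard chart. These $r+1$ compact pieces still cover $\PP^r_D$, all nonempty intersections $X_I$ are genuinely affinoid germs, so $H^{>0}(X_I,h^*\cF)=0$, and a flabby-resolution/double-complex argument identifies $H^i(\PP^r_D,h^*\cF)$ with the \v{C}ech cohomology of this $(r+1)$-element cover, giving the vanishing for $i>r$.

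For part (2), even after replacing your cover by the correct affinoid one, the plan to rerun the Hartshorne-style monomial/Laurent computation on the analytic side does not transfer verbatim: sections over the pieces $D\times D_I$ are overconvergent series rings, not polynomial rings over $A$, so the ``same combinatorial bookkeeping'' claim and the graded-dual identification of $H^r$ would need a separate argument. The paper instead takes $H^0(\PP^r_A,\cO)=A$ and $H^{>0}=0$ from the standard algebraic computation, and on the analytic side proves $\pi_*\cO_{\PP^r_D}=\cO_D$ and $R^i\pi_*\cO_{\PP^r_D}=0$ for $i>0$ by writing $\cO_{\PP^r_D}=j_r^{-1}\cO_{\PP^r_{\CC\PP^n}}$ and invoking the topological proper push-forward theorem together with Serre's original GAGA over $\CC\PP^n$, then concludes using that $D$ is Stein. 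You should either adopt that route or supply the missing chart-by-chart analytic computation over the affinoid cover.
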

 \begin{proof}
(1) For $H^i(\PP^r_A,\cF)=0$  use the standard \c{C}ech covering of $\PP^r_A$, which has only $r+1$ elements. We need to show $H^i(\PP^r_D,h^*\cF)=0$.

On the analytic side we mimic the standard argument for vanishing  using \c{C}ech cocycles of a covering by closed polydiscs instead of affine spaces.
Let $h^*\cF \to S^\bullet$ be the standard flabby resolution of $h^*\cF$ by discontinuous sections, so $H^i(Y, h^*\cF|_Y) = H^i(\Gamma(Y,S^\bullet))$ for any subset $Y\subset \PP^r_D$. Let $\CC^r\simeq U_i \subset \CC\PP^r$ be the standard open sets and let $D_i\subset U_i$ be the standard closed polydisc of fixed radius $>1$. Set $X_i = D \times D_i \subset \PP^r_D$ and for each subset $I \subset \{0\.n\}$  let $X_I = \cap_{i\in I} X_{i}$. Then $X_I$ are complex affinoids for $I\neq\emptyset$, hence  $H^i(X_I, h^*\cF|_{X_I})  =0=H^i(\Gamma(X_I,S^\bullet))$ for $i>0$ and $I\neq\emptyset$.

On the other hand $$\cC^\bullet(\{X_i\}, S^j)\ \  = \ \ \ \left[\oplus_{|I|=1} S^j_{X_I} \to   \oplus_{|I|=2} S^j_{X_I} \to\cdots\right]$$ is a flabby resolution of $S^j$ so  $H^0( \Gamma(\PP^r_D,\cC^\bullet(\{X_i\}, S^j))) = \Gamma(\PP^r_D,S^j)$ and for $i>0$ we have $H^i(\Gamma(\PP^r_D,\cC^\bullet(\{X_i\},S^j))) = 0$.

Consider the double complex $C^{p,q} = \oplus_{|I|=p}\Gamma(X_I,S^q)$ and its two edges $\Gamma(\PP^r,S^\bullet)$ and  $\check{C}^p = \oplus _{|I|=p}\Gamma(X_I,h^*\cF)$. We obtain that $$H^i(\PP^r_D,h^*\cF) = H^i(\Gamma(\PP^r,S^\bullet)) = \HH^i(C^{\bullet,\bullet}) = H^i(\check{C}^\bullet).$$ The latter is trivial in degrees $>r$.

(2) We have that $\Gamma(\cO_{\PP^r_A}) = A$ and $H^i(\cO_{\PP^r_A}) = 0$ for $i>0$ by \cite[Theorem III.5.1]{Hartshorne}.
It suffices to show that $\pi_*\cO_{\PP^r_D} = \cO_D$ and $R^i\pi_*\cO_{\PP^r_D} = 0$ for $i>0$  where $\pi: \PP^r_D \to D$ is the projection, since $D$ is Stein. For this note that $\cO_{\PP^r_D} = j_r^{-1} \cO_{\PP^r_{\CC\PP^n}}$, where $j_r: \PP^r_{D }\to \PP^r_{\CC\PP^n}$ is the inclusion:
$$\xymatrix{\PP^r_{D}\ar[d]_\pi\ar[r]^{j_r}& \PP^r_{\CC\PP^n}\ar[d]^{\varpi}\\ D\ar[r]_{j_0}& \CC\PP^n.}$$ By the topological proper push-forward theorem \cite[Corollary VII.1.5]{Iversen} we have $$R^i\pi_*\cO_{\PP^r_D}  = j_0^{-1} R^i\varpi_*\cO_{\PP^r_{\CC\PP^n}},$$ and the result follows from Serre's original GAGA theorems.
 \end{proof}

\begin{lemma}\label{allrlem}
The proposition holds for $\cF = \cO_{\PP^r_A}(n)$ for all $r\geq 0$ and all integers $n$.
\end{lemma}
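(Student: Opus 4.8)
The plan is to bootstrap from Lemma \ref{Lem:GAGA-basic-sheaves}(2), which handles $\cO_{\PP^r_A}$ (the case $n=0$), to all twists $\cO_{\PP^r_A}(n)$ by an induction that simultaneously varies $r$ and $n$. First I would dispose of the easy direction of cohomology: by Lemma \ref{Lem:GAGA-basic-sheaves}(1) all groups in question vanish in degrees $>r$, and the degree-$0$ statement $\Gamma(\PP^r_A,\cO(n))\toisom\Gamma(\PP^r_D,h^*\cO(n))$ can be checked directly, since global sections of $\cO_{\PP^r}(n)$ for $n\ge 0$ are degree-$n$ homogeneous polynomials (a free $A$-module of the standard rank) on both sides, and are $0$ for $n<0$ on both sides; here one uses that $\pi_*\cO_{\PP^r_D}(n)$ is the same locally free sheaf on $D$ as in the algebraic setting, which follows by the same topological proper push-forward argument (\cite[Corollary VII.1.5]{Iversen}) applied to $\cO_{\PP^r_{\CC\PP^n}}(n)$ exactly as in the proof of part (2) of Lemma \ref{Lem:GAGA-basic-sheaves}. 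This last point in fact proves the full statement for all $\cO(n)$ at one stroke, since $R^i\varpi_*\cO_{\PP^r_{\CC\PP^n}}(n)$ is computed by Serre's original GAGA and $R^i\pi_*\cO_{\PP^r_D}(n) = j_0^{-1}R^i\varpi_*\cO_{\PP^r_{\CC\PP^n}}(n)$, with $D$ Stein so that the Leray spectral sequence degenerates.

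Alternatively — and this is the route that generalizes to the next lemmas — I would run Serre's original double induction. Fix $r$ and argue by descending/ascending induction on $n$, using the two exact sequences obtained from a hyperplane $H = \PP^{r-1}_A \subset \PP^r_A$:
\begin{equation}
0 \to \cO_{\PP^r_A}(n-1) \to \cO_{\PP^r_A}(n) \to \cO_{\PP^{r-1}_A}(n) \to 0
\end{equation}
and its analytic pullback, which is again exact since $h^*$ is exact (it is a pullback along a flat, indeed a localization-type, map of locally ringed spaces; more precisely $h$ is flat because on affinoid pieces it is $\Spec$ of the regular homomorphism $A \to \cO_X(X)$-type maps of Lemma~\ref{Lem:affgerm}, and pullback of $\cO$-modules along a flat morphism is exact). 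The long exact cohomology sequences of these two short exact sequences are compatible via the natural transformation $h^*$, so the five lemma lets one propagate the isomorphism statement: starting from $n=0$ (Lemma \ref{Lem:GAGA-basic-sheaves}(2)) and the case $r-1$ (induction on dimension, with base case $r=0$ trivial since $\PP^0_A = \Spec A$ and Lemma \ref{Lem:affgerm} applies), one gets it for $\cO_{\PP^r_A}(n)$ for all $n$, both by increasing and decreasing $n$.

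The only genuine subtlety — and the main obstacle — is the exactness and good behaviour of $h^*$ and of the analytic pushforward $\pi_*$ on $\PP^r_D$, because $\PP^r_D = D \times \CC\PP^r$ sits over the germ $D$, which need not be Stein in the sharp sense required: as the text already flags before Lemma \ref{Lem:GAGA-basic-sheaves}, the open polydiscs $D_i \supset D$ give $D_i \times \CC\PP^r$ which is not a \emph{fundamental} system of neighborhoods of $D \times \CC\PP^r$ inside a representative. The resolution is exactly the one already carried out for the structure sheaf in Lemma \ref{Lem:GAGA-basic-sheaves}(2): express everything via $j_r^{-1}$ from the genuinely compact-Stein situation over $\CC\PP^n$ and invoke topological proper pushforward to commute $R^i\pi_*$ with restriction to $D$. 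Since this has been done for $\cO_{\PP^r_D}$, I would simply note that the identical argument applies verbatim to $\cO_{\PP^r_D}(n) = j_r^{-1}\cO_{\PP^r_{\CC\PP^n}}(n)$, which is the cleanest way to finish; the $H^{>r}$-vanishing and the $\Gamma$-computation then complete the proof of the proposition for all line bundles $\cO(n)$. I would record the argument as: reduce to $(\cX,X) = (\CC^n,D)$; prove $H^{>r} = 0$ on both sides; prove the statement for $\cO(n)$ via $j_r^{-1}$ and topological proper pushforward (or via the hyperplane induction bootstrapped off Lemma \ref{Lem:GAGA-basic-sheaves}(2)); conclude.
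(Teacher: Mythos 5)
Your second (``alternative'') route is precisely the paper's proof: induction on $r$ with base case $r=0$ (where the affinoid germ is Stein), the hyperplane sequence $0\to\cO(n-1)\to\cO(n)\to\cO_{\PP^{r-1}}(n)\to 0$ on the algebraic and analytic sides, and a five-lemma transfer in $n$ bootstrapped from the case $\cO=\cO(0)$ of Lemma \ref{Lem:GAGA-basic-sheaves}(2); on that route you and the paper coincide, including the flatness of $h$ used to pull the sequence back. Your first route --- running the $j_r^{-1}$/topological proper base change argument of Lemma \ref{Lem:GAGA-basic-sheaves}(2) verbatim for $\cO(n)$, so that $R^i\pi_*\cO_{\PP^r_D}(n)=j_0^{-1}R^i\varpi_*\cO_{\PP^r_{\CC\PP^n}}(n)$ and Leray over the Stein germ $D$ finishes --- is genuinely different from the paper and does work, with the advantage of giving all twists and all cohomological degrees at one stroke. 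Two points it needs spelled out: (a) the input $R^i\varpi_*\cO_{\PP^r_{\CC\PP^n}}(n)\cong H^i(\CC\PP^r,\cO(n))\otimes_\CC\cO_{\CC\PP^n}$ is a relative comparison/K\"unneth statement over the compact base rather than literally Serre's Th\'eor\`eme 1, though for a product with a sheaf pulled back from the compact factor it is standard (and the paper itself invokes exactly this for $\cO$); and (b) the lemma asserts that the \emph{comparison map} $h^*\colon H^i(\PP^r_A,\cO(n))\to H^i(\PP^r_D,h^*\cO(n))$ is an isomorphism, not merely that both sides are free $A$-modules of the same rank, so one must check that the Leray/base-change identifications are compatible with $h^*$ (e.g.\ by computing both sides with the standard \v{C}ech coverings, where the map sends a monomial cocycle to itself); this matters in particular in the range $i=r$, $n\le -r-1$, which your ``degree $0$ plus vanishing above degree $r$'' remark does not cover. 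The paper's inductive route avoids (a) and (b) at the cost of being less direct, and, as you note, it is the template reused for arbitrary coherent sheaves in the next steps.
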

\begin{proof} Induction identical to \cite[section 13 Lemme 5]{Serre-GAGA}: the result holds for $r=0$ since $D$ is Stein. Supposing it holds for $r-1$ and all $n$, we have the exact sequence $0 \to \cO_{\PP^r_D}(n-1) \to   \cO_{\PP^r_D}(n) \to \cO_{\PP^{r-1}_D}(n) \to 0$ and the corresponding sequence for $\PP^r_A$. We obtain a canonical homomorphism of long exact sequences
{\small $$\xymatrix{ H^{i-1}(\PP^{r-1}_A,\cO(n)) \ar[r]\ar[d] &H^i(\PP^r_A,\cO(n-1))\ar[r]\ar[d] &H^{i}(\PP^{r}_A,\cO(n))\ar[r]\ar[d] &H^{i}(\PP^{r-1}_A,\cO(n))\ar[d] \\
H^{i-1}(\PP^{r-1}_D,\cO(n)) \ar[r] &H^i(\PP^r_D,\cO(n-1))\ar[r] &H^{i}(\PP^{r}_D,\cO(n))\ar[r] &H^{i}(\PP^{r-1}_D,\cO(n)).
}$$}
The vertical arrows on the right and left are isomorphisms by the inductive assumption. It follows that the result holds for $r$ and $\cO(n-1)$ if and only if it holds for $\cO(n)$. Since we have proven that it holds for $\cO$, it holds for all $n$.
\end{proof}

\begin{proof}[Proof of the proposition]
The proof is identical to Serre's Th\'eor\`eme 1. We apply descending induction on $i$ for all coherent $\PP^r_A$ modules $\cF$. The case of $i>r$ is proved by the lemma. Since $\cF$ is coherent there is an epimorphism  $\cE\to \cF$ with $\cE = \oplus_{i=1}^m \cO_{\PP^r_A}(-k_i)$. Denoting by $\cG$ the kernel, $\cG$ is coherent and we have a short exact sequence $$0\to \cG \to \cE\to \cF\to 0.$$  Since the map $h$ is flat we have an exact sequence $$0\to h^*\cG \to h^*\cE\to h^*\cF\to 0.$$

In the commutative diagram of cohomologies with exact rows
$$\xymatrix{ H^i(\PP^r_A,\cE) \ar[r]\ar[d] &H^i(\PP^r_A,\cF)\ar[r]\ar[d] &H^{i+1}(\PP^r_A,\cG)\ar[r]\ar[d] &H^{i+1}(\PP^r_A,\cE)\ar[d] \\
H^i(\PP^r_D,h^*\cE) \ar[r]&H^i(\PP^r_D,h^*\cF)\ar[r] &H^{i+1}(\PP^r_D,h^*\cG)\ar[r] &H^{i+1}(\PP^r_D,h^*\cE)
}$$
the vertical arrows on the left and right are isomorphisms by Lemma~\ref{allrlem}. By the induction hypothesis $H^{i+1}(\PP^r_A,\cG)\to H^{i+1}(\PP^r_D,h^*\cG)$ is an isomorphism as well. By the five lemma the result holds for $H^i(\PP^r_A,\cF)\to H^i(\PP^r_D,h^*\cF)$  as required.
\end{proof}

\subsection{Homomorphisms}

\begin{proposition}[Serre's Th\'eor\`eme 2]
For any coherent $\PP^r_A$-modules $\cF, \cG$ the natural homomorphism $$\Hom_{\PP^r_A} ( \cF,\cG) \to \Hom _{\PP^r_D} ( h^*\cF,h^*\cG)$$ is an isomorphism. In particular the functor $h^*$ is fully faithful.
\end{proposition}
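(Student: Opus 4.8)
The plan is to follow Serre's original argument for Th\'eor\`eme 2, using Th\'eor\`eme 1 (the cohomology comparison proposition just proved) as the main input. First I would reduce to the case $\cF = \cO_{\PP^r_A}(-n)$ for large $n$. Indeed, given an arbitrary coherent $\cF$, choose a presentation $\cE_1 \to \cE_0 \to \cF \to 0$ with each $\cE_j$ a finite direct sum of twisting sheaves $\cO_{\PP^r_A}(-n_{j})$; this is possible since $A$ is noetherian and $\cO_{\PP^r_A}(1)$ is relatively ample. Since $h$ is flat, $h^*$ is exact, so $h^*\cE_1 \to h^*\cE_0 \to h^*\cF \to 0$ is again exact. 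The functor $\Hom(-,\cG)$ is left exact, so we get a commuting diagram with exact rows
\begin{equation*}
\xymatrix{
0 \ar[r] & \Hom_{\PP^r_A}(\cF,\cG) \ar[r]\ar[d] & \Hom_{\PP^r_A}(\cE_0,\cG) \ar[r]\ar[d] & \Hom_{\PP^r_A}(\cE_1,\cG) \ar[d] \\
0 \ar[r] & \Hom_{\PP^r_D}(h^*\cF,h^*\cG) \ar[r] & \Hom_{\PP^r_D}(h^*\cE_0,h^*\cG) \ar[r] & \Hom_{\PP^r_D}(h^*\cE_1,h^*\cG)
}
\end{equation*}
and a diagram chase (the two right vertical arrows being isomorphisms) shows the left arrow is an isomorphism. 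So it suffices to treat $\cF = \cO_{\PP^r_A}(-n)$.

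For that case, observe $\Hom_{\PP^r_A}(\cO(-n),\cG) = \Gamma(\PP^r_A, \cG(n)) = H^0(\PP^r_A, \cG(n))$, and likewise on the analytic side $\Hom_{\PP^r_D}(h^*\cO(-n), h^*\cG) = H^0(\PP^r_D, (h^*\cG)(n)) = H^0(\PP^r_D, h^*(\cG(n)))$, using that $h^*\cO_{\PP^r_A}(n) = \cO_{\PP^r_D}(n)$ and that $h^*$ is a tensor functor. The comparison map on $\Hom$ groups then becomes exactly the comparison map $H^0(\PP^r_A, \cG(n)) \to H^0(\PP^r_D, h^*(\cG(n)))$, which is an isomorphism by the preceding proposition (Serre's Th\'eor\`eme 1) applied to the coherent sheaf $\cG(n)$. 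This finishes the argument; the final sentence, that $h^*$ is fully faithful, is just the restatement that $\Hom_{\PP^r_A}(\cF,\cG) \toisom \Hom_{\PP^r_D}(h^*\cF, h^*\cG)$ for all coherent $\cF,\cG$.

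I do not expect a serious obstacle here: the entire content has been pushed into Th\'eor\`eme 1, and what remains is the standard formal reduction. The one point deserving a line of care is the identification of $h^*$ of a twist with the corresponding twist on $\PP^r_D$ and the compatibility of $h^*$ with the evaluation/Hom adjunction — i.e. that the analytification map $h: \PP^r_D \to \PP^r_A$ of ringed spaces genuinely pulls $\cO_{\PP^r_A}(n)$ back to $\cO_{\PP^r_D}(n)$ — but this is immediate from the construction of $h$ via the product $\CC\PP^r \times X$ and the definition $\cO_{\PP^r_X} = \cO_{\PP^r_\cX}|_{\PP^r_X}$. Flatness of $h$, which is used to keep the pulled-back presentation exact, follows from flatness of $\cO_{\PP^r_A} \to \cO_{\PP^r_D}$ stalkwise, which in turn reduces to the regularity (hence flatness) statement of Lemma~\ref{Lem:affgerm}(iv) base-changed along $\CC\PP^r$. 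So the proof is genuinely short once Th\'eor\`eme 1 is in hand.
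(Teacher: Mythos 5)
Your proof is correct, but it takes a different route from the paper. The paper follows Serre's original argument verbatim: it first proves the \emph{sheaf-level} statement that the natural map $h^*\cHom_{\PP^r_A}(\cF,\cG)\to\cHom_{\PP^r_D}(h^*\cF,h^*\cG)$ is an isomorphism, checked on stalks using flatness of $\cO_{\PP^r_A,h(x)}\to\cO_{\PP^r_D,x}$ and the compatibility of $\Hom$ of finitely presented modules with flat base change; it then takes $H^0$ and applies Th\'eor\`eme~1 to the single coherent sheaf $\cHom_{\PP^r_A}(\cF,\cG)$. You instead resolve $\cF$ by finite sums of twists $\cO(-n_j)$, use left-exactness of $\Hom(-,\cG)$ and exactness of $h^*$ to reduce to $\cF=\cO(-n)$, and then invoke Th\'eor\`eme~1 in degree $0$ for the sheaves $\cG(n_j)$. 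Both arguments rest on the same two inputs (flatness of $h$ and Th\'eor\`eme~1); the paper's version localizes the work in a stalkwise algebra computation and applies the cohomology comparison only once, while yours trades that for the existence of a presentation by twists (available here since $A$ is noetherian and $\cO(1)$ is relatively ample, and already used elsewhere in the paper) plus a short diagram chase. Your closing remarks on $h^*\cO(n)=\cO_{\PP^r_D}(n)$ and on deducing flatness stalkwise from Lemma~\ref{Lem:affgerm}(iv) applied to the affinoid pieces $D\times D_i$ covering $\PP^r_D$ are exactly the right points to flag; the paper itself asserts flatness of $h$ with no more detail.
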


\begin{lemma}
The sheaf homomorphism $$h^*\cHom_{\PP^r_A} ( \cF,\cG) \to \cHom _{\PP^r_D} ( h^*\cF,h^*\cG)$$ is an isomorphism.
\end{lemma}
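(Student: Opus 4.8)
The assertion is local on $\PP^r_A$, so we may work over an affine open $\Spec R\subset\PP^r_A$, where $R$ is a localization of a polynomial ring over $A$, hence noetherian; correspondingly we work over the affinoid germ mapping to $\Spec R$. Over such an open the coherent sheaf $\cF$ admits a finite free presentation
$$\cO_{\PP^r_A}^{\oplus m}\xrightarrow{\ \psi\ }\cO_{\PP^r_A}^{\oplus n}\longrightarrow\cF\longrightarrow 0.$$
Applying the left-exact functor $\cHom_{\PP^r_A}(-,\cG)$ gives an exact sequence
$$0\longrightarrow\cHom_{\PP^r_A}(\cF,\cG)\longrightarrow\cG^{\oplus n}\xrightarrow{\ \psi^{\vee}\ }\cG^{\oplus m}.$$

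Next I would pull this back along $h$. Since $h$ is flat (as already used in the proof of the previous Proposition), $h^*$ is exact, and it commutes with finite direct sums, so we obtain the exact sequence
$$0\longrightarrow h^*\cHom_{\PP^r_A}(\cF,\cG)\longrightarrow(h^*\cG)^{\oplus n}\longrightarrow(h^*\cG)^{\oplus m}.$$
On the other hand, $h^*$ is right exact and $h^*\cO_{\PP^r_A}=\cO_{\PP^r_D}$, so applying $h^*$ to the presentation of $\cF$ yields a finite free presentation $\cO_{\PP^r_D}^{\oplus m}\to\cO_{\PP^r_D}^{\oplus n}\to h^*\cF\to 0$ of $h^*\cF$. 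Applying $\cHom_{\PP^r_D}(-,h^*\cG)$ to it gives
$$0\longrightarrow\cHom_{\PP^r_D}(h^*\cF,h^*\cG)\longrightarrow(h^*\cG)^{\oplus n}\longrightarrow(h^*\cG)^{\oplus m}.$$

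Finally I would observe that the canonical base-change morphism $h^*\cHom_{\PP^r_A}(\cF,\cG)\to\cHom_{\PP^r_D}(h^*\cF,h^*\cG)$ fits into a map of these two exact sequences which is the identity on the two rightmost terms (under the identifications $h^*\cHom_{\PP^r_A}(\cO^{\oplus n},\cG)\cong(h^*\cG)^{\oplus n}\cong\cHom_{\PP^r_D}(\cO^{\oplus n},h^*\cG)$, and similarly for $m$), and in particular compatible with the two differentials $\psi^\vee$ and its pullback. Hence the induced map on kernels is an isomorphism, which is exactly the claim. The only substantive inputs are the flatness of $h$ and the noetherianity of the local rings on both sides (so that $\cF$ has finite local presentations), the latter holding on the analytic side by Lemma~\ref{Lem:affgerm}; the remaining point — verifying that the canonical transformation $h^*\cHom(-,-)\Rightarrow\cHom(h^*-,h^*-)$ is compatible with the chosen presentations — is routine diagram bookkeeping, and is the only place where a little care is needed.
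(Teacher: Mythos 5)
Your proof is correct and is essentially the paper's argument: the paper verifies the same base-change isomorphism at stalks, using flatness of $\cO_{\PP^r_D}$ over $\cO_{\PP^r_A}$ together with finite presentation of $\cF$, which is exactly the content of your presentation-and-kernels computation carried out sheaf-theoretically over an affine open.
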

\begin{proof}
This follows since $\cO_{\PP^r_D}$ is a flat $\cO_{\PP^r_A}$-module. Indeed, for a closed point $x\in\PP^r_D$ corresponding to a point  $x'=h(x)\in \PP^r_A$ we have
\begin{align*}\left(h^*\cHom_{\PP^r_A} ( \cF,\cG)\right)_x &= Hom_{\cO_{x'}} ( \cF_{x'},\cG_{x'})\otimes_{\cO_{x'}}\cO_{x}\\& =  Hom_{\cO_x} ( \cF_{x'}\otimes_{\cO_{x'}}\cO_{x},\cG_{x'}\otimes_{\cO_{x'}}\cO_{x})\\& = \cHom_{\PP^r_D} ( h^*\cF,h^*\cG)_x.
\end{align*}
\end{proof}

\begin{proof}[Proof of the proposition]
By Serre's Th\'eor\`eme 1,  $h^*$ preserves cohomology of coherent sheaves. Taking $H^0$ in the lemma the result follows.
\end{proof}

\subsection{The equivalence} It remains to show:
\begin{proposition} The functor $h^*$ is essentially surjective.
\end{proposition}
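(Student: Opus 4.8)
The statement is the final ingredient of relative GAGA in the complex germ setting: every coherent sheaf on $\PP^r_D$ is (up to isomorphism) the pullback of a coherent sheaf on $\PP^r_A$. The plan is to follow Serre's original argument \cite[\S 3, Th\'eor\`eme 3]{Serre-GAGA} essentially verbatim, since the two preceding propositions (equality of cohomology and full faithfulness of $h^*$) have already been established in exactly the form Serre uses them. First I would reduce, by Serre's twisting trick, to showing that any coherent $\cO_{\PP^r_D}$-module $\cG$ admits a surjection from a finite direct sum of twisted structure sheaves: concretely, that for $n\gg 0$ the sheaf $\cG(n)$ is globally generated and its global sections form a finitely generated $A$-module. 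The latter follows from the cohomology comparison already proved (Serre's Th\'eor\`eme 1 in our excerpt): $H^0(\PP^r_D,\cG(n)) = H^0(\PP^r_A,\cG')$ whenever $\cG$ happens to be algebraic, but a priori we do not yet know $\cG$ is algebraic, so this needs to be bootstrapped as in Serre.

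The heart of the matter is therefore the analytic global-generation statement: for every coherent analytic sheaf $\cG$ on $\PP^r_D$, there is $n_0$ such that for $n\ge n_0$ the evaluation map $H^0(\PP^r_D,\cG(n))\otimes_\CC\cO_{\PP^r_D}\to \cG(n)$ is surjective, and $H^0(\PP^r_D,\cG(n))$ is a finite $A$-module. Over a single point of $D$ this is Cartan's Theorem A / Theorem B on the Stein space $\CC^r\times\{pt\}$, or rather its projective version; the point is to get it uniformly over the Stein compact $D$. I would handle this exactly as Serre does over a point, working with the standard affine cover $\{X_i = D\times D_i\}$ of $\PP^r_D$ by complex affinoid germs (as in the proof of Lemma~\ref{Lem:GAGA-basic-sheaves}): on each $X_i$ the sheaf $\cG$ corresponds, via Lemma~\ref{Lem:affgerm}(ii), to a finitely generated module over $\cO_X(X_i)$, so it is globally generated there by finitely many sections, and one twists by $\cO(n)$ to spread these to global sections on $\PP^r_D$, killing the obstruction in $H^1$ of the ``gluing'' kernel sheaves by the cohomology-vanishing-in-high-twist statement of Lemma~\ref{allrlem} together with the long-exact-sequence / descending-induction machinery already in place. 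Finiteness of $H^0(\PP^r_D,\cG(n))$ as an $A$-module then follows from the \v{C}ech description over $\{X_i\}$ and the noetherianness of $A=\cO_X(X)$ from Lemma~\ref{Lem:affgerm}(i).

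Given global generation, I would run Serre's induction on $r$ combined with his resolution argument. Choose a surjection $\cE_1 := \bigoplus_j \cO_{\PP^r_D}(-k_j)\onto \cG$; its kernel $\cG_1$ is coherent, and repeating, build a (locally, hence after the finiteness above, globally) two-step presentation $\cE_2\to\cE_1\to\cG\to 0$ by twisted structure sheaves. Each $\cE_i$ is manifestly the analytification $h^*$ of the corresponding algebraic sheaf $\bigoplus_j\cO_{\PP^r_A}(-k_j)$, and the map $\cE_2\to\cE_1$ is $h^*$ of an algebraic map by the already-proved full faithfulness (Serre's Th\'eor\`eme 2 in the excerpt). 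Hence $\cG = \operatorname{coker}(\cE_2\to\cE_1)$ is $h^*$ of the cokernel of the corresponding algebraic morphism, and exactness of $h^*$ (flatness of $\cO_{\PP^r_D}$ over $\cO_{\PP^r_A}$, used already in the proof of Th\'eor\`eme 1) shows this algebraic cokernel pulls back to $\cG$.

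\textbf{Main obstacle.} The one genuinely delicate point — the only place where, as the text of the excerpt already warns, one departs from Serre — is the uniformity of Cartan's theorems over the Stein compact $D$: functions and sections on $D\times\CC^r$ are only guaranteed to extend to \emph{some} neighborhood from a fundamental system, and $\{D_i\times\CC^r\}$ is not such a system. This is precisely the issue already navigated in Lemma~\ref{Lem:GAGA-basic-sheaves}, and the fix is the same: never work with $D\times\CC^r$ directly, but only with the \emph{compact} affinoid pieces $X_I = D\times D_I$, for which Lemma~\ref{Lem:affgerm} gives the full Cartan-type package (equivalence of coherent sheaves with finite modules over an excellent noetherian ring, vanishing of higher cohomology), and then assemble via the finite \v{C}ech cover of $\PP^r_D$. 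Once global generation and $A$-module finiteness of twisted sections are secured this way, the remainder is a formal transcription of Serre's Th\'eor\`eme 3, and I would present it as such rather than re-deriving the homological bookkeeping.
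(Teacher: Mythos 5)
Your overall skeleton --- twist, globally generate, build a two-step presentation by sums of $\cO(-k)$, algebraize the presenting map by full faithfulness, and take cokernels using flatness of $h$ --- is exactly the paper's (and Serre's). The gap is in the step you yourself single out as the heart of the matter: global generation of $\cF(n)$ for $n\gg 0$. You propose to generate $\cF$ locally on the affinoid pieces $X_i=D\times D_i$ via Lemma~\ref{Lem:affgerm}(ii) and then extend these local sections to global ones after twisting, ``killing the obstruction in $H^1$ of the gluing kernel sheaves'' by Lemma~\ref{allrlem}. But Lemma~\ref{allrlem} only controls the sheaves $\cO(n)$ themselves; the kernels obstructing your extension problem are arbitrary coherent \emph{analytic} sheaves on $\PP^r_D$, which at this stage are not known to be algebraic, and no high-twist vanishing (nor even finiteness) theorem is available for them --- such a vanishing statement is essentially equivalent to what is being proved, so the argument as written is circular. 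The affinoid-cover trick is the right fix in Lemma~\ref{Lem:GAGA-basic-sheaves}, where only $\cO$ and its twists occur, but it does not by itself deliver a Cartan Theorem A statement for general coherent sheaves on $\PP^r_D$.

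What makes the paper's proof of Lemma~\ref{Lem:global-generation} work is precisely the ingredient missing from your sketch: the induction on $r$ is used \emph{inside} the global-generation lemma, not in the final resolution step. One fixes $x$, picks a hyperplane $H\cong\PP^{r-1}_D$ through $x$, and notes that the auxiliary sheaves $\cF_H$ and $\cG$ in the two short exact sequences extracted from $\cF(-1)\to\cF\to\cF_H\to 0$ live on $H$; by the inductive hypothesis (essential surjectivity for $r-1$) they are analytifications of algebraic sheaves, so Th\'eor\`eme 1 together with algebraic Serre vanishing kills $H^1(H,\cF_H(n))$ and $H^2(H,\cG(n))$ for $n\gg0$. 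This gives surjections $H^1(\PP^r_D,\cF(n-1))\onto H^1(\PP^r_D,\cF(n))$, hence stabilization, bijectivity of $H^1(\PP^r_D,\cP(n))\to H^1(\PP^r_D,\cF(n))$, surjectivity of $H^0(\PP^r_D,\cF(n))\to H^0(H,\cF_H(n))$, and finally generation of $\cF(n)_x\otimes_{\cO_{D,x}}\CC_x$ by global sections via Nakayama; global generation by finitely many sections then follows from coherence and compactness, so the separate $A$-module finiteness of $H^0$ you invoke is not needed --- and your \v{C}ech argument would not yield it anyway, since $\cO_X(X_I)$ is not a finite $A$-module. Your proposal invokes ``Serre's induction on $r$'' only in the resolution step, where it is not needed; reinstating the hyperplane-section argument in the global-generation lemma is the essential repair.
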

\begin{proof} This is an inductive argument on $r$ identical to Serre's Th\'eor\`eme 3 which we repeat below. The case $r=0$ follows from Lemma \ref{Lem:affgerm}. Assume the result is known for $r-1$ and let $\cF$ be a coherent sheaf on $\PP^r_D$. By Lemma \ref{Lem:global-generation} below there is an epimorphism $\phi:\cO(-n_0)^{k_0} \to \cF$, and applying this again to $\Ker (\phi)$ we get a resolution $\cO(-n_1)^{k_1} \stackrel{\psi}\to\cO(-n_0)^{k_0} \to \cF \to 0$. By Serre's Th\'eor\`eme 2 the homomorphism $\psi$ is the analytification of an algebraic sheaf homomorphism $\psi'$, so the cokernel $\cF$ of $\psi$ is also the analytification of the cokernel of $\psi'$.
\end{proof}

\begin{lemma}\label{Lem:global-generation} Assume the proposition holds for $r-1$. Then for any coherent sheaf $\cF$ on $\PP^r_D$ there is $n_0$ so that $\cF(n)$ is globally generated whenever $n>n_0$.
\end{lemma}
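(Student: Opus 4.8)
\textbf{Proof proposal for Lemma \ref{Lem:global-generation}.}

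The plan is to follow Serre's classical argument for the analogous statement on $\CC\PP^r$ (\cite[\S 14, Lemma 1]{Serre-GAGA}), adapting it to the relative setting over the Stein polydisc $D$. The inductive hypothesis gives the full statement of the Proposition for $r-1$, hence in particular the fact (via Serre's Th\'eor\`eme 1, already proved above) that cohomology of coherent sheaves on $\PP^{r-1}_D$ agrees with the algebraic side, and that every coherent sheaf on $\PP^{r-1}_D$ is an analytification.

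First I would set up the standard hyperplane exact sequence. Let $E \subset \PP^r_D$ be the relative hyperplane at infinity, so $E \simeq \PP^{r-1}_D$, cut out by a section of $\cO_{\PP^r_D}(1)$. Tensoring the structure sequence of $E$ with $\cF(n)$ and pushing to $\cF$ twisted, one gets for each $n$ an exact sequence
$$0 \to \cK_n \to \cF(n) \to \cF(n) \to \cF_E(n) \to 0,$$
where $\cF_E$ is the restriction of $\cF$ to $E$ (a coherent sheaf on $\PP^{r-1}_D$) and $\cK_n$, $\cF(n)/\cK_n$ are the kernel and image sheaves; more precisely, multiplication by the equation of $E$ gives a short exact sequence $0 \to \cG_n \to \cF(n-1) \to \cF(n) \to \cF_E(n) \to 0$ which one breaks into two short exact sequences. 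The sheaves $\cG_n$ are supported on $E$, hence are coherent sheaves on $\PP^{r-1}_D$ up to a twist, and there are only finitely many of them up to isomorphism (in fact $\cG_n$ stabilizes for $n$ large, being the $\cO(1)$-torsion subsheaf of $\cF$ suitably twisted). The key cohomological input is that by the inductive hypothesis together with the Proposition (Serre's Th\'eor\`eme 1) for $\PP^{r-1}_D$ and the algebraic Serre vanishing on $\PP^{r-1}_A$, one has $H^1(\PP^{r-1}_D, \cF_E(n)) = 0$ and $H^1(\PP^{r-1}_D, (\text{the twist of } \cG_n)) = 0$ for $n \gg 0$. Chasing the long exact sequences, the connecting maps $H^0(\cF(n-1)) \to H^0(\cF(n))$ are eventually surjective, and $H^0(\PP^r_D, \cF(n)) \to H^0(E, \cF_E(n))$ is eventually surjective as well.

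Then I would conclude exactly as in the classical case: since $\cF_E$ is coherent on $\PP^{r-1}_D$, by the inductive hypothesis it is globally generated after a twist, so $\cF_E(n)$ is globally generated by global sections for $n \ge n_1$; lifting these sections through the surjection $H^0(\PP^r_D,\cF(n)) \onto H^0(E,\cF_E(n))$ produces global sections of $\cF(n)$ generating $\cF(n)$ along $E$. By Nakayama the locus where $\cF(n)$ fails to be generated by global sections is a closed subset of $\PP^r_D$ disjoint from $E$; pushing forward to $D$ by the (topologically proper) projection $\pi\colon\PP^r_D \to D$ and using that each fiber over $D$ meets $E$, this locus maps to a closed subset of $D$, and on each fiber the analogous classical argument on $\CC\PP^r$ over a point shows the complement of $E$ is covered by finitely many affine charts on which generation can be achieved after a further twist $n \ge n_2$. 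Taking $n_0 = \max(n_1, n_2)$ finishes the proof.

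The main obstacle I anticipate is precisely the point flagged in the paper's own remarks preceding Theorem \ref{Th:GAGA}: $D \times \CC^r$ (equivalently $\PP^r_D$ minus $E$, or an affine chart $D \times \CC^{r}$) is not known to be Stein in the sense used, because the open polydiscs $D_i \supset D$ do not give a \emph{fundamental} system of neighborhoods when crossed with $\CC^r$. So one cannot directly invoke Cartan's Theorem A/B on these charts. The workaround, consistent with the strategy already used in Lemma \ref{Lem:GAGA-basic-sheaves} and Lemma \ref{Lem:affgerm}, is to only ever reduce to cohomology of \emph{coherent sheaves on complex affinoid germs} $D \times D_i$ (products of polydiscs, which \emph{are} affinoid by Appendix \ref{App:germs}) and on $\PP^{r-1}_D$ where the inductive hypothesis applies; one never needs Stein-ness of the non-compact charts themselves, only the already-established vanishing $H^{>0}$ on affinoid germs (Lemma \ref{Lem:affgerm}(ii)) and the proper pushforward comparison. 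Carefully routing every cohomological claim through these two inputs, rather than through a naive "affine charts are Stein" argument, is the delicate part.
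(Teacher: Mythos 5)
There is a genuine gap, and it sits exactly where you deviate from the structure of the argument. You fix the hyperplane at infinity $E$ once and for all, so the only global sections your argument ever produces are lifts of sections of $\cF_E(n)$; these generate $\cF(n)$ only at points of $E$. For points of $\PP^r_D\setminus E = D\times\CC^r$ you offer no mechanism at all for producing global sections of $\cF(n)$ on $\PP^r_D$ that generate there: pushing the bad locus down to $D$ and running "the classical argument on each fiber $\CC\PP^r$" only gives fiberwise sections, which have no reason to extend over $\PP^r_D$, and generation over the affine charts $D\times\CC^r$ would require exactly the Theorem A statement on these non-Stein sets that you yourself flag as unavailable. So the last step of your proof does not go through. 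The repair is the one the paper (following Serre's original Lemme) uses: do not fix $E$; instead fix the point $x$, choose a relative hyperplane $H\simeq\PP^{r-1}_D$ passing through $x$, and argue at $x$ by Nakayama --- the fiber of $\cF(n)$ at $x$ equals the fiber of $\cF_H(n)$ at $x$, the latter is generated by global sections of $\cF_H(n)$ for large $n$ by the inductive hypothesis (it is the analytification of an algebraic sheaf), and these sections lift through the surjection $H^0(\PP^r_D,\cF(n))\onto H^0(H,\cF_H(n))$; compactness of $\PP^r_D$ then gives a uniform $n_0$. This removes any need to say anything about the chart $D\times\CC^r$.

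A secondary gap: your claim that "chasing the long exact sequences" yields eventual surjectivity of $H^0(\PP^r_D,\cF(n))\to H^0(E,\cF_E(n))$ is not a formal consequence of the vanishing of $H^1(E,\cF_E(n))$ and $H^2(E,\cG(n))$. From $0\to\cP(n)\to\cF(n)\to\cF_E(n)\to 0$ one needs $H^1(\PP^r_D,\cP(n))\to H^1(\PP^r_D,\cF(n))$ to be injective, and this is obtained (as in Serre and in the paper) from the chain of surjections $H^1(\cF(n-1))\onto H^1(\cP(n))\onto H^1(\cF(n))$ together with stabilization of $\dim H^1(\PP^r_D,\cF(n))$ for large $n$; you should include this finiteness/stabilization step rather than leave it as a chase. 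The rest of your setup --- the two short exact sequences obtained from multiplication by the equation of the hyperplane, the observation that the kernel sheaf and the restriction are coherent on $\PP^{r-1}_D$ and hence algebraizable by induction, and the use of the already-proved cohomology comparison plus algebraic Serre vanishing --- matches the paper's proof.
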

\begin{proof} By compactness it suffices to show that global sections of $\cF(n)$ generate $\cF(n)_x$ for fixed $x$. By Nakayama it suffices to show  that global sections of $\cF(n)$ generate the fiber $\cF(n)_x\otimes_{\cO_{D,x}} \CC_x$.

Picking a hyperplane $ \PP^{r-1}_D\simeq H\ni x$ we obtain an exact sequence $0 \to \cO(-1) \to \cO \to \cO_{H} \to 0$, giving an exact sequence $\cF(-1) \stackrel{\varphi_1}\to \cF \stackrel{\varphi_0}\to \cF_H \to 0$. Writing $\cP$ for $\Ker(\varphi_0) = Im(\varphi_1)$ we have two exact sequences  $$0 \to \cG \to\cF(-1) \to \cP\to 0 \qquad\text{and}\qquad 0 \to \cP \to\cF \to \cF_H \to 0,$$ noting that $\cG$ and $\cF_H$  are coherent sheaves on $H$. Twisting by $\cO(n)$  gives
$$  0 \to \cG(n) \to\cF(n-1) \to \cP(n) \to 0 $$ and $$0\to \cP(n) \to \cF(n) \to \cF_H(n) \to 0.$$
The long exact cohomology sequence gives
$$ H^1(\PP^r_D, \cF(n-1)) \to H^1(\PP^r_D, \cP(n)) \to H^2(H, \cG(n))$$ and
$$ H^1(\PP^r_D, \cP(n)) \to H^1(\PP^r_D, \cF(n)) \to H^1(H, \cF_H(n)).$$
By the assumption $\cF_H$ and $\cG$ are analytifications of algebraic sheaves, so for large $n$ the terms on the right vanish by Serre's Th\'eor\`eme 1. It follows that   $\dim H^1(\PP^r_D, \cF(n))$ stabilizes  for large $n$, and when it does the exact sequences above imply that $H^1(\PP^r_D, \cP(n)) \to H^1(\PP^r_D, \cF(n))$ is bijective so $H^0(\PP^r_D, \cF(n)) \to H^0(H, \cF_H(n))$ is surjective. Since the result holds for analytifications of algebraic sheaves, $\cF_H(n)$ is globally generated  for large $n$, implying that $\cF(n)_x\otimes_{\cO_{D,x}} \CC_x$ is generated by global sections, as needed.
\end{proof}

\bibliographystyle{amsalpha}
\bibliography{factor-qe}

\end{document}